\newcommand{\set}[2]{\left\{ #1 \mid #2 \right\}}
\newcommand{\HH}{\operatorname{H}}
\newcommand{\rHH}{\widetilde{\operatorname{H}}}
\newcommand{\HHH}{\mathbb{H}}
\newcommand{\CC}{\mathscr{C}}
\newcommand{\DD}{\mathscr{D}}
\newcommand{\GG}{\mathscr{G}}
\newcommand{\VV}{\mathcal{V}}
\newcommand{\Th}{\operatorname{Th}}
\newcommand{\SP}{{Sp}}
\newcommand{\sing}{S}
\newcommand{\Set}{\operatorname{Set}}
\newcommand{\Cat}{\operatorname{Cat}}
\newcommand{\matchfun}{\mathcal{M}}
\newcommand{\match}[1]{\matchfun_{#1}}
\newcommand{\grph}{\mathscr{G}raph}
\newcommand{\sgn}{\operatorname{sgn}}
\newcommand{\interior}{\operatorname{int}}
\newcommand{\tGamma}{\Gamma}
\newcommand{\map}{\operatorname{map}}
\newcommand{\Mod}{\operatorname{Mod}}
\newcommand{\Out}{\operatorname{Out}}
\newcommand{\uc}{\widetilde}
\newcommand{\ad}{\operatorname{ad}}
\newcommand{\dquot}{/\hspace{-3pt}/}
\newcommand{\Ss}{\mathcal{S}}
\newcommand{\Ker}{\operatorname{Ker}}
\newcommand{\Fib}{\mathcal{F}ib}
\newcommand{\cX}{\widetilde{X}}
\newcommand{\QQ}{\mathbb{Q}}
\newcommand{\RR}{\mathbb{R}}
\newcommand{\ZZ}{\mathbb{Z}}
\newcommand{\LL}{\mathbb{L}}
\newcommand{\Lie}{\mathscr{L}\hspace{-1.5pt}ie}
\newcommand{\Top}{\mathbf{Top}}
\newcommand{\DGL}{\mathbf{DGL}}
\newcommand{\im}{\operatorname{im}}
\newcommand{\colim}{\operatornamewithlimits{colim}}
\newcommand{\hocolim}{\operatorname{hocolim}}
\newcommand{\rank}{\operatorname{rank}}
\newcommand{\Ind}{\operatorname{Ind}}
\newcommand{\tensor}{\otimes}
\newcommand{\ltensor}{\otimes}
\newcommand{\Hom}{\operatorname{Hom}}
\newcommand{\Ext}{\operatorname{Ext}}
\newcommand{\Diff}{\operatorname{Diff}}
\newcommand{\tDiff}{\widetilde{\Diff}}
\newcommand{\Aut}{\operatorname{Aut}}
\newcommand{\aut}{\operatorname{aut}}
\newcommand{\taut}{\widetilde{\aut}}
\newcommand{\Der}{\operatorname{Der}}
\newcommand{\OutDer}{\operatorname{OutDer}}
\newcommand{\Sp}{\operatorname{Sp}}
\newcommand{\gl}{\mathfrak{g}}
\newcommand{\al}{\mathfrak{a}}
\newcommand{\CP}{\mathbb{C}\mathrm{P}}
\newcommand{\Bun}{Bun}
\newcommand{\SDR}[5]{\xymatrix{*[r]{#1} \ar@<1ex>[r]^-{#3} \ar@(ul,dl)[]_{#5} & #2 \ar@<1ex>[l]^-{#4}}}
\newcommand{\bigSDR}[5]{\xymatrix{*[r]{#1} \ar@<1ex>[rr]^-{#3} \ar@(ul,dl)[]_{#5} && #2 \ar@<1ex>[ll]^-{#4}}}
\newcommand{\bigbigSDR}[5]{\xymatrix{*[r]{#1} \ar@<1ex>[rrr]^-{#3} \ar@(ul,dl)[]_{#5} &&& #2 \ar@<1ex>[lll]^-{#4}}}
\newcommand{\rtree}{\xygraph{
!{<0pt,0pt>;<11pt,0pt>:<0pt,11pt>::}
!{(-2,2)}*+{1}="a"
!{(0,2)}*+{2}="b"
!{(-1,1)}="c"
!{(1,1)}*+{3}="d"
!{(0,0)}="e"
!{(2,0)}*+{4}="f"
!{(1,-1)}="g"
!{(1,-2)}*+{{\bf 5}}="h"
"a"-"c"
"c"-"b"
"e"-"c"
"e"-"d"
"g"-"e"
"g"-"f"
"h"-"g"}}
\newcommand{\trtree}{\xygraph{
!{<0pt,0pt>;<11pt,0pt>:<0pt,11pt>::}
!{(-2,2)}*+{2}="a"
!{(0,2)}*+{3}="b"
!{(-1,1)}="c"
!{(1,1)}*+{4}="d"
!{(0,0)}="e"
!{(2,0)}*+{{\bf 5}}="f"
!{(1,-1)}="g"
!{(1,-2)}*+{1}="h"
"a"-"c"
"c"-"b"
"e"-"c"
"e"-"d"
"g"-"e"
"g"-"f"
"h"-"g"}}
\newcommand{\ntree}{\xygraph{
!{<0pt,0pt>;<11pt,0pt>:<0pt,11pt>::}
!{(-2,1)}*+{1}="a"
!{(-1,2)}*+{2}="b"
!{(1,2)}*+{3}="c"
!{(2,1)}*+{4}="e"
!{(0,1)}="d"
!{(1,0)}="f"
!{(0,-1)}="g"
!{(0,-2)}*+{{\bf 5}}="h"
"a"-"g"
"g"-"h"
"g"-"f"
"f"-"e"
"f"-"d"
"d"-"b"
"d"-"c"}}
\newcommand{\nocontentsline}[3]{}
\newcommand{\tocless}[2]{\bgroup\let\addcontentsline=\nocontentsline#1{#2}\egroup}
\newtheorem{theorem}{Theorem}
\newtheorem{conjecture}[theorem]{Conjecture}
\newtheorem{proposition}[theorem]{Proposition}
\newtheorem{corollary}[theorem]{Corollary}
\newtheorem{lemma}[theorem]{Lemma}
\theoremstyle{definition}
\newtheorem{definition}[theorem]{Definition}
\newtheorem{remark}[theorem]{Remark}
\newtheorem{example}[theorem]{Example}
\numberwithin{theorem}{section}
\title{Rational homotopy theory of automorphisms of manifolds}
\author{Alexander Berglund}
\address{Department of Mathematics\\
Stockholm University\\
SE-106 91 Stockholm\\
Sweden}
\email{alexb@math.su.se}
\author{Ib Madsen}
\address{Department of Mathematical Sciences\\
University of Copenhagen\\
Universitetsparken 5\\
DK-2100 Copenhagen \O{}\\
Denmark}
\email{imadsen@math.ku.dk}
\thanks{Supported by the Danish National Research Foundation through the Centre for Symmetry and Deformation (DNRF92). Supported by ERC adv grant no.~228082. Supported by the Swedish Research Council through grant no.~2015-03991.}
\begin{document}

\begin{abstract}
We study the rational homotopy types of classifying spaces of automorphism groups of smooth simply connected manifolds of dimension at least five. We give dg Lie algebra models for the homotopy automorphisms and the block diffeomorphisms of such manifolds.

Moreover, we use these models to calculate the rational cohomology of the classifying spaces of the homotopy automorphisms and block diffeomorphisms of the manifold $\#^g S^d \times S^d$ relative to an embedded disk as $g\to \infty$. The answer is expressed in terms of stable cohomology of arithmetic groups and invariant Lie algebra cohomology. Through an extension of Kontsevich's work on graph complexes, we relate our results to the (unstable) homology of automorphisms of free groups with boundaries.
\end{abstract}

\maketitle

\tableofcontents

\section{Introduction}
This work examines homotopical and homological properties of groups of automorphisms of simply connected smooth manifolds $M^n$ with $\partial M = S^{n-1}$, for $n\geq 5$. We study three types of automorphism groups, namely the homotopy automorphisms $\aut_\partial(M)$, the block diffeomorphisms $\tDiff_\partial(M)$ and the diffeomorphisms $\Diff_\partial(M)$. The subscript $\partial$ indicates that we consider automorphisms that fix the boundary pointwise. The classifying spaces are related by maps
\begin{equation} \label{eq:classifying spaces}
B \Diff_\partial(M) \xrightarrow{I} B \tDiff_\partial(M) \xrightarrow{J} B\aut_\partial(M).
\end{equation}

Let $\aut_{\partial,\circ}(M)$ denote the connected component of $\aut_\partial(M)$ that contains the identity, and write $\tDiff_{\partial,\circ}(M)$ for the subgroup of block diffeomorphisms homotopic to the identity.
For a vector bundle $\xi$ over $M$, let $\aut_{\partial,\circ}^*(\xi)$ be the topological monoid of diagrams
$$
\xymatrix{\xi \ar[r]^-{\widehat{f}} \ar[d] & \xi \ar[d] \\ M \ar[r]^-f & M}
$$
with $f\in \aut_{\partial,\circ}(M)$ and $\widehat{f}$ a fiberwise isomorphism over $f$ that restricts to the identity on the fiber over the basepoint $*\in \partial M$. Then stabilize,
$$\aut_{\partial,\circ}^*(\xi^S) = \hocolim_s \aut_{\partial,\circ}^*(\xi\times \RR^s),$$
where the stabilization maps are given by $(f,\widehat{f}) \mapsto (f, \widehat{f} \times id_\RR)$.

\begin{theorem} \label{thm:first main result}
For a simply connected smooth compact manifold $M$ of dimension $n\geq 5$ with $\partial M = S^{n-1}$ and tangent bundle $\tau_M$, the differential gives rise to a map
$$
D\colon B\tDiff_{\partial,\circ}(M) \rightarrow B\aut_{\partial,\circ}^*(\tau_M^S).
$$
The spaces $B\tDiff_{\partial,\circ}(M)$ and $B\aut_{\partial,\circ}^*(\tau_M^S)$ are nilpotent and the map $D$ is a rational homotopy equivalence. In particular,
\begin{equation*} \label{eq:H_k}
\HH_k(B\tDiff_{\partial,\circ}(M);\QQ) \cong \HH_k(B\aut_{\partial,\circ}^*(\tau_M^S);\QQ),
\end{equation*}
\begin{equation*} \label{eq:pi_k}
\pi_k(B\tDiff_{\partial,\circ}(M))\tensor \QQ \cong \pi_k(B\aut_{\partial,\circ}^*(\tau_M^S))\tensor \QQ,
\end{equation*}
for all $k$.
\end{theorem}

% This is proved as Corollary \ref{cor:bdiff model} in the body of the text.

Thus, from the point of view of rational homotopy and homology, $B\tDiff_{\partial,\circ}(M)$ may be replaced by $B\aut_{\partial,\circ}^*(\tau_M^S)$. Building on Quillen's and Sullivan's rational homotopy theory and subsequent work of Schlessinger-Stasheff and Tanr\'e, we proceed to construct a dg Lie algebra model of the latter space. Consider the desuspension of the reduced rational homology,
$$V = s^{-1}\rHH^*(M;\QQ).$$
There is a differential $\delta$ on the free graded Lie algebra $\LL(V)$ such that $(\LL(V),\delta)$ is a minimal dg Lie algebra model for $M$. Moreover, there is a distinguished cycle $\omega \in \LL(V)$ that represents the inclusion of the boundary sphere. Write $\Der_\omega \LL(V)$ for the dg Lie algebra of derivations $\theta$ on $\LL(V)$ such that $\theta(\omega) = 0$, with differential
$$[\delta,\theta] = \delta\circ \theta - (-1)^{|\theta|} \theta \circ \delta,$$
and let $\Der_\omega^+ \LL(V)$ denote the sub dg Lie algebra of positive degree derivations such that $[\delta,\theta] = 0$ if $\theta$ is of degree $1$.

Consider the graded vector space $P = \pi_*(\Omega BO) \tensor \QQ$ and fix generators $q_i\in \pi_{4i-1}(\Omega BO)\tensor \QQ$ by the equation
$$\langle p_i,\sigma(q_i) \rangle = 1$$
where $p_i\in \HH^{4i}(BO;\QQ)$ is the $i$th Pontryagin class and $\sigma(q_i) \in \pi_{4i}(BO)\tensor \QQ$ is the suspension.
Let $p_i(\tau_M)\in \HH^{4i}(M;\QQ)$ denote the Pontryagin classes of the tangent bundle $\tau_M$ of $M$.
There is a distinguished element of degree $-1$ in the tensor product $\rHH^*(M;\QQ)\tensor P$,
$$\tau = \sum_i p_i(\tau_M)\tensor q_i.$$
The action of $\Der_\omega^+ \LL(V)$ on $\LL(V)$ induces an action on $\LL(V)/[\LL(V),\LL(V)] = s^{-1}\rHH_*(M;\QQ)$, and hence on the tensor product $\rHH^*(M;\QQ)\tensor P$. We may then form the dg Lie algebra
$$
\mathcal{M}^\tau = \big(\rHH^*(M;\QQ)\tensor P\big)_{\geq 0} \rtimes_\tau \Der_\omega^+ \LL(V),
$$
where the subscript on the left factor indicates that we discard elements of negative degree. The Lie bracket is given by
$$\big[(x,\theta),(y,\eta)\big] = \big( x\ldotp \eta + \theta \ldotp y, [\theta,\eta] \big),$$
where $x\ldotp \eta$ is the action above and $\theta \ldotp y = - (-1)^{|\theta||y|} y\ldotp \theta$. The differential is given by
$$
\partial^\tau(x,\theta) = \big( \tau\ldotp \theta, [\delta,\theta] \big).
$$

\begin{theorem} \label{thm:models}
For a simply connected smooth compact manifold $M^n$ with $\partial M = S^{n-1}$,
\begin{enumerate}
\item $\big(\Der_\omega^+ \LL(V),[\delta,-] \big)$ is a dg Lie algebra model for $B\aut_{\partial,\circ}(M)$,
\item $\big( \mathcal{M}^\tau,\partial^\tau \big)$ is a dg Lie algebra model for $B\aut_{\partial,\circ}^*(\tau_M^S)$.
\end{enumerate}
\end{theorem}

The first part of Theorem \ref{thm:models} is proved below as Theorem \ref{thm:aut model} and the second part as Theorem \ref{thm:block diff model}.

We next focus attention on highly connected manifolds, for which these models simplify dramatically: if $M$ is $(d-1)$-connected and $2d$-dimensional for some $d\geq 3$, then $\delta = 0$ and the action of $\Der_\omega^+ \LL(V)$ on the reduced homology of $M$ is trivial, for degree reasons. In these cases, we can also analyze the spectral sequences of the coverings
$$B\aut_{\partial,\circ}(M) \to B\aut_\partial(M),$$
$$B\tDiff_{\partial,\circ}(M) \to B\tDiff_\partial(M),$$
which leads to a calculation of the rational cohomology of the base spaces (in a range).

In particular, we consider the generalized surfaces of ``genus'' $g$,
$$M_{g,1} = \#^g S^d\times S^d\setminus \interior(D^{2d}).$$
For $2d>4$, the three spaces in \eqref{eq:classifying spaces} are radically different (the case $2d=4$ is excluded due to the usual difficulties in dimension four, but see Remark \ref{rem:dimension four} below). Still, in all three cases, there is a stable range for the rational cohomology: in degrees less than $(g-4)/2$ the cohomology is independent of $g$. This was proved in \cite{GRW2} for $\Diff_\partial(M_{g,1})$ and we prove it for $\tDiff_\partial(M_{g,1})$ and $\aut_\partial(M_{g,1})$ in this paper\footnote{In an earlier paper \cite{BM} we established a stability range that depended on the dimension of the manifold. The range is greatly improved in this paper.}. We then proceed to study the stable cohomologies and the maps between them,
\begin{equation} \label{eq:stable cohomologies}
\HH^*(B\aut_\partial(M_{\infty,1});\QQ) \xrightarrow{J^*} \HH^*(B\tDiff_\partial(M_{\infty,1});\QQ) \xrightarrow{I^*} \HH^*(B\Diff_\partial(M_{\infty,1});\QQ).
\end{equation}
The desuspension of the reduced homology $V_g = s^{-1}\widetilde{\HH}_*(M_{g,1};\QQ)$, equipped with the intersection form $\langle -,-\rangle$, is a non-degenerate graded anti-symmetric vector space; it admits a graded basis
$$\alpha_1,\ldots,\alpha_g,\beta_1,\ldots,\beta_g,\quad |\alpha_i|=|\beta_i| = d-1,$$
such that $\langle \alpha_i,\alpha_j\rangle = \langle \beta_i,\beta_j \rangle = 0$ and $\langle \alpha_i,\beta_j\rangle =-(-1)^{|\alpha_i||\beta_j|} \langle \beta_j,\alpha_i\rangle =\delta_{ij}$.
It follows directly from Theorem \ref{thm:models} that
\begin{equation} \label{eq:sub Lie algebra}
\gl_g = \Der_\omega^+ \LL(V_g)
\end{equation}
is a dg Lie algebra model for $B\aut_{\partial,\circ}(M_{g,1})$, where $\omega = [\alpha_1,\beta_1] + \cdots + [\alpha_g,\beta_g]$.
The differential $\delta$ is zero, so in particular we get a computation of the rational homotopy groups:
\begin{equation} \label{eq:tanre}
\pi_{*+1} B\aut_\partial(M_{g,1}) \tensor \QQ \cong \Der_\omega^+ \LL(V_g),\quad *>0.
\end{equation}
The Whitehead product on the left hand side corresponds to the commutator bracket on the right hand side.

The fundamental group of $B\aut_\partial(M_{g,1})$, i.e., the homotopy mapping class group, can be determined up to commensurability. The automorphism group,
$$G_g(\QQ) = \Aut(V_g, \langle -,-\rangle),$$
is the $\QQ$-points of an algebraic group, isomorphic to $\Sp_{2g}(\QQ)$ or $O_{g,g}(\QQ)$ depending on the parity of $d$. In \S\ref{sec:wall} we introduce an arithmetic subgroup $\Gamma_g$ of $G_g(\QQ)$ commensurable with the fundamental group of $B\aut_\partial(M_{g,1})$. The fundamental group surjects onto $\Gamma_g$, and under the isomorphism \eqref{eq:tanre} the action on the higher homotopy groups corresponds to the evident action of $\Gamma_g\subset G_g(\QQ)$ on the right hand side. Note that the Chevalley-Eilenberg cohomology $\HH_{CE}^*(\gl_g)$ inherits an action of $\Gamma_g$.
\begin{theorem} \label{thm:aut cohomology}
Let $2d\geq 6$. The stable cohomology of the homotopy automorphisms of $M_{g,1}$ is given by
$$\HH^*(B\aut_\partial(M_{\infty,1});\QQ) \cong \HH^*(\Gamma_\infty;\QQ) \tensor \HH_{CE}^*(\gl_\infty)^{\Gamma_\infty}.$$
\end{theorem}
The situation for block diffeomorphisms is similar. Let
$$\Pi =\QQ\set{\pi_i}{4i>d} \quad \big( = \pi_{*+d}(BO)\tensor \QQ \big),$$
be the graded vector space with basis elements $\pi_i$ in degree $4i-d>0$. Next, let
\begin{equation} \label{eq:a_g}
\al_g = s^{-1}\Pi \tensor \rHH_d(M_{g,1};\QQ),
\end{equation}
considered as an abelian Lie algebra. In the notation of Theorem \ref{thm:models}, we have that $\tau = 0$ and $\delta = 0$ and moreover the action of $\Der_\omega^+ \LL(V)$ on the reduced cohomology $\rHH^*(M_{g,1};\QQ)$ is trivial for degree reasons. It follows that the higher homotopy groups of the block space are given by
\begin{equation} \label{eq:block space}
\pi_{*+1} B\tDiff_{\partial,\circ}(M_{g,1}) \tensor \QQ \cong \gl_g \oplus \al_g,
\end{equation}
and again the fundamental group acts through the projection onto $\Gamma_g$.
\begin{theorem} \label{thm:block cohomology}
Let $2d\geq 6$. The stable cohomology of the block diffeomorphism group of $M_{g,1}$ is given by
$$\HH^*(B\tDiff_\partial(M_{\infty,1});\QQ) \cong \HH^*(\Gamma_\infty;\QQ) \tensor \HH_{CE}^*(\gl_\infty\oplus \al_\infty)^{\Gamma_\infty}.$$
\end{theorem}
Thus, the calculation of the stable cohomology is reduced to the calculation of the cohomology of arithmetic groups and invariant Lie algebra cohomology.

The stable rational cohomology of arithmetic groups was computed by Borel in \cite{Borel1}. For $\Gamma_g$ the result reads
$$\HH^*(\Gamma_\infty;\QQ) = \QQ[x_1,x_2,\ldots],$$
where $|x_i| = 4i-2$ when $d$ is odd and $|x_i| = 4i$ when $d$ is even.

Serendipitously, the invariant Lie algebra cohomology has been considered by Kontsevich, though for entirely different purposes. Indeed, at least for $d$ odd, the Lie algebra $\Der_\omega\LL(V_g)$ is the same as the one studied by Kontsevich in his work on formal non-commutative symplectic geometry \cite{Kontsevich1,Kontsevich2}. Extending Kontsevich's result, we find that the fixed set of the Chevalley-Eilenberg cochains,
$$C_{CE}^*(\gl_g\oplus\al_g)^{\Gamma_g},$$
admits an interpretation in terms of graphs, which we describe next.

For $s,k\geq 0$, let $\GG(s)_k$ denote the rational vector space spanned by connected graphs with $k$ vertices of valence $\geq 3$, decorated by elements of the cyclic Lie operad, and $s$ leaves labeled by $1,\ldots,s$. The graphs are moreover equipped with orientations of the vertices and of the internal edges. There is an action of the symmetric group $\Sigma_s$ given by permuting the leaf labels. Kontsevich's differential
$$
\partial \colon \GG(s)_k \to \GG(s)_{k-1},
$$
is defined as a sum over edge contractions. The subcomplex $\GG(0)$ spanned by graphs without leaves is Kontsevich's original graph complex. There is a decomposition
$$\GG(s) = \bigoplus_{n\geq 0} \GG(n,s),$$
where $\GG(n,s) \subseteq \GG(s)$ is the subcomplex spanned by graphs $G$ with $\rank \HH_1(G) = n$. 
We remark that $\GG$ is closely related to the dual of the `Feynman transform' of the Lie operad \cite{GK2}. 

If $W$ is a graded vector space, then let $W[n]$ or $s^nW$ denote the graded vector space with $W[n]_i = W_{i-n}$. Define the suspension $\Sigma \GG(s)$ by
$$\Sigma \GG(s) = \bigoplus_n (\Sigma\GG)(n,s),\quad (\Sigma\GG)(n,s) = \GG(n,s)[2(n-1)+s]\tensor \sgn_s,$$
and let $\GG^d = \Sigma^{d-1}\GG$. For a graded vector space $W$, we define
$$\GG^d[W] = \bigoplus_{s\geq 0} \GG^d(s) \tensor_{\Sigma_s} W^{\tensor s}.$$
With this notation, we establish isomorphisms
\begin{align} \label{eq:graphs}
C_*^{CE}(\gl_\infty)_{\Gamma_\infty} & \cong \Lambda\GG^d(0), \\
C_*^{CE}(\gl_\infty\oplus \al_\infty)_{\Gamma_\infty} & \cong \Lambda\GG^d[\Pi],
\end{align}
where $\Pi$ is the graded vector space from \eqref{eq:a_g}, and $\Lambda W$ denotes the free graded commutative algebra on $W$. Moreover, in each case the Chevalley-Eilenberg differential on the left hand side corresponds to Kontsevich's differential. (The isomorphism \eqref{eq:graphs} for $d=1$ is equivalent to Kontsevich's theorem.) This leads to
\begin{theorem} \label{thm:CE}
There are isomorphisms
\begin{enumerate}
\item $\HH_*^{CE}(\gl_\infty)_{\Gamma_\infty} \cong \Lambda\big( \HH_*(\GG^d(0),\partial) \big)$,
\item $\HH_*^{CE}(\gl_\infty \oplus \al_\infty)_{\Gamma_\infty} \cong \Lambda\big( \HH_*(\GG^d[\Pi],\partial) \big)$.
\end{enumerate}
\end{theorem}

The graph homology can in turn be related to the cohomology of automorphism groups of free groups. Building on the work of Culler and Vogtmann \cite{CV}, Kontsevich expressed the graph homology (for $d=1$ and $s=0$) in terms of the cohomology of outer automorphism groups of free groups. This was extended by Conant, Kassabov and Vogtmann \cite{CKV} to include the case $s>0$. Let $A_{n,s}$ be the group of homotopy classes of homotopy equivalences of a bouquet of $n$ circles relative to $s$ marked points. Then
$$A_{n,0} \cong \Out F_n,\quad A_{n,1} \cong \Aut F_n,\quad A_{n,s} \cong F_n^{s-1} \rtimes \Aut F_n,$$
where $F_n$ is the free group on $n$ generators. Note that permutation of the marked points yields an action of $\Sigma_s$ on the homology of $A_{n,s}$.

\begin{theorem}[{Kontsevich ($s=0$), Conant-Kassabov-Vogtmann ($s>0$)}] \label{eq:CKV}
For $n+s\geq 2$ and all $k,d$ there is a $\Sigma_s$-equivariant isomorphism
$$\HH_k(\GG^d(n,s),\partial) \cong \HH^{(2(n-1)+s)d-k}(A_{n,s};\QQ) \tensor \sgn_s.$$
\end{theorem}

Our results should be compared with the known results for the diffeomorphism group. The stable cohomology for $B\Diff_\partial(M_{g,1})$ was calculated in \cite{MW} for $2d=2$, verifying the Mumford conjecture, and in \cite{GRW1} for $2d>4$. We recall the description. Let
$$B\subset \QQ[p_1,\ldots,p_{d-1},e] \quad \big( = \HH^*(BSO(2d);\QQ) \big)$$
be the set of monomials $e^np_{i_1}\ldots p_{i_s}$ of degree $>2d$, with $n\geq 0$ and $d/4 < i_\nu < d$. To each $b\in B$, there is a cohomology class
$$
\kappa_b \in \HH^*(B\Diff_\partial(M_{g,1});\QQ),\quad |\kappa_b| = |b|-2d,
$$
and these classes are multiplicative generators for the stable cohomology.
\begin{theorem}[Madsen-Weiss ($2d=2$), Galatius-Randal-Williams ($2d>4$)]
The stable cohomology of the diffeomorphism group,
$$\HH^*(B\Diff_\partial(M_{\infty,1});\QQ),$$
is freely generated as a graded commutative algebra by the classes
$$\kappa_{e^np_{i_1}\ldots p_{i_s}},$$
where $d/4 < i_\nu < d$ if $n+s\geq 2$, and $d/2< i_1 < d$ if $(n,s) = (0,1)$.
\end{theorem}

We remark in passing that the proof of this result is different in spirit from the proofs of the above theorems and does not give any insights into the homotopy groups. It is an open problem of considerable interest to evaluate $\pi_*\Diff_\partial(M_{g,1})$.

In light of this result, the following reformulation of our main result suggests itself. Theorem \ref{thm:CE} combined with Theorem \ref{eq:CKV} imply that elements
$$\xi \in \HH_*(A_{n,s};\QQ),\quad p_{i_1},\ldots,p_{i_s}\in \Pi^\vee,$$
give rise to cohomology classes
$$
\widetilde{\kappa}_{p_{i_1},\ldots,p_{i_s}}^\xi \in \HH_{CE}^*(\gl_\infty\oplus \al_\infty)^{\Gamma_\infty}$$
of degree $2(n-1)d +4i_1 + \cdots + 4i_s - |\xi|$.
These classes, subject to the equivariance and linearity relations
\begin{align*}
\widetilde{\kappa}_{p_{i_1},\ldots,p_{i_s}}^{a\xi+b\zeta} & = a\widetilde{\kappa}_{p_{i_1},\ldots,p_{i_s}}^{\xi} + b\widetilde{\kappa}_{p_{i_1},\ldots,p_{i_s}}^{\zeta},\quad a,b\in \QQ, \\
\widetilde{\kappa}_{p_{i_1},\ldots,p_{i_s}}^{\sigma\xi} & = \widetilde{\kappa}_{p_{i_{\sigma_1}},\ldots,p_{i_{\sigma_s}}}^{\xi},\quad \sigma\in \Sigma_s,
\end{align*}
are the multiplicative generators of $ \HH_{CE}^*(\gl_\infty\oplus \al_\infty)^{\Gamma_\infty}$. The isomorphisms in Theorem \ref{thm:aut cohomology} and Theorem \ref{thm:block cohomology} are not canonical (see the discussion after Lemma \ref{lemma:collapse}), but after choosing suitable lifts of the generators $\widetilde{\kappa}_{p_{i_1},\ldots,p_{i_s}}^\xi$ to $\HH^*(B\tDiff_\partial(M_{\infty,1});\QQ)$,
our results can be reformulated as follows.

\begin{theorem} \label{thm:stable cohomology}
Let $2d\geq 6$. The stable cohomology of the block diffeomorphism group,
$$\HH^*(B\tDiff_\partial(M_{\infty,1});\QQ),$$
is freely generated as a graded commutative algebra by the Borel classes $x_i$, of degree $4i-2$ if $d$ is odd and $4i$ if $d$ is even, and classes
$$\widetilde{\kappa}_{p_{i_1},\ldots,p_{i_s}}^{\xi},\quad \xi \in \HH_*(A_{n,s}),\quad i_\nu > d/4, \quad  n+s\geq 2,$$
of degree $2(n-1)d +4i_1 + \cdots + 4i_s - |\xi|$.
\end{theorem}

\begin{theorem}
Let $2d\geq 6$. The homomorphism
$$\HH^*(B\aut_\partial(M_{\infty,1});\QQ) \xrightarrow{J^*} \HH^*(B\tDiff_\partial(M_{\infty,1});\QQ)$$
is injective. Its image is the subalgebra freely generated by the classes $\widetilde{\kappa}^\xi$ of degree $2(n-1)d-|\xi|$, for $\xi \in \HH_*(A_{n,0};\QQ) = \HH_*(\Out F_n;\QQ)$, for $n\geq 2$.
\end{theorem}

\begin{remark} \label{rem:dimension four}
The referee has pointed out that Theorem \ref{thm:stable cohomology} might hold also in dimension $2d = 4$, because stable surgery works in dimension 4 by \cite{FQ} and because Theorem 1.5 of \cite{GRW3} does not exclude dimension 4. We leave for the interested reader to work out the details.
\end{remark}

The rational homology of the graph complex $\GG$, or equivalently of the groups $A_{n,s}$, is largely unknown (though see \cite{CHKV} for some recent computations). At any rate, certain classes present themselves immediately. If we let $\epsilon_{n,s}$ denote a generator for $\HH_0(A_{n,s})$, then for $n+s\geq 2$, we have the class $\tilde{\kappa}_{p_{i_1},\ldots,p_{i_s}}^{\epsilon_{n,s}}$
of degree $2(n-1)d + 4i_1 + \ldots + 4i_s$. We note that this is the same as the degree of the class $\kappa_{e^n p_{i_1}\ldots p_{i_s}}$. Thus, the free graded commutative algebra generated by the Borel classes
\begin{equation} \label{eq:Borel classes intro}
x_i,\quad 1\leq i < d/2,
\end{equation}
and the classes
\begin{equation} \label{eq:kappa-tilde classes}
\widetilde{\kappa}_{p_{i_1},\ldots,p_{i_s}}^{\epsilon_{n,s}},\quad d/4 < i_\nu < d, \quad n+s\geq 2,
\end{equation}
is abstractly isomorphic to the stable cohomology of the diffeomorphism group.

\begin{conjecture}
The subalgebra of $\HH^*(B\tDiff_\partial(M_{\infty,1});\QQ)$ generated by the classes \eqref{eq:Borel classes intro} and suitable lifts of the classes \eqref{eq:kappa-tilde classes} maps isomorphically onto the cohomology ring $\HH^*(B\Diff_\partial(M_{\infty,1});\QQ)$ under the homomorphism $I^*$.
\end{conjecture}

It was shown in \cite{ERW} that
$$
I^*\colon \HH^*(B\tDiff_\partial(M_{\infty,1});\QQ) \to \HH^*(B\Diff_\partial(M_{\infty,1});\QQ)
$$
is surjective. It is now an easy count of dimensions to check that
$$
\dim \HH^k(B\Diff_\partial(M_{\infty,1});\QQ) = \dim \HH^k(B\tDiff_\partial(M_{\infty,1});\QQ)
$$
when $k<2d$, and that in degree $2d$, there is a difference in dimensions by $1$.
We conclude that $I^*$ is an isomorphism in degrees $<2d$ and that the kernel in degree $2d$ is $1$-dimensional. Interestingly, the range of degrees where $I^*$ is an isomorphism is greater than expected from the relation of $\ker I^*$ to algebraic $K$-theory \cite{WW}. If the conjecture is true, then the extra element $\widetilde{\kappa}^{\epsilon_{2,0}}$, associated to the generator of $\HH_0(\Out F_2;\QQ)$, could be held responsible for the failure of injectivity in degree $2d$. It is a bit surprising that the homology of the groups $A_{n,s}$ in some sense measures the difference between the cohomology of the block diffeomorphism group and that of the diffeomorphism group.

\subsection*{Acknowledgments}
We thank the referee for many pertinent comments that led to an improvement of the paper.

\section{Quillen's rational homotopy theory}
In this section we will briefly review Quillen's rational homotopy theory \cite{Quillen} and set up a spectral sequence for calculating the rational homology of a simply connected space from its rational homotopy groups. The existence of this spectral sequence was pointed out by Quillen \cite[\S 6.9]{Quillen}, but we need a version that incorporates group actions that are not necessarily base-point preserving, so we need to revisit the construction.

\subsection{Quillen's dg Lie algebra}
The Whitehead products on the homotopy groups of a simply connected based topological space $X$,
$$\pi_{p+1}(X) \times \pi_{q+1}(X) \rightarrow \pi_{p+q+1}(X),$$
endow the rational homotopy groups,
$$\pi_*^\QQ(X) = \pi_{*+1}(X)\tensor \QQ,$$
with the structure of a graded Lie algebra. Rationally homotopy equivalent spaces have isomorphic Lie algebras, but $\pi_*^\QQ(X)$ is not a complete invariant; two spaces may have isomorphic Lie algebras without being rationally homotopy equivalent, as witnessed for instance by $\CP^2$ and $K(\ZZ,2)\times K(\ZZ,5)$.

Quillen \cite{Quillen} constructed a functor $\lambda$ from the category of simply connected based topological spaces to the category of dg Lie algebras and established a natural isomorphism of graded Lie algebras
\begin{equation} \label{eq:h lambda}
\HH_*(\lambda(X)) \cong \pi_*^\QQ(X).
\end{equation}
The quasi-isomorphism type of $\lambda(X)$ is a finer invariant than the isomorphism type of $\pi_*^\QQ(X)$. The main result of Quillen's theory is that it is a complete invariant: two simply connected spaces $X$ and $Y$ are of the same rational homotopy type if and only if the dg Lie algebras $\lambda(X)$ and $\lambda(Y)$ are quasi-isomorphic. Here, we say that two dg Lie algebras are quasi-isomorphic if they are isomorphic in the homotopy category of dg Lie algebras. Concretely, this means that there exists a zig-zag of quasi-isomorphisms that connects them.

\subsection{The Quillen spectral sequence} \label{sec:Quillen ss}
Let $L$ be a dg Lie algebra. The Chevalley-Eilenberg complex of $L$ is the chain complex
$$C_*^{CE}(L) = (\Lambda sL , \delta).$$
Here $\Lambda sL$ denotes the free graded commutative algebra on the suspension of $L$. Elements of $sL$ are denoted $sx$, where $x\in L$, with $|sx| = |x|+1$. The differential $\delta = \delta_0 + \delta_1$ is defined by the following formulas
$$\delta_0(sx_1\wedge \ldots \wedge sx_n) = \sum_{i=1}^n (-1)^{1+\epsilon_i} sx_1 \wedge \ldots sdx_i \ldots \wedge sx_n,$$
$$\delta_1(sx_1\wedge \ldots \wedge sx_n) = \sum_{i<j} (-1)^{|sx_i|+\eta_{ij}} s[x_i,x_j]\wedge sx_1 \wedge \ldots \widehat{sx_i} \ldots \widehat{sx_j} \ldots \wedge sx_n,$$
where
$$\epsilon_i = |sx_1|+\cdots+|sx_{i-1}|,$$
and the sign $(-1)^{\eta_{ij}}$ is determined by graded commutativity;
$$sx_1\wedge \ldots \wedge sx_n = (-1)^{\eta_{ij}}sx_i\wedge sx_j \wedge sx_1 \ldots \widehat{sx_i}\ldots \widehat{sx_j} \ldots \wedge sx_n.$$
We let $\HH_*^{CE}(L)$ denote the homology of this chain complex.

If the differential of $L$ is trivial, then there is a decomposition of the Chevalley-Eilenberg homology as
$$\HH_n^{CE}(L) = \bigoplus_{p+q=n} \HH_{p,q}^{CE}(L),$$
where $\HH_{p,q}^{CE}(L)$ is the homology in word-length $p$ and total degree $p+q$.
$$\xymatrix{\cdots \ar[r] & (\Lambda^{p+1} sL)_q \ar[r]^-{\delta_1} & (\Lambda^p sL)_q \ar[r]^-{\delta_1} & (\Lambda^{p-1} sL)_q \ar[r] & \cdots.}$$
For arbitrary $L$, we may filter the Chevalley-Eilenberg complex by word-length;
$$F_p = \Lambda^{\leq p}sL.$$
The associated spectral sequence has
\begin{equation} \label{eq:dgl ss}
E_{p,q}^2(L) = \HH_{p,q}^{CE}(\HH_*(L)) \Rightarrow \HH_{p+q}^{CE}(L)
\end{equation}
If $L$ is positively graded the filtration is finite in each degree, which ensures strong convergence of the spectral sequence.

There is a coproduct on $\Lambda sL$, called the shuffle coproduct, which is uniquely determined by the requirement that it makes $\Lambda sL$ into a graded Hopf algebra with space of primitives $sL$. The differential $\delta$ is a coderivation with respect to the shuffle coproduct, making $C_*^{CE}(L)$ into a dg coalgebra, and \eqref{eq:dgl ss} is a spectral sequence of coalgebras.

We will now interpret the above for the dg Lie algebra $\lambda(X)$. A fundamental property of Quillen's functor is the existence of a natural isomorphism of graded coalgebras
\begin{equation} \label{eq:hce lambda}
\HH_*^{CE}(\lambda(X)) \cong \HH_*(X;\QQ).
\end{equation}
By \eqref{eq:h lambda} and \eqref{eq:hce lambda} the spectral sequence of Quillen's dg Lie algebra $\lambda(X)$ may be written as follows
\begin{equation} \label{eq:quillen ss}
E_{p,q}^2(X) = \HH_{p,q}^{CE}(\pi_*^\QQ(X)) \Rightarrow \HH_{p+q}(X;\QQ).
\end{equation}
We will refer to this as the Quillen spectral sequence.

\subsection{Functoriality for unbased maps}
It is clear from the construction that the Quillen spectral sequence is natural for base-point preserving maps. But in fact the functoriality can be extended to unbased maps. The homotopy groups $\pi_n(X) = [S^n,X]_*$ depend on the base-point of $X$, and are a priori only functorial for base-point preserving maps. However, if $X$ is simply connected, the canonical map
$$\pi_n(X)\rightarrow [S^n,X]$$
is a bijection, and we may use this to extend $\pi_*^\QQ(X)$ to a functor defined on unbased simply connected spaces. Quillen's functor $\lambda$ can also be extended to unbased maps, but only up to homotopy.

Suppose that $X$ and $Y$ are simply connected spaces with base-points $x_0$ and $y_0$. Given a not necessarily base-point preserving map $f\colon X\rightarrow Y$, we may choose a path $\gamma$ from $y_0$ to $f(x_0)$. Then we obtain based maps
$$(X,x_0) \stackrel{f}{\rightarrow} (Y,f(x_0)) \stackrel{ev_1}{\leftarrow} (Y^I,\gamma) \stackrel{ev_0}{\rightarrow} (Y,y_0).$$
The maps $ev_i$ are weak homotopy equivalences, so the above may be interpreted as a morphism $\overline{f}$ from $(X,x_0)$ to $(Y,y_0)$ in the homotopy category of based spaces. It is easily checked that $\overline{f}$ only depends on the homotopy class of $f$, and that compositions are respected in the sense that $\overline{gf} = \overline{g}\overline{f}$ as maps in the homotopy category.

We may apply Quillen's functor to get a diagram of dg Lie algebras
$$\lambda(X,x_0) \stackrel{f_*}{\rightarrow} \lambda(Y,f(x_0)) \stackrel{(ev_1)_*}{\leftarrow} \lambda(Y^I,\gamma) \stackrel{(ev_0)_*}{\rightarrow} \lambda(Y,y_0),$$
where the maps $(ev_i)_*$ are quasi-isomorphisms. In homology, we obtain an induced morphism of graded Lie algebras
$$(ev_0)_* (ev_1)_*^{-1} f_* \colon \pi_*^\QQ(X) \rightarrow \pi_*^\QQ(Y).$$
Under the identification $\pi_n(X) \cong [S^n,X]$, this map agrees with $f_*\colon [S^n,X]\rightarrow [S^n,Y]$, because $ev_0$ and $ev_1$ are homotopic as unbased maps. Since the spectral sequence \eqref{eq:dgl ss} is natural with respect to morphisms of dg Lie algebras, the above considerations imply the following.

\begin{proposition}
Let $X$ be a simply connected space. There is a spectral sequence of coalgebras
$$E_{p,q}^2 = \HH_{p,q}^{CE}(\pi_*^\QQ(X)) \Rightarrow \HH_{p+q}(X;\QQ).$$
The spectral sequence is natural with respect to unbased maps of simply connected spaces.
\end{proposition}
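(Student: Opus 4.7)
The plan is to construct the spectral sequence by applying the word-length filtration \eqref{eq:dgl ss} to Quillen's dg Lie algebra $\lambda(X,x_0)$ for a chosen base-point, and then to upgrade the resulting functoriality from based to unbased maps via the zig-zag mechanism already set up in the paragraphs immediately preceding the statement.

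For existence, I would fix a base-point $x_0\in X$. Since $X$ is simply connected, $\lambda(X,x_0)$ is positively graded, so the word-length filtration yields a convergent spectral sequence of coalgebras
$$E^2_{p,q} = \HH^{CE}_{p,q}(\HH_*(\lambda(X,x_0))) \Rightarrow \HH^{CE}_{p+q}(\lambda(X,x_0)).$$
Substituting the natural isomorphisms \eqref{eq:h lambda} and \eqref{eq:hce lambda} produces exactly the desired $E^2$-page and abutment, and the coalgebra structure is inherited from the shuffle coproduct on $\Lambda sL$.

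For naturality, a key intermediate observation is that any quasi-isomorphism of positively graded dg Lie algebras induces an isomorphism of word-length spectral sequences from the $E^2$-page onward: this follows because $E^2$ depends only on $\HH_*(L)$ and the abutment $\HH^{CE}_*(L)$ is a homotopy invariant. Consequently, any morphism in the homotopy category of dg Lie algebras induces a well-defined morphism of spectral sequences from the $E^2$-page on. Given an unbased map $f\colon X\to Y$ and a choice of path $\gamma$, applying Quillen's functor to the zig-zag
$$\lambda(X,x_0) \xrightarrow{f_*} \lambda(Y,f(x_0)) \xleftarrow{(ev_1)_*} \lambda(Y^I,\gamma) \xrightarrow{(ev_0)_*} \lambda(Y,y_0)$$
and inverting $(ev_1)_*$ on the level of spectral sequences produces a morphism $\bar f_*\colon E^r(X) \to E^r(Y)$ for $r\geq 2$. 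On $E^2$ this equals $\HH^{CE}_{p,q}$ applied to the graded Lie algebra map $\pi_*^\QQ(X)\to\pi_*^\QQ(Y)$, which by the discussion above agrees with the usual $f_*$ under $\pi_n\cong[S^n,-]$; on the abutment it is $f_*$ on $\HH_*(-;\QQ)$. Functoriality, $\overline{gf}_* = \bar g_* \bar f_*$ and $\overline{\mathrm{id}}_* = \mathrm{id}$, and independence of the homotopy class of $f$ then transfer directly from the corresponding facts about $\bar f$ in the homotopy category of dg Lie algebras that were already verified above.

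The main subtlety I anticipate is bookkeeping rather than conceptual: one must verify that the induced map of spectral sequences does not depend on the auxiliary choice of path $\gamma$, nor on the zig-zag representative of $\bar f$. This reduces, as in the construction of $\bar f$, to comparing two choices by a further zig-zag of quasi-isomorphisms of dg Lie algebras, after which homotopy invariance of both $\HH_*$ and $\HH^{CE}_*$ forces the induced maps of spectral sequences from $E^2$ onward to coincide.
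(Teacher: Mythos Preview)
Your proposal is correct and follows essentially the same approach as the paper: the proposition is not given a separate proof there but is stated as an immediate consequence of the preceding discussion, which is exactly what you recapitulate---construct the spectral sequence from the word-length filtration on $\lambda(X,x_0)$, identify $E^2$ and the abutment via \eqref{eq:h lambda} and \eqref{eq:hce lambda}, and handle unbased naturality by applying $\lambda$ to the zig-zag and inverting the quasi-isomorphisms on the spectral sequence level. One minor sharpening: a quasi-isomorphism of dg Lie algebras in fact induces an isomorphism already from the $E^1$-page (since $E^1_{p,*}\cong \Lambda^p s\HH_*(L)$ over $\QQ$), which makes the invertibility step cleaner than arguing via the $E^2$-identification plus the abutment.
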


In particular, if $X$ has a not necessarily base-point preserving action of a group $\pi$, then the Quillen spectral sequence \eqref{eq:quillen ss} is a spectral sequence of $\pi$-modules (from the $E^1$-page and on). An important special case is when $X = \widetilde{Y}$ is the universal cover of a path connected space $Y$ and $\pi$ is the group of deck transformations. By the above, we obtain a spectral sequence of coalgebras with a $\pi$-action,
$$E_{p,q}^2 = \HH_{p,q}^{CE}(\pi_*^\QQ(\widetilde{Y})) \Rightarrow \HH_{p+q}(\widetilde{Y};\QQ).$$
It is an exercise in covering space theory to check that, under the standard identifications
$$\pi \cong \pi_1(Y),\quad \pi_n(\widetilde{Y}) \cong \pi_n(Y),\quad n\geq 2,$$
the action of $\pi$ on $\pi_n(\widetilde{Y})$ obtained as above corresponds to the usual action of $\pi_1(Y)$ on the higher homotopy groups $\pi_n(Y)$.

\subsection{Formality and collapse of the Quillen spectral sequence}
The spectral sequence \eqref{eq:dgl ss} is natural with respect to morphisms of dg Lie algebras. Evidently, a quasi-isomorphism induces an isomorphism from the $E^1$-page and on, so quasi-isomorphic dg Lie algebras have isomorphic spectral sequences. It is also evident that the spectral sequence of a dg Lie algebra with trivial differential collapses at the $E^2$-page. These simple observations have an interesting consequence. Namely, if the dg Lie algebra $L$ is \emph{formal}, meaning that it is quasi-isomorphic to its homology $\HH_*(L)$ viewed as a dg Lie algebra with trivial differential, then the spectral sequence for $L$ collapses at the $E^2$-page. Collapse of the spectral sequence is weaker than formality in general, although the difference is subtle.

\begin{definition} \label{def:rationally perfect}
Let us say that a group $\pi$ is \emph{rationally perfect} if $\HH^1(\pi;V) = 0$ for every finite dimensional $\QQ$-vector space $V$ with an action of $\pi$.
\end{definition}

Note that if $\pi$ is rationally perfect, then every short exact sequence of finite dimensional $\QQ[\pi]$-modules splits, cf.~Appendix B.

\begin{proposition} \label{prop:perfect}
Let $\pi$ be a group acting on a simply connected space $X$ with degree-wise finite dimensional rational cohomology groups. If $\pi$ is rationally perfect and if Quillen's dg Lie algebra $\lambda(X)$ is formal, then there is an isomorphism of graded $\pi$-modules
$$\HH_n(X;\QQ) \cong \bigoplus_{p+q=n} \HH_{p,q}^{CE}(\pi_*^\QQ(X)),$$
for every $n$.
\end{proposition}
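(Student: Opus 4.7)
The plan is to run the Quillen spectral sequence of the proposition just proved and then solve the resulting extension problem using the rationally perfect hypothesis.

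First I would observe that since $\lambda(X)$ is assumed formal, the discussion in the preceding subsection gives that the Quillen spectral sequence
\[
E_{p,q}^2 = \HH_{p,q}^{CE}(\pi_*^\QQ(X)) \Rightarrow \HH_{p+q}(X;\QQ)
\]
collapses at $E^2$. By the earlier extension of the spectral sequence to unbased maps, the whole spectral sequence (and in particular the collapse identification) is natural with respect to the $\pi$-action, so it is a spectral sequence of $\pi$-modules. Collapse therefore furnishes a natural finite filtration $0 = F_{-1}\subseteq F_0 \subseteq \cdots \subseteq F_n = \HH_n(X;\QQ)$ by $\pi$-submodules, with
\[
F_p/F_{p-1} \cong \HH_{p,n-p}^{CE}(\pi_*^\QQ(X))
\]
as $\pi$-modules. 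This already gives the desired identification on associated gradeds; the content of the proposition is that the filtration splits equivariantly.

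Next I would verify that the pieces $F_p/F_{p-1}$ are finite dimensional over $\QQ$. Since $X$ is simply connected with degree-wise finite dimensional rational cohomology, the rational homotopy groups $\pi_*^\QQ(X)$ are degree-wise finite dimensional, hence each $(\Lambda^p s\pi_*^\QQ(X))_q$ is finite dimensional, and so are its Chevalley--Eilenberg subquotients. Consequently each $F_p$ and each $F_p/F_{p-1}$ is a finite dimensional $\QQ[\pi]$-module.

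The remaining step is to split the successive short exact sequences of $\pi$-modules
\[
0 \rightarrow F_{p-1} \rightarrow F_p \rightarrow F_p/F_{p-1} \rightarrow 0
\]
inductively on $p$. Each such extension is classified by an element of
\[
\Ext^1_{\QQ[\pi]}\bigl(F_p/F_{p-1},\, F_{p-1}\bigr) \cong \HH^1\bigl(\pi;\, \Hom_\QQ(F_p/F_{p-1}, F_{p-1})\bigr).
\]
The coefficient module $\Hom_\QQ(F_p/F_{p-1}, F_{p-1})$ is a finite dimensional $\QQ$-vector space with a $\pi$-action, so by the rationally perfect hypothesis this $\HH^1$ vanishes. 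Hence every extension splits, and choosing splittings step by step produces the desired $\pi$-equivariant isomorphism $\HH_n(X;\QQ) \cong \bigoplus_{p+q=n} \HH_{p,q}^{CE}(\pi_*^\QQ(X))$.

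The main obstacle is exactly this last extension-splitting step: formality alone only yields the isomorphism of associated gradeds, and upgrading it to an honest $\pi$-equivariant isomorphism requires both the finite dimensionality of the filtration quotients (used to express $\Ext^1$ as $\HH^1$ of a finite dimensional coefficient module) and Definition \ref{def:rationally perfect}. Everything else is bookkeeping once the naturality of the Quillen spectral sequence for unbased maps, established earlier, is in hand.
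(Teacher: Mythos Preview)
Your proposal is correct and follows essentially the same approach as the paper: use formality to collapse the Quillen spectral sequence, invoke finite dimensionality of the rational homotopy groups to ensure the filtration quotients are finite dimensional $\QQ[\pi]$-modules, and then use the rationally perfect hypothesis to split the resulting extensions. The paper's proof is simply a three-sentence summary of exactly this argument, with the $\Ext^1_{\QQ[\pi]}(W,V)\cong \HH^1(\pi;\Hom_\QQ(W,V))$ step deferred to a lemma in the appendix.
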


\begin{proof}
If the rational cohomology groups of a simply connected space are finite dimensional, then so are the rational homotopy groups. It follows that the Quillen spectral sequence \eqref{eq:quillen ss} is a spectral sequence of finite dimensional $\QQ[\pi]$-modules. Since $\lambda(X)$ is formal, the Quillen spectral sequence collapses, and since $\pi$ is rationally perfect, all extensions relating $E_{*,*}^\infty$ and $\HH_*(X;\QQ)$ are split.
\end{proof}

\begin{remark}
A simply connected space $X$ such that Quillen's dg Lie algebra $\lambda(X)$ is formal is called \emph{coformal} in the literature. The name \emph{formal} is reserved for spaces where Sullivan's minimal model is formal. The two notions are not the same, they are Eckman-Hilton dual. Spaces that are simultaneously formal and coformal can be characterized in terms of Koszul algebras, see \cite{Berglund}.
\end{remark}

\section{Classification of fibrations}
The purpose of this section is to review some fundamental results on the classification of fibrations in the categories of topological spaces and dg Lie algebras.

The classification of fibrations up to fiber homotopy equivalence was pioneered by Stasheff \cite{Stasheff} and given a systematic treatment by May \cite{May}. For a more recent modern approach, see \cite{BC}. The classification of fibrations for dg Lie algebras is implicit in the work of Sullivan \cite{Sullivan} and in a widely circulated preprint of Schlessinger-Stasheff (recently made available \cite{SS}). A detailed account is given in Tanr\'e's book \cite{Tanre}. There is also a more recent approach due to Lazarev \cite{Lazarev}, which uses the language of $L_\infty$-algebras.

\subsection{Fibrations of topological spaces}
Let $X$ be a simply connected space of the homotopy type of a finite CW-complex. Let $\aut(X)$ denote the topological monoid of homotopy automorphisms of $X$, with the compact-open topology, and let $\aut_*(X)$ denote the submonoid of base-point preserving homotopy automorphisms. It is well known that the classifying space $B\aut(X)$ classifies fibrations with fiber $X$. Let us recall the precise meaning of this statement.

An $X$-fibration over a space $B$ is a fibration $E\rightarrow B$ such that for every point $b\in B$ there is a homotopy equivalence $X\rightarrow E_b$. An elementary equivalence between two $X$-fibrations $E\rightarrow B$ and $E'\rightarrow B$ is a map $E\rightarrow E'$ over $B$ such that for every $b\in B$ the induced map $E_b\rightarrow E_b'$ is a homotopy equivalence. We let $\Fib(B,X)$ denote the set of equivalence classes of $X$-fibrations over $B$ under the equivalence relation generated by elementary equivalences.

\begin{theorem}[See \cite{May}] \label{thm:top fibrations}
There is an $X$-fibration,
\begin{equation} \label{eq:universal fibration}
E_X \rightarrow B_X,
\end{equation}
which is universal in the sense that the map
$$[B,B_X] \rightarrow \Fib(B,X), \quad [\varphi]\mapsto [\varphi^*(E_X)\rightarrow B],$$
is a bijection for every space $B$ of the homotopy type of a CW-complex. Furthermore, the universal fibration \eqref{eq:universal fibration} is weakly equivalent to the map
$$B\aut_*(X)\rightarrow B\aut(X)$$
induced by the inclusion of monoids $\aut_*(X)\rightarrow \aut(X)$.
\end{theorem}

\subsection{Fibrations of dg Lie algebras}
There is a parallel story for dg Lie algebras. According to Quillen \cite[\S5]{Quillen}, the category of positively graded dg Lie algebras admits a model structure where the weak equivalences are the quasi-isomorphisms and the fibrations are the maps that are surjective in degrees $>1$. The cofibrations are the `free maps', see \cite[Proposition II.5.5]{Quillen}. In particular, a dg Lie algebra is cofibrant if and only if its underlying graded Lie algebra is free. Schlessinger and Stasheff have given an explicit construction of a classifying space for fibrations in this context, which we now will recall.

Let $L$ be a dg Lie algebra. A derivation of degree $p$ is a linear map $\theta\colon L_*\rightarrow L_{*+p}$ such that
$$\theta [x,y] = [\theta(x),y] + (-1)^{p|x|} [x,\theta(y)],$$
for all $x,y\in L$. The derivations of $L$ are the elements of a dg Lie algebra $\Der L$, whose Lie bracket and differential $D$ are defined by
$$[\theta,\eta] = \theta \circ \eta - (-1)^{|\theta||\eta|}\eta \circ \theta,\quad D(\theta) = d\circ\theta-(-1)^{|\theta|}\theta \circ d,$$
where $d$ is the differential in $L$.

Given a morphism of dg Lie algebras $f\colon L\rightarrow L'$, an \emph{$f$-derivation} of degree $p$ is a map $\theta\colon L_*\rightarrow L_{*+p}'$ such that
$$\theta [x,y] = [\theta(x),f(y)] + (-1)^{p|x|} [f(x),\theta(y)],$$
for all $x,y\in L$. The $f$-derivations assemble into a chain complex $\Der_f(L,L')$, whose differential $D$ is defined by
$$D(\theta) = d_{L'} \circ \theta - (-1)^{|\theta|} \theta \circ d_L.$$
In general there is no natural Lie algebra structure on $\Der_f(L,L')$.

The Jacobi identity for $L$ implies that the map $\ad_x\colon L\rightarrow L$, sending $y$ to $[x,y]$, is a derivation of degree $|x|$ for each $x\in L$. The map $\ad \colon L\rightarrow \Der L$ sending $x$ to $\ad_x$ is a morphism of dg Lie algebras. Let $\Der L \dquot \ad L$ denote the mapping cone of $\ad \colon L\rightarrow \Der L$, i.e.,
$$\Der L \dquot \ad L = sL \oplus \Der L,$$
with differential given by
$$
\tilde{D}(\theta) = D(\theta),\quad  \tilde{D}(sx) = \ad_x - sd(x),
$$
for $\theta \in \Der L$ and $x\in L$. There is a Lie bracket on $\Der L \dquot \ad L$, which is defined as the extension of the Lie bracket on $\Der L$ that satisfies
$$
[\theta,sx] = (-1)^{|\theta|} s\theta(x), \quad [sx,sy] = 0,
$$
for $\theta \in \Der L$ and $x,y\in L$. The Schlessinger-Stasheff classifying dg Lie algebra of $L$ is defined to be the positive truncation,
$$B_L = (\Der L \dquot \ad L)^+.$$
Here, the positive truncation of a dg Lie algebra $L$ is the sub dg Lie algebra $L^+$ with
$$L_i^+ = \left\{ \begin{array}{cc}  L_i,& i\geq 2 \\ \ker(d\colon L_1\rightarrow L_0), & i = 1 \\ 0, & i\leq 0. \end{array} \right.$$

An $L$-fibration over $K$ is a surjective map of dg Lie algebras $\pi\colon E\rightarrow K$ together with a quasi-isomorphism $L\rightarrow \Ker \pi$. An elementary equivalence between two $L$-fibrations $\pi\colon E\rightarrow K$ and $\pi'\colon E'\rightarrow K$ is a quasi-isomorphism of dg Lie algebras $E\rightarrow E'$ over $K$ such that the diagram
$$
\xymatrix{L\ar[r] \ar[dr] & \Ker \pi \ar[d] \\ & \Ker \pi'}
$$
commutes. Let $\Fib(K,L)$ denote the set of equivalence classes of $L$-fibrations over $K$ under the equivalence relation generated by elementary equivalence.

\begin{theorem}[See Tanr\'e {\cite{Tanre}}] \label{thm:dgl fibrations}
Let $L$ be a cofibrant dg Lie algebra and let $B_L$ denote its Schlessinger-Stasheff classifying dg Lie algebra. There is an $L$-fibration
\begin{equation} \label{eq:universal lie}
E_L \rightarrow B_L,
\end{equation}
which is universal in the sense that for every cofibrant dg Lie algebra $K$, the map
$$[K,B_L] \rightarrow \Fib(K,L),\quad [\varphi]\mapsto [\varphi^*(E_L)]$$
is a bijection. Furthermore, the morphism $E_L\rightarrow B_L$ is weakly equivalent to the morphism
$$\Der^+ L \rightarrow (\Der L \dquot \ad L)^+.$$
\end{theorem}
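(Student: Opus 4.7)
The plan is to work in the model category $\DGL$, in which fibrations are surjections, weak equivalences are quasi-isomorphisms, and cofibrant objects are retracts of quasi-free dg Lie algebras. The argument has three main steps: (i) construct an explicit universal $L$-fibration $E_L \to B_L$; (ii) define a classification assignment sending an $L$-fibration $\pi \colon E \to K$ with $K$ cofibrant to a homotopy class in $[K, B_L]$; (iii) verify that pullback of $E_L$ along $\varphi \colon K \to B_L$ and classification are mutually inverse on $\Fib(K,L)$.

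For the construction, I would exhibit the universal total space as a twisted extension of $B_L$ by $L$: the natural evaluation action of $\Der L$ on $L$, combined with the mapping-cone bracket on $\Der L \dquot \ad L$, endows $L \oplus B_L$ with a dg Lie algebra structure in which $L$ is a dg ideal and the projection onto $B_L$ is a surjection with kernel $L$. The identification with $\Der^+ L \to (\Der L \dquot \ad L)^+$ up to weak equivalence in the arrow category of $\DGL$ then follows by inspecting the short exact sequence underlying the mapping cone and recognizing the explicit model just constructed as a fibrant replacement of the canonical inclusion $\Der^+ L \hookrightarrow (\Der L \dquot \ad L)^+$.

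For classification, given $\pi \colon E \to K$ I would lift the identity of $K$ through $\pi$ to a graded (not dg) Lie algebra section $s \colon K \to E$; cofibrancy of $K$ allows such a lift by lifting generators one at a time through the surjection. Transferring the dg Lie structure on $E$ through the resulting graded isomorphism $E \cong L \oplus K$ produces an action $\rho \colon K \to \Der L$ of $K$ on $L$ by derivations, together with a twist of degree $-1$ measuring the failure of $s$ to intertwine differentials. Packaged together these assemble into a morphism of dg Lie algebras $\varphi \colon K \to (\Der L \dquot \ad L)^+ = B_L$. The quotient by $\ad L$ encodes precisely the freedom of modifying $s$ by an $L$-valued function on $K$, so the homotopy class $[\varphi]$ depends only on the equivalence class $[\pi]$.

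The chief technical obstacle I anticipate is injectivity of the classification: showing that if $\varphi$ and $\varphi'$ are homotopic in $[K, B_L]$, then the pulled-back fibrations $\varphi^* E_L$ and ${\varphi'}^* E_L$ are elementarily equivalent. The route is to realize a homotopy as a morphism from a cylinder object on $K$ to $B_L$, pull back $E_L$ along this cylinder to obtain a concordance of $L$-fibrations, and use the lifting property of the trivial cofibrations at the two endpoints to produce the required elementary equivalence over $K$. Additional care is needed to verify that passage to the positive truncation $(\Der L \dquot \ad L)^+$ does not lose information when $K$ is positively graded, which is automatic in the simply connected geometric setting of interest. Once these pieces are in place, the weak equivalence of morphisms $E_L \to B_L$ and $\Der^+ L \to (\Der L \dquot \ad L)^+$ is obtained by direct comparison of the explicit models.
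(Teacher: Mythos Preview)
The paper does not supply its own proof of this theorem: it is stated with the attribution ``See Tanr\'e \cite{Tanre}'' and is quoted as a known result, so there is no argument in the paper to compare against. Your sketch is broadly in line with the standard treatment in \cite[Chapitre VII]{Tanre}: Tanr\'e also builds $E_L$ as a semidirect product of $L$ with $B_L$ via the evaluation action, and classifies an $L$-fibration over a cofibrant $K$ by choosing a graded Lie section and reading off a twisting map into $B_L$.

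One point to tighten: in your step (iii) you describe the universal fibration as ``a fibrant replacement of the canonical inclusion $\Der^+ L \hookrightarrow (\Der L \dquot \ad L)^+$,'' but this inclusion is already a fibration in the relevant sense only after one checks it is surjective, which is not automatic; what one actually verifies is a zig-zag of weak equivalences in the arrow category between the explicit projection $E_L \to B_L$ and the map $\Der^+ L \to (\Der L \dquot \ad L)^+$, not a fibrant replacement in one step. Also, your remark that ``passage to the positive truncation \ldots\ does not lose information when $K$ is positively graded'' is the crux of why $B_L$ is defined as a truncation at all, and in Tanr\'e's argument this is not a side issue but is built into the construction from the start; you should make explicit that the classifying map lands in the truncation because $K$ is assumed positively graded (connected), rather than treating it as an afterthought.
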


By combining Theorem \ref{thm:top fibrations} and Theorem \ref{thm:dgl fibrations}, together with Quillen's equivalence of homotopy theories between $\Top_{*,1}^\QQ$ and $\DGL_1$, it is not difficult to derive the following consequence.

\begin{corollary}[See {\cite[Corollaire VII.4(4)]{Tanre}}] \label{cor:tanre}
Let $X$ be a simply connected space of the homotopy type of a finite CW-complex. Let $\LL_X$ be a cofibrant model of Quillen's dg Lie algebra $\lambda(X)$. The positive truncation of the morphism of dg Lie algebras
$$\Der \LL_X \rightarrow \Der \LL_X \dquot \ad \LL_X$$
is a dg Lie algebra model for the map of simply connected covers
$$B\aut_*(X)\langle 1 \rangle \rightarrow B\aut(X)\langle 1\rangle.$$
\end{corollary}

\subsection{Relative fibrations}
Given a non-empty subspace $A\subset X$, we may consider the monoid $\aut(X;A)$ of homotopy self-equivalences of $X$ that restrict to the identity map on $A$. We will assume that the inclusion map from $A$ into $X$ is a cofibration. As follows from the theory of \cite{May} (see, e.g., \cite[Appendix B]{HL} for details), the classifying space $B\aut(X;A)$ classifies fibrations with fiber $X$ under the trivial fibration with fiber $A$.

Similarly, for a cofibration of cofibrant dg Lie algebras $K\subset L$, the positive truncation of the dg Lie algebra $\Der(L;K)$ of derivations on $L$ that restrict to zero on $K$, acts as a classifying space for fibrations of dg Lie algebras with fiber $L$ under the trivial fibration with fiber $K$. This result seems not to have appeared in the literature, but the proof is a straightforward generalization of \cite[Chapitre VII]{Tanre}. The following is a consequence.

\begin{theorem} \label{thm:relative classification}
Let $A\subset X$ be a cofibration of simply connected spaces of the homotopy type of finite CW-complexes, and let $\LL_A \subset \LL_X$ be a cofibration between cofibrant dg Lie algebras that models the inclusion of $A$ into $X$. Then the positive truncation of the dg Lie algebra $\Der(\LL_X;\LL_A)$, consisting of all derivations on $\LL_X$ that restrict to zero on $\LL_A$, is a dg Lie algebra model for the simply connected cover of $B\aut(X;A)$.
\end{theorem}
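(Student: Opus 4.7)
The plan is to follow the pattern of Corollary \ref{cor:tanre}, but substituting a relative classification theorem in $\DGL$ for Theorem \ref{thm:dgl fibrations}, and then transferring to the topological side via Quillen's equivalence between $\Top_{*,1}^\QQ$ and $\DGL$. The topological half is already available: by May's theory (see \cite[Appendix B]{HL}), $B\aut(X;A)$ is the base of a universal $X$-fibration equipped with a preferred trivial $A$-sub-fibration, and it classifies such relative $X$-fibrations up to elementary equivalence. The task thus reduces to producing a dg Lie algebra classifying the analogous relative fibrations in $\DGL$ and identifying it with $\Der^+(\LL_X;\LL_A)$.

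For the algebraic classification, a relative $\LL_X$-fibration over a cofibrant $K$ should be a surjection $E\to K$ of dg Lie algebras together with a quasi-isomorphism $\LL_X\to \Ker$ and data exhibiting $\LL_A$ as a trivial sub-fibration inside it. Following \cite[Chapitre VII]{Tanre}, one constructs a universal such object by the usual bar-type construction. The new feature is that its base is $\Der^+(\LL_X;\LL_A)$, not the mapping cone $(\Der \LL_X \dquot \ad \LL_X)^+$ that appears in the absolute case. The $\ad$-quotient disappears because fixing a non-trivial $\LL_A$ pointwise already rigidifies the ``basepoint'' ambiguity, just as in the pointed case $A=*$ (where $\LL_A=0$ and the classifying object reduces to $\Der^+\LL_X$). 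The model-categorical arguments of Tanr\'e go through with essentially cosmetic modifications.

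With both classifications in hand, $B\aut(X;A)$ and $\Der^+(\LL_X;\LL_A)$ represent the same functor on the rational homotopy category---equivalence classes of relative fibrations with fiber $X$ under $A$, respectively $\LL_X$ under $\LL_A$---so Quillen's equivalence identifies $\Der^+(\LL_X;\LL_A)$ with a dg Lie algebra model for the rationalization of $B\aut(X;A)$. The positive truncation discards the non-simply-connected part, so what we obtain is precisely a model for the simply connected cover.

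The main obstacle, and the reason the author defers the details elsewhere, is the middle step: setting up the relative classification in $\DGL$ carefully enough to pinpoint the classifying object as $\Der^+(\LL_X;\LL_A)$. This involves choosing the right notion of relative $\LL_X$-fibration and its equivalences, constructing a universal example (a relative analog of Tanr\'e's construction), and verifying its universal property via the model structure on $\DGL$. Once this is in place, the remaining steps are formal consequences of Tanr\'e's machinery and Quillen's equivalence.
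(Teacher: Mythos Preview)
Your proposal is correct and matches the paper's approach: the paper does not prove this theorem in detail either, but states that the relative classification in $\DGL$ with classifying object $\Der^+(\LL_X;\LL_A)$ is a straightforward generalization of \cite[Chapitre VII]{Tanre}, with details deferred elsewhere, and that the theorem then follows as a consequence via the topological classification of relative fibrations and Quillen's equivalence. Your outline is if anything more explicit than the paper's, in particular your remark that the $\ad$-quotient disappears because fixing $\LL_A$ pointwise rigidifies the basepoint ambiguity.
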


A detailed proof of this result, following a different route, can be found in \cite{BSaleh}.

\subsection{Derivations and mapping spaces} \label{sec:derivations}
Given a morphism of dg Lie algebras $f\colon \LL\to L$, we let $\Der_f(\LL,L)$ denote the chain complex of \emph{$f$-derivations}. Its elements of degree $p$ are by definition all maps $\theta\colon \LL \rightarrow L$ of degree $p$ that satisfy
$$\theta[x,y] = [\theta(x),f(y)] + (-1)^{|x|p} [f(x),\theta(y)]$$
for all $x,y\in \LL$. The differential $D$ is defined by
$$D(\theta) = d_L \circ \theta - (-1)^p \theta \circ d_\LL.$$
We include here a lemma for later reference. It is presumably well known, but we indicate the proof for completeness.
\begin{lemma} \label{lemma:qi}
Let $\phi\colon \LL \to \LL'$ and $\psi\colon L \to L'$ be quasi-isomorphisms of dg Lie algebras. Suppose that $\LL$ and $\LL'$ are cofibrant and concentrated in strictly positive homological degrees.
\begin{enumerate}
\item \label{eq:item 1} For every morphism of dg Lie algebras $f\colon \LL\to L$, the induced chain map
$$\psi_*\colon \Der_f(\LL,L) \to \Der_{\psi f}(\LL,L'), \quad \theta \mapsto \psi\circ \theta,$$
is a quasi-isomorphism.

\item \label{eq:item 2} For every morphism of dg Lie algebras $g\colon \LL' \to L$, the induced chain map
$$\phi^*\colon \Der_g(\LL',L)\to \Der_{g\phi}(\LL,L),\quad \eta\mapsto \eta \circ \phi,$$
is a quasi-isomorphism.
\end{enumerate}
\end{lemma}

\begin{proof}
There is a complete filtration,
$$\Der_f(\LL,L)  = F^1 \supseteq F^2 \supseteq \cdots,$$
where $F^p$ consists of those $f$-derivations $\theta\colon \LL\to L$ that vanish on $\LL_{<p}$. This filtration gives rise to a spectral sequence with
$$E_2^{p,-q} = \Hom\big(\HH_p(Q\LL),\HH_{q}(L)\big) \Rightarrow \HH_{-p+q}(\Der_f(\LL,L)).$$
Here $Q\LL = \LL/[\LL,\LL]$ denotes the chain complex of indecomposables in the dg Lie algebra $\LL$. It is well-known that a morphism $\phi\colon \LL\to \LL'$ between positively graded cofibrant dg Lie algebras is a quasi-isomorphism if and only if the induced map on indecomposables $Q\phi\colon Q\LL\to Q\LL'$ is a quasi-isomorphism (see, e.g., \cite[Proposition 22.12]{FHT-RHT}). Bearing this in mind, both claims may be deduced through an application of the comparison theorem for spectral sequences.
\end{proof}

Let $G$ be a topological group with the neutral element $e$ as base-point. The Samelson product
$$\pi_p(G)\times \pi_q(G)\rightarrow \pi_{p+q}(G)$$
is a natural operation on the homotopy groups of $G$. It may be defined as follows. Given based maps $f\colon S^p\rightarrow G$ and $g\colon S^q\rightarrow G$, the composite map
$$\xymatrix{S^p\times S^q \ar[r]^-{f\times g} & G\times G \ar[r]^-{[-,-]} & G},$$
where $[-,-]\colon G\times G\to G$ is the commutator $[x,y] = xyx^{-1}y^{-1}$,
is trivial when restricted to $S^p\vee S^q$. It therefore induces a based map $[f,g]\colon S^{p+q} \cong S^p\times S^q/S^p\vee S^q \rightarrow G$. The homotopy class of $[f,g]$ is the Samelson product of the classes $[f]$ and $[g]$.

The map $G\rightarrow G$ sending $x$ to $gxg^{-1}$ preserves the base-point, and defines a homomorphism $\phi_g\colon \pi_k(G) \rightarrow \pi_k(G)$. This defines an action of the group $\pi_0(G)$ on $\pi_k(G)$, and this action preserves Samelson products. Under the standard isomorphism $\pi_{k+1}(BG) \cong \pi_k(G)$, the Whitehead product on $\pi_{*+1}(BG)$ corresponds to the Samelson product on $\pi_*(G)$, and the standard action of $\pi_1(BG)$ on $\pi_{k+1}(BG)$ corresponds to action of $\pi_0(G)$ on $\pi_k(G)$ described above, see \cite{Whitehead}. The above holds true for $G$ a group-like topological monoid, because every such may be replaced by a homotopy equivalent group. In particular it applies to monoids of homotopy automorphisms.

\begin{theorem}[Lupton-Smith {\cite[Theorem 3.1]{LS}}] \label{thm:lupton-smith}
Let $f\colon X\rightarrow Y$ be a map between simply connected CW-complexes with $X$ a finite CW-complex, and let $\varphi\colon \LL_X\rightarrow \LL_Y$ be a Lie model for $f$. There is a natural isomorphism for all $k\geq 2$,
\begin{equation} \label{eq:aut der}
\beta \colon \pi_k(\map_*(X,Y),f)\tensor \QQ \xrightarrow{\cong} \HH_k(\Der_\varphi(\LL_X,\LL_Y)),
\end{equation}
If $f=id_X$, it is valid also for $k=1$.
\end{theorem}

\begin{proof}
We indicate the definition of $\beta$, following \cite{LS} (with a minor modification), and refer the reader to \cite{LS} for a proof that it is a bijection. Let $Z\ltimes X$ denote the half-smash product $(Z\times X) / (Z \times *)$ and let $i\colon X\to Z\ltimes X$ denote the map sending $x\in X$ to the class of $(*,x)$. If $\LL_X = (\LL V,\delta)$, then $S^k\ltimes X$ has dg Lie model $(\LL(V \oplus s^k V),\delta')$, where $\delta'$ is determined by the conditions that
\begin{itemize}
\item the inclusion $\iota\colon (\LL V,\delta)\to (\LL(V \oplus s^k V),\delta')$ is a chain map, and
\item the $\iota$-derivation $s^k\colon \LL V \to \LL(V\oplus s^k V)$ that extends $v\mapsto s^k v$ satisfies $\delta' s^k = (-1)^k s^k \delta$.
\end{itemize}
Given a map $h\colon S^k\rightarrow \map_*(X,Y)$ sending the base-point of $S^k$ to $f$, there is an adjoint map $h^\#\colon S^k \ltimes X \to Y$ such that $h^\# \circ i = f$.
By \cite[Proposition A.3]{LS} we can find a dg Lie model
$$\psi_h\colon (\LL V,\delta)\to  (\LL(V \oplus s^k V),\delta')$$
for $h^\#$ such that $\psi_h \circ \iota = \varphi$.
The composite $\theta_h=\psi_h \circ s^k \colon \LL_X \to \LL_Y$ is then a $k$-cycle in the chain complex $\Der_\varphi(\LL_X,\LL_Y)$, and one sets
$$\beta[h] = [\theta_h].$$
\end{proof}

We will need the following addendum to Theorem \ref{thm:lupton-smith}.
\begin{proposition} \label{prop:samelson}
Under the isomorphism
\begin{equation} \label{eq:aut der lie}
\beta\colon \pi_k(\aut_*(X),id_X)\tensor \QQ \cong \HH_k(\Der(\LL_X)),
\end{equation}
the Samelson product corresponds to the Lie bracket on derivations.
\end{proposition}

For the proof we will use the following lemma.

\begin{lemma} \label{lemma:commutator}
Let $G$ be a topological group and let $f\colon S^p\to G$ and $g\colon S^q\to G$ be based maps. The map $S^p\times S^q \xrightarrow{f\times g} G\times G \xrightarrow{[-,-]} G$ is homotopic to the composite
$$\{f,g\}\colon S^p\times S^q \xrightarrow{\Xi} S^p\times S^q \times S^p \times S^q \xrightarrow{f\times g\times f\times g} G\times G\times G\times G \xrightarrow{\mu} G,$$
where $\Xi(x,y) = (x,y,m_p(x),m_q(y))$, where $m_k$ denotes a degree $-1$ map on $S^k$ and $\mu$ is the multiplication map.
\end{lemma}

\begin{proof}
This follows readily from the fact that the inverse map $j\colon G\to G$, $x\mapsto x^{-1}$, induces multiplication by $(-1)$ on $\pi_k(G)$, i.e., the diagram
$$
\xymatrix{S^k \ar[r]^-f \ar[d]^-{m_k} & G \ar[d]^-j \\ S^k \ar[r]^-f & G}
$$
commutes up to homotopy for every based map $f\colon S^k \to G$.
\end{proof}

\begin{proof}[Proof of Proposition \ref{prop:samelson}]
Let $f\colon S^p \to \aut_*(X)$, $g\colon S^q \to \aut_*(X)$ be based maps.
It follows from Lemma \ref{lemma:commutator} that the Samelson product $[f,g]$ is characterized up to homotopy by homotopy commutativity of the diagram
$$
\xymatrix{S^p\times S^q \ar[r]^-{\{f,g\}} \ar[d]_-c & \aut_*(X) \\
S^{p+q} \ar[ur]_-{[f,g]}}
$$
or, equivalently, of the diagram
$$
\xymatrix{\big(S^p\times S^q\big) \ltimes X \ar[r]^-{\{f,g\}^\#}  \ar[d]_-{c\ltimes 1} & X \\ S^{p+q}\ltimes X. \ar[ur]_-{[f,g]^\#}}
$$
One checks that the diagram
\begin{equation} \label{eq:pre-samelson}
\xymatrix{\big(S^p \times S^q\big) \ltimes X \ar[r]^-{\{f,g\}^\#} \ar[d]_-{\Xi\ltimes 1} & X \\
\big(S^p \times S^q\times S^p \times S^q\big)\ltimes X \ar[r]^-\cong & S^p \ltimes \big( S^q \ltimes \big( S^p \ltimes \big( S^q \ltimes X \big) \big) \big) \ar[u]|-{f^\# \circ (1\ltimes g^\#) \circ (1\ltimes 1 \ltimes f^\# ) \circ (1\ltimes 1\ltimes 1 \ltimes g^\#)}}
\end{equation}
is commutative. By iterated use of $\big(S^k\times Y) \ltimes X \cong S^k\ltimes (Y \ltimes X)$ and the dg Lie model for $S^k \ltimes X$ described above, one works out that the dg Lie model for $Z\ltimes X$, where $Z$ is a product of spheres, has the form $\big(\LL(H_*(Z)\tensor V),\delta'')$. Moreover, one finds that the map $\Xi \ltimes 1$ has dg Lie model
$$\Xi_* \colon \LL(H_*(S^p \times S^q)\tensor V) \to \LL(H_*(S^p \times S^q \times S^p\times S^q)\tensor V)$$
induced by $\Xi_*\colon H_*(S^p \times S^q) \to H_*(S^p \times S^q \times S^p\times S^q)$ (we omit the details, but this is true because the map $\Xi$ is formal).
After picking Lie models $\psi_f$ and $\psi_g$ for $f^\#$ and $g^\#$ as in the proof of Theorem \ref{thm:lupton-smith}, one sees that a Lie model for the right vertical map in \eqref{eq:pre-samelson} is given by
$$\gamma\colon \LL(H_*(S^p \times S^q \times S^p\times S^q)\tensor V) \to \LL V,$$
$$\gamma((a\times b\times c\times d) v) = \psi_f(a\psi_g(b\psi_f(c\psi_g(dv)))),$$
for homology classes $a,c\in H_*(S^p)$, $b,d,\in H_*(S^q)$. It follows that we may take
$$\psi_{\{f,g\}} \colon \LL(H_*(S^p \times S^q)\tensor V) \to \LL V$$
to be the composite $\gamma\circ\Xi_*$. Explicitly, for $v\in V$,
\begin{align*}
\psi_{\{f,g\}}((s^p\times s^q)v) & = \gamma(\Xi_*(s^p\times s^q)v) \\
& = \gamma((s^p\times s^q\times 1\times 1)v - (-1)^{pq}(1\times s^q\times s^p \times 1)v  \\
& \quad - (s^p\times 1 \times 1 \times s^q)v + (1\times 1\times s^p \times s^q)v) \\
& = \theta_f\theta_g(v) - (-1)^{pq}\theta_g\theta_f(v) - \theta_f \theta_g(v) + \theta_f\theta_g(v) \\
& = [\theta_f,\theta_g](v),
\end{align*}
and similar calculations show
$$\psi_{\{f,g\}}(v) = v, \quad \psi_{\{f,g\}}(s^pv) = 0, \quad \psi_{\{f,g\}}(s^q v) = 0.$$
In particular, the morphism $\psi_{\{f,g\}}$ factors through the morphism induced by the collapse map $c_*$,
$$
\xymatrix{\LL(H_*(S^p\times S^q)\tensor V) \ar[r]^-{\psi_{\{f,g\}}} \ar[d]_-{c_*} & \LL V \\
\LL(H_*(S^{p+q})\tensor V), \ar@{-->}[ur]_-\lambda}
$$
and we may take $\psi_{[f,g]}$ to be $\lambda$. Thus, for $v\in V$ we get
$$\theta_{[f,g]}(v) = \psi_{[f,g]}(s^{p+q}v) = \psi_{\{f,g\}}((s^p\times s^q)v) = [\theta_f,\theta_g](v),$$
which proves the proposition.

\end{proof}

\subsection{Homotopy automorphisms of manifolds} \label{sec:ham}
Let $M^n$ be a simply connected compact manifold with boundary $\partial M = S^{n-1}$.
Let $\aut_\partial(M)$ denote the topological monoid of homotopy automorphisms of $M$ that restrict to the identity on $\partial M$, with the compact-open topology.
Let $\aut_{\partial,\circ}(M)$ denote the connected component of the identity. There is a homotopy fibration sequence
$$
B\aut_{\partial,\circ}(M) \to B\aut_\partial(M) \to B\pi_0(\aut_\partial(M)).
$$
Hence, up to homotopy $B\aut_{\partial,\circ}(M)$ may be identified with the simply connected cover of $B\aut_\partial(M)$. The goal of this section is to establish a tractable dg Lie algebra model for $B\aut_{\partial,\circ}(M)$.

An \emph{inner product space of degree $n$} is a finite dimensional graded vector space $V$ together with a degree $-n$ map of graded vector spaces,
$$V\tensor V \to \QQ,\quad x\tensor y \mapsto \langle x,y\rangle,$$
which is non-singular in the sense that the adjoint map,
$$V \to \Hom(V,\QQ),\quad x\mapsto \langle x, - \rangle,$$
is an isomorphism of graded vector spaces (of degree $-n$).
Note that $\langle x, y\rangle$ is automatically zero unless $|x|+|y| = n$.

We call an inner product space as above \emph{graded symmetric} if
$$\langle x,y \rangle = (-1)^{|x||y|} \langle y,x \rangle,$$
for all $x,y\in V$ and \emph{graded anti-symmetric} if
$$\langle x,y \rangle = - (-1)^{|x||y|} \langle y,x \rangle,$$
for all $x,y\in V$.

For example, if $M^n$ is a simply connected compact manifold with boundary $\partial M = S^{n-1}$, then
the reduced homology $H = \rHH_*(M;\QQ)$ together with the intersection form is a graded
symmetric inner product space of degree $n$. The desuspension of the reduced rational homology,
$$V = s^{-1} H,$$
becomes a graded anti-symmetric inner product space of degree $n-2$ by setting
$$\langle s^{-1} e,s^{-1} f \rangle = (-1)^{|e|} \langle e, f\rangle.$$

Now, let $V$ be a graded anti-symmetric inner product space of degree $n-2$ and choose a graded basis $\alpha_1,\ldots,\alpha_r$. The dual basis $\alpha_1^\#,\ldots,\alpha_r^\#$ is characterized by
$$\langle \alpha_i,\alpha_j^\# \rangle = \delta_{ij}.$$
There is a canonical element $\omega = \omega_V\in V^{\tensor 2}$ defined by
$$\omega = \sum_i \alpha_i^\# \tensor \alpha_i.$$
Up to sign, the element $\omega$ corresponds to the inner product $\langle -,-\rangle\in \Hom(V^{\tensor 2},\QQ)$ under the isomorphism $V^{\tensor 2} \cong \Hom(V^{\tensor 2},\QQ)$ induced by the inner product on $V^{\tensor 2}$;
$$\langle v\tensor w,v'\tensor w' \rangle = (-1)^{|v'||w|} \langle v,v'\rangle \langle w,w'\rangle.$$
Indeed, one checks that $\langle \omega, x\tensor y \rangle = (-1)^{|x||y|+|x|+1} \langle x,y\rangle$. In particular, $\omega$ is independent of the choice of basis.
Since $V$ is anti-symmetric, the transposition $\tau$ acts by $\tau \omega = -\omega$. This implies that $\omega$ may be written as a sum of graded commutators $[x,y] = x\tensor y - (-1)^{|x||y|} y\tensor x$ as follows:
\begin{equation} \label{eq:omega}
\omega = \frac{1}{2} \sum_i \big[\alpha_i^\#,\alpha_i\big].
\end{equation}
In this way, $\omega$ may be regarded as an element of the free graded Lie algebra $\LL V$.

Let $\Der \LL V$ denote the graded Lie algebra of derivations on $\LL V$. 
Consider the map of degree $2-n$,
$$\theta_{-,-} \colon \LL V \tensor V \to \Der \LL V, \quad \theta_{\xi,x}(y) = \xi \langle x,y \rangle.$$
Since the form is non-degenerate and since a derivation on a free graded Lie algebra is determined by its values on generators, the map $\theta_{-,-}$ is an isomorphism.

The following proposition plays a key role.

\begin{proposition} \label{prop:bracket}
Let $V$ be a graded anti-symmetric inner product space with canonical element $\omega\in \LL V$. The diagram
$$\xymatrix{\Der \LL V \ar[r]^-{ev_\omega} & \LL V \\ \LL V\tensor V \ar[u]^-{\theta_{-,-}} \ar[ur]_-{[-,-]}}$$
is commutative.
\end{proposition}

\begin{proof}
Note that every element $x\in V$ may be written as
\begin{equation} \label{eq:x}
x= \sum_i \langle x,\alpha_j^\#\rangle \alpha_j.
\end{equation}
If $\theta$ is a derivation, then
$$\theta(\omega) = \sum_i \big[\theta(\alpha_i^\#),\alpha_i\big].$$
To see this, first use \eqref{eq:omega} to get
$$\theta(\omega) =  \frac{1}{2} \sum_i \left( \big[\theta(\alpha_i^\#),\alpha_i\big] + (-1)^{|\theta||\alpha_i^\#|}\big[\alpha_i^\#,\theta(\alpha_i)\big]\right).$$
Rewriting the right summands using graded anti-symmetry of the bracket, \eqref{eq:x} on $x=\alpha_i^\#$, and then \eqref{eq:x} on $x=\alpha_j^\#$ backwards, we get
\begin{align*}
\sum_i (-1)^{|\theta||\alpha_i^\#|}\big[\alpha_i^\#,\theta(\alpha_i)\big]
& = \sum_i (-1)^{|\alpha_i||\alpha_i^\#|+1} \big[\theta(\alpha_i),\alpha_i^\#\big] \\
& = \sum_{i,j} (-1)^{|\alpha_i||\alpha_i^\#|+1} \big[ \theta(\alpha_i),\langle \alpha_i^\#,\alpha_j^\# \rangle \alpha_j \big] \\
& = \sum_j \big[ \theta\big( \sum_i (-1)^{|\alpha_i||\alpha_i^\#|+1} \langle \alpha_i^\#,\alpha_j^\# \rangle \alpha_i\big), \alpha_j\big] \\
& = \sum_j \big[\theta(\alpha_j^\# ),\alpha_j\big].
\end{align*}
Thus,
$$ev_\omega(\theta_{\xi,x})  = \sum_i \big[\theta_{\xi,x}(\alpha_i^\#),\alpha_i\big] = \sum_i \big[\xi \langle x, \alpha_i^\# \rangle,\alpha_i\big]  = \big[\xi,\sum_i \langle x,\alpha_i^\# \rangle \alpha_i\big] = \big[\xi,x\big].$$
\end{proof}

\begin{corollary} \label{cor:imev}
The image of the map $ev_\omega \colon \Der \LL V \to \LL V$ is the space of decomposables $[\LL V, \LL V]$.
In other words, for every $\zeta\in \LL^{\geq 2} V$, there is a derivation $\theta$ on $\LL V$ such that $\theta(\omega) = \zeta$.
\end{corollary}

The following result is essentially due to Stasheff \cite{Stasheff2}.

\begin{theorem} \label{thm:stasheff}
Let $M^n$ be a simply connected compact manifold with boundary $\partial M = S^{n-1}$ and let $V$ denote the graded anti-symmetric inner product space $s^{-1}\widetilde{\HH}_*(M;\QQ)$.
There is a differential $\delta$ on $\LL V$ such that 
\begin{enumerate}
\item $\big( \LL V, \delta \big)$ is a minimal Quillen model for $M$.
\item The canonical element $\omega \in \LL V$ is a cycle that represents $(-1)^n$ times the homotopy class of the inclusion of the boundary.
\end{enumerate}
\end{theorem}

\begin{proof}
Consider the closed manifold $X=M\cup_\partial D^n$. Fix an orientation of $X$ and let $\mu \in \HH_n(X;\QQ)$ be the fundamental class. Choose a basis $e_1,\ldots,e_r$ for $\widetilde{\HH}_*(M;\QQ)$, and let $e_1^\#,\ldots,e_r^\#$ be the dual basis with respect to the intersection form, in the sense that $\langle e_i,e_j^\# \rangle = \delta_{ij}$. Identifying $\widetilde{\HH}_*(X;\QQ) = \widetilde{\HH}_*(M;\QQ) \oplus \QQ\mu$, the reduced diagonal of the fundamental class assumes the form
\begin{equation} \label{eq:diagonal class}
\overline{\Delta}(\mu) = \sum_{i=1}^r e_i^\#\tensor e_i.
\end{equation}
To derive this expression, one can use that the intersection form on $\HH_*(X;\QQ)$ satisfies
$$\langle x\cap \mu , y\cap \mu \rangle = \langle x\cup y, \mu\rangle = \langle x\tensor y, \Delta(\mu)\rangle,$$
for cohomology classes $x,y\in \HH^*(X;\QQ)$, where $- \cap \mu\colon \HH^k(X;\QQ) \to \HH_{n-k}(X;\QQ)$ is the Poincar\'e duality isomorphism and $\langle -,-\rangle$ in the right hand side denotes the standard pairing between cohomology and homology.
Alternatively, it can be derived from Theorem 11.11 and Problem 11-C of \cite{MS}

The minimal Quillen model of $M$ has the form $\big(\LL V,\delta\big)$ and the cell attachment $M\to X$ is modeled by a free map of dg Lie algebras
$$\big(\LL V, \delta\big) \to \big( \LL(V\oplus s^{-1}\mu) , \delta \big),$$
where $\delta(s^{-1}\mu) \in \LL V$ represents the attaching map for the top cell, i.e., the class of $S^{n-1} = \partial M \to M$.

It is well-known that the quadratic part $\delta_1$ of the differential in the minimal Quillen model corresponds to the reduced diagonal in the sense that
$$\delta_1(s^{-1} x) = (s^{-1} \tensor s^{-1}) \overline{\Delta}(x),$$
see e.g.~\cite[Corollary 2.14]{Baues-Lemaire}. In particular,
$$\delta_1(s^{-1}\mu) = \sum_{i=1}^r (-1)^{|e_i^\#|} s^{-1}e_i^\#\tensor s^{-1}e_i.$$
If we choose $\alpha_i = s^{-1}e_i$ as our basis for $V$, then
$$\langle s^{-1}e_i, s^{-1} e_j^\# \rangle = (-1)^{|e_i|} \langle e_i,e_j^\#\rangle$$
shows that the dual basis is given by $\alpha_i^\# = (-1)^{|e_i|}s^{-1}e_i^\#$. Hence,
$$\delta_1(s^{-1}\mu) = \sum_{i=1}^r (-1)^{|e_i^\#| + |e_i|} \alpha_i^\#\tensor \alpha_i = (-1)^n \omega.$$

It is an important observation due to Stasheff \cite[Theorem 2]{Stasheff2} that one may assume that $\delta(s^{-1}\mu)$ is purely quadratic. We give a proof for completeness. The key ingredient is Corollary \ref{cor:imev}.

Write $\delta = \delta_1 + \delta_2 + \delta_3 +\cdots$, where $\delta_k$ increases bracket length by exactly $k$.
By Corollary \ref{cor:imev}, there exists a derivation $\theta$ on $\LL V$ such that
$$\theta(\omega) = (-1)^n \delta_2(s^{-1}\mu).$$
We may assume that $\theta$ increases bracket length by exactly $1$. Extend $\theta$ to a derivation on $\LL(V\oplus s^{-1}\mu)$ by setting
$\theta(s^{-1} \mu) = 0$.
Then
$$e^{\theta} = \sum_{k\geq 1} \frac{1}{k!}\theta^k$$
is a Lie algebra automorphism of $\LL(V\oplus s^{-1}\mu)$, and one checks that
$$\delta' = e^{-\theta} \circ \delta \circ e^\theta$$
is a new differential such that $\delta_1' = \delta_1$ and $\delta_2'(s^{-1}\mu) = 0$. Clearly,
$$e^\theta \colon \big( \LL(V\oplus s^{-1}\mu), \delta' \big) \to \big( \LL(V\oplus s^{-1}\mu), \delta \big)$$
is an isomorphism of dg Lie algebras. If $\delta_3'(s^{-1}\mu) \ne 0$, we continue in a similar way by finding a derivation $\theta'$ such that $\theta'(\omega) = (-1)^n \delta_3'(s^{-1}\mu)$, obtaining an isomorphism
$$e^{\theta'} \colon \big( \LL(V\oplus s^{-1}\mu), \delta'' \big) \to \big( \LL(V\oplus s^{-1}\mu), \delta' \big),$$
where $\delta_1'' = \delta_1$, $\delta_2''(s^{-1}\mu) = 0$ and $\delta_3''(s^{-1}\mu) = 0$.
In this way, the non-zero higher terms of $\delta(s^{-1}\mu)$ may be peeled off one at a time.
The process will stop after finitely many steps. Indeed, since $V$ is concentrated in positive homological degrees, the bracket length of any term of $\delta^{(r)}(s^{-1}\mu)$ will be at most $n-2$, i.e., $\delta_k^{(r)}(s^{-1}\mu) = 0$ for $k\geq n-2$, and, by construction, $\delta_k^{(r)}(s^{-1}\mu) = 0$ for $k=2,3,\ldots,r+1$. Thus, we can stop after $r=n-4$ steps.
\end{proof}

Let $\Der_\omega \LL V$ denote the graded Lie subalgebra of $\Der \LL V$ consisting of those derivations $\theta$ such that $\theta(\omega) = 0$. Note that the differential $\delta$ in the minimal Quillen model for $M$ (see Theorem \ref{thm:stasheff}) is an element of $\Der_\omega \LL V$ of degree $-1$. Therefore, $[\delta,-]$ is a differential on $\Der_\omega \LL V$, making it a dg Lie algebra. We let $\big(\Der_\omega \LL V, [\delta,-] \big)^+$ denote the positive truncation of this dg Lie algebra, i.e., it agrees with $\Der_\omega \LL V$ in degrees $>1$, and in degree $1$ it is the kernel of the differential $[\delta,-]$.

\begin{theorem} \label{thm:aut model}
Let $M$ be a simply connected compact manifold with boundary $S^{n-1}$.
A dg Lie algebra model for the classifying space $B\aut_{\partial,\circ}(M)$ is given by
$$\big(\Der_\omega \LL V, [\delta,-] \big)^+.$$
\end{theorem}

\begin{proof}
We will use Theorem \ref{thm:relative classification}.
Let $\rho$ be a generator of degree $n-2$. By Theorem \ref{thm:stasheff}, the morphism of dg Lie algebras
$$\varphi\colon \LL(\rho) \to \big(\LL V,\delta\big),\quad \varphi(\rho) = (-1)^n \omega,$$
is a model for the inclusion of $\partial M$ into $M$. However, it is not a cofibration (i.e.~free map) of dg Lie algebras. To rectify this, we factor $\varphi$ as a free map $q$ followed by a surjective quasi-isomorphism $p$ as follows:
\begin{equation} \label{eq:factorization}
\LL(\rho) \stackrel{q}{\rightarrow} \big( \LL(V,\rho,\gamma),\delta \big) \stackrel{p}{\rightarrow} \big(\LL V, \delta\big).
\end{equation}
Here $q$ is the obvious inclusion, the map $p$ is defined by $p|_V = id_V$, $p(\rho) = (-1)^n\omega$, and $p(\gamma) = 0$, and the differential $\delta$ is extended to $\rho$ and $\gamma$ by $\delta(\rho) = 0$ and $\delta(\gamma) = (-1)^n \omega - \rho$.
To simplify notation, denote the sequence \eqref{eq:factorization} by
$$
\LL_{\partial M} \xrightarrow{q} \widetilde{\LL}_M \xrightarrow{p} \LL_M.
$$
Now, the map $q\colon \LL_{\partial M}\to \widetilde{\LL}_M$ is a cofibration that models the inclusion of $\partial M$ into $M$. By Theorem \ref{thm:relative classification} the dg Lie algebra
\begin{equation} \label{eq:dg model}
\Der^+(\widetilde{\LL}_M; \LL_{\partial M})
\end{equation}
models $B\aut_{\partial,\circ}(M)$. We will show it is quasi-isomorphic to $\big( \Der_\omega \LL V, [\delta,-] \big)^+$.

There is a pullback diagram of chain complexes
\begin{equation} \label{eq:pullback}
\xymatrix{\Der(p;\LL_{\partial M}) \ar[d]_{pr_2} \ar[r]^-{pr_1} & \Der(\widetilde{\LL}_M;\LL_{\partial M}) \ar[d]^-{p_*} \\
\Der(\LL_M;\LL_{\partial M}) \ar[r]_-{p^*} & \Der_p(\widetilde{\LL}_M,\LL_M;\LL_{\partial M}),}
\end{equation}
where $\Der(\LL_M;\LL_{\partial M})$ denotes the dg Lie algebra of derivations $\eta$ on $\LL_M$ such that $\eta \circ \varphi= 0$, $\Der(p;\LL_{\partial M})$ denotes the chain complex of pairs $(\theta,\eta)$ of derivations $\theta\in \Der(\widetilde{\LL}_M;\LL_{\partial M})$, $\eta\in \Der(\LL_M;\LL_{\partial M})$, with $p_*(\theta) = p^*(\eta)$, and $\Der_p(\widetilde{\LL}_M,\LL_M;\LL_{\partial M})$ denotes the chain complex of $p$-derivations $\theta\in \Der_p(\widetilde{\LL}_M,\LL_M)$ such that $\theta \circ q = 0$. As the reader may check, taking componentwise Lie brackets turns $\Der(p;\LL_{\partial M})$ into a dg Lie algebra and the projections $pr_1(\theta,\eta) = \theta$, $pr_2(\theta,\eta) = \eta$ into morphisms of dg Lie algebras.

Below we will argue that the map $p_*$ in \eqref{eq:pullback} is a surjective quasi-isomorphism. Surjective quasi-isomorphisms of chain complexes are stable under pullbacks, so this will imply that $pr_2$ is a surjective quasi-isomorphism. We will also argue that the map $p^*$ in \eqref{eq:pullback} is a quasi-isomorphism in positive degrees. This, together with the fact that $p_*$ and $pr_2$ are quasi-isomorphisms, will imply that $pr_1$ is a quasi-isomorphism in positive degrees. Taking positive truncations, we obtain a zig-zag of quasi-isomorphisms of dg Lie algebras,
$$\xymatrix{\Der^+(\widetilde{\LL}_M;\LL_{\partial M}) & \ar[l]^-{pr_1}_-\sim \Der^+(p;\LL_{\partial M}) \ar[r]_-{pr_2}^-\sim & \Der^+(\LL_M;\LL_{\partial M}).}$$
The dg Lie algebra $\Der^+(\LL_M;\LL_{\partial M})$ is clearly the same as $\big(\Der_\omega \LL V, [\delta,-]\big)^+$, so this will finish the proof.

To see that $p_*$ is a surjective quasi-isomorphism, consider the following diagram:
$$
\xymatrix{
0 \ar[r] & \Der(\widetilde{\LL}_M;\LL_{\partial M}) \ar[d]^-{p_*} \ar[r] & \Der \widetilde{\LL}_M \ar[d]^-{p_*} \ar[r]^-{q^*} & \Der_q(\LL_{\partial M},\widetilde{\LL}_M) \ar[d]^-{p_*} \ar[r] & 0 \\
0 \ar[r] & \Der_p(\widetilde{\LL}_M,\LL_M;\LL_{\partial M}) \ar[r] & \Der_p(\widetilde{\LL}_M,\LL_M)  \ar[r]^-{q^*} & \Der_\varphi(\LL_{\partial M},\LL_M) \ar[r] & 0
}
$$
Exactness at the left and middle terms is clear. That the maps labeled by $q^*$ are surjective follows because $q$ is a free map. The middle and right vertical maps are quasi-isomorphisms by Lemma \ref{lemma:qi}. A five lemma argument then shows that the left vertical map is a quasi-isomorphism. To see that it is surjective, note that $p$ admits a section $\sigma\colon \LL_M\to \widetilde{\LL}_M$. Given $\theta \in \Der_p(\widetilde{\LL}_M,\LL_M;\LL_{\partial M})$, let $\eta$ be the unique derivation on $\widetilde{\LL}_M$ that agrees with $\sigma\theta$ when restricted to the generators. Then one checks that $\eta\in \Der(\widetilde{\LL}_M;\LL_{\partial M})$ and that $p_*(\eta) = \theta$.

To see that the map $p^*$ in \eqref{eq:pullback} is a quasi-isomorphism in positive degrees, consider the following diagram:
{\small
$$
\xymatrix{0\ar[r] &\Der(\LL_M;\LL_{\partial M}) \ar[r] \ar[d]_-{p^*} & \Der \LL_M \ar[r]^-{\varphi^*} \ar[d]_-{p^*} & \Der_\varphi (\LL_{\partial M},\LL_M) \ar@{=}[d] \ar[r] & 0 \\
0 \ar[r] &\Der_p(\widetilde{\LL}_M,\LL_M;\LL_{\partial M}) \ar[r] & \Der_p(\widetilde{\LL}_M,\LL_M) \ar[r]^-{q^*} & \Der_\varphi(\LL_{\partial M},\LL_M) \ar[r] & 0.}
$$}
Exactness of the bottom row and at the left and middle terms of the top row follows as above. The middle vertical map is a quasi-isomorphism by Lemma \ref{lemma:qi}. 

Here is the crux of the proof: The top right horizontal map $\varphi^*$ is surjective in non-negative degrees. Indeed, up to a sign, $\varphi^*$ may be identified with the map $ev_\omega \colon \Der \LL V \to s^{n-2}\LL V$. Since the space of generators $V = s^{-1}\widetilde{\HH}_*(M;\QQ)$ is concentrated in degrees $\leq n-2$, every element of positive degree in $s^{n-2} \LL V$ is necessarily decomposable, whence in the image of $ev_\omega$ by Proposition \ref{prop:bracket}. Finally, a five lemma argument shows that the left vertical map is a quasi-isomorphism in positive degrees.
\end{proof}

\begin{corollary} \label{cor:coformal}
If $M$ is formal and the reduced rational cohomology ring has trivial multiplication, then $B\aut_{\partial,\circ}(M)$ is coformal and there is an isomorphism of graded Lie algebras
$$\pi_*^\QQ(B\aut_{\partial,\circ}(M)) \cong \Der_\omega^+ \LL V.$$
\end{corollary}

\begin{proof}
Formality is equivalent to the property that the minimal Quillen model may be chosen to have purely quadratic differential, i.e., $\delta(V) \subseteq \LL^2(V)$. The quadratic part of the differential in any minimal Quillen model corresponds to the cup product on the reduced cohomology.
\end{proof}

For example, the conditions hold if $M$ is $(d-1)$-connected of dimension at most $3d-2$ for some $d\geq 2$. In particular they hold for the manifolds $M_{g,1}$.

\section{Block diffeomorphisms} \label{sec:block diffeomorphisms}
This section examines the classifying space of block diffeomorphisms of simply connected smooth compact manifolds with boundary. We assume throughout that $\dim M \geq 5$ so that the $h$-cobordism theorem and surgery theory are available. Moreover, we assume that the manifolds under consideration are oriented and that the maps between them are orientation preserving unless otherwise specified.

\subsection{The surgery fibration} \label{sec:surgery}
This paragraph briefly reviews the surgery fibration introduced in F.~Quinn's thesis \cite{Quinn}. There is a detailed account of the surgery fibration for topological manifolds in the memoir \cite{Nicas}. In contrast, we work in the smooth category and treat only simply connected manifolds. The proofs of \cite{Nicas} carry over to the smooth category with only minor changes.

Recall that the topological group $\tDiff(M)$ of block diffeomorphisms of a manifold $M$ is defined to be the geometric realization of the $\Delta$-group $\tDiff(M)_\bullet$, whose $k$-simplices are the self-diffeomorphisms $\varphi$ of $\Delta^k\times M$ that preserve the faces in the sense that $\varphi(F \times M) = F\times M$ for every face $F$ of $\Delta^k$, cf.~\cite{RS1,RS2}. We assume furthermore that $\varphi$ preserves a collar of each face in which case $\tDiff(M)_\bullet$ is a Kan $\Delta$-group by \cite{BLR}. See \S\ref{sec:partial linearization} below for more details.

The $\Delta$-group of self-diffeomorphisms that preserve the projection onto $\Delta^k$ is equal to the set of smooth singular $k$-simplices of the diffeomorphism group $\Diff(M)$, equipped with the Whitney topology. Since the geometric realization $|\sing_\bullet(X)|$ is weakly homotopy equivalent to $X$, one can view $\tDiff(M)$ as an enlargement of $\Diff(M)$.

If $M$ has a non-empty boundary, then we consider the subgroup of boundary preserving block diffeomorphisms, $\tDiff_\partial(M)$, which is defined as above but with the additional requirement that each $k$-simplex $\varphi$ restricts to the identity on a neighborhood of $\Delta^k\times \partial M$.

If we replace the face preserving self-diffeomorphisms with face preserving homotopy automorphisms in the above definition, we obtain the $\Delta$-set of block homotopy equivalences, $\taut_\partial(M)_\bullet$, with geometric realization $\taut_\partial(M)$. As above, the subcomplex of self-homotopy equivalences over $\Delta^k$ may be identified with the set of singular $k$-simplices in the space $\aut_\partial(M)$ of self-homotopy equivalences equipped with the compact-open topology. In contrast to the case of diffeomorphisms, the inclusion
$$\sing_\bullet \aut_\partial(M) \to \taut_\partial(M)_\bullet$$
induces a homotopy equivalence of geometric realizations. Indeed, if $f\colon \Delta^k\times M \to \Delta^k \times M$ is a face preserving map, then $pr_{\Delta^k} \circ f$ is equivalent to a map
$$f_1\colon M\to \widetilde{G}(\Delta^k)$$
into the monoid of face preserving endomorphisms of $\Delta^k$. But $\widetilde{G}(\Delta^k)$ contracts to $\{id_{\Delta^k}\}$: if $\alpha\colon \Delta^k \to \Delta^k$ is face preserving, then
$$(1-t) \alpha + t id_{\Delta^k}\in \widetilde{G}(\Delta^k)$$
contracts to the identity, and induces a retraction
$$\taut_\partial(M)_\bullet \to S_\bullet(\aut_\partial(M)).$$
This induces a homotopy equivalence of geometric realizations
\begin{equation} \label{eq:taut}
\aut_\partial(M) \simeq \taut_\partial(M).
\end{equation}
In the sequel, we prefer to use the notation $\aut_\partial(M)$ instead of $\taut_\partial(M)$.

Let $M$ be a simply connected $n$-dimensional manifold with simply connected boundary, $n\geq 5$. The product of $M$ with the standard $k$-simplex, $\Delta^k\times M$, is a $(k+3)$-ad with faces of dimension $n+k-1$,
$$
\partial_0 \Delta^k\times M, \quad \partial_1 \Delta^k\times M,\quad \ldots,\quad  \partial_k \Delta^k\times M, \quad \Delta^k\times \partial M.
$$
There are three Kan $\Delta$-sets associated to $M$ and maps between them,
\begin{equation} \label{eq:kan}
\Ss_\partial^{G/O}(M)_\bullet \xrightarrow{\eta_\bullet} \mathcal{N}_\partial^{G/O}(M)_\bullet \xrightarrow{\lambda_\bullet} \LL(M)_\bullet.
\end{equation}
The $k$-simplices of the (smooth) structure space $\Ss_\partial^{G/O}(M)_\bullet$ are pairs $(W,f)$ of a (smooth) $(k+3)$-ad $W$ and a face preserving homotopy equivalence
$$
f\colon W \to \Delta^k \times M
$$
with the additional property that
$$\partial_{k+1}W \to \Delta^k\times \partial M$$
is a diffeomorphism.

Let $K\gg 0$ and let $(M,\partial M)\subseteq (D^{n+K}, \partial D^{n+K})$ be an embedding where $n=\dim M$. The set $\mathcal{N}_\partial^{G/O}(M)_k^K$ consists of embeddings of $(k+3)$-ads
$$
W\subset \Delta^k \times D^{n+K},\quad d_i W = W\cap \partial_i(\Delta^k \times D^{n+K}), \quad i\leq k,
$$
and $\partial_{k+1}W = \Delta^k\times \partial M$, which are collared near each of these faces, together with a $K$-dimensional vector bundle $\zeta^K$ over $\Delta^k \times M$, and a commutative bundle diagram of $(k+3)$-ads,
$$
\xymatrix{\nu^K(W) \ar[d] \ar[r]^-{\widehat{f}} & \zeta^K \ar[d] \\ W \ar[r]^-f & \Delta^k \times M,}
$$
such that the two conditions below are satisfied.
\begin{enumerate}
\item $f$ and its faces $d_i(f)$ have degree one for $i = 0,\ldots, k$ and $\partial_{k+1} W \to \Delta^k \times \partial M$ is a diffeomorphism.
\item $\widehat{f}$ is a fiberwise isomorphism of vector bundles.
\end{enumerate}
Then $\mathcal{N}_\partial^{G/O}(M)_\bullet^K$ is a $\Delta$-set and taking the colimit as $K\to \infty$ using $D^{n+K}\subset D^{n+K+1}$ one obtains the $\Delta$-set in \eqref{eq:kan}.

The third term $\LL(M)_\bullet$ of \eqref{eq:kan} and the $\Delta$-maps $\eta_\bullet$ and $\lambda_\bullet$ are defined in \S2.2 of \cite{Nicas}. The homotopy groups of $\LL(M)_\bullet$ depend only on the fundamental group of $M$ and the dimension $n = \dim M$. In the simply connected case the only non-zero groups are
\begin{equation} \label{eq:L-groups}
\pi_{4k-n}\LL(M)_\bullet = \ZZ,\quad \pi_{4k+2-n} \LL(M)_\bullet = \ZZ/2\ZZ.
\end{equation}

\begin{theorem}[Nicas, Quinn]
The sequence \eqref{eq:kan} of Kan $\Delta$-sets is a homotopy fibration.
\end{theorem}
This follows from Theorem 2.3.4 of \cite{Nicas}, which states that the homotopy fiber of $\lambda_\bullet$ is homotopy equivalent to $\Ss_\partial^{G/O}(M)_\bullet$.

We have left to identify $\mathcal{N}_\partial^{G/O}(M)_\bullet$ in more homotopy theoretic terms. This goes back to Sullivan's proof of the Hauptvermutung for manifolds, outlined in \cite{Rourke}. The result is
\begin{theorem} \label{thm:kan-eq}
There is a homotopy equivalence of Kan $\Delta$-sets
$$
\mathcal{N}_\partial^{G/O}(M)_\bullet \xrightarrow{\simeq} \sing_\bullet \map_*(M/\partial M,G/O),
$$
where $S_\bullet$ denotes the singular $\Delta$-set.
\end{theorem}

The combination of the two theorems above leads to
\begin{corollary} \label{cor:surgery fibration}
There is a homotopy fibration of geometric realizations
\begin{equation} \label{eq:surgery fibration}
\Ss_\partial^{G/O}(M) \xrightarrow{\eta} \map_*(M/\partial M,G/O) \xrightarrow{\lambda} \LL(M).
\end{equation}
\end{corollary}
This is the form of the surgery fibration, listed without proof in \S17A of \cite{W}.

We note in passing and for future reference that since $\map_*(M/\partial M,G/O)$ is an infinite loop space and therefore nilpotent, \cite[Proposition 4.4.1 (i)]{MP} or \cite[Theorem II.2.2]{HMR} applied to the surgery fibration \eqref{eq:surgery fibration} implies that the components of the structure space $\Ss_\partial^{G/O}(M)$ are nilpotent.  We are grateful to the referee for pointing this out to us.

The $k$-th homotopy group of $\Ss_\partial^{G/O}(M)$, with the identity as the basepoint, is the structure set of equivalence classes of homotopy equivalences
$$
f\colon (W,\partial W) \to (D^k \times M, \partial(D^k\times M))
$$
with $\partial f$ a diffeomorphism; $(W,f)$ is equivalent to $(W',f')$ if there exists a diffeomorphism $\varphi$ from $W$ to $W'$ for which
$$
\xymatrix{(W,\partial W) \ar[d]^-\varphi \ar[dr]^-f \\ (W',\partial W') \ar[r]^-{f'} & (D^k\times M,\partial(D^k\times M))}
$$
is homotopy commutative.

For use later in the paper we need to recall the calculation of
\begin{equation} \label{eq:eta}
\eta_*\colon \pi_k(\Ss_\partial^{G/O}(M),id) \to \pi_k(\map_*(M/\partial M,G/O),*).
\end{equation}
Since $\Ss_\partial^{G/O}(M)_\bullet$ is a Kan $\Delta$-set, an element of $\pi_k(\Ss_\partial^{G/O}(M),id)$ is represented by a homotopy equivalence
$$(V,\partial V) \to (D^k\times M, \partial(D^k\times M))$$
which is a diffeomorphism on the boundary. If $M$ is simply connected we may take $V=D^k \times M$. This is a consequence of the $h$-cobordism theorem, as explained in \S3.2 of \cite{BM}.
Suppose more generally that
$$(W,\partial W) \xrightarrow{(f,\partial f)} (X, \partial X)$$
is a pair of a homotopy equivalence $f$ of smooth $n$-manifolds with $\partial f$ a diffeomorphism. We need a description of
$$\eta(f,\partial f) \in [X/\partial X,G/O]_*.$$
To this end, pick a homotopy inverse pair,
$$(g,(\partial f)^{-1})\colon (X,\partial X) \to (W,\partial W),$$
and define $\zeta =  g^*(\nu(W))$, where $\nu(W)$ is the normal bundle of an embedding $(W,\partial W) \subset (\RR^{K+n}, \RR^{K+n-1})$ with $K\gg 0$. Since $f^*(\zeta) \cong \nu(W)$ by an isomorphism which is unique up to homotopy, we obtain a normal map
$$
\xymatrix{\nu(W) \ar[r]^-{\widehat{f}} \ar[d] & \zeta \ar[d] \\ W \ar[r]^-f & X.}
$$
Let
$$
c_W\colon (D^{K+n},S^{K+n-1}) \to (\Th(\nu(W)),\Th(\nu(W)|_{\partial W}))
$$
be the collapse map. The composition of $c_W$ with $\widehat{f}$ induces a reduction (degree one map)
$$
c_\zeta \colon (D^{K+n},S^{K+n-1}) \to (\Th(\zeta),\Th(\zeta|_{\partial X})),
$$
which we compare to the canonical reduction $c_X$ of $\nu(X)$. The restriction of $\zeta$ to $\partial X$ is identified with $((\partial f)^{-1})^*(\nu(\partial W))$ and the normal derivative (see the remarks following Lemma \ref{lemma:4.4} below) induces a linear isomorphism
$$\partial t = D^\nu(\partial f) \colon \zeta|_{\partial X} \xrightarrow{\cong} \nu(X)|_{\partial X}.$$
The Atiyah-Wall uniqueness theorem extends $\partial t$ to a proper homotopy equivalence
$$(t,\partial t) \colon (\zeta,\zeta|_{\partial X}) \to (\nu(X),\nu(X)|_{\partial X})$$
compatible with the two reductions, cf.~\cite{Wall4,W}. Let $\xi = \zeta \oplus \tau(X)$ and let $\theta = t\oplus id_{\tau(X)}$. The restriction
$\partial \theta = \partial t \oplus id$ defines a framing of $\xi|_{\partial X}$ and induces a bundle $\xi/\partial \theta$ over $X/\partial X$. Moreover, $\theta$ defines a proper homotopy equivalence $\overline{\theta}\colon \xi/\partial \theta \to \epsilon_X^{n+K}$. The pair $(\xi/\partial \theta,\overline{\theta})$ is classified by $G/O$, providing a unique element
$$\eta[f,\partial f] \in [X/\partial X,G/O]_*.$$
Under the map induced by $j\colon G/O\to BO$,
$$
j_*\colon [X/\partial X,G/O]_* \to [X/\partial X,BO]_*,
$$
the element $j_*\eta(f,\partial f)$ classifies the bundle $\xi/\partial \theta$.

We have left to explain the linear isomorphism
$$\partial t\colon \zeta|_{\partial X} \to \nu(\partial X).$$
To this end we consider a diffeomorphism $\varphi\colon M\to N$ of closed $m$-manifolds. We choose embeddings of $M$ and $N$ in $\RR^{m+K}$ with normal bundles $\nu(M)$ and $\nu(N)$ and make the associated identifications
$$
\nu(M) \oplus \tau(M) = \epsilon_M^{K+m},\quad \nu(N) \oplus \tau(N) = \epsilon_N^{K+m},
$$
with the $(K+m)$-dimensional product bundles.

Given vector bundles $\xi$ and $\eta$ over the space $X$, let $\Bun(\xi,\eta)$ denote the space of fiberwise isomorphisms. It is the space of sections in the fiber bundle $GL(\xi,\eta)$ over $X$, whose fiber at $x\in X$ is the space of isomorphisms from $\xi_x$ to $\eta_x$. We are interested in the set of homotopy classes in $\Bun(\xi,\eta)$, or equivalently the connected components of $\Gamma(X,GL(\xi,\eta))$. For $K$ sufficiently large it turns out that $\Bun(\nu(M),\varphi^*(\nu(N)))$ is non-empty and we consider the map
$$\alpha\colon \Bun(\nu(M),\varphi^*(\nu(N)))\to \Bun(\nu(M)\oplus \varphi^*(\tau(N)),\epsilon_M^{K+m})$$
which sends $a\colon \nu(M) \to \varphi^*(\nu(N))$ into
$$\nu(M)\oplus \varphi^*(\tau(N)) \xrightarrow{a\oplus id} \varphi^*(\nu(N)) \oplus \varphi^*(\tau(N)) = \epsilon_M^{K+m}.$$

\begin{lemma} \label{lemma:4.4}
For $K$ sufficiently large, the map $\alpha$ induces a bijection on homotopy classes.
\end{lemma}

\begin{proof}
For $x\in M$, consider the diagram
$$
\xymatrix{GL(\nu_x M,\nu_{\varphi(x)} N) \ar[r]^-{\alpha_x} \ar[d]^-{\beta_x \alpha_x} & GL(\nu_x M \oplus \tau_{\varphi(x)} N,\RR^{K+m}) \ar[dl]_-{\beta_x} \ar[d]_-{\alpha_x \beta_x} \\
GL(\nu_x M\oplus \RR^{K+m},\nu_{\varphi(x)} N\oplus \RR^{K+m}) \ar[r]^-{\alpha_x} & GL(\nu_x M, \tau_{\varphi(x)} N \oplus \RR^{K+m}, \RR^{2(K+m)}),}
$$
where $\beta_x$ adds the identity of $\nu_{\varphi(x)}N$ to $\nu_x M\oplus \tau_{\varphi(x)} N \to \RR^{K+m}$. The space $GL(\nu(M)\oplus \varphi^*(\tau (N)),\epsilon_M^{K+m})$ is non-empty because it contains the map
$$\nu(M) \oplus \varphi^*(\tau(N)) \xrightarrow{id \oplus (D \varphi)^{-1}} \nu(M) \oplus \tau(M) = \epsilon_M^{K+m}.$$
It follows that $GL(\nu(M) \times \RR^{K+m}, \varphi^*(\nu(N))\times \RR^{K+m})$ is non-empty and therefore that $GL(\nu(M), \varphi^*(\nu(N)))$ is non-empty for $K$ sufficiently large. Moreover, $\beta_x \circ \alpha_x$ and $\alpha_x \circ \beta_x$ are homotopy equivalences in a range of dimensions depending on $K$. It follows that
$$\alpha\colon \Bun(\nu(M),\varphi^*(\nu(N)))\to \Bun(\nu(M)\oplus \varphi^*(\tau(N)),\epsilon_M^{K+m})$$
defines a bijection of homotopy classes for large $K$.
\end{proof}

A bundle map $a\colon \nu(M) \to \varphi^*(\nu(N))$ is called a \emph{normal derivative} of $\varphi$ if $\alpha(a)$ is homotopic to $id\oplus (D \varphi)^{-1}\colon \nu(M) \oplus \varphi^*(\tau(N)) \to \epsilon^{K+m}$. We use the notation $D^\nu \varphi$ for such a bundle map, noting that it exists only for $K\gg 0$ and is determined only up to homotopy.

Returning to the normal invariants above, we have
$\zeta|_{\partial X} = \psi^*(\nu(\partial W))$, where $\psi = (\partial f)^{-1}$ and
$$\partial t = \psi^*(D^\nu \varphi)\colon \psi^*(\nu(\partial W)) \to \nu(\partial X).$$
We observe that the framing
\begin{equation} \label{eq:framing}
\partial \theta = \partial t \oplus id \colon \psi^*(\nu(\partial W)) \oplus \tau(\partial X) \to \epsilon^{K+n-1}
\end{equation}
is homotopic to $id\oplus D\psi$.

\subsection{Fundamental homotopy fibrations} \label{sec:fhf}
For a smooth simply connected manifold $M$ with boundary, we consider the following string of grouplike monoids of homotopy automorphisms
$$
\aut_{\partial,\circ}(M) \subset \aut_{\partial,J}(M) \subset \aut_\partial(M).
$$
Here, $\aut_{\partial,\circ}(M)$ denotes the connected component of the identity, and $\aut_{\partial,J}(M)$ the larger monoid of connected components in the image of the forgetful map
$$
J\colon \pi_0 \tDiff_\partial(M) \to \pi_0 \aut_\partial(M).
$$
Let $\tDiff_{\partial,\circ}(M) \subset \tDiff_\partial(M)$ denote the union of those components that map to $\aut_{\partial,\circ}(M)$. Then
\begin{equation*} \label{eq:(22)}
\aut_{\partial,\circ}(M)/\tDiff_{\partial,\circ}(M) \simeq \aut_{\partial,J}(M)/\tDiff_\partial(M) \simeq \Ss_\partial^{G/O}(M)_{(1)}.
\end{equation*}

We have a diagram in which each square is a homotopy pullback,
\begin{equation} \label{eq:(23)}
\xymatrix{\Ss_\partial^{G/O}(M)_{(1)} \ar[r] \ar[d] & {*} \ar[d] \\ B\tDiff_{\partial,\circ}(M) \ar[r] \ar[d] & B\aut_{\partial,\circ}(M) \ar[d] \ar[r] & {*} \ar[d] \\ B\tDiff_\partial(M) \ar[r] & B\aut_{\partial,J}(M) \ar[r] & B\big(\pi_0\aut_{\partial,J}(M)\big).}
\end{equation}
Notice also that $B\aut_{\partial,\circ}(M)$ is a simply connected cover of $B\aut_\partial(M)$.

\begin{proposition} \label{prop:F(M)}
Let $M$ be an $n$-dimensional smooth compact simply connected manifold with boundary $\partial M \cong S^{n-1}$ ($n\geq 5$). There is a homotopy fibration
$$\map_*(S^n,Top/O) \to \Ss_\partial^{G/O}(M) \xrightarrow{q^*\circ \eta} \map_*(M,G/O),$$
where $q\colon M\to M/\partial M$ is the quotient map.
\end{proposition}

\begin{proof}
Let us first recall that the structure space for $S^n$ is homotopy equivalent to $\map_*(S^n,Top/O)$. Indeed, the surgery obstruction map depends only on the underlying topological situation in the sense that there is a homotopy commutative diagram
$$
\xymatrix{\map_*(S^n,G/O) \ar[d] \ar[r]^-\lambda & \LL(S^n) \\ \map_*(S^n,G/Top). \ar[ur]_-{\lambda^{Top}} }
$$
The map $\lambda^{Top}$ is a homotopy equivalence since the its homotopy fiber --- the topological structure space $\Ss_\partial^{G/Top}(D^n)$ --- is contractible (by Alexander's trick). Apply the homotopy fibration
$$Top/O \to G/O \to G/Top$$
to identify the homotopy fiber of
$$\lambda\colon \map_*(S^n,G/O) \to \map_*(S^n,G/Top) \simeq \LL(S^{n})$$
with $\map_*(S^n,Top/O)$.

Next, consider the diagram
\begin{equation} \label{eq:pb}
\xymatrix{\map_*(S^n,Top/O) \ar[d] \ar@{-->}[r] & \Ss_\partial^{G/O}(M) \ar[d]^-\eta \ar[r] & {*} \ar[d] \\
\map_*(S^n,G/O) \ar[d] \ar[r]^-{c^*} & \map_*(M/\partial M,G/O) \ar[d]^-{q^*} \ar[r]^-\lambda & \LL(M) \\
{*} \ar[r] & \map_*(M,G/O).}
\end{equation}
The upper right square is the surgery fibration for $M$ and the bottom left square is induced by the homotopy cofiber sequence
$$M\xrightarrow{q} M/\partial M \xrightarrow{c} S^n,$$
where $q$ is the quotient map and $c$ collapses onto the top cell.

The composite $\lambda\circ c^*\colon \map_*(S^n,G/O)\to \LL(M)\,\, (= \LL(S^n))$ may be identified up to homotopy with the surgery obstruction map for $S^n$, whose homotopy fiber was just recalled to be $\map_*(S^n,Top/O)$. This implies firstly the existence of the dashed map and, secondly, that the top left square is a homotopy pullback. It follows that the rectangle formed by the two squares to the left is a homotopy pullback. This finishes the proof.
\end{proof}

\begin{corollary} \label{cor:ss}
With hypothesis as in Proposition \ref{prop:F(M)}, the map
$$j_*\circ q^*\circ \eta \colon \Ss_\partial^{G/O}(M)_{(1)} \to \map_*(M,BO)_{(0)}$$
is a rational homotopy equivalence. Here the right hand side denotes the connected component containing the constant map and $j\colon G/O \to BO$ is the canonical map.
\end{corollary}

\begin{proof}
By Proposition \ref{prop:F(M)}, the $k$-th homotopy group of the homotopy fiber of the map $q^*\circ \eta\colon \Ss_\partial^{G/O}(M) \to \map_*(M,G/O)$ is given by
$$\pi_k\map_*(S^n,Top/O) \cong \Theta_{k+n},$$
the group of homotopy spheres of dimension $k+n$, which is finite by the celebrated result of Kervaire and Milnor \cite{KM}.
Secondly, since the homotopy groups of $\map_*(M,BG)$ are finite, the homotopy fiber of the map $j_*\colon \map_*(M,G/O)\to \map_*(M,BO)$ has finite homotopy groups. These facts imply the result.
\end{proof}

We end this subsection by showing that the space $B\tDiff_{\partial,\circ}(M)$ is nilpotent, which ensures that it has a well-behaved rationalization. To do this, we use the following lemma.
\begin{lemma} \label{lemma:nilpotent}
Let $F\xrightarrow{i} E\xrightarrow{p} B$ be a fibration of connected spaces. If $F$ is nilpotent and $B$ is simply connected, then $E$ is nilpotent.
\end{lemma}

\begin{proof}
The exact sequence $\pi_1 F \to \pi_1 E \to \pi_1 B = 0$ shows that $\pi_1E$ is a quotient of a nilpotent group, hence nilpotent itself. Next, for $k\geq 2$, note that $\pi_k F \to \pi_k E \to \pi_k B$ is an exact sequence of $\pi_1 E$-modules such that $\gamma \cdot \alpha = i_*(\gamma) \cdot \alpha$ for $\gamma\in \pi_1 F$ and $\alpha\in \pi_k F$, and furthermore $\xi \cdot \beta = p_*(\xi) \cdot \beta$ for $\xi\in \pi_1 E$ and $\beta\in \pi_k B$, see e.g.~\cite[Proposition 1.5.4]{MP}. Since $\pi_k F$ is a nilpotent $\pi_1 F$-module, this implies that $\pi_k F$ is also nilpotent as a $\pi_1 E$-module, and since $B$ is simply connected it implies that $\pi_k B$ is trivial, hence nilpotent, as a $\pi_1 E$-module. It follows from \cite[Proposition I.4.3]{HMR} that $\pi_k E$ is nilpotent as a $\pi_1 E$-module.
\end{proof}

\begin{proposition} \label{prop:nilpotent}
The space $B\tDiff_{\partial,\circ}(M)$ is nilpotent.
\end{proposition}

\begin{proof}
As noted just after Corollary \ref{cor:surgery fibration}, the space $\Ss_\partial^{G/O}(M)_{(1)}$ is nilpotent. We can then apply the previous lemma to the fibration sequence
$$
\Ss_\partial^{G/O}(M)_{(1)} \to B\tDiff_{\partial,\circ}(M) \to B\aut_{\partial,\circ}(M).
$$
\end{proof}

\subsection{A partial linearization of $\tDiff_\partial(M)$} \label{sec:partial linearization}
This section studies the Jacobian of the topological group $\tDiff_\partial(M)$. The end result is a map of monoids
$$D \colon \tDiff_{\partial,\circ}(M) \to \aut_{\partial,\circ}(\tau_M^S)$$
where $\aut_{\partial,\circ}(\tau_M^S)$ is a certain space of automorphisms of the stable tangent bundle of the compact manifold with boundary $M$ and $\tDiff_{\partial,\circ}(M)\subseteq \tDiff_\partial(M)$ is the set of components of diffeomorphisms that are homotopic to the identity. We begin with a description of the target.

To a vector bundle $\xi$ over a finite $CW$-complex $X$, with projection map $E\to X$, we associate the topological monoid $\aut(\xi)$ of diagrams
$$
\xymatrix{E \ar[r]^-{\widehat{f}} \ar[d] & E \ar[d] \\ X \ar[r]^-f & X,}
$$
with $f$ a homotopy equivalence and $\widehat{f}$ a fiberwise isomorphism, topologized as a subset of $\aut(X)\times \aut(E)$.
For subspaces $D\subseteq C\subseteq X$, we have a submonoid $\aut_C^D(\xi)\subseteq \aut(\xi)$ consisting of those $(f,\widehat{f})\in \aut(\xi)$ such that $f$ restricts to the identity on $C$ and $\widehat{f}$ restricts to the identity on $\xi|_D$. When $C=D$ we write $\aut_C(\xi) = \aut_C^C(\xi)$. Let $\aut_{C,\circ}^D(\xi)\subseteq \aut_C^D(\xi)$ be the submonoid of those $(f,\widehat{f})$ for which $f$ is homotopic to the identity map. We will assume that the inclusions $D\to C \to X$ are cofibrations.

We let $GL(\xi) \to X$ be the fiber bundle whose fiber at $x\in X$ is the group $GL(\xi_x)$ of linear isomorphisms of the fiber $\xi_x$, and write $\Gamma_D(X,GL(\xi))$ for the space of sections that map points $x\in D$ to the identity isomorphism of $\xi_x$.

\begin{lemma} \label{lemma:serre fibration}
The map
$$\pi\colon \aut_{C}^D(\xi) \to \aut_{C}(X)$$
is a Serre fibration with fiber $\Gamma_D(X,GL(\xi))$.
\end{lemma}

\begin{proof}
A diagram of the form
$$
\xymatrix{A \ar[r] \ar[d] & \aut_{C}^D(\xi) \ar[d]   \\ A\times I \ar[r] \ar@{-->}[ur] & \aut_{C}(X), }
$$
may be viewed as a bundle map $A\times \xi \to \xi$,
$$\xymatrix{A\times E \ar[r]^-{\widehat{f}_0} \ar[d] & E \ar[d] \\ A\times X \ar[r]^-f & X}$$
together with a homotopy $A\times X\times I \to X$ of the base map. Steenrod's ``first covering homotopy theorem'' \cite[\S11.3]{Steenrod} provides an extension of the homotopy to a bundle map $A \times \xi \times I \to \xi$, which can be taken to be stationary over $D$. This yields the desired lift $A\times I \to \aut_{C}^D(\xi)$.
The fiber $\pi^{-1}(id_X)$ is easily identified with $\Gamma_D(X,GL(\xi))$.
\end{proof}

We proceed to analyze the homotopy type of the classifying space $B\aut_C^D(\xi)$. We will use the following lemma.

\begin{lemma} \label{lemma:homotopy orbit}
Consider a commutative square of the form
$$
\xymatrix{H \ar[r]^-{x_0 \cdot}  \ar[d]^-\varphi & X \ar[d]^-f \\ G \ar[r]^{y_0 \cdot} & Y,}
$$
where $\varphi$ is a morphism of grouplike topological monoids, $X$ is a right $H$-space, $Y$ is a right $G$-space, $x_0\in X$, $y_0\in Y$, and $f$ is a map of $H$-spaces such that $f(x_0) = y_0$. If the square is a homotopy pullback, then the induced map on components,
$$B\left( X , H,* \right)_{x_0} \to B\left( Y, G,* \right)_{y_0},$$
is a weak homotopy equivalence.
\end{lemma}

\begin{proof}
By \cite[Proposition 7.9]{May}, the horizontal maps extend to homotopy fiber sequences
$$
\xymatrix{H \ar[r]^-{x_0 \cdot}  \ar[d]^-\varphi & X \ar[d]^-f \ar[r] & B(X,H,*) \ar[d] \\ G \ar[r]^{y_0 \cdot} & Y \ar[r] & B(Y,G,*).}
$$
The assumption that the left square is a homotopy pullback implies that the induced map on homotopy fibers
$\Omega_{x_0} B(X,H,*) \to \Omega_{y_0} B(Y,G,*)$ is a weak homotopy equivalence. Delooping this yields the desired weak homotopy equivalence.
\end{proof}

Let $k\colon X\to BO(n)$ be the classifying map for $\xi$ and let $\map_D(X,BO(n))$ denote the space of maps from $X$ to $BO(n)$ whose restriction to $D$ agrees with $k|_D$. Precomposition endows it with a right action of the monoid $\aut_C(X)$.

\begin{proposition} \label{prop:fibrations}
For an $n$-dimensional vector bundle $\xi$ over $X$, the classifying space $B\aut_C^D(\xi)$ is weakly homotopy equivalent to the component of
$$B\big (\map_D(X,BO(n)),\aut_C(X),*)$$
determined by the classifying map $k\colon X\to BO(n)$ for $\xi$. In particular, there is a homotopy fibration
\begin{equation} \label{eq:1}
\map_D(X,BO(n))_\xi \to B\aut_{C}^D(\xi) \to B\aut_{C}(X).
\end{equation}
\end{proposition}

\begin{proof}
Let $\gamma_n$ denote the universal bundle over $BO(n)$ and fix a classifying map $\kappa\colon \xi \to \gamma_n$,
$$
\xymatrix{E\ar[d]^-p \ar[r] & E(\gamma_n) \ar[d] \\ X\ar[r]^-k & BO(n).}
$$
We may assume it is a pullback. This implies that the square
$$
\xymatrix{\aut_C^D(\xi) \ar[r]^-{\kappa_*} \ar[d] & Bun_D(\xi,\gamma_n) \ar[d] \\
\aut_C(X) \ar[r]^-{k_*} & \map_D(X,BO(n)),}
$$
is a pullback, where $Bun_D(\xi,\gamma_n)$ denotes the space of bundle maps $\alpha\colon \xi\to \gamma_n$ such that $\alpha|_D = \kappa|_D\colon \xi|_D\to \gamma_n$. The vertical maps are fibrations by Lemma \ref{lemma:serre fibration} (and its obvious extension to $Bun_D(\xi,\gamma_n)$), so the square is a homotopy pullback. By Lemma \ref{lemma:homotopy orbit}, the induced map
$$B\big(Bun_D(\xi,\gamma_n),\aut_C^D(\xi),*\big)_{\kappa} \to B \big(\map_D(X,BO(n)),\aut_C(X),* \big)_k$$
is a weak homotopy equivalence. To finish the proof, note that $Bun_D(\xi,\gamma_n)$ is contractible, since it is the homotopy fiber of the restriction map $Bun(\xi,\gamma_n) \to Bun(\xi|_D,\gamma_n)$, where both the source and the target are contractible (for the last statement, see e.g.~\cite[Lemma 5.1]{GMTW}). This yields a weak homotopy equivalence
$$B\big(Bun_D(\xi,\gamma_n),\aut_C^D(\xi),*\big)_{\kappa} \to B\aut_C^D(\xi).$$
\end{proof}

We also need to consider stable vector bundles, so we now turn to stabilization.
\begin{proposition} \label{prop:connectivity}
The stabilization map
$$\aut_{C}^D(\xi) \to \aut_{C}^D(\xi\times \RR),\quad (f,\widehat{f}) \mapsto (f,\widehat{f}\times id_{\RR}),$$
is $(\dim \xi - \dim X -1)$-connected.
\end{proposition}

\begin{proof}
By Lemma \ref{lemma:serre fibration}, it suffices to see that
$$
\Gamma_D(X,GL(\xi)) \to \Gamma_D(X,GL(\xi\times \RR))
$$
is $(\dim \xi - \dim X -1)$-connected. But this follows from obstruction theory because $GL(\xi_x) \to GL(\xi_x\oplus \RR)$ is $(\dim \xi -1)$-connected.
\end{proof}

Denote by $\aut_{C}^{D}(\xi^S)$ the homotopy colimit of
$$\aut_{C}^D(\xi) \to \aut_{C}^D(\xi \times \RR) \to \aut_{C}^D(\xi\times \RR^2) \to \cdots.$$
Clearly, the homotopy type of $\aut_{C}^{D}(\xi^S)$ depends only on the stable equivalence class of the vector bundle $\xi$. The following description of the homotopy type of $B\aut_C^D(\xi^S)$ is easily deduced from Proposition \ref{prop:fibrations}.

\begin{proposition} \label{prop:stable fibration}
The space $B\aut_C^D(\xi^S)$ is weakly homotopy equivalent to the component of 
$$B\big (\map_D(X,BO),\aut_C(X),*)$$
determined by the classifying map $X\to BO$ of the stable vector bundle $\xi^S$. In particular, there is a homotopy fibration
\begin{equation} \label{eq:2}
\map_D(X,BO)_{\xi^S} \to B\aut_{C}^D(\xi^S) \to B\aut_{C}(X).
\end{equation}\hfill $\square$
\end{proposition}

\begin{corollary}
The space $B\aut_{C,\circ}^D(\xi^S)$ is nilpotent.
\end{corollary}

\begin{proof}
Apply Lemma \ref{lemma:nilpotent} to the fibration sequence
$$\map_D(X,BO)_{\xi^S} \to B\aut_{C,\circ}^D(\xi^S) \to B\aut_{C,\circ}(X),$$
which is obtained from \eqref{eq:2} by pulling back along $B\aut_{C,\circ}(X) \to B\aut_C(X)$.
The fiber is nilpotent since it is an infinite loop space.
\end{proof}

\begin{remark}
Since the components of an infinite loop space are all homotopy equivalent, we have
$$\map_D(X,BO)_{\xi^S} \simeq \map_D(X,BO)_{(0)} \simeq \map_*(X/D,BO)_{(0)},$$
where the subscript $(0)$ indicates the component of the trivial map,
but beware that these spaces need not be equivalent as $\aut_{C,\circ}(X)$-spaces.
\end{remark}

We now turn to the definition of the map from the block diffeomorphism group to the automorphisms of the stable tangent bundle.

We shall use the model
$$\Delta^k = \set{(x_1,\ldots,x_k)\in \RR^k}{0\leq x_1 \leq \cdots \leq x_k \leq 1}$$
for the $k$-simplex rather than the usual model in $\RR^{k+1}$. Its tangent space is then canonically trivialized, $\tau_{\Delta^k} = \Delta^k \times \RR^k$. The simplicial operators
$$d^\mu\colon \Delta^{k-1} \to \Delta^k,\quad s^\mu \colon \Delta^k \to \Delta^{k-1},$$
take the form
\begin{align*}
d^\mu(x_1,\ldots,x_{k-1}) & = \left\{ \begin{array}{lc} (0,x_1,\ldots,x_{k-1}), & \mu = 0, \\ (x_1,\ldots,x_\mu,x_\mu,\ldots,x_{k-1}), & 0<\mu<k,\\ (x_1,\ldots,x_{k-1},1), & \mu = k, \end{array} \right. \\
s^\mu(x_1,\ldots,x_k) & = (x_1,\ldots,\widehat{x}_{\mu+1},\ldots,x_k).
\end{align*}
The induced operators on the tangent spaces are
\begin{align*}
d^\mu \times d^\mu \colon & \Delta^{k-1} \times \RR^{k-1} \to \Delta^k\times \RR^k, \\
s^\mu \times s^\mu \colon & \Delta^k\times \RR^k \to \Delta^{k-1} \times \RR^{k-1}
\end{align*}
where $d^\mu\colon \RR^{k-1} \to \RR^k$ and $s^\mu\colon \RR^k\to \RR^{k-1}$ are given by the same formulas as above.

We remember that the $k$-simplices of $\tDiff_\partial(M)_\bullet$ consist of face preserving diffeomorphisms $(\varphi,\psi) \colon \Delta^k\times M \to \Delta^k \times M$ which in addition preserve a collar of each face. For a face $\theta\colon \Delta^r\to\Delta^k$, let $U_\epsilon(\theta) = \theta(\Delta^r)\times D_\epsilon(\RR^{k-r})$ be an $\epsilon$-normal neighborhood in $\RR^k$ of the face and let $\theta_M = \theta \times M$. The collar condition is that the restriction of $(\varphi,\psi)$ to $U_\epsilon(\theta)\times M$ is equal to $(\theta_M^*(\varphi),\theta_M^*(\psi))\times id_{D_\epsilon(\RR^{k-r})}$ for some small $\epsilon > 0$. Below we shall use the notation
$$
x_1 \sim 0,\quad x_\mu \sim x_{\mu+1},\quad x_k\sim 1
$$
to indicate that $x=(x_1,\ldots,x_k)$ belongs to a small normal neighborhood of $d^\mu(\Delta^{k-1})$. With this notation we have the following for $(\varphi,\psi)\in \tDiff_\partial(M)_k$ and $(x,y)\in \Delta^k\times M$.

\begin{align*}
\mbox{For $x_1\sim 0$:} && \varphi_1(x,y) & = x_1,\quad \varphi_i(x,y) = \varphi_i(0,x_2,\ldots,x_k,y),\quad i>0, \\
&&\psi (x,y) & = \psi(0,x_2,\ldots,x_k,y).\\
\\
\mbox{For $x_k\sim 1$:}&& \varphi_k(x,y) & = x_k,\quad \varphi_i(x,y) = \varphi_i(x_1,\ldots,x_{k-1},1,y), \quad i<k, \\
&&\psi(x,y) & = \psi(x_1,\ldots,x_{k-1},1,y).\\
\\
\mbox{For $x_\mu\sim x_{\mu+1}$:}&&
\varphi(x,y) & = \varphi(t_\mu(x),y) + r_\mu(x), \\
&& \psi(x,y) & = \psi(t_\mu(x),y),
\end{align*}
\begin{align*}
\mbox{where}&& t_\mu(x) & = (x_1,\ldots,x_{\mu-1}, \frac{1}{2}(x_\mu + x_{\mu+1}), \frac{1}{2}(x_\mu + x_{\mu+1}), x_{\mu+2},\ldots,x_k), \\
&& r_\mu(x) & = (0,\ldots,0,\frac{1}{2}(x_\mu-x_{\mu+1}),\frac{1}{2}(x_{\mu+1}-x_\mu),0,\ldots,0).
\end{align*}
Since $(\varphi,\psi)$ is face preserving,
$$
\varphi(t_\mu(x),y) = d^\mu(d_\mu \varphi(\overline{t}_\mu(x),y)), \quad \psi(t_\mu(x),y) = d_\mu\psi(\overline{t}_\mu(x),y),
$$
where $\overline{t}_\mu \colon \RR^k\to \RR^{k-1}$ is given by
$$
\overline{t}_\mu(x_1,\ldots,x_k) = (x_1,\ldots,x_{\mu-1}, \frac{1}{2}(x_\mu+x_{\mu+1}),x_{\mu+2},\ldots,x_k)
$$
and $(d_\mu\varphi,d_\mu\psi)$ is $\mu$-th face of $(\varphi,\psi)$.

We shall compare the Jacobians $D(\varphi,\psi)(d^\mu x,y)$ and $D(d_\mu \varphi,d_\mu \psi)(x,y)$ when $(x,y)\in \Delta^{k-1}\times M$. We need the isomorphism $\phi_\mu\colon \RR\times \RR^{k-1} \to \RR^k$ defined by
\begin{align*}
\phi_\mu(e_1) & = e_{\mu+1}-e_\mu, \\
\phi_\mu(e_i) & = e_{i-1}, \quad 1< i \leq \mu, \\
\phi_\mu(e_{\mu+1}) & = e_{\mu+1} + e_\mu, \\
\phi_\mu(e_i) & = e_i, \quad \mu +1 < i \leq k,
\end{align*}
where $e_1,\ldots,e_k$ is the standard basis for $\RR^k$, with the convention that $e_0 = 0$ and $e_{k+1} = 0$.

The linear inclusions $d^\mu\colon \RR^{k-1} \to \RR^k$ induce $k+1$ embeddings $d_\mu \colon GL(\RR^{k-1}) \to GL(\RR^k)$. For $A\in GL(\RR^{k-1})$, we have that $d_\mu(A)\in GL(\RR^k)$ is equal to the identity on the orthogonal complement of $d^\mu(\RR^{k-1})$, and there is a commutative diagram
$$
\xymatrix{\RR\times \RR^{k-1} \ar[r]^-{id \times A} \ar[d]_-{\phi_\mu} & \RR\times \RR^{k-1} \ar[d]_-{\phi_\mu} \\
\RR^k \ar[r]^-{d_\mu(A)} & \RR^k.}
$$
Differentiating the collar conditions imposed on the $k$-simplices $(\varphi,\psi)$ of $\tDiff_\partial(M)_\bullet$ and listed above we will obtain

\begin{lemma} \label{lemma:jacobian}
For a $k$-simplex $(\varphi,\psi)$ of $\tDiff_\partial(M)_\bullet$ and $(x,y)\in \Delta^{k-1} \times M$, the diagrams
$$
\xymatrix{\RR^k \times \tau_y M \ar[rrr]^-{D(\varphi,\psi)(d^\mu x,y)} &&& \RR^k \times \tau_{\psi(x,y)} M \\
\RR\times \RR^{k-1} \times \tau_y M \ar[u]_-{\phi_\mu\times id} \ar[rrr]^-{id \times D(d_\mu\varphi,d_\mu\psi)(x,y)} &&& \RR\times \RR^{k-1} \times \tau_{\psi (x,y)} M \ar[u]_-{\phi_\mu\times id}}
$$
commute for $0\leq \mu \leq k$.
\end{lemma}

In the proof below we use $D_\Delta$ and $D_M$ to denote the part of the Jacobian $D$ which differentiates with respect to $x\in \Delta^k$ and $y\in M$, respectively. With this notation, the bottom map in the diagram above consists of the following four homomorphisms
\begin{align*}
id\times D_\Delta(d_\mu \varphi)(x,y) \colon & \RR\times \RR^{k-1} \to \RR\times \RR^{k-1},\\
D_M(d_\mu \varphi)(x,y) \colon & \tau_y M \to 0\times \RR^{k-1},\\
id \times D_\Delta(d_\mu \psi)(x,y) \colon & \RR \times \RR^{k-1} \to \RR \times \tau_{\psi(x,y)} M \xrightarrow{pr_2} \tau_{\psi(x,y)} M,\\
D_M(d_\mu \psi)(x,y)\colon & \tau_y M \to \tau_{\psi(x,y)} M.
\end{align*}

\begin{proof}
We leave for the reader to check the easier cases $\mu = 0$ and $\mu = k$. So assume $0<\mu<k$. We differentiate the equation
$$\varphi(x,y) = d^\mu(d_\mu\varphi(\overline{t}_\mu(x),y)) + r_\mu(x),$$
valid for $x_\mu \sim x_{\mu+1}$, with respect to $x\in \Delta^k$ to get
$$
D_\Delta(\varphi) = d^\mu \circ D_\Delta(d_\mu\varphi)\circ D_\Delta(\overline{t}_\mu) + D_\Delta(r_\mu).
$$
Now observe that
$$
D_\Delta(\overline{t}_\mu) \circ \phi_\mu = p_1^\perp \colon \RR^k\to \RR^{k-1}
$$
is the projection onto the last $(k-1)$ coordinates, and that
$$
R_\mu = D_\Delta(r_\mu) \circ \phi_\mu \colon \RR \times \RR^{k-1} \to \RR^k$$
is the linear map that sends $v = (v_1,v_2,\ldots,v_k)$ into $\phi_\mu(v_1 e_1) = -v_1e_\mu + v_1e_{\mu+1}$. Since $\phi_\mu(0,v_2,\ldots,v_k) = d^\mu(v_2,\ldots,v_k)$, we have
$$
d^\mu \circ D_\Delta(d_\mu\varphi)\circ p_1^\perp(v) + R_\mu(v) = \phi_\mu(id_\RR \times D_\Delta(d_\mu \varphi))(v)
$$
so that
$$
D_\Delta(\varphi)\circ \phi_\mu = \phi_\mu \circ (id_\RR \times D_\Delta(d_\mu \varphi)).
$$
Differentiating with respect to $y\in M$, we have
$$D_M\varphi(d^\mu x,y) = d^\mu \circ D_M(d_\mu \varphi)(x,y) = \phi_\mu D_M(d_\mu \varphi)(x,y).$$
This proves the required commutativity for $D(\varphi)$, $D(d_\mu \varphi)$, and leaves us to check commutativity of
$$
\xymatrix{\RR^k \times \tau_y M \ar[rrr]^-{D_\Delta(\psi)(d^\mu x,y)} &&& \tau_{\psi(x,y)} M \\
\RR\times \RR^{k-1} \times \tau_y M \ar[u]_-{\phi_\mu\times id} \ar[rrr]^-{id\times D_\Delta(d_\mu \psi)(x,y)} &&& \RR\times \tau_{\psi(x,y)} M. \ar[u]_-{p_2}}
$$
But this follows upon differentiating the equation
$$
\psi(x,y) = \psi(t_\mu(x),y) = (d_\mu \psi)(\overline{t}_\mu x,y)
$$
with respect to $x\in \Delta^k$ to get
$$
D_\Delta \psi (d^\mu x,y)\circ \phi_\mu = D_\Delta (d_\mu \psi)\circ p_1^\perp.
$$
Finally, $D_M(\psi)(d^\mu x,y) = D_M(d_\mu)(x,y)$ because $\psi(d^\mu x,y) = d_\mu\psi(x,y)$.
This completes the proof.
\end{proof}

We next introduce the $\Delta$-monoid $\widetilde{S}_\bullet \aut(\xi)$ and a map of $\Delta$-monoids
$$
\alpha_\bullet\colon S_\bullet \aut(\xi) \to \widetilde{S}_\bullet \aut(\xi).
$$
\begin{definition}
The $k$-simplices of $\widetilde{S}_\bullet \aut_C(\xi)$ consist of all maps
$$
F\colon \Delta^k \to \aut_C(\RR^k\times \xi)
$$
that satisfy the conditions
$$
(\phi_\mu\times \xi)^{-1} \circ F(d^\mu x)\circ (\phi_\mu \times \xi) = id_\RR \times d_\mu F(x),\quad \mu \in \{0,\ldots,k\},
$$
where $d_\mu F \colon \Delta^{k-1} \to \aut_C(\RR^{k-1}\times \xi)$ is a $(k-1)$-simplex of $\widetilde{S}_\bullet \aut_C(\xi)$.

Variants such as $\widetilde{S}_\bullet \aut_{C,\circ}(\xi)$ or $\widetilde{S}_\bullet\aut_{C}(\xi^S)$ have the obvious meanings.
\end{definition}
The inclusion of $S_\bullet \aut(\xi)$ into $\widetilde{S}_\bullet \aut(\xi)$ is induced in degree $k$ from the map $\aut(\xi) \to \aut(\RR^k\times \xi)$ that sends $(f,\widehat{f})$ to $(f,id_{\RR^k}\times \widehat{f})$. It is a consequence of Lemma \ref{lemma:jacobian} that the Jacobian defines a $\Delta$-map
\begin{equation*} \label{eq:star}
D_\bullet\colon \tDiff_\partial(M)_\bullet \to \widetilde{S}_\bullet \aut_\partial(\tau_M).
\end{equation*}
It sends $(\varphi,\psi)\in\tDiff_\partial(M)_k$ to
$$
\xymatrix{\Delta^k\times \RR^k \times \tau_M \ar[d] \ar[r]^-{D(\varphi,\psi)} & \Delta^k \times \RR^k \times \tau_M \ar[d] \ar[r]^-{p_1^\perp} & \RR^k\times \tau_M \ar[d] \\
\Delta^k \times M \ar[r]^-{(\varphi,\psi)} & \Delta^k\times M \ar[r]^-{p_1^\perp} & M}
$$
where $p_1^\perp$ is the projection onto the last two factors. Observe that $\psi\colon \Delta^k \times M \to M$ adjoins to a map $\Delta^k\to \aut_\partial(M)$.

Let $\tDiff_{\partial,\circ}(M)_\bullet \subseteq \tDiff_\partial(M)_\bullet$ be the union of the components of those block diffeomorphism that are homotopic to the identity; it maps into $S_\bullet \aut_{\partial,\circ}(M)$. In Appendix D we prove
\begin{theorem} \label{thm:S-tilde}
The map
$$
\alpha_\bullet \colon S_\bullet \aut_{\partial,\circ}(\xi^S) \to \widetilde{S}_\bullet \aut_{\partial,\circ}(\xi^S)
$$
defines a homotopy equivalence of Kan $\Delta$-monoids.
\end{theorem}

For $\xi = \tau_M$, we have (weak) homotopy equivalences
$$
|\alpha_\bullet| \colon |S_\bullet \aut_{\partial,\circ}(\tau_M^S)| \to |\widetilde{S}_\bullet \aut_{\partial,\circ}(\tau_M^S)|,
$$
$$
ev\colon |S_\bullet \aut_{\partial,\circ}(\tau_M^S)| \to \aut_{\partial,\circ}(\tau_M^S),
$$
of topological monoids. Combined with the geometric realization of
$$D_\bullet \colon \tDiff_{\partial,\circ}(M)_\bullet \to \widetilde{S}_\bullet \aut_{\partial,\circ}(\tau_M^S),$$
one obtains a zig-zag of monoid maps from $\tDiff_{\partial,\circ}(M)$ to $\aut_{\partial,\circ}(\tau_M^S)$. Applying the classifying space construction one gets a zig-zag of maps from $B\tDiff_{\partial,\circ}(M)$ to $B\aut_{\partial,\circ}(\tau_M^S)$. The map $B|\alpha_\bullet|$ is a homotopy equivalence, since the spaces involved are of the homotopy type of CW complexes, and choosing a homotopy inverse we obtain a well-defined homotopy class of maps
$$L\colon B\tDiff_{\partial,\circ}(M) \to B\aut_{\partial,\circ}(\tau_M^S).$$
We let $D = \Omega L$,
$$D\colon \tDiff_{\partial,\circ}(M) \to \aut_{\partial,\circ}(\tau_M^S).$$
(Alternatively, one could let $L$ and $D$ denote the zig-zag maps, and remember this for the proofs below, which are all statements about homotopy groups).

If $f\colon D^k\times M \to D^k\times M$ with $\partial f = id$ represents an element of $\pi_k(\tDiff_\partial(M),id)$, then the composition
$$
D^k\times (\RR^k \times \tau_M) \xrightarrow{D(f)} D^k \times (\RR^k \times \tau_M) \xrightarrow{proj} \RR^k\times \tau_M
$$
represents its image in $\pi_k(\aut_{\partial,\circ}(\tau_M^S))$.

\subsection{The rational homotopy theory of $B\tDiff_{\partial,\circ}(M)$}
In this section we compare $B\tDiff_{\partial,\circ}(M)$ with $B\aut_{\partial,\circ}^*(\tau_M^S)$, where $\tau_M$ is the tangent bundle of $M$, and we calculate the rational homotopy type of $B\aut_{\partial,\circ}^*(\tau_M^S)$.

In the diagram of homotopy fibrations
$$
\xymatrix{\aut_{\partial,\circ}(M)/\tDiff_{\partial,\circ}(M) \ar[r] \ar[d]^-\sim & B\tDiff_{\partial,\circ}(M) \ar[r] \ar[d] & B\aut_{\partial,\circ}(M) \ar[d] \\
\aut_{\partial,J}(M)/\tDiff_\partial(M) \ar[r] & B\tDiff_\partial(M) \ar[r] & B\aut_{\partial,J}(M)}
$$
the map of the homotopy fibers is a homotopy equivalence, and there are homotopy equivalences
$$
k\colon \aut_{\partial,\circ}(M)/\tDiff_{\partial,\circ}(M) \xrightarrow{\sim} \aut_{\partial,J}(M)/\tDiff_\partial(M) \xrightarrow{\sim} \Ss_\partial^{G/O}(M)_{(1)}
$$
into the identity component of the structure space.

The map $L$ constructed in the previous section fits in a diagram
\begin{equation} \label{eq:(26)}
\xymatrix{\aut_{\partial,\circ}(M)/\tDiff_{\partial,\circ}(M) \ar[r] \ar@{-->}[d]^-\ell & B\tDiff_{\partial,\circ}(M) \ar[r] \ar[d]^-L & B\aut_{\partial,\circ}(M) \ar@{=}[d] \\ \map_{\partial M}(M,BO)_{\tau_M^S} \ar[r] & B\aut_{\partial,\circ}(\tau_M^S) \ar[r] & B\aut_{\partial,\circ}(M).}
\end{equation}
Note that $\map_{\partial M}(M,BO)_{\tau_M^S} \simeq \map_*(M/\partial M,BO)_{(0)}$.
We shall compare the induced map $\ell$ on homotopy fibers with the normal invariant map $\eta$, composed with the map induced by $j\colon G/O\to BO$,
$$
\Ss_\partial^{G/O}(M)_{(1)} \xrightarrow{\eta} \map_*(M/\partial M,G/O)_{(0)} \xrightarrow{j_*} \map_*(M/\partial M,BO)_{(0)}.
$$
The result is
\begin{lemma} \label{lemma:key}
For a simply connected smooth compact manifold $M$ of dimension at least $5$, the diagram
$$
\xymatrix{\aut_{\partial,\circ}(M)/\tDiff_{\partial,\circ}(M) \ar[d]_-\sim^-k  \ar[r]^-\ell & \map_*(M/\partial M,BO)_{(0)} \\ 
\Ss_\partial^{G/O}(M)_{(1)} \ar[r]^-\eta & \map_*(M/\partial M,G/O)_{(0)} \ar[u]^-{j_*}}
$$
commutes up to a sign upon taking homotopy groups, i.e.,
$$(j_* \circ \eta \circ k)_* = (-1) \cdot \ell_* \colon \pi_k(\aut_{\partial,\circ}(M)/\tDiff_{\partial,\circ}(M)) \to \pi_k\big(\map_*(M/\partial M,BO)_{(0)}\big).$$
\end{lemma}

\begin{proof}
The homotopy fibration
$$
\tDiff_{\partial,\circ}(M) \to \aut_{\partial,\circ}(M) \to \aut_{\partial,\circ}(M)/\tDiff_{\partial,\circ}(M)
$$
shows that we have isomorphisms
$$
\pi_k(\aut_{\partial,\circ}(M),\tDiff_{\partial,\circ}(M);id_M) \cong \pi_k(\aut_{\partial,\circ}(M)/\tDiff_{\partial,\circ}(M);*)
$$
so that
$$\pi_k(\Ss_\partial^{G/O}(M)_{(1)};id_M) \cong \pi_k(\aut_{\partial,\circ}(M),\tDiff_{\partial,\circ}(M);id_M).$$
An element
$$
[f,\varphi]\in \pi_k(\aut_{\partial,\circ}(M),\tDiff_{\partial,\circ}(M);id_M)
$$
is represented by the diagram
\begin{equation} \label{eq:source element}
\xymatrix{D^k\times M \ar[r]^-f & D^k\times M \\
S^{k-1}\times M \ar[r]^-\varphi \ar[u] & S^{k-1}\times M \ar[u]\\
{*}\times M \ar[u] \ar@{=}[r] & {*}\times M \ar[u]}
\end{equation}
with the additional conditions that the restriction of $(f,\varphi)$ to $(D^k\times \partial M,S^{k-1}\times \partial M)$ is equal to the identity, and $\varphi$ is homotopic to the identity relative to $*\times M$. The resulting element
$$\eta([f,\partial f]) \in [S^k\wedge M/\partial M,G/O]_*$$
and its companion
$$j_*\eta([f,\partial f]) \in [S^k\wedge M/\partial M,BO]_*$$
were described at the end of \S\ref{sec:surgery}.

In our case,
$$f\colon D^k \times M \to D^k \times M, \quad \partial f = \varphi \cup id \colon \partial(D^k\times M) \to \partial(D^k\times M),$$
and $(f,\varphi)$ restricts to the identity on $(C,\partial C)$ where
$$C=D^k \times \partial M \cup *\times M,\quad \partial C = S^{k-1}\times \partial M\cup * \times M.$$
Let $(g,\psi)$ be homotopy inverse to $(f,\varphi)$ relative to $(C,\partial C)$, and set
$$
\xi = g^*(D^k\times \nu_M) \oplus D^k\times \RR^k \times \tau_M,\quad \gamma = \psi^*(S^{k-1}\times \nu_M)\oplus S^{k-1}\times \RR^k\times \tau_M.
$$
Here $\nu_M$ is the $K$-dimensional normal bundle of an embedding
$$(M,\partial M) \subset (\RR^{K+n},\RR^{K+n-1})$$
with $K\gg 0$. The restriction of $\xi$ to $S^{k-1}\times M$ is equal to $\gamma$ and
$$\xi|_C = C\times \RR^{K+n},\quad \gamma|_{\partial C} = \partial C \times \RR^{K+n}$$
upon using that $\nu_M \oplus \tau_M = M\times \RR^{K+n}$.

Since $(D^k\times M)/C = D^k\wedge M/\partial M$ and $S^{k-1} \times M / \partial C = S^{k-1} \wedge M/\partial M$ we have bundles
$$\overline{\xi} \searrow D^k \wedge M/\partial M,\quad \overline{\gamma} \searrow S^{k-1}\wedge M/\partial M.$$
The framing $\partial \theta$ of $\gamma$ was defined in \eqref{eq:framing} at the end of \S\ref{sec:surgery}. The differential of $\psi$ induces a bundle isomorphism
$$D\psi \colon S^{k-1} \times (\RR^k \times \tau_M) \to \psi^*(S^{k-1} \times \RR^k \times \tau_M)$$
and $\partial \theta$ is the framing of $\gamma$ associated to the isomorphism
$$\psi^*(S^{k-1}\times \nu_M)\oplus \psi^*(S^{k-1} \times \RR^k \times \tau_M) \xrightarrow{id\oplus D\psi} \epsilon_M^{K+n+k},$$
where we have used the identification $\nu_M \oplus \tau_M  = \epsilon_M^{K+n}$. Since $\psi = \varphi^{-1}$ is the identity over $\partial C$, the framing $\partial \theta$ induces a framing $\overline{\partial \theta}$ of $\overline{\gamma}$ and
$$[\overline{\xi}/\overline{\partial \theta}] = j_* \eta (f,\partial f) \in [S^k \wedge M/\partial M,BO].$$

The bundle $\overline{\xi}$ is canonically trivialized as a bundle over the cone $D^k\wedge M/\partial M$ and induces a trivialization
$$h\colon S^{k-1}\wedge M/\partial M \times \RR^{K+n+k} \xrightarrow{\cong} \overline{\gamma}.$$
The composition $\overline{\partial \theta} \circ h$ adjoins to a map
$$S^{k-1}\wedge M/\partial M \to GL(\RR^{K+n+k})$$
that represents $\partial_*^{-1}([\overline{\xi},\overline{\partial \theta}])$. Indeed, this is the well-known relation between maps from a space $X$ into $GL_\infty(\RR)$ and bundles over the suspension of $X$.

We next calculate the composition $(\Omega \ell)_*\circ \partial_*$. By construction, the analog of \eqref{eq:source element} is valid with $(f,\varphi)$ replaced by $(g,\psi)$, so $\psi\simeq id$ (rel.~$\partial C$). This yields the isomorphism
$$h'\colon \psi^*(S^{k-1}\times \RR^k\times \tau_M)\to S^{k-1}\times \RR^k\times \tau_M$$
and leads to the element
$$\sigma = h' \circ D\psi \in \Gamma_{\partial C}(S^{k-1}\times M,S^{k-1}\times \RR^k \times \tau_M).$$
The map
$$\Gamma_{\partial C}(S^{k-1}\times M,S^{k-1}\times \RR^k \times \tau_M) \to \map_*(S^{k-1}\wedge M/\partial M,GL_\infty(\RR))$$
sends $\sigma$ into (the adjoint of) the bundle isomorphism
$$\widehat{\sigma}\colon S^{k-1}\wedge M/\partial M \times \RR^{K+n+k} \to S^{k-1} \wedge M/\partial M \times \RR^{K+n+k}$$
induced from $\sigma$ by adding the identity of $S^{k-1}\times \nu_M$. It follows that the above $\overline{\partial \theta} \circ h$ is homotopic to $\widehat{\sigma}$.

Finally, the map $(\Omega \ell)_* \circ \partial_*$ is represented not by $\widehat{\sigma}$ but by $\widehat{(\sigma^{-1})} = h'\circ D\varphi$, the inverse of $\widehat{\sigma}$. Since inversion $GL_\infty(\RR) \to GL_\infty(\RR)$ induces multiplication by $(-1)$ on the mapping space, this completes the proof.
\end{proof}

\begin{proposition} \label{prop:fiber proposition}
For a simply connected smooth compact manifold $M$ with $\partial M = S^{n-1}$ ($n\geq 5$), the composite map
$$\aut_{\partial,\circ}(M)/\tDiff_{\partial,\circ}(M) \xrightarrow{\ell} \map_*(M/\partial M,BO)_{(0)} \xrightarrow{q^*} \map_*(M,BO)_{(0)}$$
is a rational homotopy equivalence.
\end{proposition}

\begin{proof}
Consider the diagram
$$
\xymatrix{\aut_{\partial,\circ}(M)/\tDiff_{\partial,\circ}(M) \ar[d]_-\sim^-k  \ar[r]^-\ell & \map_*(M/\partial M,BO)_{(0)} \ar[r]^-{q^*} & \map_*(M,BO)_{(0)} \\ 
\Ss_\partial^{G/O}(M)_{(1)} \ar[r]^-\eta & \map_*(M/\partial M,G/O)_{(0)} \ar[u]^-{j_*} \ar[r]^-{q^*} & \map_*(M,G/O)_{(0)} \ar[u]^-{j_*}}
$$
By Lemma \ref{lemma:key}, the diagram anti-commutes after taking homotopy groups. By Corollary \ref{cor:ss} the map $j_*\circ q^* \circ \eta$ is a rational homotopy equivalence. It follows that $q^*\circ \ell$ is a rational homotopy equivalence as well.
\end{proof}

Recall that for a vector bundle $\xi$ over a space $X$ with subspaces $D\subseteq C\subseteq X$, we let $\aut_{C,\circ}^D(\xi) \subseteq \aut(\xi)$ denote the submonoid of those $(f,\widehat{f})$ for which $f\in \aut_{C,\circ}(X)$ and $\widehat{f}$ restricts to the identity map on the fibers over points in $D$. The map $B\tDiff_{\partial,\circ}(M) \to B\aut_{\partial,\circ}(\tau_M^S)$, followed by the map on classifying spaces induced by the inclusion of $\aut_{\partial,\circ}(\tau_M^S) = \aut_{\partial,\circ}^\partial(\tau_M^S)$ into $\aut_{\partial,\circ}^{*}(\tau_M^S)$, induces a map of fibration sequences, similar to \eqref{eq:(26)},

\begin{equation} \label{eq:b}
\xymatrix{
\aut_{\partial,\circ}(M)/\tDiff_{\partial,\circ}(M) \ar[d]^-{q^*\circ \ell} \ar[r] & B\tDiff_{\partial,\circ}(M) \ar[d] \ar[r] & B\aut_{\partial,\circ}(M) \ar@{=}[d] \\
\map_*(M,BO)_{\tau_M^S} \ar[r] & B\aut_{\partial,\circ}^*(\tau_M^S) \ar[r] & B\aut_{\partial,\circ}(M).}
\end{equation}

The induced map on homotopy fibers may be identified with the map $q^*\circ \ell$, which is a rational homotopy equivalence by the previous proposition. The following is a consequence.

\begin{corollary} \label{cor:bdiff model}
For a simply connected smooth compact manifold $M^n$ with $\partial M = S^{n-1}$  $(n\geq 5)$ and tangent bundle $\tau_M$, the differential induces a rational homotopy equivalence
$$
B\tDiff_{\partial,\circ}(M) \rightarrow B\aut_{\partial,\circ}^*(\tau_M^S).
$$
\hfill $\square$
\end{corollary}

\begin{remark}
Let $\nu_M$ be the normal bundle of an embedding of $(M,\partial M)$ into $(\RR^{n+K},\RR^{n+K-1})$ for $K$ large. Then we have that $B\aut_{\partial,\circ}^*(\nu_M^S)$ is weakly equivalent to $B\aut_{\partial,\circ}^*(\tau_M^S)$. To see this, one can use Proposition \ref{prop:stable fibration}, which shows that the two spaces are different, but homotopy equivalent, components of the space $B\big( \map_*(M,BO),\aut_{\partial,\circ}(M),*\big)$. Indeed, since $BO$ is an infinite loop space, inversion defines an $\aut_{\partial,\circ}(M)$-equivariant homotopy automorphism of $\map_*(M,BO)$, which maps the component of a bundle $\xi$ to the component of its complementary bundle. The induced map of $B\big(\map_*(M,BO),\aut_{\partial,\circ}(M),*\big)$ maps the component of $\tau_M$ to the component of $\nu_M$.
\end{remark}

Thus, the analysis of the rational homotopy type of $B\tDiff_{\partial,\circ}(M)$ reduces to homotopy theory of stable vector bundles. We proceed to analyze the rational homotopy type of $B\aut_{\partial,\circ}^*(\tau_M^S)$, following \cite{Berglund2}. Applying Proposition \ref{prop:stable fibration} to $(X,C,D) = (M,\partial M,*)$, $\xi = \tau_M$, we obtain
\begin{corollary} \label{cor:bar model}
There is a weak homotopy equivalence
$$B\aut_{\partial,\circ}^*(\tau_M^S) \simeq B\big( \map_*(M,BO)_{\tau_M^S},\aut_{\partial,\circ}(M),*\big).$$
\end{corollary}
From Corollary \ref{cor:bar model}, the following rational model for $B\aut_{\partial,\circ}^*(\tau_M^S)$ can be derived using the methods of \cite{Berglund2}: The classifying space for stable vector bundles $BO$ has a simple dg Lie algebra model, namely the abelian dg Lie algebra with zero differential
$$P = \pi_*(\Omega B O )\tensor \QQ.$$
It admits a basis $q_1,q_2,\ldots$, where $q_i\in \pi_{4i-1}(\Omega BO)\tensor \QQ = \pi_{4i}(BO)\tensor \QQ$ is dual to the universal Pontryagin class $p_i \in \HH^{4i}(BO;\QQ)$.
Recall from the previous section that the minimal Quillen model of $M$ has the form
$$\LL_M = \big( \LL(V),\delta \big),$$
where $V = s^{-1}\widetilde{\HH}_*(M;\QQ)$.
The action of the dg Lie algebra
$$\Der_\omega^+ \LL_M = \big(\Der_\omega^+ \LL(V),[\delta,-]\big)$$
on $\LL_M$ induces an action on indecomposables
$$\LL_M/[\LL_M,\LL_M] = V = s^{-1} \widetilde{\HH}_*(M;\QQ),$$
which dualizes to give an action on reduced cohomology $\widetilde{\HH}^*(M;\QQ)$.
The Pontryagin classes of the tangent bundle $\tau_M$,
$$p_i(\tau_M)\in \HH^{4i}(M;\QQ),$$
may be assembled to a distinguished element $\tau$ of degree $-1$ in the tensor product $\widetilde{\HH^*}(M;\QQ)\tensor P$;
$$\tau = \sum_i p_i(\tau_M) \tensor q_i.$$

\begin{theorem} \label{thm:block diff model}
Let $M$ be a simply connected smooth compact manifold with boundary $\partial M = S^{n-1}$ and tangent bundle $\tau_M$. Let $\big(\LL(V),\delta\big)$ be a minimal Quillen model for $M$ and let $\omega\in \LL(V)$ represent the inclusion of the boundary.

The classifying space $B\aut_{\partial,\circ}^*(\tau_M^S)$ is rationally homotopy equivalent to the geometric realization of the dg Lie algebra
$$
\big(\widetilde{\HH}^*(M;\QQ)\tensor P\big)_{\geq 0} \rtimes_{\tau} \Der_\omega^+ \LL(V).
$$
Explicitly, the differential is given by
\begin{equation} \label{eq:bdiff differential}
\partial^\tau(x,\theta) = \big(\tau\ldotp \theta, [\delta,\theta] \big),
\end{equation}
where $\tau\ldotp \theta$ denotes the action of $\theta\in \Der_\omega^+ \LL(V)$ on $\tau$ described above.
\end{theorem}

\begin{proof}
Using that $B\aut_{\partial,\circ}^*(\tau_M^S)$ is weakly homotopy equivalent to the bar construction $B\big(\map_*(M,BO)_{(0)},\aut_{\partial,\circ}(M),*\big)$, the result follows from \cite{Berglund2}.
\end{proof}

\begin{corollary}
If $M$ is formal with trivial multiplication on the reduced cohomology ring and if the rational Pontryagin classes of $\tau_M$ are trivial, then $B\aut_{\partial,\circ}^*(\tau_M^S)$ is coformal, with rational homotopy Lie algebra isomorphic to
$$
\big(\widetilde{\HH}^*(M;\QQ)\tensor P\big)_{\geq 0} \rtimes \Der_\omega^+ \LL(V).
$$
\end{corollary}

\begin{proof}
Vanishing of the rational Pontryagin classes is equivalent to $\tau = 0$.
Formality of $M$ together with triviality of the multiplication on $\widetilde{\HH}^*(M;\QQ)$ is equivalent to $\delta = 0$.
This implies that the differential \eqref{eq:bdiff differential} is zero, which in particular means that $B\aut_{\partial,\circ}^*(\tau_M^S)$ is coformal.
\end{proof}

\begin{corollary} \label{cor:triviality}
If $\widetilde{\HH}_*(M;\QQ)$ is concentrated in a single degree, then the fibration
$$
\map_*(M,BO)_{\tau_M^S} \to B\aut_{\partial,\circ}^*(\tau_M^S) \to B\aut_{\partial,\circ}(M)
$$
is rationally trivial. Consequently, there is a rational homotopy equivalence
$$B\aut_{\partial,\circ}^*(\tau_M^S) \simeq_\QQ \map_*(M,BO)_{(0)} \times B\aut_{\partial,\circ}(M).$$
Moreover, $B\aut_{\partial,\circ}^*(\tau_M^S)$ is coformal with rational homotopy Lie algebra isomorphic to
$$
\big(\widetilde{\HH}^*(M;\QQ)\tensor P\big)_{\geq 0} \times \Der_\omega^+ \LL(V).
$$
\end{corollary}

\begin{proof}
If $\widetilde{\HH}_*(M;\QQ)$ is concentrated in a single degree, then $\delta = 0$ is forced by degree reasons. Moreover, the action of $\Der_\omega^+ \LL(V)$ (which is concentrated in positive degrees) on $\widetilde{\HH}_*(M;\QQ)$ is trivial, also for degree reasons. This implies that the differential \eqref{eq:bdiff differential} is zero, and moreover that the semi-direct product is a product.
\end{proof}

Clearly, the previous corollary applies to $(d-1)$-connected manifolds of dimension $2d$ and in particular to the generalized surfaces
$$
M_{g,1} = \#^g (S^d\times S^d)\setminus \interior(D^{2d}).
$$
In the next section, we will focus on this class of manifolds.

\section{Automorphisms of highly connected manifolds} \label{sec:auto}
This section is devoted to the proof of the following general result on the rational homotopy type of classifying spaces of homotopy automorphisms of highly connected manifolds.

\begin{theorem} \label{thm:rht}
Let $M$ be a closed $(d-1)$-connected $2d$-dimensional manifold and let $N$ denote the result of removing an open $2d$-disk from $M$. Let $X$ denote either of the classifying spaces
$$B\aut(M),\quad B\aut_*(M),\quad \textrm{or} \quad B\aut_\partial(N),$$
and $\cX$ the simply connected cover of $X$. Let $H = \HH_d(M;\ZZ)$ with intersection form $\mu$ and quadratic refinement $Jq$ (see \S\ref{sec:wall}). If $d\geq 3$ and $\rank H>2$, then
\begin{enumerate}
\item The fundamental group $\pi_1(X)$ maps surjectively, with finite kernel, onto the automorphism group $\Aut(H,\mu,Jq)$.
\item Quillen's dg Lie algebra $\lambda(\cX)$ is formal.
\item The rational homotopy Lie algebra $\pi_*^\QQ(\cX) = \pi_{*+1}(\cX)\tensor \QQ$, with the Whitehead product, is isomorphic to
$$\OutDer^+ \pi_*^\QQ(M),\quad \Der^+ \pi_*^\QQ(M),\quad \textrm{or} \quad \Der_\omega^+ \pi_*^\QQ(N);$$
the graded Lie algebra of positive degree outer derivations, derivations, or derivations annihilating $\omega$, respectively. The graded Lie algebra $\pi_*^\QQ(N)$ is free on $\rank H$ generators of degree $d-1$, and $\pi_*^\QQ(M)$ is isomorphic to the quotient graded Lie algebra $\pi_*^\QQ(N)/(\omega)$, where $\omega\in \pi_{2d-1}(N)$ is the homotopy class of the attaching map for the top cell in $M$.
\end{enumerate}
\end{theorem}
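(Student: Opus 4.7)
The plan is to pass to explicit Quillen DGL models for $M$ and $N$, to invoke the standard identifications of rational homotopy of classifying spaces of homotopy self-equivalences with homology of derivation DGLs, and to prove formality by exploiting the concentration of Lie-algebra generators in only two degrees.

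First I would identify Lie models of $M$ and $N$. Since $M$ is $(d-1)$-connected of dimension $2d$, it has a CW structure with a single $0$-cell, $g = \rank H$ cells in dimension $d$, and one $2d$-cell. Removing the top disk retracts $N$ onto the $d$-skeleton, so $N \simeq \bigvee^g S^d$, and a minimal Quillen model is the free graded Lie algebra $\LL(V)$ on $V = H\tensor \QQ$ in degree $d-1$; hence $\pi_*^\QQ(N) = \LL(V)$. Attaching the top cell via $\omega \in \pi_{2d-1}(N)$ produces a minimal Lie model
$$L = \bigl(\LL(V \oplus \QQ w),\, d\bigr), \quad |w|=2d-1, \quad dw = \omega,$$
for $M$, with $\pi_*^\QQ(M) = \LL(V)/(\omega)$. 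This establishes the last sentence of (3). Sullivan--Tanré theory (via Schlessinger--Stasheff) then gives, for $Y$ simply connected with minimal Lie model $L$, isomorphisms of graded Lie algebras in positive degrees
$$\pi_*^\QQ(B\aut_*(Y)) \cong H_*(\Der^+ L), \qquad \pi_*^\QQ(B\aut(Y)) \cong H_*(\OutDer^+ L),$$
with an analogous statement for $B\aut_\partial(N)$ involving the sub-DGL of derivations of $\LL(V)$ annihilating $\omega$. Parts (2) and (3) thus reduce to proving formality of these derivation DGLs and identifying their homology.

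The main obstacle is formality. I would filter $\Der L$ by the $w$-weight (number of occurrences of $w$ in the image of a derivation). The resulting spectral sequence has an $E_1$-page expressible in terms of $\Der\LL(V)$ and maps $V\oplus \QQ w \to \LL(V)/(\omega)$, with $d_1$ induced by bracketing with $\omega$. Because $d\geq 3$ and $\omega$ is quadratic in degree-$(d-1)$ generators, an internal degree count confines $E_2$ to bidegrees where no further differential can land, producing collapse and simultaneously identifying the homology as $\Der^+ \bigl(\LL(V)/(\omega)\bigr) = \Der^+ \pi_*^\QQ(M)$. The same argument applies to the outer and $\omega$-preserving variants. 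The hypothesis $\rank H > 2$ is used to avoid low-rank degenerations of $\omega$ (e.g.\ accidental symmetries that could prevent collapse). Together this yields parts (2) and (3).

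For (1), Wall's classification of $(d-1)$-connected $2d$-manifolds (refined by Kreck for $d\geq 3$) asserts that every automorphism of $(H,\mu, Jq)$ is realized by a self-homotopy equivalence, so $\pi_0\aut(M)\to \Aut(H,\mu,Jq)$ is surjective. Finiteness of the kernel follows by obstruction theory: a self-equivalence acting trivially on $H_d(M)$ is determined up to homotopy by finitely many torsion obstruction classes in cohomology of $M$ with coefficients in homotopy groups of spheres. The statements for $B\aut_*(M)$ and $B\aut_\partial(N)$ reduce to the free case through standard fibration sequences comparing the pointed, free, and rel-boundary variants.
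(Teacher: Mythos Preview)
Your overall architecture---pass to Quillen models, invoke the Schlessinger--Stasheff/Tanr\'e identification of $B\aut_*$ and $B\aut$ with derivation DGLs, then argue formality---matches the paper. The treatment of part~(1) is also essentially what the paper does (it cites Kreck and the earlier paper \cite{BM}). However, the heart of the argument, your proof of formality, has a genuine gap, and your account of where $\rank H>2$ enters is incorrect.

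\textbf{The formality gap.} You filter $\Der L$ by $w$-weight and argue that a degree count forces the spectral sequence to collapse at $E_2$. Even granting collapse, this only computes $H_*(\Der^+ L)$ as a bigraded Lie algebra; it does not produce a quasi-isomorphism of dg Lie algebras (or $L_\infty$-algebras) between $\Der^+ L$ and its homology. Collapse of a spectral sequence is strictly weaker than formality. The paper confronts this directly: it builds an explicit $L_\infty$-quasi-isomorphism via homological perturbation. Concretely, one chooses a contraction of the cofibrant model $\LL$ onto $L=\LL(V)/(\omega)$, transports it to a contraction of $\Der\LL\dquot\ad\LL$ onto $\Der_f(\LL,L)\dquot\ad L$ by the basic perturbation lemma, and then shows that the composite $\psi=g'\circ f^*$ extends to an $L_\infty$-morphism from $\Der L\dquot\ad L$ (zero differential) into the model. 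That $\psi$ is a quasi-isomorphism requires a separate verification that $f^*\colon\Der L\to\Der_f(\LL,L)$ is a quasi-isomorphism in non-negative degrees; this is where non-degeneracy of the intersection form is used, to see that the map $\Der\LL(V)\to\LL(V)$, $\theta\mapsto\theta(\omega)$, is isomorphic to the bracketing map $V\otimes\LL(V)\to\LL(V)$ and hence surjective. Your sketch does not supply any of this, and the ``degree count'' you allude to does not substitute for it. The same issue arises for $B\aut_\partial(N)$: the relative classification theorem gives a model $\Der^+(\LL(\rho,\alpha_i,\gamma);\LL(\rho))$ with nontrivial differential, not $\Der_\omega^+\LL(V)$ directly; one must exhibit a DGL (or $L_\infty$) quasi-isomorphism between them.

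\textbf{The role of $\rank H>2$.} This hypothesis is not used to prevent ``low-rank degenerations of $\omega$'' or to force spectral-sequence collapse. It is used to guarantee that $L=\pi_*^\QQ(M)$ has trivial center: by Koszul duality $\Ext_{UL}^*(\QQ,\QQ)\cong H^*(M;\QQ)$, so $L$ has global dimension $2$ and Euler characteristic $2-\rank H$; a result of B\o{}gvad then forces the center to vanish when $\rank H>2$. Triviality of the center is what makes $\Der L\dquot\ad L\to\Der L/\ad L$ a quasi-isomorphism, so that $\OutDer^+ L$ really models $B\aut(M)\langle 1\rangle$. Without it the identification in part~(3) for the unbased case fails.
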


The result for boundary preserving automorphisms will be a key ingredient in later sections. The results for base-point preserving and free automorphisms will not play a further role in this paper, but they are of independent interest and are included for completeness. We remark that Theorem \ref{thm:rht} may be viewed as an `infinitesimal' version of the Dehn-Nielsen-Baer theorem (see, e.g., \cite[Chapter 8]{FM}).

\subsection{Wall's classification of highly connected manifolds} \label{sec:wall}
Let $M$ be a closed oriented $(d-1)$-connected smooth manifold of dimension $2d$, where $d\geq 3$. The intersection form
$$\mu \colon \HH_d(M) \tensor \HH_d(M) \rightarrow \ZZ,\quad \mu(x,y) = \langle x,y \rangle,$$
endows $\HH_d(M)$ with the structure of an $(-1)^d$-symmetric inner product space over $\ZZ$. Every homology class $x\in \HH_d(M)$ may be represented as the fundamental class of some embedded sphere $S^d\subset M$ by \cite{Haefliger}. The normal bundle of the embedding $S^d \subset M$ is classified by a map $\nu\colon S^d\rightarrow BSO(d)$ and determines a homotopy class $[\nu] \in \pi_{d-1}(SO(d))$. The function
$$q\colon \HH_d(M)\rightarrow \pi_{d-1}(SO(d)),\quad x\mapsto [\nu],$$
is well-defined and satisfies the following equations:
\begin{align} \label{eq:n-space1}
q(x+y) & = q(x) + q(y) + \langle x,y\rangle \partial(\iota_d), \\
HJ q(x) & = \langle x,x\rangle. \label{eq:n-space2}
\end{align}
Here $\partial(\iota_d)\in \pi_{d-1}(SO(d))$ denotes the image of the class of the identity map of $S^d$ under the boundary map of the long exact homotopy sequence associated to the fibration $SO(d)\rightarrow SO(d+1) \rightarrow S^d$. In the second row, $J\colon \pi_{d-1}(SO(d))\rightarrow \pi_{2d-1}(S^d)$ is the $J$-homomorphism and $H\colon \pi_{2d-1}(S^d) \rightarrow \ZZ$ the Hopf invariant. We refer to Wall's work \cite{Wall0, Wall1,Wall2,Wall3} for more details.

By a \emph{(geometric) quadratic module} we will mean the data $(H,\mu,q)$ of an abelian group $H$ together with a $(-1)^d$-symmetric non-degenerate bilinear form
$$\mu \colon H\tensor H \rightarrow \ZZ,\quad \mu(x,y) = \langle x,y\rangle,$$
and a function
$$q\colon H \rightarrow \pi_{d-1}(SO(d)),$$
such that the equations \eqref{eq:n-space1} and \eqref{eq:n-space2} are satisfied. A morphism of quadratic modules is a homomorphism that preserves $\mu$ and $q$. Let $Q(M)$ denote the quadratic module $(\HH_d(M),\mu,q)$ associated to a highly connected manifold $M$.

If the normal bundles $\nu$ of the embedded spheres $S^d \subset M$ are stably trivial, i.e., if the tangent bundle $\tau_M$ restricts to the trivial bundle on the embedded spheres, then the quadratic function $q$ maps into the subgroup of $\pi_{d-1}(SO(d))$ generated by $\partial(\iota_d)$. The $J$-homomorphism maps this subgroup isomorphically onto the subgroup of $\pi_{2d-1}(S^d)$ generated by the Whitehead product $[\iota_d,\iota_d]$. If $d$ is even, then $\partial(\iota_d)$ has infinite order, and in this case the quadratic function is determined by the self-intersection by \eqref{eq:n-space2}. If $d$ is odd and $\ne 1,3,7$, then $\partial(\iota_d)$ is a non-zero element of order $2$. In the cases $d=1,3,7$, we have $\partial(\iota_d) = 0$.

Let $N$ denote the manifold obtained by removing an open $2d$-disk from $M$. Then $N$ is homotopy equivalent to a wedge of spheres $\vee^n S^d$, where $n$ is the rank of $H=\HH_d(M)$, and we may identify its boundary $\partial N$ with $S^{2d-1}$. The homotopy type of $M$ is determined by the homotopy class $\omega\in \pi_{2d-1}(N)$ of the inclusion $S^{2d-1} = \partial N\rightarrow N$, which may be expressed in terms of the associated quadratic module as follows. Let $\alpha_i\colon S^d\rightarrow N$, for $i=1,\ldots,n$, represent a basis for $\pi_d(N)$ and let $e_1,\ldots,e_n$ be the corresponding basis for $H$. Then we have the equality
\begin{equation} \label{eq:attaching map}
\omega = \sum_{i<j} \langle e_i,e_j \rangle [\alpha_i,\alpha_j] + \sum_i \alpha_i\circ Jq(e_i)
\end{equation}
of elements in the homotopy group $\pi_{2d-1}(N)$, see \cite{Wall0}. We note furthermore that the rational homotopy groups $\pi_*^\QQ(N) = \pi_{*+1}(N)\tensor \QQ$, with the Whitehead product, is a free graded Lie algebra on the classes $\alpha_1,\ldots,\alpha_n$.

\begin{remark} \label{rmk:Jq}
The function $Jq\colon \HH_d(M)\rightarrow \pi_{2d-1}(S^d)$ may be defined in purely homotopy theoretical terms. In fact, given a homology class $x\in \HH_d(M)$ with Poincar\'e dual cohomology class $\xi\in \HH^d(M)$, one can check that $Jq(x)\in \pi_{2d-1}(S^d)$ is the obstruction for $\xi$ to be induced by a map $M\rightarrow S^d$. This obstruction has been studied by Kervaire and Milnor in \cite[\S8]{KM}, where it is denoted $\psi(\xi)$. In particular, the function $Jq$ is defined for all $d$. Note however that the function $q$ is defined only for $d\geq 3$, because one needs to be able to represent homology classes by embedded spheres (cf.~\cite{Haefliger}).
\end{remark}

\subsection{Mapping class groups} \label{sec:mcg}
The mapping class groups of highly connected manifolds may be described in terms of the associated quadratic modules, up to extensions. We will recall the calculation for the homotopy and block diffeomorphism mapping class groups of $N$ relative to its boundary, where as above $M$ is a closed $(d-1)$-connected $2d$-dimensional manifold and $N = M \setminus \interior D^{2d}$.

\begin{proposition}[See \cite{BM,Kreck}] \label{prop:mcg}
Let $d\geq 3$. There is a commutative diagram with exact rows
$$
\xymatrix{
0\ar[r] & \widetilde{K} \ar[d] \ar[r] & \pi_0 \tDiff_\partial(N) \ar[d] \ar[r] & \Aut(H,\mu,q) \ar[d] \ar[r] & 0 \\
0\ar[r] & K \ar[r] & \pi_0 \aut_\partial(N) \ar[r] & \Aut(H,\mu,Jq) \ar[r] & 0.
}
$$
The group $K$ is finite. The group $\widetilde{K}$ is finite except when $d\equiv 3 \mbox{(mod $4$)}$, in which case there is an exact sequence
$$\xymatrix{0\ar[r] & \Theta_{2d+1} \ar[r] & \widetilde{K} \ar[r] & H \ar[r] & 0,}$$
where $\Theta_{2d+1}$ denotes the group of $(2d+1)$-dimensional homotopy spheres.
\end{proposition}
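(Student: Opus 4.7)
The plan is to construct the two horizontal maps in the diagram and analyze them separately, invoking obstruction theory for the kernels, and Wall's classification together with Kreck's results for surjectivity in the diffeomorphism case.

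I would first define the maps by sending a (block) self-equivalence $f$ to its induced action $f_*$ on $H = \HH_d(N)$. For $f \in \aut_\partial(N)$, preservation of $\mu$ follows from Poincar\'e--Lefschetz duality for the pair $(N,\partial N)$, since $f$ fixes the relative fundamental class and the intersection form is $\langle x\cup y, [N,\partial N]\rangle$. Preservation of $Jq$ follows from $f_*(\omega)=\omega$, which is automatic as $\omega$ is the homotopy class of $\partial N\hookrightarrow N$: substituting into \eqref{eq:attaching map}, expanding with the identity $(\alpha+\beta)\circ\gamma = \alpha\circ\gamma+\beta\circ\gamma+[\alpha,\beta]\circ H(\gamma)$ and the Hopf-invariant relation $H(Jq(e_i))=\langle e_i,e_i\rangle$ from \eqref{eq:n-space2}, and reading off coefficients of basic products in the Hilton--Milnor decomposition of $\pi_{2d-1}(\vee^n S^d)$, one extracts exactly the condition that $f_*$ preserves $Jq$. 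For $f \in \tDiff_\partial(N)$ the induced map preserves $q$ itself because a block diffeomorphism acts on the normal bundle of any embedded $d$-sphere.

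For surjectivity of the bottom row, given $\varphi \in \Aut(H,\mu,Jq)$ I would choose lifts of $\varphi(e_i)$ in $\pi_d(N)$, assemble them into a self-equivalence $g$ of $\vee^n S^d \simeq N$ realizing $\varphi$; the same computation as above, now run backwards, gives $g_*(\omega) = \omega$, so $g|_{\partial N}$ is homotopic to the identity and the homotopy extension property deforms $g$ to an element of $\aut_\partial(N)$ with $f_*=\varphi$. Surjectivity of the top row, that every automorphism preserving $q$ is realized by a block diffeomorphism, is the geometric content of \cite[Proposition 3]{Kreck}.

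For the kernels, obstruction theory applied to self-maps of $(N,\partial N)$ inducing the identity on $\pi_d$ identifies $K$ with a quotient of $[N/\partial N, N]_* \cong \pi_{2d}(\vee^n S^d)$. By Hilton--Milnor, this group decomposes as a direct sum of $\pi_{2d}(S^{k(d-1)+1})$ over basic products of length $k\geq 1$; for $d\geq 3$ all summands with $k\geq 3$ vanish for dimensional reasons, while $\pi_{2d}(S^d)$ and $\pi_{2d}(S^{2d-1})$ are finite by Serre's theorem, so $K$ is finite. For $\widetilde{K}$, the forgetful map $\widetilde{K}\to K$ has kernel coming from the action of $\theta_{2d+1}$ by boundary connect-sum. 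A surgery-theoretic analysis (as in \cite{Kreck} and \cite{BM}) shows that $\widetilde{K}$ is finite except when $d\equiv 3 \pmod 4$, in which case $\theta_{2d+1}$ injects into $\widetilde{K}$ with cokernel isomorphic to $H$; the quotient map is a ``variation'' homomorphism recording how framings of embedded $d$-spheres can be twisted along the boundary cylinder, with an infinite contribution coming from the nontrivial class in $\pi_d(SO)$ that exists precisely for $d\equiv 3\pmod 4$. The main technical obstacle is this last identification of $\widetilde{K}$: it requires careful bookkeeping of tangential structures and the full force of the surgery-theoretic classification of diffeomorphisms of highly connected manifolds, and is where the exceptional congruence on $d$ genuinely intervenes.
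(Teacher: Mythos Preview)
The paper does not actually prove this proposition: it is stated as a recall of known results, with references to \cite{BM} and \cite[Proposition~3]{Kreck}, and no argument is given in the text. So there is no ``paper's own proof'' to compare against; your sketch is in effect a reconstruction of what those references contain.

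As a reconstruction it is broadly correct in shape, and the key ideas (reading off preservation of $\mu$ and $Jq$ from $f_*(\omega)=\omega$ via \eqref{eq:attaching map} and Hilton--Milnor, realizing automorphisms by self-maps of $\vee^n S^d$, invoking Kreck for the diffeomorphism side, and Serre finiteness for $K$) are the right ones. One technical slip: your identification $[N/\partial N,N]_*\cong \pi_{2d}(\vee^n S^d)$ is not correct as stated, since $N/\partial N\simeq M$ has $d$-cells as well as the top cell. The clean way to see that $K$ is a quotient of $\pi_{2d}(\vee^n S^d)$ is via the fibration
\[
\aut_\partial(N)\;\longrightarrow\;\aut_*(N)\;\longrightarrow\;\map_*(\partial N,N)
\]
(used later in the paper in the proof of Theorem~\ref{thm:main aut}): an element of $K$ is freely homotopic to the identity, and the obstruction to doing so rel $\partial N$ lies in $\pi_1\big(\map_*(\partial N,N),\iota\big)\cong \pi_{2d}(N)$. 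Your conclusion that this group is finite for $d\geq 3$ then follows exactly as you say. For $\widetilde{K}$ you correctly defer the substance to \cite{Kreck} and \cite{BM}; that is also what the paper does.
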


This description of $\pi_0\aut_\partial(N)$ is valid also for $d=2$, see Remark \ref{rmk:Jq}.

\begin{remark} \label{rem:arithmetic}
We note that $\Aut(H,\mu,Jq)$ is an arithmetic subgroup (in the sense of \cite{Serre}) of the algebraic group over $\QQ$ of automorphisms of the inner product space $(H^\QQ,\mu)$. The exact sequence of Proposition \ref{prop:mcg} shows that $\pi_0\aut_\partial(N)$ maps onto $\Aut(H,\mu,Jq)$ with finite kernel. This is related to the general result, due to Sullivan \cite[Theorem 10.3]{Sullivan} and Wilkerson \cite[Theorem B(2)]{Wilkerson}, that the homotopy mapping class group of a simply connected finite CW-complex is commensurable with an arithmetic group.
\end{remark}

\begin{example} \label{ex:hyperbolic}
For the manifold $S^d\times S^d$ the normal bundles of the embeddings $S^d\times * \subset S^d \times S^d$ and $*\times S^d\subset S^d\times S^d$ are trivial. Thus, if we let $e$ and $f$ be the classes in $\HH_d(S^d\times S^d)$ represented by these embeddings, then the quadratic module associated to $S^d\times S^d$ is given by $(\ZZ e \oplus \ZZ f,\mu,q)$, where
$$\langle e,e\rangle = 0,\quad \langle e,f\rangle = 1, \quad \langle f,f\rangle = 0,$$
$$q(a e +  b f) = a b \partial(\iota_d).$$
Connected sums of oriented manifolds correspond to orthogonal sums of quadratic modules; for $M$ and $N$ two highly connected manifolds, there is a natural isomorphism of quadratic modules $Q(M\# N) \cong Q(M)\oplus Q(N)$. It follows that the quadratic module associated to the manifold $M_g = \#^g S^d \times S^d$ is the hyperbolic module $(H_g,\mu,q)$: there is a basis $e_1,\ldots,e_g,f_1,\ldots,f_g$ for $H_g$ such that
$$\langle e_i,e_j\rangle = 0,\quad \langle e_i,f_j\rangle = \delta_{ij}, \quad \langle f_i,f_j\rangle = 0,$$
$$q(a_1e_1 + \cdots +a_g e_g + b_1f_1 + \cdots b_g f_g) = \sum_{i=1}^g a_i b_i \partial(\iota_d).$$

It follows that $\Aut(H_g,\mu,q) = \Aut(H_g,\mu,Jq)$ for the hyperbolic module. According to Proposition \ref{prop:mcg}, both groups $\pi_0\tDiff_\partial(M_{g,1})$ and $\pi_0\aut_\partial(M_{g,1})$ map onto $\Aut(H_g,\mu,q)$\footnote{We warn the reader that there is an erroneous claim in \cite{BM} (bottom of p.24 and onwards) that $\pi_0\aut_\partial(M_{g,1})$ maps onto $\Aut(H_g,\mu)$. The error comes from the incorrect inference ``if $q(e_i) = 0$ for all $i$, then $q=0$''. The mistake is harmless; replacing $\Aut(H_g,\mu)$ by $\Aut(H_g,\mu,q)$ in \cite{BM} (where $\mu$ is denoted $q$ and $q$ is denoted $\beta$) the arguments go through.}.
The automorphism group
$$\Gamma_g := \Aut(H_g,\mu,q)$$
admits the following concrete description. If $d$ is even, then $\Gamma_g$ is isomorphic to the automorphism group $\Aut(H_g,\mu)$, i.e., to the orthogonal group $O_{g,g}(\ZZ)$. If $d=1,3,7$, then $\Gamma_g$ is isomorphic to the symplectic group $\Sp_{2g}(\ZZ)$. If $d\ne 1,3,7$ is odd, then $\Gamma_g$ is isomorphic to the subgroup of $\Sp_{2g}(\ZZ)$ consisting of those symplectic matrices
$$\left( \begin{array}{cc} \alpha & \beta \\ \gamma & \delta \end{array} \right)$$
for which the diagonal entries of the $g\times g$-matrices $\gamma^t \alpha$ and $\delta^t \beta$ are even. For this last description, see e.g., \cite[\S3]{Bak}. In the notation of \cite{Bak}, $\Gamma_g$ is isomorphic to the automorphism group of the hyperbolic module in the category $Q^\lambda(A,\Lambda)$, where $A$ is the ring $\ZZ$ with trivial involution, $\lambda = (-1)^d$, and $\Lambda = 0$ if $d$ is even, $\Lambda = \ZZ$ if $d=1,3,7$ and $\Lambda = 2\ZZ$ if $d\ne 1,3,7$ is odd.
\end{example}

In what follows we will describe the rational homotopy type of the simply connected cover of $B\aut_\partial(N)$, viewed as a representation of the mapping class group.

\subsection{Equivariant rational homotopy type}
Let $M$ be a $(d-1)$-connected, $2d$-dimensional manifold and let $N$ be the manifold obtained by removing an open $2d$-disk from $M$. Let $(H,\mu,q)$ be the associated quadratic module and let $H^\QQ = H\tensor \QQ$. We may identify $\pi_d(N)$ with $H$, and the homotopy Lie algebra $\pi_*^\QQ(N) = \pi_{*+1}(N)\tensor \QQ$ with the free graded Lie algebra $\LL(H^\QQ[d-1])$, where the generators are put in degree $d-1$. Note also that $V=s^{-1}\widetilde{\HH}_*(N;\QQ)$ is the same as $H^\QQ[d-1]$.
It follows immediately from Corollary \ref{cor:coformal} that $B\aut_{\partial,\circ}(N)$ is coformal with rational homotopy Lie algebra
$$\pi_*(\aut_\partial(N))\tensor \QQ \cong \Der_\omega^+ \LL(H^\QQ[d-1]).$$
We next wish to identify the action of the mapping class group algebraically.

\begin{proposition} \label{prop:equivariant}
There is a $\pi_0\aut_\partial(N)$-equivariant isomorphism of graded Lie algebras
$$\pi_*(\aut_\partial(N))\tensor \QQ \cong \Der_\omega^+ \LL(H^\QQ[d-1]),$$
where the action on the right hand side is induced from the standard action of $\Aut(H,\mu,Jq)$ on $H$.
\end{proposition}

\begin{proof}
According to \eqref{eq:aut der lie}, there is an isomorphism of graded Lie algebras
\begin{equation} \label{eq:samelson}
\pi_*(\aut_*(N),id_N)\tensor \QQ \cong \Der^+ \LL_N.
\end{equation}

The monoid $\aut_\partial(N)$ is the fiber over the inclusion map $i\colon \partial N\rightarrow N$ of the fibration
$$i^*\colon \aut_*(N)\rightarrow \map_*(\partial N,N).$$
By naturality of \eqref{eq:aut der}, the map in rational homotopy induced by $i^*$ may be identified with the map
\begin{equation} \label{eq:model}
\varphi^*\colon \Der^+ \LL_N \rightarrow \Der_\varphi^+(\LL_{\partial N},\LL_N).
\end{equation}
As in the proof of Theorem \ref{thm:aut model}, the map \eqref{eq:model} is surjective. Hence, the rational homotopy exact sequence of the fibration
$$\aut_\partial(N) \rightarrow \aut_*(N) \rightarrow \map_*(\partial N,N)$$
splits into short exact sequences, and the rational homotopy groups of $\aut_\partial(N)$ may be identified with the kernel of \eqref{eq:model}. Thus, for $*>0$,
\begin{equation} \label{eq:identification}
\pi_*(\aut_\partial(N))\tensor \QQ \cong \Der_\omega^+ \LL(H^\QQ[d-1]).
\end{equation}
An argument is needed to show that this isomorphism commutes with Lie brackets. Since $\aut_\partial(N)\rightarrow \aut_*(N)$ is a map of monoids the map $\pi_*(\aut_\partial(N))\tensor \QQ \rightarrow \pi_*(\aut_*(N))\tensor \QQ$ commutes with Samelson products, and since the map is injective, we may calculate Lie brackets in $\pi_*(\aut_\partial(N))\tensor \QQ$ by passing to $\pi_*(\aut_*(N))\tensor \QQ$, where they are calculated in terms of commutators of derivations \eqref{eq:samelson}, so it does follow that \eqref{eq:identification} preserves Lie brackets.

By the same token, the action of $\pi_0(\aut_\partial(N))$ on $\pi_*(\aut_\partial(N))\tensor \QQ$ may be calculated by passing to the action of $\pi_0(\aut_*(N))$ on $\pi_*(\aut_*(N))\tensor \QQ$. The latter action may in turn be identified by exploiting the naturality of the isomorphism \eqref{eq:aut der}. Indeed, if $f\colon N\rightarrow N$ is a based homotopy self-equivalence, then a Lie model for $f$ is simply given by the isomorphism $\varphi_f\colon \LL_N\rightarrow \LL_N$ that is induced by $f$ in rational homotopy. By our previous considerations, cf.~\S\ref{sec:derivations}, the action of the class $[f]\in\pi_0(\aut_*(N))$ on $\pi_*(\aut_*(N))\tensor \QQ$ is induced by the self-map of $\aut_*(N)$ that sends $g$ to $fgf^{-1}$, where $f^{-1}$ is a choice of homotopy inverse of $f$. From the naturality of the isomorphism \eqref{eq:aut der} it follows that the action of $[f]$ on $\Der^+(\LL_N)$ is given by $\theta \mapsto \varphi_f \circ \theta \circ \varphi_f^{-1}$.
\end{proof}

\begin{corollary} \label{thm:finite type}
The rational cohomology groups of $B\aut_\partial(N)$ are finite dimensional in each degree.
\end{corollary}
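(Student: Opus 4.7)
The plan is to assemble Corollary~\ref{thm:finite type} from the three ingredients provided by Theorem~\ref{thm:main aut}. Write $X = B\aut_\partial(N)$ and $\widetilde{X}$ for its simply connected cover.

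First I would verify that $\pi_*^\QQ(\widetilde{X})$ is finite-dimensional in each degree. By part~(3) of Theorem~\ref{thm:main aut}, this Lie algebra is $\Der_\omega^+ \LL(H^\QQ)$, a subspace of $\Der \LL(H^\QQ)$. A derivation of $\LL(H^\QQ)$ is determined by its values on the finitely many generators $\alpha_1,\ldots,\alpha_n$, each of which sits in degree $d-1$; hence a derivation of degree $k$ is an element of $H^\QQ{}^\vee \tensor \LL(H^\QQ)_{d-1+k}$. Since $H^\QQ$ is finite-dimensional and concentrated in a single positive degree, $\LL(H^\QQ)$ is finite-dimensional in every degree, so the same holds for $\Der \LL(H^\QQ)$ and a fortiori for $\Der_\omega^+ \LL(H^\QQ)$.

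Next I would upgrade this to finite-dimensionality of $\HH^*(\widetilde{X};\QQ)$ in each degree. The Quillen spectral sequence \eqref{eq:quillen ss} has $E_{p,q}^2 = \HH_{p,q}^{CE}(\pi_*^\QQ(\widetilde{X}))$. For fixed total degree $n$, only finitely many bidegrees $(p,q)$ with $p+q=n$ contribute, and each entry $(\Lambda^p s\pi_*^\QQ(\widetilde{X}))_q$ is finite-dimensional since $\pi_*^\QQ(\widetilde{X})$ is degree-wise finite-dimensional and concentrated in positive degrees. The spectral sequence converges (the filtration being finite in each degree by positivity), so $\HH_n(\widetilde{X};\QQ)$ is finite-dimensional for every $n$.

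Finally, I would pass from $\widetilde{X}$ to $X$ via the Serre spectral sequence of the fibration $\widetilde{X} \to X \to B\pi_1(X)$:
$$E_2^{p,q} = \HH^p(\pi_1(X);\HH^q(\widetilde{X};\QQ)) \Rightarrow \HH^{p+q}(X;\QQ).$$
By part~(1) of Theorem~\ref{thm:main aut}, $\pi_1(X)$ sits in an extension of the arithmetic group $\Aut(H,\mu,Jq)$ by a finite kernel. Arithmetic subgroups of algebraic groups admit torsion-free subgroups of finite index with classifying spaces of finite type (Borel--Serre); consequently they have finite-dimensional rational cohomology with coefficients in any finite-dimensional rational representation, and transfer through the finite kernel extends this to $\pi_1(X)$. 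Applied to the finite-dimensional $\pi_1(X)$-modules $\HH^q(\widetilde{X};\QQ)$ from the previous step, this shows that each $E_2^{p,q}$ is finite-dimensional. Since only finitely many bidegrees contribute to each total degree, $\HH^n(X;\QQ)$ is finite-dimensional.

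The main obstacle is the last step: securing the finite-type cohomological property of $\pi_1(X)$ with twisted coefficients. The existence of cofinite torsion-free subgroups with finite-type classifying spaces for the relevant arithmetic groups is the crucial input, and everything else is a straightforward bookkeeping of the spectral sequences together with the derivation description of $\pi_*^\QQ(\widetilde{X})$.
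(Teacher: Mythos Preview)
Your proof is correct and follows essentially the same route as the paper: show $\pi_*^\QQ(\widetilde X)\cong\Der_\omega^+\LL(H^\QQ)$ is degree-wise finite-dimensional, deduce the same for $\HH^*(\widetilde X;\QQ)$, then feed this into the universal cover spectral sequence together with the finiteness of $\HH^p(\pi_1(X);V)$ for finite-dimensional $V$ coming from arithmeticity. The paper is terser (it simply asserts $\pi_1(X)$ is arithmetic and invokes Theorem~\ref{thm:arithmetic groups}), whereas you spell out the Quillen spectral sequence step and the Borel--Serre/transfer argument, but the logical skeleton is identical.
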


\begin{proof}
Let $X = B\aut_\partial(N)$. The graded Lie algebra $\Der_\omega^+ \LL(H^\QQ[d-1])$ is finite dimensional in each degree, so the rational homotopy groups of $\cX$ are finite dimensional in each degree. Hence, the same is true of the rational cohomology groups of $\cX$. As pointed out in Remark \ref{rem:arithmetic}, the group $\pi_1(X)$ maps onto an arithmetic group with finite kernel.
A spectral sequence argument together with a certain finiteness property of the cohomology of arithmetic groups (see Theorem \ref{thm:arithmetic groups}) then shows that the cohomology $\HH^p(\pi_1(X);V)$, with coefficients in any finite dimensional representation $V$, is finite dimensional. Thus, in the universal cover spectral sequence,
$$E_2^{p,q} = \HH^p(\pi_1(X);\HH^q(\cX;\QQ))\Rightarrow \HH^{p+q}(X;\QQ),$$
each term $E_2^{p,q}$ is finite dimensional. It follows that $\HH^k(X;\QQ)$ is finite dimensional for every $k$.
\end{proof}

\subsection{Free and based homotopy automorphisms}
We now turn to the rational homotopy theory of the classifying spaces of the monoids $\aut(M)$ and $\aut_*(M)$ of free and base-point preserving homotopy automorphisms, respectively, for highly connected closed manifolds $M$.

Let $M$ be a closed $(d-1)$-connected, $2d$-dimensional manifold, and let $N$ be the the result of removing an open $2d$-disk from $M$. Recall from \S\ref{sec:wall} the definition of the quadratic module $(H,\mu,q)$ and the homotopy class $\omega\in \pi_{2d-1}(N)$. Let $n$ be the rank of $H$. The rational homotopy groups $\pi_*^\QQ(N) = \pi_{*+1}(N)\tensor \QQ$, with the Whitehead product, may be identified with the free graded Lie algebra $\LL(\alpha_1,\ldots,\alpha_n)$ over $\QQ$ on classes $\alpha_1,\ldots,\alpha_n$ of degree $d-1$. The homotopy class of the inclusion of the boundary is, up to a sign, represented by the element
$$\omega = \frac{1}{2} \sum_i[\alpha_i^\#,\alpha_i],$$
cf.~Theorem \ref{thm:stasheff}. The rational homotopy groups of the closed manifold $M$ may be identified with the quotient graded Lie algebra,
$$\pi_*^\QQ(M) \cong  \LL(\alpha_1,\ldots,\alpha_n)/(\omega).$$

\begin{theorem} \label{thm:fb}
Let $M$ be a closed $(d-1)$-connected $2d$-dimensional manifold, where $d\geq 2$, and consider the classifying spaces
$$X = B\aut(M),\quad X_*= B\aut_*(M).$$
If $\rank H>2$, then

\begin{enumerate}
\item  \label{eq:one} Both groups $\pi_1(X)$ and $\pi_1(X_*)$ surject onto $\Aut(H,\mu,Jq)$ with finite kernel.
\item \label{eq:two} The Quillen dg Lie algebras $\lambda(\cX)$ and $\lambda(\cX_*)$ are formal.
\item \label{eq:three} There are $\pi_1$-equivariant isomorphisms of graded Lie algebras
\begin{align*}
\pi_*^\QQ(\cX) & \cong \OutDer^+\big(\LL(\alpha_1,\ldots,\alpha_n)/(\omega) \big), \\
\pi_*^\QQ(\cX_*) & \cong \Der^+\big(\LL(\alpha_1,\ldots,\alpha_n)/(\omega) \big).
\end{align*}
\end{enumerate}
\end{theorem}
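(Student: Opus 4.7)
The plan is to parallel the proof of Theorem \ref{thm:main aut}, this time working with an absolute cofibrant Lie model of $M$ in place of the relative model of $(N,\partial N)$. Take
$$\LL_M=\bigl(\LL(\alpha_1,\ldots,\alpha_n,\gamma),\ d\gamma=\omega\bigr),$$
with $|\alpha_i|=d-1$ and $|\gamma|=2d-1$. A direct calculation shows $H_*(\LL_M)\cong \LL(\alpha_1,\ldots,\alpha_n)/(\omega)\cong \pi_*^{\QQ}(M)$, so $\LL_M$ is a cofibrant Lie model of $M$. By Corollary \ref{cor:tanre}, the dg Lie algebras $\Der^+\LL_M$ and $(\Der\LL_M\dquot\ad\LL_M)^+$ are Lie models for $B\aut_*(M)\langle 1\rangle$ and $B\aut(M)\langle 1\rangle$, respectively.

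For (2) and (3) in the $\cX_*$ case, I would mimic the $L_\infty$-morphism construction used in Theorem \ref{thm:main aut}. The surjective quasi-isomorphism $p\colon \LL_M\to \LL/(\omega)$ sending $\gamma\mapsto 0$ produces an $L_\infty$-morphism $\psi\colon \Der^+(\LL/(\omega))\to \Der^+\LL_M$ fitting into
$$\Der^+\LL_M\xrightarrow{p_*}\Der_p^+(\LL_M,\LL/(\omega))\xleftarrow{p^*}\Der^+(\LL/(\omega))$$
with $p_*\circ\psi=p^*$. Cofibrancy of $\LL_M$ makes $p_*$ a quasi-isomorphism, so $\psi$ is one iff $p^*$ is. The quasi-isomorphism of $p^*$ is the technical heart of the argument: the same bracketing-map analysis as in Theorem \ref{thm:main aut} reduces it to surjectivity of $H^{\QQ}\tensor \LL^{\geq 2}(H^{\QQ})\to \LL^{\geq 3}(H^{\QQ})$, which follows from non-degeneracy of the intersection form. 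Since $\Der^+(\LL/(\omega))$ carries the trivial differential, this simultaneously yields formality of $\lambda(\cX_*)$ and the isomorphism $\pi_*^{\QQ}(\cX_*)\cong \Der^+(\LL/(\omega))$.

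For the $\cX$ case, I would analyze the mapping cone $\Der\LL_M\dquot\ad\LL_M$ via its long exact sequence in homology. Using the identifications above, this becomes the sequence associated to $\ad\colon \LL/(\omega)\to \Der(\LL/(\omega))$. The hypothesis $\rank H>2$ should imply that $\LL/(\omega)$ has trivial center (a Hall-basis type argument exploiting the non-degenerate quadratic defining relation $\omega$), so $\ad$ is injective in positive degrees and the sequence collapses to $\pi_*^{\QQ}(\cX)\cong \OutDer^+(\LL/(\omega))$. Formality of $\lambda(\cX)$ is then inherited from that of $\lambda(\cX_*)$ by extending $\psi$ naturally to the mapping cone.

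Finally, (1) follows from the analogue of Proposition \ref{prop:mcg} for closed manifolds: the identity $\pi_0\aut_*(M)\cong \pi_0\aut(M)$ holds by simple connectivity of $M$, and the map $\pi_0\aut_*(M)\to \Aut(H,\mu,Jq)$ can be analyzed through the fibration $\aut_\partial(N)\to \aut_*(M)\to \aut_*(D^{2d},\partial D^{2d})$ and Proposition \ref{prop:mcg}. Equivariance in (3) is obtained by the Samelson-product/naturality argument from the end of the proof of Theorem \ref{thm:main aut}, transcribed with $\LL_M$ in place of $\LL$. I expect the main obstacles to be the quasi-isomorphism of $p^*$ (where non-degeneracy of $\mu$ enters) and the verification that the center of $\LL/(\omega)$ vanishes for $\rank H>2$.
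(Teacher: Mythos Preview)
Your proposal follows the same architecture as the paper: the cofibrant model $\LL_M=(\LL(\alpha_1,\ldots,\alpha_n,\gamma),d\gamma=\omega)$, the appeal to Corollary~\ref{cor:tanre}, the construction of an $L_\infty$-quasi-isomorphism from $\Der^+ L$ (with $L=\LL(H^\QQ)/(\omega)$) to $\Der^+\LL_M$ via a contraction, and the passage to the mapping cone using triviality of the center of $L$. The paper organizes the middle steps as Lemmas~\ref{lemma:contraction} and~\ref{lemma:L-infinity morphism} and Theorem~\ref{thm:smg}, with the key technical verification isolated as Lemma~\ref{lemma:f^*}.

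Two sub-arguments differ and are worth flagging. First, showing that $p^*\colon\Der L\to\Der_p(\LL_M,L)$ is a quasi-isomorphism needs more than surjectivity of the bracketing map: the target complex is $L^n\xrightarrow{\partial} L$ with $\partial(\zeta_1,\ldots,\zeta_n)=\sum_{i,j}\langle e_i,e_j\rangle[\alpha_i,\zeta_j]$, and one must also identify $\ker\partial$ with $\Der L$ (this uses that $\omega$ is the \emph{only} relation, so a tuple extends to a derivation of $L$ precisely when it annihilates $\omega$). Second, for the vanishing of the center of $L$ when $\rank H>2$, the paper does not attempt a Hall-basis computation but instead invokes a result of B{\o}gvad (Proposition~\ref{prop:center}): a graded Lie algebra of finite global dimension with nonzero Euler characteristic has trivial center, and Koszul duality between $\HH^*(M;\QQ)$ and $\pi_*^\QQ(M)$ gives global dimension~$2$ and $\chi(L)=2-n$. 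This is considerably cleaner than a direct combinatorial argument in the quotient Lie algebra. For part~(1) the paper simply cites \cite{BM,Baues}.
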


Statement \eqref{eq:one} about the homotopy mapping class groups $\pi_1(X) = \pi_0\aut(M)$ and $\pi_1(X_*) = \pi_0\aut_*(M)$ was established in \cite{BM}, see also \cite{Baues}. Statements \eqref{eq:two} and \eqref{eq:three} are consequences of the following general result.

\begin{theorem} \label{thm:smg}
Let $M$ be a simply connected space of finite $\QQ$-type with homotopy Lie algebra $L = \pi_*^\QQ(M)$. Assume that $M$ is coformal and let $f\colon \LL\to L$ be the minimal model. Suppose that
\begin{enumerate}
\item The graded Lie algebra $L$ has trivial center.
\item The map $f^*\colon \Der L \to \Der_f(\LL,L)$ induces an isomorphism in homology in non-negative degrees.
\end{enumerate}
Then the universal simply connected fibration with fiber $M$,
$$M \to E_M \to B_M,$$
is rationally modeled by the short exact sequence of graded Lie algebras
$$0\to L \to \Der^+ L \to \Der^+ L/ \ad L\to 0.$$
\end{theorem}

\begin{proof}
Consider the pullback of chain complexes
\begin{equation} \label{eq:free pullback}
\xymatrix{\Der f \ar[d]^-{pr_2} \ar[r]^-{pr_1} & \Der \LL \ar[d]^-{f_*} \\
\Der L \ar[r]^-{f^*} & \Der_f(\LL,L),}
\end{equation}
where $\Der f$ is the chain complex of pairs $(\theta,\eta)$ of derivations $\theta\in \Der \LL$ and $\eta \in \Der L$ such that $f^*(\theta) = f_*(\eta)$. The coordinatewise Lie bracket on $\Der f$ makes it into a dg Lie algebra, and the chain maps $pr_1$ and $pr_2$ become morphisms of dg Lie algebras. Since $f\colon \LL\to L$ is a surjective quasi-isomorphism and $\LL$ is cofibrant, Lemma \ref{lemma:qi} implies that the chain map $f_*$ is a surjective quasi-isomorphism. Since \eqref{eq:free pullback} is a pullback, it follows that $pr_1$ is a surjective quasi-isomorphism as well. By hypothesis, $f^*$ induces an isomorphism in homology in non-negative degrees.

As the reader may check, a morphism of dg Lie algebras may be defined by
$$\ad \colon \LL \to \Der f,\quad \ad(\xi) = (\ad_{\xi},\ad_{f(\xi)}).$$
Its mapping cone $\Der f\dquot \ad \LL$ admits a dg Lie algebra structure such that the map $\Der f \to \Der f \dquot \ad \LL$ becomes a morphism of dg Lie algebras. After taking positive truncations, we get a commutative diagram of dg Lie algebras where all vertical morphisms are quasi-isomorphisms
\begin{equation} \label{eq:zig-zag}
\xymatrix{
\LL \ar[rr]^-{\ad} && \Der^+ \LL \ar[r] & \Der^+ \LL \dquot \ad \LL \\
\LL \ar@{=}[u] \ar[d]_-\sim^-f \ar[rr]^-{\iota} && \Der^+ f \ar[u]_{pr_1}^\sim \ar[d]_-\sim^-{pr_2} \ar[r] & \Der^+ f \dquot \ad \LL \ar[u]^-\sim \ar[d]_-\sim \\
L \ar[rr]^-{\ad} && \Der^+ L \ar[r] & \Der^+ L \dquot \ad L.}
\end{equation}
By Corollary \ref{cor:tanre}, the top row is a model for the universal simply connected fibration with fiber $M$.

If the center of $L$ is trivial, then the morphism $\ad \colon L\to \Der^+ L$ is injective and the natural morphism $\Der^+ L \dquot \ad L\to \Der^+ L/ \ad L$ is a surjective quasi-isomorphism. Hence,  in this case the bottom row of \eqref{eq:zig-zag} is weakly equivalent to the short exact sequence
$$0\to L \to \Der^+ L \to \Der^+ L/\ad L \to 0.$$
\end{proof}

To finish the proof of Theorem \ref{thm:fb}, we will verify the hypotheses of Theorem \ref{thm:smg}. This is done in Proposition \ref{prop:center} and Lemma \ref{lemma:f^*} below. For $M$ as in Theorem \ref{thm:fb}, we have $L = \pi_*^\QQ(M) = \LL(\alpha_1,\ldots,\alpha_n)/(\omega)$, and a cofibrant dg Lie algebra model $M$ is given by
$$\LL = \big(\LL(\alpha_1,\ldots,\alpha_n,\rho),\delta\big),\quad \delta(\rho) = \omega,\quad \delta(\alpha_i) = 0.$$
The generators $\alpha_i$ have degree $d-1$ and the generator $\rho$ has degree $2d-1$. The evident morphism of dg Lie algebras $f\colon \LL\rightarrow L$ is a quasi-isomorphism.

\begin{proposition} \label{prop:center}
Let $M$ be a $(d-1)$-connected $2d$-dimensional closed manifold where $d\geq 2$ and let $n=\rank \HH_d(M)$. If $n > 2$ then the homotopy Lie algebra $\pi_*^\QQ(M)$ has trivial center.
\end{proposition}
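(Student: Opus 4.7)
The plan is to exploit the word-length grading on $L = \pi_*^\QQ(M) \cong \LL(H^\QQ)/(\omega)$. Since $\omega \in \LL^2(H^\QQ)$ is quadratic, the Lie ideal $(\omega)$ is homogeneous with respect to word-length, and $L$ inherits a word-length grading $L = \bigoplus_{m\ge 1} L^{(m)}$. Because all generators of $\LL(H^\QQ)$ lie in the same homological degree $d-1$, the word-length grading is proportional to the homological grading ($L^{(m)} = L_{m(d-1)}$), so any homogeneous central element is automatically word-length homogeneous. It thus suffices to show $Z(L) \cap L^{(m)} = 0$ for every $m\ge 1$.

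The case $m = 1$ is the crux of the argument and is where the hypothesis $n > 2$ is used. A central $z = \sum_i c_i \alpha_i \in L^{(1)} = H^\QQ$ must satisfy $[z,\alpha_j] \in (\omega)\cap \LL^2(H^\QQ)$ for every $j$. Since $\omega$ is the minimal-word-length element of its ideal, $(\omega)\cap \LL^2(H^\QQ) = \QQ\cdot\omega$, so $[z,\alpha_j] = \lambda_j \omega$ for scalars $\lambda_j\in\QQ$. Identifying $\LL^2(H^\QQ)$ with $\Lambda^2 H^\QQ$ (for $d$ odd) or $S^2 H^\QQ$ (for $d$ even), the element $[z,\alpha_j]$ corresponds to a bilinear form of rank at most $2$, whereas $\omega$ is non-degenerate of rank $n>2$. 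This forces $\lambda_j = 0$ for every $j$, hence $[z,\alpha_j] = 0$ in $\LL^2(H^\QQ)$ for every $j$, and a short linear-algebra argument (using $n\ge 2$) then yields $z = 0$.

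For $m\ge 2$, the plan is to combine the $m=1$ result with a structural argument based on the formality of $M$ and Poincaré duality. Quillen's isomorphism $\HH^{CE}_*(L)\cong \HH_*(M;\QQ)$, together with the fact that $\HH_*(M;\QQ)$ is concentrated in degrees $0, d, 2d$, imposes severe restrictions on $L$: for a central $z$, multiplication by $sz$ is a chain endomorphism of $C^{CE}_*(L)$ inducing a degree-$(|z|+1)$ map on homology, but since $|z|+1 = m(d-1)+1$ lies in $\{d,2d\}$ only for $m=1$ when $d\ge 3$, this chain map is null-homologous for $m\ge 2$ and $sz$ itself must be a boundary. Tracing through the Chevalley--Eilenberg differential, one then reduces the vanishing of $Z(L)^{(m)}$ for $m\ge 2$ to the already established case $m=1$. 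An alternative route is to pass to the universal enveloping algebra $UL = T(H^\QQ)/(\omega)$, whose Koszul dual is $\HH^*(M;\QQ)$ (a Poincaré duality algebra), and use that $Z(L) = L\cap Z(UL)$ together with the Koszul structure to exclude nonzero central elements.

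The main obstacle is clearly the higher word-length case: the case $m = 1$ is elementary once non-degeneracy of $\omega$ and the hypothesis $n>2$ are brought to bear, but propagating this to all word-lengths requires a global argument. The cleanest such argument is likely the one via the Quillen spectral sequence and Poincaré duality of $\HH^*(M;\QQ)$, which leverages the formality of $M$ asserted in Theorem \ref{thm:main aut} \eqref{thm:formality} for $N$, and which extends to $M$ because $M$ is $(d-1)$-connected of dimension $2d$.
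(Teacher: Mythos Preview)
Your $m=1$ argument is correct and the rank comparison is the right idea there. The gap is in the $m\ge 2$ step. The observation that multiplication by $sz$ on $C^{CE}_*(L)$ is a chain map inducing zero on $\HH_*(M;\QQ)$ only tells you that $sz$ is a boundary in $\Lambda sL$, equivalently that $z\in [L,L]$. But for $m\ge 2$ this is vacuous: $L$ is generated in word-length~$1$, so $L^{(m)}\subset [L,L]$ automatically. No information has been extracted, and there is no visible ``reduction to the case $m=1$''. Even upgrading to a null-homotopy (valid over $\QQ$) does not help: evaluating the homotopy on higher-weight elements just says that various wedges $sz\wedge sx_1\wedge\cdots$ are boundaries, and unwinding these conditions does not single out $z$ among arbitrary elements of $[L,L]$. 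The sentence ``tracing through the Chevalley--Eilenberg differential'' is exactly where the missing idea would have to go, and it is not supplied.

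The paper's proof takes a completely different and much shorter route. It invokes a result of B{\o}gvad: a positively graded Lie algebra $L$ of finite global dimension with nontrivial center must have $\chi(L)=\sum_i(-1)^i\dim_\QQ\Ext^i_{UL}(\QQ,\QQ)=0$. Koszul duality between $UL$ and $\HH^*(M;\QQ)$ identifies $\Ext^i_{UL}(\QQ,\QQ)$ with $\HH^{id}(M;\QQ)$, so the global dimension is~$2$ and $\chi(L)=2-n$, which is nonzero for $n>2$. Your suggested ``alternative route'' through $UL$ and its Koszul dual is in fact pointing in the right direction, but the key input you are missing is precisely this Euler-characteristic obstruction (or an equivalent argument showing that a nonzero central element would force $UL$ to have the wrong homological invariants). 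Without it the Koszul remark is not a proof.
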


\begin{proof}
We invoke \cite[Proposition 2]{Bog} which says that a graded Lie algebra $L$ of finite global dimension has non-trivial center only if the Euler characteristic $\chi(L)$ is zero, where
$$\chi(L) = \sum_i (-1)^i \dim_\QQ \Ext_{UL}^i(\QQ,\QQ),$$
and $UL$ denotes the universal enveloping algebra of $L$. For $L = \pi_*^\QQ(M)$, we have that $\Ext_{UL}^i(\QQ,\QQ) \cong \HH^{id}(M;\QQ)$, because $\HH^*(M;\QQ)$ is Koszul dual to $\pi_*^\QQ(M)$ (see \cite{Berglund}). It follows that $L$ has global dimension $2$ and that $\chi(L) = 2-n$, whence $L$ must have trivial center whenever $n > 2$.
\end{proof}

\begin{lemma} \label{lemma:f^*}
The chain map $f^* \colon \Der L \rightarrow \Der_f(\LL,L)$ induces an isomorphism in homology in non-negative degrees.
\end{lemma}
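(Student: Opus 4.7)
The plan is to give an explicit description of both complexes and to reduce the claim to a single surjectivity statement that hinges on non-degeneracy of the intersection form.

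First I would unpack the definitions. Since $d_L=0$ and $\LL=(\LL(W\oplus\QQ\rho),\delta)$ with $W=\operatorname{span}\{\alpha_i\}$, $\delta\alpha_i=0$, and $\delta\rho=\omega$, a derivation on $\LL$ (resp.\ an $f$-derivation from $\LL$ to $L$) is determined by its values on generators. Thus $\Der_f(\LL,L)$ identifies, as a graded vector space, with $\Hom(W,L)\oplus\Hom(\QQ\rho,L)$, and the only non-trivial component of its differential is a map
$$\omega_*\colon\Hom(W,L)\to\Hom(\QQ\rho,L),\qquad \theta_1\mapsto \bigl(\rho\mapsto\pm\theta_1(\omega)\bigr),$$
where $\theta_1(\omega)$ is computed via the $f$-derivation rule applied to $\omega=\tfrac12\sum\langle e_i,e_j\rangle[\alpha_i,\alpha_j]$. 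Likewise $\Der L$ has trivial differential, and a derivation on $L=\LL(W)/(\omega)$ is the same as an element of $\Hom(W,L)$ whose extension as an $\LL(W)$-derivation into $L$ kills the relation $\omega$; equivalently, $\Der L\cong \ker\omega_*$ for the analogous map $\omega_*\colon\Hom(W,L)\to L$. Direct inspection shows that $f^*\theta=\theta\circ f$ sends $\theta\in\Der L$ to $(\theta|_W,0)$, so $f^*$ identifies $\Der L$ with the evident cycle subcomplex.

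Computing the homology of this two-term complex yields
$$\HH_p(\Der_f(\LL,L))\cong\ker(\omega_*)_p\oplus L_{p+2d-1}/\omega_*(\Hom(W,L)_{p+1}),$$
and $f^*$ lands precisely in the first summand, inducing the identity there. So the lemma reduces to showing that the cokernel summand vanishes whenever $p\ge 0$, i.e., that $\omega_*$ surjects onto $L_q$ for every $q\ge 2d-1$. This is where non-degeneracy of $\mu$ enters. After expanding the $f$-derivation rule and folding the two symmetrization terms together using the $(-1)^d$-symmetry of $\mu$, one finds (up to a non-zero rational scalar) that $\omega_*(\theta_1)=\sum_{i,j}\langle e_i,e_j\rangle[\theta_1(\alpha_i),\bar\alpha_j]$. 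Any element of $L$ in degree $>d-1$ is, by graded anti-commutativity together with the fact that $L$ is generated by the $\bar\alpha_k$, a finite sum of terms of the form $[y,\bar\alpha_k]$ with $y\in L$; and if $(\mu^{ij})$ denotes the matrix inverse of $(\langle e_i,e_j\rangle)$, then setting $\theta_1(\alpha_i):=\mu^{ik}y$ gives $\omega_*(\theta_1)=[y,\bar\alpha_k]$ after a brief calculation. Hence $\omega_*$ hits every $L_q$ with $q>d-1$, and in particular every $L_q$ with $q\ge 2d-1$, as required.

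The only genuinely delicate piece is the sign and scalar bookkeeping that packages the two $f$-derivation terms into a non-zero multiple of the displayed bracketing formula; I would handle this separately for $d$ odd (where $\mu$ is skew-symmetric) and $d$ even (where $\mu$ is symmetric), checking non-vanishing of the prefactor in each case.
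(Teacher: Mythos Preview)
Your proposal is correct and follows essentially the same approach as the paper: both identify $\Der_f(\LL,L)$ with the two-term complex $\Hom(W,L)\to L$ given by $\theta\mapsto\theta(\omega)$, identify $\Der L$ with the kernel via $f^*$, and reduce the lemma to surjectivity of this bracketing map, which follows from non-degeneracy of the intersection form. Your write-up is in fact more explicit than the paper's on the surjectivity step (the paper simply asserts it), and your explicit preimage via the inverse matrix $(\mu^{ij})$ is exactly the right argument.
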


\begin{proof}
Recall that $\LL$ is the free graded Lie algebra on generators $\alpha_1,\ldots,\alpha_n$ in degree $d-1$ and $\rho$ in degree $2d-1$, with differential $d\alpha_i = 0$ and $d\rho  = \omega$. Note that $L$ is concentrated in degrees $r(d-1)$, for $r\geq 1$. The chain complex $\Der_f(\LL,L)$ is spanned by elements of the form $\zeta \frac{\partial}{\partial \alpha_i^\#}$ in degrees congruent to $0$ modulo $(d-1)$ and $\xi \frac{\partial}{\partial \rho}$ in degrees congruent to $-1$ modulo $(d-1)$, where $\zeta,\xi\in L$. The differential is given by $D\big( \xi \frac{\partial}{\partial \rho} \big) = 0$ and 
\begin{equation} \label{eq:differential}
D\big(\zeta \frac{\partial}{\partial \alpha_i^\#}\big) = \pm [\zeta,f(\alpha_i)] \frac{\partial}{\partial \rho}.
\end{equation}
One checks that the image of the map $f^*\colon \Der L \rightarrow \Der_f(\LL,L)$ is precisely the kernel of $D$ in degrees congruent to $0$ modulo $(d-1)$. For every element $\xi\in L_{r(d-1)}$, we have a cycle $\xi \frac{\partial}{\partial \rho}$ in degree $(r-2)(d-1)-1$. If $r\geq 2$, then $\xi$ is decomposable and \eqref{eq:differential} shows that $\xi \frac{\partial}{\partial \rho}$ is in the image of $D$. For $r=1$, it represents a non-trivial homology class outside the image of $f^*$, but this is harmless because it is of negative degree.
\end{proof}

\section{On the structure of derivation Lie algebras}
In this section we will analyze the structure of the graded Lie algebra $\Der_{\omega} \LL(V)$ associated to a graded anti-symmetric inner product space $V$. This will be an essential ingredient both in the proof of homological stability for $B\aut_\partial(M_{g,1})$ and $B\tDiff_\partial(M_{g,1})$ and for the calculation of the stable cohomology.

\subsection{$\SP$-modules} \label{sec:SPD}
Recall that a graded anti-symmetric inner product space of degree $D$ is a finite dimensional graded vector space $V$ together with a non-degenerate bilinear form of degree $-D$,
$$V\tensor V \to \QQ,\quad x\tensor y \mapsto \langle x,y\rangle,$$
such that $\langle x,y \rangle = - (-1)^{|x||y|} \langle y,x \rangle$ for all $x,y\in V$.

A morphism $f\colon V\to W$ of graded anti-symmetric inner product spaces of the same degree is a linear map of degree $0$ such that
$$\langle fx,fy\rangle_W = \langle x,y\rangle_V$$
for all $x,y\in V$. Let $\SP^D$ denote the category of graded anti-symmetric inner product spaces of degree $D$.
The adjoint of a morphism $f\colon V\to W$ is the unique linear map $f^!\colon W\rightarrow V$ such that
$$\langle f^! x, y \rangle_V = \langle x,fy\rangle_W$$
for all $x\in W$, $y\in V$. Clearly $f^! f = 1_V$. In particular, every morphism $f\colon V\rightarrow W$ is injective and there is an isomorphism of inner product spaces
$$W\cong V \oplus V^\perp, \quad x\mapsto (f^!(x), x - f f^!(x)),$$
where
$$V^\perp = \set{x\in W}{\mbox{$\langle x,fy \rangle_W = 0$ for all $y\in V$}} = \ker f^!.$$
(Note however that $f^!$ is not a morphism in $\SP^D$.)

We define an $\SP^D$-module in a category $\VV$ to be a functor $\SP^D\to \VV$.
In what follows we will show that $\Der_\omega \LL(V)$ is the value at $V$ of an $\SP^D$-module in graded Lie algebras.

Recall that $\LL(V)$ denotes the free graded Lie algebra on $V$. For a linear map $f\colon V\rightarrow W$, we let $\LL(f)\colon \LL(V) \rightarrow \LL(W)$ denote the induced morphism of graded Lie algebras. Given a morphism $f\colon V\rightarrow W$ in $\SP^D$, we define a morphism of graded Lie algebras
$$\chi_f\colon \Der \LL(V) \rightarrow \Der \LL(W)$$
as follows. For $\theta\in \Der\LL(V)$, we let $\chi_f(\theta)\in \Der \LL(W)$ be the unique derivation that satisfies
$$\chi_f(\theta)(x) = \LL(f) \theta (f^! x)$$
for all $x\in W$. It is easy to check that $\chi_g \chi_f = \chi_{gf}$ when the composition $gf$ is defined. It is perhaps not evident from the definition that $\chi_f$ is a morphism of Lie algebras, but this will be verified below.

\begin{proposition} \label{prop:der functor}
If $f\colon V\rightarrow W$ is a morphism in $\SP^D$, then $\chi_f\colon \Der \LL(V) \rightarrow \Der \LL(W)$ is an injective morphism of graded Lie algebras.
\end{proposition}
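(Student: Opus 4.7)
The plan is to isolate a single intertwining identity that makes both assertions fall out formally. Specifically, I would first prove
$$\chi_f(\theta) \circ \LL(f) = \LL(f) \circ \theta \quad \text{as maps } \LL(V) \to \LL(W).$$
Both sides are $\LL(f)$-derivations from $\LL(V)$ to $\LL(W)$, so by the universal property of the free graded Lie algebra they agree as soon as they agree on the generating set $V$. On $v \in V$, the definition of $\chi_f$ combined with $f^! f = 1_V$ gives $\chi_f(\theta)(f(v)) = \LL(f)\,\theta(f^! f(v)) = \LL(f)\,\theta(v)$, which is exactly what we want. This is the one step that actually uses anything; everything below is bookkeeping.

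With the intertwining identity in hand, I would verify that $\chi_f$ preserves brackets by checking the identity $\chi_f([\theta,\eta]) = [\chi_f(\theta),\chi_f(\eta)]$ on generators $x \in W$, since both sides are derivations of $\LL(W)$. Using the identity,
$$\chi_f(\theta)\chi_f(\eta)(x) = \chi_f(\theta)\bigl(\LL(f)\,\eta(f^! x)\bigr) = \LL(f)\,(\theta\eta)(f^! x),$$
and analogously for the swapped composition with its Koszul sign. Subtracting and comparing with $\chi_f([\theta,\eta])(x) = \LL(f)\,[\theta,\eta](f^! x)$ finishes this step.

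For injectivity, I would exploit the fact that $f^! f = 1_V$ makes $f\colon V\to W$ a split injection, hence $\LL(f)\colon \LL(V)\to \LL(W)$ is injective (a morphism of free graded Lie algebras induced by an injection of generators is injective), and simultaneously makes $f^!\colon W \to V$ surjective. If $\chi_f(\theta) = 0$, then for any $v \in V$ choose $x \in W$ with $f^! x = v$; the defining formula gives $0 = \chi_f(\theta)(x) = \LL(f)\,\theta(v)$, so $\theta(v) = 0$. A derivation on a free Lie algebra is determined by its values on generators, so $\theta = 0$.

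The only genuine content is the intertwining identity of the first step; I expect no real obstacle beyond noticing that this is the right organizing identity. Everything else is a one-line consequence once one remembers that a derivation on $\LL(W)$ is determined by its restriction to $W$ and that $f^!$ is a section-retraction of $f$.
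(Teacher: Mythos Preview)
Your argument is correct and follows essentially the same route as the paper: the same intertwining identity $\chi_f(\theta)\circ\LL(f)=\LL(f)\circ\theta$, checked on generators using $f^!f=1_V$, and the same verification of the bracket on generators of $W$. For injectivity the paper instead writes down an explicit left inverse $\psi_f$ with $\psi_f(\theta)(x)=\LL(f^!)\,\theta(fx)$, but this is the same content as your observation that $\LL(f)$ is split injective via $\LL(f^!)$; the two arguments are interchangeable.
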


\begin{proof}
Let $\theta,\eta\in\Der \LL(V)$. We have an equality of maps from $\LL(V)$ to $\LL(W)$,
\begin{equation} \label{eq:commute}
\chi_f(\theta) \circ \LL(f) = \LL(f) \circ \theta.
\end{equation}
This follows because both sides are $\LL(f)$-derivations and for every $y\in V$ we have
$$\chi_f(\theta)\LL(f)(y) = \chi_f(\theta)(fy)  = \LL(f) \theta (f^! f y) = \LL(f)\theta(y).$$
Next, to verify the equality
$$[\chi_f(\theta),\chi_f(\eta)] = \chi_f[\theta,\eta],$$
observe that both sides are derivations, so equality may be checked by evaluating at generators $x\in V$. But
\begin{align*}
[\chi_f(\theta),\chi_f(\eta)](x) & = \chi_f(\theta) \chi_f(\eta)(x) - (-1)^{|\chi_f(\theta)||\chi_f(\eta)|} \chi_f(\eta)\chi_f(\theta)(x) \\
& = \chi_f(\theta)\LL(f)\eta(f^! x) -(-1)^{|\theta||\eta|} \chi_f(\eta)\LL(f)\theta(f^! x) \\
& = \LL(f)\theta \eta (f^!x) - (-1)^{|\theta||\eta|} \LL(f) \eta \theta(f^! x) \\
& = \chi_f([\theta,\eta])(x),
\end{align*}
where we have used \eqref{eq:commute} in the middle step.

To check injectivity, define the map
$$\psi_f\colon \Der \LL(W) \rightarrow \Der \LL(V)$$
by requiring
$$\psi_f(\theta)(x) = \LL(f^!) \theta(fx),$$
for $\theta \in \Der \LL(W)$ and $x\in V$. Then one checks that the composite $\psi_f \circ \theta_f$ is the identity map on $\Der \LL(V)$. (Note however that $\psi_f$ is not necessarily a morphism of Lie algebras.)
\end{proof}

Next, recall from \S\ref{sec:ham} that there is a canonical element $\omega = \omega_V \in \LL(V)$ associated to every graded anti-symmetric inner product space $V$. If $\alpha_1,\ldots,\alpha_r$ is a graded basis with dual basis $\alpha_1^\#,\ldots,\alpha_r^\#$, then
$$\omega = \frac{1}{2} \sum_i \big[\alpha_i^\#,\alpha_i\big].$$
In what follows, we will show that the evaluation map,
$$ev_{\omega}\colon \Der \LL(V) \to V,\quad ev_{\omega}(\theta) = \theta(\omega),$$
and the map
$$\theta_{-,-}\colon \LL(V)\tensor V \to \Der \LL(V),\quad \theta_{\xi,x}(y) = \xi \langle x,y\rangle,$$
are natural transformations of $\SP^D$-modules.

\begin{proposition}
Given a morphism $f\colon V\rightarrow W$ in $\SP^D$, the diagram
$$\xymatrix{\Der \LL(V) \ar[r]^-{ev_{\omega_V}} \ar[d]_-{\chi_f} & \LL(V) \ar[d]^-{\LL(f)} \\ \Der \LL(W) \ar[r]^-{ev_{\omega_W}} & \LL(W)}$$
is commutative.
\end{proposition}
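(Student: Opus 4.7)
The goal is to show $\LL(f)(\theta(\omega_V)) = \chi_f(\theta)(\omega_W)$ for every derivation $\theta \in \Der\LL(V)$. The plan is to pivot through the element $\LL(f)(\omega_V) \in \LL(W)$, which is \emph{not} equal to $\omega_W$ unless $f$ is an isomorphism; the gap between the two will be controlled by the orthogonal complement of $f(V)$ in $W$.

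First, I would invoke equation \eqref{eq:commute} from the proof of the preceding proposition, namely $\chi_f(\theta) \circ \LL(f) = \LL(f) \circ \theta$. Applied to $\omega_V$ this gives
$$\LL(f)(\theta(\omega_V)) = \chi_f(\theta)\bigl(\LL(f)(\omega_V)\bigr),$$
so the problem reduces to proving $\chi_f(\theta)\bigl(\LL(f)(\omega_V)\bigr) = \chi_f(\theta)(\omega_W)$, i.e.\ that $\chi_f(\theta)$ annihilates $\omega_W - \LL(f)(\omega_V)$ in $\LL(W)$.

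The key geometric input is the orthogonal splitting $W \cong f(V) \oplus V^\perp$ recorded at the start of the section, where $V^\perp = \ker f^!$. I would combine a basis of $f(V)$ with one of $V^\perp$ to obtain a basis of $W$ whose $W$-dual basis splits accordingly, and then use the uniqueness of $\omega_W$ (the adjoint map $\LL^2(W) \to \Hom_\ZZ(\Lambda^2 W,\ZZ)$ is an isomorphism) to check that
$$\omega_W \;=\; \LL(f)(\omega_V) \;+\; \omega_{V^\perp}$$
by verifying the defining identity $\langle \omega_W, a\wedge b\rangle_W = \langle a,b\rangle_W$ after decomposing $a,b \in W$ into their $f(V)$- and $V^\perp$-components; the cross terms vanish because $f(V)$ and $V^\perp$ are orthogonal in $W$.

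It then remains to show $\chi_f(\theta)(\omega_{V^\perp}) = 0$. This is immediate from the defining formula: for any $x \in V^\perp$ one has $f^!(x)=0$, hence $\chi_f(\theta)(x) = \LL(f)\theta(f^! x) = 0$. Since $\chi_f(\theta)$ is a derivation and $\omega_{V^\perp}$ is a Lie bracket of elements of $V^\perp$, it annihilates $\omega_{V^\perp}$ as well. Combining the three steps yields the desired equality. I do not expect any serious obstacle; the only place that needs care is the bookkeeping in step two, verifying the orthogonal decomposition of $\omega_W$ from its characterizing identity.
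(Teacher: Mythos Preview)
Your proposal is correct and follows essentially the same route as the paper: decompose $\omega_W$ as $\LL(f)(\omega_V) + \omega_{V^\perp}$ via the orthogonal splitting $W \cong f(V)\oplus V^\perp$, and use that $\chi_f(\theta)$ vanishes on $V^\perp$. The paper streamlines the first step by assuming without loss of generality that $W = V\oplus V^\perp$ and $f$ is the inclusion (so that $\chi_f(\theta)$ literally restricts to $\theta$ on $\LL(V)$), whereas you invoke the relation $\chi_f(\theta)\circ\LL(f)=\LL(f)\circ\theta$; these are the same observation.
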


\begin{proof}
As we noted above, the map $f\colon V\rightarrow W$ is injective and induces an isomorphism of inner product spaces
$$W \cong V\oplus V^\perp,$$
so we may without loss of generality assume that $W = V\oplus V^\perp$. Then we have that $\omega_W = \omega_V + \omega_{V^\perp}$, where $\omega_V\in \LL(V)$ and $\omega_{V^\perp}\in\LL(V^\perp)$. For a derivation $\theta$ on $\LL(V)$, we may describe $\chi_f(\theta)$ as the unique derivation on $\LL(W)$ that restricts to $\theta$ on $\LL(V)$ and restricts to zero on $\LL(V^\perp)$. Thus,
$$\chi_f(\theta)(\omega_W) = \chi_f(\theta)(\omega_V) + \chi_f(\theta)(\omega_{V^\perp}) = \theta(\omega_V),$$
which proves the claim.
\end{proof}

\begin{proposition}
The map
$$\theta_{-,-}\colon \LL(V)\tensor V \to \Der \LL(V).$$
defines an isomorphism of $\SP^D$-modules (of degree $-D$).
\end{proposition}

\begin{proof}
Since the inner product is non-degenerate and since a derivation is determined by its value on generators, it follows that $\theta_{-,-}$ is an isomorphism. We need to verify that the diagram
$$\xymatrix{\LL(V)\tensor V \ar[d]_-{\LL(f)\tensor f} \ar[r]^-{\theta_{-,-}} & \Der \LL(V) \ar[d]^-{\chi_f} \\
\LL(W)\tensor W \ar[r]^-{\theta_{-,-}} & \Der \LL(W)}$$
is commutative for every morphism $f\colon V\rightarrow W$ in $\SP^D$. Indeed, for $x\in V$, $y\in W$ and $\xi \in \LL(V)$, we have
$$\chi_f(\theta_{\xi,x})(y) = \LL(f)\theta_{\xi,x}(f^!y) = \LL(f)(\xi \langle x, f^! y\rangle),$$
and on the other hand
$$\theta_{\LL(f)\xi,fx}(y) = \LL(f)(\xi) \langle fx,y\rangle.$$
These elements are clearly equal.
\end{proof}

The kernel of the evaluation map is exactly $\Der_{\omega} \LL(V)$. It is a graded Lie subalgebra of $\Der \LL(V)$ and since $ev_\omega$ is a morphism in $\SP^D$, it inherits an $\SP^D$-module structure. By Corollary \ref{cor:imev}, the image of $ev_\omega$ is the space of decomposables $[\LL(V),\LL(V)] = \LL^{\geq 2}(V)$. Hence, there is a short exact sequence of $\SP^D$-modules,
$$0\rightarrow \Der_{\omega} \LL(V) \rightarrow \Der \LL(V) \rightarrow \LL^{\geq 2}(V) \to 0.$$

Clearly, the Lie bracket $\LL(V)\tensor V \rightarrow \LL^{\geq 2}(V)$ defines a morphism of $\SP^D$-modules. Let, for the moment, $\gl(V)$ denote the kernel. It follows from Proposition \ref{prop:bracket} that we have a commutative diagram of $\SP^D$-modules, where the rows are exact and the vertical maps are isomorphisms,
$$\xymatrix{0 \ar[r] & \gl(V) \ar[r] \ar@{-->}[d]_-\cong & \LL(V)\tensor V \ar[r]^-{[-,-]} \ar[d]_-{\theta_{-,-}}^-\cong & \LL^{\geq 2}(V) \ar[r] \ar@{=}[d] & 0\\
0\ar[r] & \Der_{\omega} \LL(V) \ar[r] & \Der \LL(V) \ar[r]^-{ev_{\omega}} & \LL^{\geq 2}(V) \ar[r] & 0.}
$$
In fact, the top row is functorial not only with respect to morphisms in $\SP^D$, but for all linear maps. In what follows, we will identify the entries in the top row as the values at $V$ of certain Schur functors.

\subsection{On the Lie operad}
Let $\Lie = \{\Lie(n)\}_{n\geq 0}$ denote the Lie operad. As a vector space, $\Lie(n)$ is the subspace of the free Lie algebra $\LL(x_1,\ldots,x_n)$ spanned by all Lie monomials in $x_1,\ldots,x_n$ containing every generator exactly once, cf.~\cite[(1.3.9)]{GiKa}. There is a left action of the symmetric group $\Sigma_n$ given by permuting the generators;
$$\sigma f(x_1,\ldots,x_n) = f(x_{\sigma_1},\ldots,x_{\sigma_n}).$$
Define the Lie monomials $\ell_n = \ell(x_1,\ldots,x_n)$ and $r_n  = r(x_1,\ldots,x_n)$ inductively by $\ell(x_1) = r(x_1) = x_1$, and
\begin{align*}
\ell(x_1,\ldots,x_n) & = \big[x_1,\ell(x_2,\ldots,x_n)\big],\\
r(x_1,\ldots,x_n) & = \big[r(x_1,\ldots,x_{n-1}),x_n\big].
\end{align*}
It is an exercise to show that $\ell_n$ generates $\Lie(n)$ as a $\Sigma_n$-module. In fact, identifying $\Sigma_{n-1}$ with the subgroup of $\Sigma_n$ consisting of those permutations that leave $n$ fixed, it is not difficult to show that the elements
$$\sigma\ell_n =  \big[x_{\sigma_1},[\ldots [x_{\sigma_{n-2}}, [x_{\sigma_{n-1}},x_n]]\ldots ] \big],\quad \sigma \in \Sigma_{n-1},$$
form a vector space basis for $\Lie(n)$, see \cite[\S5.6.2]{Reutenauer}. In particular, $\dim \Lie(n) = (n-1)!$. Similarly, $r_n$ generates $\Lie(n)$ as a $\Sigma_n$-module.

The Lie operad is a cyclic operad, which means that the action of $\Sigma_{n-1}$ on $\Lie(n-1)$ extends to an action of $\Sigma_{n}$ in a way compatible with the operad structure, cf.~\cite[(3.9)(b)]{GK1}. Let $\Lie((n))$ denote $\Lie(n-1)$ viewed as a $\Sigma_n$-module. For $p\in \Lie((n))$ and $\sigma\in \Sigma_n$, we write $p\sigma = \sigma^{-1} p$. Let $t_n$ denote the cyclic permutation $(1 2 \ldots n) \in \Sigma_n$. We will abbreviate $pt_n$ to $p t$ when the index $n$ is clear from the context. Since $\Sigma_n$ is generated by $\Sigma_{n-1}$ and $t_n$, the action can be computed using \cite[Theorem (2.2)]{GK1} or \cite[Proposition 42]{Markl}: For $p\in \Lie((m))$, $q\in \Lie((n))$ and $1\leq i \leq m-1$ we have
\begin{equation} \label{eq:cyclic operad}
\big( p \circ_i q \big) t = \left\{ \begin{array}{ll} qt \circ_{n-1} pt, & i = 1, \\ pt \circ_{i-1} q, & 2\leq i \leq m-1. \end{array} \right.
\end{equation}
The action has a compelling graphical description. If we represent a Lie monomial in $\Lie((n))$ by a planar binary rooted tree with leaves labeled by $1,\ldots,n-1$ and the root labeled by $n$, then a permutation $\sigma\in \Sigma_n$ acts by relabeling and then reinterpreting the tree as a Lie monomial using the leaf labeled by $n$ as the root. For example, here is the computation of the action of $t_5 = (1 2 3 4 5)\in \Sigma_5$ on $r_4 = [[[x_1,x_2],x_3],x_4] \in \Lie((5))$:
$$
\rtree \quad \stackrel{t_5}{\mapsto}\quad  \trtree = \ntree
$$
$$
[[[x_1,x_2],x_3],x_4]  \quad \stackrel{t_5}{\mapsto}\quad  [x_1,[[x_2,x_3],x_4]].
$$
More generally, one can work out that the action of $t_n^{-i}$ on $r_{n-1}\in \Lie((n))$ is given by
\begin{equation} \label{eq:t-action}
t_n^{-i} r_{n-1} = \left\{ \begin{array}{lc} \ell_{n-1}, & i=1, \\ \big[\ell(x_1,\ldots,x_{n-i}),r(x_{n-i+1},\ldots,x_{n-1})\big], & 1< i < n, \\ r_{n-1}, & i = n. \end{array} \right.
\end{equation} 

\begin{proposition} \label{prop:lie sequence}
For every $n\geq 2$, the sequence
\begin{equation} \label{eq:lie sequence}
0 \to \Lie((n)) \xrightarrow{\mu} \QQ\Sigma_{n}\tensor_{\Sigma_{n-1}} \Lie(n-1) \xrightarrow{\epsilon} \Lie(n) \to 0,
\end{equation}
$$\mu(\xi) = \sum_{i=1}^n t_n^i \tensor t_n^{-i}\xi,\quad \epsilon(\sigma\tensor \zeta) = \sigma[\zeta,x_n],$$
is a short exact sequence of $\Sigma_n$-modules.
\end{proposition}

\begin{remark}
The existence of an exact sequence of the form \eqref{eq:lie sequence} has been noted before, see \cite[Corollary 2.7]{Whitehouse}, but the explicit expressions for the maps and the following direct proof of exactness seem to be new.
\end{remark}

\begin{proof}
It is immediate that $\epsilon$ is $\Sigma_n$-equivariant. To check that $\mu$ is $\Sigma_n$-equivariant, it suffices to check that $\mu(t_n\xi) = t_n\mu(\xi)$ and $\mu(\rho \xi) = \rho\mu(\xi)$ for all $\xi \in \Lie(n-1)$ and all $\rho\in \Sigma_{n-1}$. The former is clear. For the latter, we use the fact that the permutation $t_n^{-i}\rho t_n^{\rho^{-1}(i)}$ fixes $n$, so that
\begin{align*}
\mu(\rho \xi) & = \sum_{i=1}^n t_n^i \tensor t_n^{-i}(\rho \xi) = \sum_{i=1}^n t_n^i \tensor (t_n^{-i}\rho t_n^{\rho^{-1}(i)}) t_n^{-\rho^{-1}(i)}  \xi \\
& = \sum_{i=1}^n t_n^i (t_n^{-i}\rho t^{\rho^{-1}(i)}) \tensor t_n^{-\rho^{-1}(i)}  \xi = \rho \sum_{i=1}^n t_n^{\rho^{-1}(i)} \tensor t_n^{-\rho^{-1}(i)}  \xi \\
& = \rho \mu(\xi),
\end{align*}
where the last equality follows by changing the summation index; $i' = \rho^{-1}(i)$.

Since $\epsilon(1\tensor r_{n-1})= r_n$, and since $\Lie(n)$ is generated by $r_n$ as a $\Sigma_n$-module, it is clear that $\epsilon$ is surjective. To see that $\mu$ is injective, note that there is an isomorphism of right $\Sigma_{n-1}$-modules,
$$\QQ\Sigma_n \cong \QQ C_n \tensor \QQ\Sigma_{n-1},$$
where $C_n$ denotes the cyclic subgroup of $\Sigma_n$ generated by $t_n$.
Hence, there is an isomorphism $\QQ\Sigma_n\tensor_{\Sigma_{n-1}} \Lie(n-1) \cong \QQ C_n \tensor \Lie(n-1)$. We can define a $\QQ$-linear splitting of $\mu$ by sending $t_n^i\tensor \xi$ to $0$ if $i\ne 0$ and to $\xi$ if $i=0$.

To show that $\epsilon \mu = 0$, it suffices to verify that $\epsilon\mu(r_{n-1}) = 0$, because $\Lie(n-1)$ is generated by $r_{n-1}$ as a $\Sigma_{n-1}$-module and $\mu$ and $\epsilon$ are equivariant. 
Using the formula for the action \eqref{eq:t-action}, we get that
\begin{align*}
\epsilon&\mu(r_{n-1})  = \sum_{i=1}^n t_n^i \big[t_n^{-i} r_{n-1},x_n\big] \\
& = t_n\big[\ell_{n-1},x_n] + \sum_{i=2}^{n-1} t_n^i \big[ [\ell(x_1,\ldots, x_{n-i}), r(x_{n-i+1},\ldots,x_{n-1})],x_n \big] + [r_{n-1},x_n] \\
& = \big[\ell(x_2,\ldots,x_n),x_1] + \sum_{i=2}^{n-1} \big[ [\ell(x_{i+1},\ldots, x_n), r(x_1,\ldots,x_{i-1})],x_i \big] + [r_{n-1},x_n].
\end{align*}
Using the Jacobi relation and anti-symmetry, we may rewrite the middle sum as:
\begin{align*}
\sum_{i=2}^{n-1} \big[\ell(x_{i+1},\ldots, x_n),[r(x_1,\ldots,x_{i-1}),x_i]\big] - \big[[x_i,\ell(x_{i+1},\ldots, x_n)], r(x_1,\ldots,x_{i-1}) \big] \\
= \sum_{i=2}^{n-1} \big[\ell(x_{i+1},\ldots, x_n),r(x_1,\ldots,x_{i-1},x_i)\big] - \big[\ell(x_i,x_{i+1},\ldots, x_n),r(x_1,\ldots,x_{i-1})\big].
\end{align*}
The last expression is a telescoping sum, whose surviving terms are $[x_n,r_{n-1}]$ and $-\big[\ell(x_2,\ldots,x_n),x_1\big]$. These cancel the outer terms in the expression for $\epsilon\mu(r_{n-1})$ we found above. Thus, $\epsilon\mu(r_{n-1}) = 0$.

Finally, exactness of \eqref{eq:lie sequence} follows from a dimension count: So far, we know that \eqref{eq:lie sequence} is a chain complex and that the homology vanishes except possibly at the middle term. Since $\dim \Lie(n) = (n-1)!$ and $\dim \QQ\Sigma_n\tensor_{\Sigma_{n-1}} \Lie(n-1) = n(n-2)!$, the Euler characteristic of the complex is $(n-2)! - n(n-2)! + (n-1)! = 0$, which implies that the middle homology must vanish.
\end{proof}

Recall that $\SP^D$ denotes the category of graded anti-symmetric inner product spaces of degree $D$ (see \S\ref{sec:SPD}).
The cyclic operad $\Lie$ determines an $\SP^D$-module $V\mapsto \Lie((V))$, defined by
$$\Lie((V)) = s^{-D}\bigoplus_{n\geq 2} \Lie((n))\tensor_{\Sigma_n} V^{\tensor n}.$$

\begin{proposition} \label{prop:Lie iso}
There is an isomorphism of $\SP^D$-modules,
$$\Der_\omega \LL(V) \cong \Lie((V)).$$
Explicitly, the derivation $\eta_{\xi,h}$ corresponding to $\xi\tensor h_1\tensor \ldots \tensor h_n$ sends $x\in V$ to
$$\eta_{\xi,h}(x) = \sum_{i=1}^n (-1)^{\epsilon_i^h} \big(t_n^{-i} \xi \big)(h_{i+1},\ldots,h_n,h_1,\ldots,h_{i-1}) \langle h_i,x \rangle,$$
where $\epsilon_i^h = \big( |h_1|+\cdots +|h_i|\big) \big(|h_{i+1}| + \cdots + |h_n|\big)$.
\end{proposition}

\begin{proof}
The claim is proved by considering the following commutative diagram for $n\geq 2$, where the rows are short exact sequences and all vertical maps are isomorphisms (and we remember that the map $\theta_{-,-}$ is of degree $-D$):
{\small $$
\xymatrix{
\Lie((n))\tensor_{\Sigma_n} V^{\tensor n} \ar@{-->}[d]_-\cong \ar[r]^-{\mu} & \big( \QQ\Sigma_n\tensor_{\Sigma_{n-1}} \Lie(n-1) \big) \tensor_{\Sigma_n} V^{\tensor n} \ar[d]_-\cong^-{\alpha} \ar[r]^-{\epsilon} & \Lie(n)\tensor_{\Sigma_n} V^{\tensor n} \ar[d]_-\cong^-{\beta} \\ \gl^n(V) \ar@{-->}[d]_-\cong \ar[r] & \LL^{n-1}(V)\tensor V \ar[d]_-\cong^-{\theta_{-,-}} \ar[r]^-{[-,-]} & \LL^n(V) \ar@{=}[d] \\
\Der_{\omega}^{n-2} \LL(V) \ar[r] & \Der^{n-2} \LL(V) \ar[r]^-{ev_{\omega}} & \LL^n(V).}
$$}
The top row is obtained by applying the functor $-\tensor_{\Sigma_n} V^{\tensor n}$ to the short exact sequence in Proposition \ref{prop:lie sequence}. Explicitly, the maps are given by
\begin{align*}
\mu(\xi \tensor h_1\tensor \ldots \tensor h_n) & = \sum_{i=1}^n \big( t^i \tensor t^{-i} \xi \big) \tensor h_1\tensor \ldots \tensor h_n, \\
\epsilon(\rho \tensor \zeta \tensor h_1\tensor \ldots \tensor h_n) & = \big(\rho\big[\zeta,x_n]\big) \tensor h_1\tensor \ldots \tensor h_n, \\
\alpha(\rho \tensor \zeta \tensor h_1\tensor \ldots \tensor h_n) & = \pm\zeta(h_{\rho_1},\ldots,h_{\rho_{n-1}})\tensor h_{\rho_n}, \\
\beta(\zeta\tensor h_1\tensor \ldots \tensor h_n) & = \zeta(h_1,\ldots,h_n),
\end{align*}
The sign in the formula for $\alpha$ is dictated by the standard sign convention, according to which a sign is introduced every time two elements of odd degree are permuted. Everything is natural in $V\in \SP^D$, so there results an isomorphism of $\SP^D$-modules $\Der_\omega\LL(V) \cong \Lie((V))$.
\end{proof}

The isomorphism of Proposition \ref{prop:Lie iso} can be used to endow $\Lie((V))$ with the structure of a graded Lie algebra. This can be made explicit as follows.

Let $p\in\Lie((m))$, $q\in\Lie((n))$. Define for $1\leq i \leq m$ and $1\leq j\leq n$,
\begin{equation} \label{eq:circ ij}
p{}_i\circ_j q = p \circ_i q t^j,
\end{equation}
where $p \circ_m q$ is defined to be $\big(p t \circ_{m-1} q\big)t^{-1}$.

Let $V$ be a graded anti-symmetric inner product space of degree $D$. For $1\leq i\leq m$ and $1\leq j\leq n$, define the \emph{contraction} ${}_i\circ_j\colon V^{\tensor m}\tensor V^{\tensor n} \to V^{\tensor m+n-2}$ on generators $v=v_1\tensor \cdots \tensor v_m$ and $w=w_1\tensor \cdots \tensor w_n$ by
{\small 
$$v{}_i\circ_j w = (-1)^{c_{i,j}^{v,w}} v_1\tensor \cdots \tensor v_{i-1} \tensor w_{j+1} \tensor \cdots \tensor w_n \tensor w_1 \tensor \cdots \tensor w_{j-1} \tensor v_{i+1} \tensor \cdots \tensor v_m \langle w_j, v_i \rangle,$$}
where the sign is given by
$$c_{i,j}^{v,w} = \big( |v_i| + \cdots + |v_m| - D\big) \big( |w| - D \big) + \big(|w_1| + \cdots + |w_j| \big) \big(|w_{j+1}| + \cdots + |w_n| \big) + 1.$$
\begin{proposition}
The equality
\begin{equation*} 
\big[ \eta_{p,v}, \eta_{q,w} \big] = \sum_{i=1}^m \sum_{j=1}^n \eta_{p{}_i\circ_j q, v{}_i \circ_j w}
\end{equation*}
holds in the graded Lie algebra of derivations on $\LL V$.
\end{proposition}

\begin{proof}
Inductive application of the formula \eqref{eq:cyclic operad} yields
\begin{equation} \label{eq:cyclic operad identities}
\big( p \circ_i q \big) t^j = \left\{ \begin{array}{ll} pt^j \circ_{i-j} q, & 1\leq j \leq i-1, \\ qt^{j-i+1} \circ_{n+i-j-1} pt^i, & i\leq j \leq n+i-2, \\ pt^{j-n+2} \circ_{m+n+i-j-2} q, & n+i-1 \leq j \leq m+n+i-3. \end{array} \right.
\end{equation}
The proof is a long but straightforward calculation that uses the rules \eqref{eq:cyclic operad identities} and the basic fact that
$$\eta(q(w_1,\ldots,w_n)) = \sum_{j=1}^{n-1} \pm q(w_1,\ldots,w_{j-1},\eta(w_j),w_{j+1},\ldots,w_{n-1})$$
for every derivation $\eta$ on $\LL V$. We omit the details.
\end{proof}

It follows that an explicit description of the Lie bracket on $\Lie((V))$ is
\begin{equation} \label{eq:lie bracket}
\big[\xi\tensor h,\zeta \tensor g\big] = \sum_{i,j} \xi {}_i\circ_j \zeta \tensor h{}_i\circ_j g.
\end{equation}
Thus, $\Der_\omega \LL(V)$ and $\Lie((V))$ are, naturally in $V\in \SP^D$, isomorphic as graded Lie algebras.
More generally, one can prove that the formula \eqref{eq:lie bracket} defines a graded Lie algebra structure on $\CC((V))$ for any cyclic operad $\CC$, where ${}_i\circ_j$ is defined as in \eqref{eq:circ ij}.

\section{Homological stability}
This section contains the proof of rational homological stability for the classifying spaces $B\aut_\partial(M_{g,1})$ and $B\tDiff_\partial(M_{g,1})$. The proof consists in a reduction to a homological stability result for certain arithmetic groups with twisted coefficients; we begin by reviewing this.

\subsection{Polynomial functors and homological stability}
We adopt a naive approach to polynomial functors. By a \emph{polynomial functor of degree $\leq \ell$} we will mean a functor from abelian groups to itself isomorphic to a functor of the form
$$P(H) = \bigoplus_{k=0}^\ell P(k) \tensor_{\Sigma_k} H^{\tensor k},$$
for some sequence of abelian groups $P(k)$ with an action of the symmetric group $\Sigma_k$. Recall that $\Gamma_g$ denotes the automorphism group of the hyperbolic quadratic module $(H_g,\mu,q)$, see Example \ref{ex:hyperbolic}.

\begin{theorem}[Charney {\cite[Theorem 4.3]{Charney}}] \label{thm:charney}
If $P$ is a polynomial functor of degree $\leq \ell$, then the stabilization map
$$\HH_q(\Gamma_g;P(H_g))\rightarrow \HH_q(\Gamma_{g+1};P(H_{g+1}))$$
is an isomorphism for $g>2q+\ell+4$ and a surjection for $g = 2q+\ell+4$.
\end{theorem}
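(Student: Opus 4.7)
My plan is to follow the standard template for homological stability with twisted coefficients, pioneered by Charney. For each $g$ I would construct a semi-simplicial set $W_\bullet(g)$ whose $p$-simplices are ordered $(p+1)$-tuples of hyperbolic pairs $(e_i, f_i)$ in $H_g$ that together span an orthogonal summand. The group $\Gamma_g$ acts transitively on $p$-simplices with stabilizer isomorphic to $\Gamma_{g-p-1}$, and the key geometric input is that $|W_\bullet(g)|$ is approximately $\lfloor (g-c)/2 \rfloor$-connected for a small constant $c$; this connectivity bound is obtained via a Solomon--Tits style argument on the underlying complex of isotropic unimodular sequences in $H_g$.

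Next I would run the equivariant homology spectral sequence for the $\Gamma_g$-action on $|W_\bullet(g)|$ with coefficients $P(H_g)$:
$$E^1_{p,q} = \bigoplus_{[\sigma]} \HH_q\bigl(\mathrm{stab}(\sigma); P(H_g)\bigr) \Longrightarrow \HH_{p+q}\bigl(\Gamma_g; P(H_g)\bigr),$$
valid through a range determined by the connectivity of $|W_\bullet(g)|$. By Shapiro's lemma and the stabilizer identification, each $E^1_{p,q}$ term is a sum of groups of the form $\HH_q(\Gamma_{g-p-1}; P(H_g))$. The crucial point for polynomial coefficients is the restriction analysis: the orthogonal splitting $H_g \cong H_{g-p-1} \oplus H_{p+1}$ induces, via the cross-effect decomposition of $P$, an isomorphism
$$P(H_g)\big|_{\Gamma_{g-p-1}} \cong \bigoplus_{j=0}^{\ell} P_j(H_{g-p-1}) \otimes W_j,$$
where each $P_j$ is a polynomial functor of degree $\leq \ell - j$ and each $W_j$ is a fixed finite-rank abelian group built from tensor powers of $H_{p+1}$. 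The summands with $j \geq 1$ carry polynomial coefficients of strictly smaller degree, which is what enables the inductive step.

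I would then perform a double induction on the pair $(k, \ell)$. The base case $\ell = 0$ is Charney's stability theorem for $\Gamma_g$ with constant coefficients, giving the range $g > 2k + 4$. For the inductive step, I would compare the spectral sequences for $g$ and $g+1$ via the map induced by the inclusion $H_g \hookrightarrow H_{g+1}$; by induction each stabilizer term on the $E^1$-page is already stable in the relevant range, so the comparison edge homomorphism $\HH_k(\Gamma_g; P(H_g)) \to \HH_k(\Gamma_{g+1}; P(H_{g+1}))$ is an isomorphism for $g > 2k + \ell + 4$ and a surjection at the boundary. The linear appearance of $\ell$ in the stability range precisely reflects the one-step loss incurred each time a cross-effect summand of lower polynomial degree is peeled off.

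The main obstacle is establishing the connectivity estimate for $|W_\bullet(g)|$ with sharp enough constants; this is the hard geometric input, and Charney's paper devotes most of its effort to versions of it. A secondary technical point is that when $d$ is odd and $d \ne 1,3,7$ the group $\Gamma_g$ is only a finite-index subgroup of $\mathrm{Sp}_{2g}(\ZZ)$ cut out by a congruence condition (see Example \ref{ex:hyperbolic}), so one must verify that the semi-simplicial complex, the stabilizer identification, and the connectivity argument all survive the passage to this variant.
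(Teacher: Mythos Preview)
The paper does not prove this theorem at all: it is quoted directly from Charney \cite[Theorem 4.3]{Charney}, followed only by a remark explaining how the groups $\Gamma_g$ and the coefficient systems $P(H_g)$ fit into Charney's general framework of automorphism groups of hyperbolic modules in $Q^\lambda(A,\Lambda)$ with central coefficient systems. So there is nothing to compare your argument against here beyond the citation itself.

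That said, what you have sketched is precisely the architecture of Charney's own proof: a highly connected semi-simplicial complex of hyperbolic (isotropic unimodular) sequences on which $\Gamma_g$ acts with stabilizers $\Gamma_{g-p-1}$, the associated equivariant homology spectral sequence, Shapiro's lemma, the cross-effect decomposition of the coefficient system under restriction along an orthogonal splitting, and the double induction on $(k,\ell)$ with base case the constant-coefficient theorem. You have also correctly flagged the two genuine subtleties, namely that the connectivity estimate for the complex is the hard input, and that for odd $d\ne 1,3,7$ one works with the form parameter $\Lambda=2\ZZ$ rather than the full symplectic group. In short, your proposal is a faithful outline of the cited result, and the present paper is content simply to invoke it.
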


\begin{proof}
In the notation of \cite{Charney}, the group $\Gamma_g$ is isomorphic to the automorphism group of the hyperbolic module in the category $Q^\lambda(A,\Lambda)$, where $A$ is the ring $\ZZ$ with trivial involution, $\lambda = (-1)^d$, and $\Lambda = 0$ if $d$ is even, $\Lambda = \ZZ$ if $d=1,3,7$, and $\Lambda = 2\ZZ$ if $d\ne 1,3,7$ is odd. It is straightforward to verify that $P(H_g)$ is a `central coefficient system of degree $\leq \ell$' whenever $P$ is a polynomial functor of degree $\leq \ell$.
\end{proof}

Let $V_g$ denote the graded anti-symmetric inner product space $s^{-1}\widetilde{\HH}_*(M_{g,1};\QQ) \cong s^{d-1} H_g\tensor \QQ$ and consider the graded Lie algebra
$$\gl_g = \Der_{\omega_g}^+ \LL(V_g)$$
with its natural $\Gamma_g$-action. Let $\sigma = \chi_f\colon \gl_g\rightarrow \gl_{g+1}$ be the morphism of graded Lie algebras induced by the inclusion $V_g\rightarrow V_{g+1}$.

\begin{lemma} \label{lemma:polynomial functor}
Let $d\geq 2$. The component of the Chevalley-Eilenberg chains in total degree $p$, $C_p^{CE}(\gl_g)$, may be identified with the value at $H_g$ of a polynomial functor of degree $\leq \lfloor \frac{3p}{d} \rfloor$.
\end{lemma}

\begin{proof}
As a graded vector space, the Chevalley-Eilenberg chains on a graded Lie algebra $L$ may be described as the value at $L$ of a Schur functor,
$$C_*^{CE}(L) = \bigoplus_{k\geq 0} \Lambda(k) \tensor_{\Sigma_k} L^{\tensor k},$$
where $\Lambda(k)$ is the sign representation of $\Sigma_k$ concentrated in degree $k$. It follows from Proposition \ref{prop:Lie iso} that $C_*^{CE}(\gl_g)$ may be identified with the value at $H_g$ of the Schur functor associated to the symmetric sequence
$$\CC = \Lambda\circ \widetilde{\Lie} \circ I^{d-1},$$
where $I^{d-1}$ is the symmetric sequence with $I^{d-1}(k) = 0$ for $k\ne 1$ and $I^{d-1}(1)$ the trivial representation concentrated in degree $d-1$, and $\widetilde{Lie}$ is the symmetric sequence with $\widetilde{\Lie}(k) = \Lie((k))$ concentrated in degree $2-2d$ for $k\geq 3$ and $\widetilde{\Lie}(k) = 0$ for $k\leq 2$. A calculation with composition products reveals that $\CC(k)$ is concentrated in degrees $\geq \frac{kd}{3}$.

Thus, there is an isomorphism
$$C_p^{CE}(\gl_g) \cong \bigoplus_{k\geq 0} \CC(k)_p \tensor_{\Sigma_k} H_g^{\tensor k},$$
where $\CC(k)_p = 0$ unless $p\geq \frac{kd}{3}$, i.e., $k\leq \frac{3p}{d}$. Hence, $C_p^{CE}(\gl_g)$ may be identified with the value at $H_g$ of a polynomial functor of degree $\leq \lfloor \frac{3p}{d} \rfloor$.
\end{proof}

The following proposition is an immediate consequence of the previous lemma and Charney's theorem.

\begin{proposition} \label{prop:e1 stability}
Let $d\geq 2$ and fix a non-negative integer $p$. The map
$$\sigma_* \colon \HH_q(\Gamma_g;C_p^{CE}(\gl_g)) \rightarrow \HH_q(\Gamma_{g+1};C_p^{CE}(\gl_{g+1}))$$
is an isomorphism for $g>2q+\lfloor \frac{3p}{d} \rfloor + 4$ and a surjection for $g=2q+\lfloor \frac{3p}{d} \rfloor + 4$.
\end{proposition}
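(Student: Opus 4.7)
The plan is to reduce this to Charney's theorem (Theorem \ref{thm:charney}) by exhibiting $C_p^{CE}(\gl_g)$ as the value at $H_g$ of a polynomial functor of degree $\leq \lfloor 3p/d \rfloor$, then applying stability with $\ell = \lfloor 3p/d \rfloor$.

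First I would invoke Proposition \ref{prop:polynomial1}: there is a natural isomorphism of graded $\ZZ$-modules
$$C_*^{CE}(\Der_{\omega_V}^+\LL(V)) \cong \bigoplus_{k\geq 0} \CC(k)\tensor_{\Sigma_k} V^{\tensor k},$$
where $\CC(k)$ is concentrated in degrees $\geq kd/3$. Taking the degree $p$ part and writing $\CC(k)_p$ for the degree $p$ component, this gives
$$C_p^{CE}(\Der_{\omega_V}^+\LL(V)) \cong \bigoplus_{k\geq 0} \CC(k)_p \tensor_{\Sigma_k} V^{\tensor k}.$$
Because $\CC(k)$ lives in degrees at least $kd/3$, the summand with index $k$ vanishes as soon as $kd/3 > p$, i.e. whenever $k > 3p/d$. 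Hence the sum is finite and runs over $0 \leq k \leq \lfloor 3p/d \rfloor$. Defining the polynomial functor
$$P(V) = \bigoplus_{k=0}^{\lfloor 3p/d \rfloor} \CC(k)_p \tensor_{\Sigma_k} V^{\tensor k},$$
we see that $P$ has degree $\leq \ell := \lfloor 3p/d \rfloor$ in the sense used for Theorem \ref{thm:charney}, and $C_p^{CE}(\gl_g) \cong P(H_g)$.

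Next I need to check that this identification is $\Gamma_g$-equivariant and compatible with the stabilization map $\sigma$. Both follow from the naturality assertion in Proposition \ref{prop:polynomial1} with respect to morphisms in $S^\epsilon(\ZZ)$: the automorphism group $\Gamma_g$ acts by morphisms in $S^\epsilon(\ZZ)$ on $H_g$, and the inclusion $H_g \hookrightarrow H_{g+1}$ is such a morphism. On the derivation Lie algebra side, the induced map is precisely $\chi_f$ from the preceding subsection, which by Proposition \ref{prop:stabilization} is the stabilization morphism $\gl_g \to \gl_{g+1}$, and by functoriality of Chevalley-Eilenberg chains it induces $\sigma_*$ on $C_p^{CE}$. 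So under the identification $C_p^{CE}(\gl_g) \cong P(H_g)$ the map $\sigma_*$ corresponds to $P(H_g) \to P(H_{g+1})$ induced by the standard inclusion.

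Finally I apply Theorem \ref{thm:charney} to $P$ with $\ell = \lfloor 3p/d \rfloor$: the map
$$\HH_q(\Gamma_g;P(H_g)) \to \HH_q(\Gamma_{g+1};P(H_{g+1}))$$
is an isomorphism for $g > 2q + \lfloor 3p/d \rfloor + 4$ and surjective for $g = 2q + \lfloor 3p/d \rfloor + 4$, which is exactly the claim. The only genuinely non-routine step is the bookkeeping that extracts the finite polynomial degree bound from the degree concentration property of $\CC(k)$; everything else is naturality and an invocation of Charney's theorem.
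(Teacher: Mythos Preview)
Your proposal is correct and follows essentially the same approach as the paper: invoke Proposition \ref{prop:polynomial1} to see that $C_p^{CE}(\gl_g)$ is the value at $H_g$ of a polynomial functor of degree $\leq \lfloor 3p/d\rfloor$, then apply Charney's theorem. Your version is in fact slightly more detailed than the paper's, which does not spell out the equivariance and stabilization compatibility but leaves them implicit in the naturality statement of Proposition \ref{prop:polynomial1}.
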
 \hfill $\square$

\begin{theorem} \label{thm:hh stability}
Let $d\geq 2$. The map in hyperhomology
$$\sigma_* \colon \HHH_k(\Gamma_g;C_*^{CE}(\gl_g)) \rightarrow \HHH_k(\Gamma_{g+1};C_*^{CE}(\gl_{g+1}))$$
is an isomorphism for $g>2k+4$ and surjective for $g=2k+4$.
\end{theorem}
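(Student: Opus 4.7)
The plan is to reduce Theorem \ref{thm:hh stability} to Proposition \ref{prop:e1 stability} via the hyperhomology spectral sequence. The relevant first-quadrant spectral sequence is
$$E_{p,q}^1 = \HH_q(\Gamma_g; C_p^{CE}(\gl_g)) \Rightarrow \HHH_{p+q}(\Gamma_g; C_*^{CE}(\gl_g)),$$
and, since the stabilization $\sigma\colon \gl_g \rightarrow \gl_{g+1}$ is equivariant with respect to the inclusion $\Gamma_g \hookrightarrow \Gamma_{g+1}$, it induces a map of such spectral sequences. The task becomes to control this map at $E^1$ and propagate the information to the abutment.

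To control $E^1$, I combine Proposition \ref{prop:e1 stability} with the elementary bound $\lfloor 3p/d \rfloor \leq 2p$, valid for $d \geq 2$ and $p \geq 0$, with equality only at $p = 0$. This gives $2q + \lfloor 3p/d \rfloor + 4 \leq 2(p+q) + 4$, with equality iff $p = 0$. Hence for $g > 2k+4$, $\sigma_*$ is an isomorphism on every $E^1_{p,q}$ with $p+q \leq k$; for $g = 2k+4$, it is an isomorphism for $p+q \leq k-1$ and at least a surjection for $p+q \leq k$. Note that the worst case $p=0$ corresponds to the trivial coefficients $\HH_q(\Gamma_g;\QQ)$, so the bound $2k+4$ exactly saturates Charney's range for constant coefficients.

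A standard inductive spectral sequence comparison then promotes these $E^1$-statements to the abutment. Induction on the page index $r$, together with the five lemma, gives iso on $E^{r+1}_{p,q}$ for $p+q \leq k$ provided one has iso on the outgoing target (total degree $k-1$, covered by the lower-range hypothesis) and surjection on the source of the incoming differential at $(p+r, q-r+1)$ (total degree $k+1$, with $p+r \geq 1$ for every $r \geq 1$). For the latter, the $E^1$-range from Proposition \ref{prop:e1 stability} becomes $2q + \lfloor 3p/d \rfloor + 4 \leq 2k+5$ whenever $p \geq 1$ and $p+q = k+1$, which is covered by $g > 2k+4$. One concludes iso on $E^\infty_{p,q}$ for $p+q \leq k$, and a filtration argument on $\HHH_k$ yields the desired isomorphism. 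The surjection at $g = 2k+4$ is the same argument applied one degree lower. The main point requiring care is this refined bookkeeping at the boundary: the corner $(p,q) = (0, k+1)$, whose $E^1$-range is only $g > 2k+6$ and would a priori spoil the bound, never enters the comparison, precisely because incoming differentials into the diagonal $p+q = k$ originate only at $(p', q')$ with $p' \geq 1$.
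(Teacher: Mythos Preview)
Your proof is correct and follows the same approach as the paper: the hyperhomology spectral sequence with $E^1_{p,q}=\HH_q(\Gamma_g;C_p^{CE}(\gl_g))$, Proposition~\ref{prop:e1 stability}, the bound $\lfloor 3p/d\rfloor\le 2p$, and a spectral sequence comparison. The paper dispatches the last step in one line by invoking ``the mapping theorem for spectral sequences''; your explicit inductive bookkeeping, in particular the observation that the corner $(0,k{+}1)$ never feeds a differential into total degree $\le k$ and hence need not be controlled, makes precise exactly the point that invocation glosses over, and is what pins the range at $g>2k+4$ rather than a weaker bound.
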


\begin{proof}
Consider the first page of the first hyperhomology spectral sequence
$${}^I E_{p,q}^1(g) = \HH_q(\Gamma_g;C_p^{CE}(\gl_g)) \Rightarrow \HHH_{p+q}(\Gamma_g;C_*^{CE}(\gl_g)).$$
The map ${}^I E_{p,q}^1(g) \rightarrow {}^I E_{p,q}^1(g+1)$ is an isomorphism for $g>2q+2p+4$ and a surjection for $g=2q+2p+4$ by Proposition \ref{prop:e1 stability}, because $\frac{3p}{d}\leq 2p$ when $d\geq 2$. The claim then follows from the comparison theorem for spectral sequences.
\end{proof}

So far in this section we might as well have worked over $\ZZ$. However, in the following theorem it will be essential to work over $\QQ$. A vanishing theorem of Borel implies that the stable cohomology may be expressed in terms of cohomology with trivial coefficients and invariants. Denote $H_g^\QQ = H_g\tensor \QQ$.

\begin{theorem} \label{thm:borel vanishing}
If $P$ is a polynomial functor of degree $\leq \ell$, then the natural map
$$\HH^k(\Gamma_g;\QQ)\tensor P(H_g^\QQ)^{\Gamma_g} \rightarrow \HH^k(\Gamma_g;P(H_g^\QQ)) $$
is an isomorphism for $g>2k + \ell +4$.
\end{theorem}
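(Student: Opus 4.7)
The plan is to combine Charney's homological stability (Theorem \ref{thm:charney}) with Borel's computation of the stable cohomology of arithmetic groups with polynomial coefficients. The basic idea is that both sides of the claimed isomorphism stabilize in the range $g>2k+\ell+4$, and the stable isomorphism is precisely the content of Borel's theorem.

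First I would dualize Theorem \ref{thm:charney} via the universal coefficient theorem, which is harmless because $P(H_g^\QQ)$ is a finite-dimensional $\QQ$-vector space, to obtain that the restriction map $\HH^k(\Gamma_{g+1};P(H_{g+1}^\QQ))\rightarrow \HH^k(\Gamma_g;P(H_g^\QQ))$ is an isomorphism for $g>2k+\ell+4$. Applied to the trivial functor $P=\QQ$ (which has degree $0\leq\ell$), this also yields stabilization of $\HH^k(\Gamma_g;\QQ)$ in the same range. Separately, classical invariant theory for the symplectic and orthogonal groups (Weyl's first fundamental theorem) shows that the restriction $P(H_\infty^\QQ)^{\Gamma_\infty} \to P(H_g^\QQ)^{\Gamma_g}$ is an isomorphism once $g$ is large in terms of $\ell$ alone; this bound is easily dominated by the Charney range.

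Second I would invoke Borel's theorem: for the arithmetic groups $\Gamma_\infty = \Sp_\infty(\ZZ)$ or $O_{\infty,\infty}(\ZZ)$ acting on an algebraic polynomial representation $P(H_\infty^\QQ)$, the natural cup-product map
$$\HH^*(\Gamma_\infty;\QQ)\tensor P(H_\infty^\QQ)^{\Gamma_\infty} \rightarrow \HH^*(\Gamma_\infty;P(H_\infty^\QQ))$$
is an isomorphism. Combining this stable isomorphism with the three stabilization facts above identifies the two sides of the claimed map in the unstable range $g>2k+\ell+4$, via naturality of the cup product in both arguments.

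The main obstacle is to pin down the precise form of Borel's theorem that is needed: namely, that every non-trivial irreducible polynomial summand of $P(H_\infty^\QQ)$ contributes nothing to the stable cohomology of $\Gamma_\infty$. This is the substance of Borel's vanishing result for arithmetic groups with non-trivial irreducible coefficients, and it must be verified uniformly across the finitely many isotypic components appearing in the decomposition of $P(H_g^\QQ)$. Once this is in hand, the remaining ingredients (Charney stability and classical invariant theory) assemble routinely to give the stated range.
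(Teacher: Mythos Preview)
Your proposal is correct and follows essentially the same route as the paper: Borel's vanishing gives the isomorphism in \emph{some} (possibly very large) stable range, and Charney's stability then forces it to hold already for $g>2k+\ell+4$ by the commuting-square argument you describe. The only cosmetic differences are that the paper invokes Borel at finite large $g$ (using that the highest weights appearing in $P(H_g^\QQ)$ are bounded by $\ell$, so a single $n(k)$ works for all irreducible summands) rather than at $g=\infty$, and that the paper obtains stabilization of the invariants $P(H_g^\QQ)^{\Gamma_g}$ directly from Charney with $k=0$ rather than from Weyl's first fundamental theorem.
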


\begin{proof}
This follows by combining Charney's theorem (Theorem \ref{thm:charney}) with Borel's vanishing theorem \cite[Theorem 4.4]{Borel2}. The group $\Gamma_g$ is an arithmetic subgroup of the algebraic group $\Sp_g$ or $O_{g,g}$, depending on whether $d$ is odd or even. Call this algebraic group $G_g$. If $P$ is a polynomial functor, then $P(H_g^\QQ)$ is an algebraic (rational) representation of the algebraic group $G_g$, and we may decompose it as a direct sum,
$$P(H_g^\QQ) = P(H_g^\QQ)^{G_g} \oplus E_1^g\oplus \cdots \oplus E_{r_g}^g,$$
where $E_1^g,\ldots,E_{r_g}^g$ are the non-trivial irreducible subrepresentations. It is easy to check that because $P$ is polynomial of degree $\leq \ell$, the coefficients of the highest weight of $E_i^g$ are bounded above by $\ell$, for all $g$ and all $i$. As explained in \cite[\S 4.6]{Borel2}, this implies that for every $k$, there is an $n(k)$ such that
$$\HH^k(\Gamma_g;E_i^g) = 0,\quad \mbox{for all $g\geq n(k)$ and all $i$.}$$
It follows that the map induced by the inclusion of $P(H_g^\QQ)^{G_g}$ into $P(H_g^\QQ)$,
\begin{equation*}
\HH^k(\Gamma_g;\QQ) \tensor P(H_g^\QQ)^{G_g}\cong \HH^k(\Gamma_g;P(H_g^\QQ)^{G_g}) \rightarrow \HH^k(\Gamma_g;P(H_g^\QQ)),
\end{equation*}
is an isomorphism for all $g\geq n(k)$. Thus, for $k$ fixed, the vertical maps in the diagram
$$
\xymatrix{\HH^k(\Gamma_g;\QQ) \tensor P(H_g^\QQ)^{G_g} \ar[d] \ar[r] & \HH^k(\Gamma_{g+1};\QQ)\tensor P(H_{g+1}^\QQ)^{G_{g+1}} \ar[d] \ar[r] & \cdots \\
\HH^k(\Gamma_g;P(H_g^\QQ)) \ar[r] & \HH^k(\Gamma_{g+1};P(H_{g+1}^\QQ)) \ar[r] & \cdots }
$$
become isomorphisms after continuing far enough to the right. It follows from Theorem \ref{thm:charney} that both the top and the bottom horizontal maps are isomorphisms for $g>2k+\ell+4$ (note that $P(H_g^\QQ)^{G_g} = P(H_g^\QQ)^{\Gamma_g} = \HH^0(\Gamma_g;P(H_g^\QQ))$). This implies that the vertical maps are isomorphisms already for $g>2k+\ell+4$, no matter what $n(k)$ is. Finally, we should point out that $V^{\Gamma_g} = V^{G_g}$ for any algebraic representation $V$ because of density of $\Gamma_g$ in $G_g$ (see e.g. \cite{Borel0}).
\end{proof}

\subsection{Homological stability for homotopy automorphisms} \label{sec:stabilization maps}
Let $M_{g,r}$ denote the result of removing the interiors of $r$ disjointly embedded $2d$-disks from the manifold $M_g = \#^g S^d\times S^d$. The manifold $M_{g+1,1}$ may be realized as the union of $M_{g,1}$ and $M_{1,2}$ along a common boundary component. An automorphism of $M_{g,1}$ that fixes the boundary point-wise may be extended to an automorphism of $M_{g+1,1} = M_{g,1} \cup M_{1,2}$ by letting it act as the identity on $M_{1,2}$. This determines a map of monoids $\aut_\partial(M_{g,1}) \rightarrow \aut_\partial(M_{g+1,1})$, and hence an induced map on classifying spaces
\begin{equation} \label{eq:stabilization map}
\sigma\colon B\aut_\partial(M_{g,1}) \rightarrow B\aut_\partial(M_{g+1,1}),
\end{equation}
which we will refer to as the `stabilization map'. In this section we will prove the following theorem:
\begin{theorem} \label{thm:baut stability}
Let $d\geq 2$. The stabilization map induces an isomorphism
$$\sigma_* \colon \HH_k(B\aut_\partial(M_{g,1});\QQ) \rightarrow \HH_k(B\aut_\partial(M_{g+1,1});\QQ)$$
for $g>2k+4$ and a surjection for $g=2k+4$.
\end{theorem}
Throughout the section we will use the notation
\begin{align*}
X_g & = B\aut_\partial(M_{g,1}), \\
H_g & = \HH_d(M_g;\ZZ), \\
V_g & = s^{d-1}H_g\tensor \QQ, \\
\Gamma_g & = \Aut(H_g,\mu,q), \\
\gl_g & = \Der_{\omega_g}^+ \LL(V_g).
\end{align*}

First, we need to understand the behavior of the stabilization map in homotopy and homology. Proposition \ref{prop:equivariant} yields a $\pi_1(X_g)$-equivariant isomorphism of graded Lie algebras
$$\pi_*^\QQ(\cX_g) \cong \Der_{\omega_g}^+ \LL(V_g).$$
We may choose a basis $\alpha_1,\beta_1,\ldots,\alpha_g,\beta_g$ for $V_g$ in which
$$\omega_g = [\alpha_1,\beta_1] + \cdots + [\alpha_g,\beta_g].$$
The intersection form makes $V_g = H_g^\QQ[d-1]$ into a graded anti-symmetric inner product space and the `stabilization' morphism $\Der_{\omega_g}^+ \LL(V_g) \rightarrow \Der_{\omega_{g+1}}^+ \LL(V_{g+1})$ is induced by the obvious inclusion $V_g\to V_{g+1}$, cf.~ Proposition \ref{prop:der functor}. Explicitly, it is given by extending derivations by zero on the new generators $\alpha_{g+1},\beta_{g+1}$.

\begin{proposition} \label{prop:stabilization}
The isomorphism
$$\pi_*^\QQ(\cX_g) \cong \Der_{\omega_g}^+ \LL(V_g)$$
is compatible with the stabilization maps.
\end{proposition}

\begin{proof}
If $f$ is a self-equivalence of $M_{g,1}$, then $\sigma(f)$ is the self-map of $M_{g+1,1}$ that restricts to $f$ on $M_{g,1}$ and to the identity on $M_{1,2}$, when we realize $M_{g+1,1}$ as the union of $M_{g,1}$ and $M_{1,2}$ along a common boundary component. In other words, the diagram
$$\xymatrix{ & \map_*(M_{g,1},M_{g+1,1}) \\
\aut_\partial(M_{g,1}) \ar[r]^-\sigma \ar[ur]^-{i_*} \ar[d] & \aut_\partial(M_{g+1,1}) \ar[d]_-{j^*} \ar[u]_-{i^*} \\
{*} \ar[r]^-j & \map_*(M_{1,2},M_{g+1,1})}$$
is commutative. The manifold $M_{1,2}$ is homotopy equivalent to a wedge of spheres $S^d\vee S^d \vee S^{2d-1}$, and a Lie model for it is given by the free graded Lie algebra $\LL(\rho,\alpha,\beta)$ with trivial differential, where the generators $\alpha,\beta$ have degree $d-1$, and $\rho$ has degree $2d-2$. The inclusions $i$ and $j$ of $M_{g,1}$ and $M_{1,2}$ into $M_{g+1,1}$ are modeled by the dg Lie algebra morphisms
$$\varphi \colon \LL(V_g)\rightarrow \LL(V_{g+1}),\quad \psi\colon \LL(\rho,\alpha,\beta) \rightarrow \LL(V_{g+1}),$$
respectively, where $\psi(\rho) = \omega$, $\psi(\alpha)  = \alpha_{g+1}$ and $\psi(\beta) = \beta_{g+1}$, and $\varphi$ is induced by the standard inclusion. From our earlier calculation and the naturality of \eqref{eq:aut der}, it follows that the diagram
$$\xymatrix{ & \Der_\varphi^+(\LL(V_g),\LL(V_{g+1})) \\
\Der_{\omega_g}^+(\LL(V_g)) \ar[r]^-{\sigma_*} \ar[ur]^-{\varphi_*} \ar[d] & \Der_{\omega_{g+1}}^+(\LL(V_{g+1})) \ar[d]_-{\psi^*} \ar[u]_-{\varphi^*} \\
0 \ar[r] & \Der_\psi^+(\LL(\rho,\alpha_{g+1},\beta_{g+1}),\LL(V_{g+1}))}$$
is commutative. This pins down $\sigma_*(\theta)$ as the unique derivation on $\LL(V_{g+1})$ that extends $\theta$ and vanishes on $\alpha_{g+1}$ and $\beta_{g+1}$.
\end{proof}

The universal cover spectral sequence,
$$E_{p,q}^2 = \HH_p(\pi_1(X);\HH_q(\cX)) \Rightarrow \HH_{p+q}(X),$$
is natural in $X$. To prove Theorem \ref{thm:baut stability} it is therefore sufficient to show that
$$\sigma \colon \HH_p(\pi_1(X_g);\HH_q(\cX_g;\QQ)) \rightarrow \HH_p(\pi_1(X_{g+1});\HH_q(\cX_{g+1};\QQ))$$
is an isomorphism if $g> 2p+2q+4$ and a surjection for $g = 2p+2q+4$. This will follow from Proposition \ref{prop:identify} and Proposition \ref{prop:e2 stability} below.

Recall that we call a group $\pi$ rationally perfect if $\HH^1(\pi;V) = 0$ for all finite dimensional $\QQ$-vector spaces $V$ with a $\pi$-action (cf.~Definition \ref{def:rationally perfect appendix}).

\begin{proposition} \label{prop:rp}
The group $\pi_1(X_g)$ is rationally perfect for $g\geq 2$.
\end{proposition}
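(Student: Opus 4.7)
The plan is to reduce rational perfectness of $\pi_1(X_g)$ to rational perfectness of the arithmetic quotient $\Gamma_g$, and then to invoke classical vanishing theorems for arithmetic lattices in higher-rank semisimple groups.

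First I would use Proposition \ref{prop:mcg}, which provides a short exact sequence
$$1 \to K \to \pi_1(X_g) \to \Gamma_g \to 1,$$
in which $K$ is a finite group and $\Gamma_g = \Aut(H_g,\mu,Jq)$. For any finite-dimensional $\QQ[\pi_1(X_g)]$-module $V$, the Hochschild--Serre spectral sequence
$$E_2^{p,q} = \HH^p(\Gamma_g;\HH^q(K;V)) \Rightarrow \HH^{p+q}(\pi_1(X_g);V)$$
degenerates onto the row $q=0$, since the cohomology of a finite group with coefficients in a $\QQ$-vector space vanishes in positive degrees. This yields a natural isomorphism $\HH^1(\pi_1(X_g);V) \cong \HH^1(\Gamma_g;V^K)$ and reduces the problem to showing that $\Gamma_g$ is rationally perfect.

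For the second step, Example \ref{ex:hyperbolic} identifies $\Gamma_g$ with a finite-index subgroup of $\Sp_{2g}(\ZZ)$ or $O_{g,g}(\ZZ)$, which for $g \geq 2$ is an irreducible arithmetic lattice in a semisimple Lie group of real rank $\geq 2$. The plan is then to invoke Margulis superrigidity: any finite-dimensional $\QQ$-linear representation of such a lattice is, after passage to a finite-index subgroup and modulo a summand with bounded (relatively compact) image, the restriction of an algebraic representation of the ambient algebraic group. The cohomology $\HH^1(\Gamma_g;W)$ of algebraic representations $W$ vanishes by Borel's vanishing theorem \cite{Borel2}, while the bounded-image part contributes nothing to $\HH^1$ by a standard averaging argument (alternatively, by Kazhdan's property (T), which $\Gamma_g$ enjoys for $g \geq 2$).

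The main obstacle is exactly this last passage: upgrading the classical vanishing theorems, which are usually stated for algebraic representations of the ambient semisimple group, to vanishing for arbitrary finite-dimensional $\QQ$-representations of the discrete group $\Gamma_g$. Margulis superrigidity is what bridges this gap, and the hypothesis $g \geq 2$ is essential precisely because it guarantees the higher-rank condition under which superrigidity applies. Once these two classical inputs are in place, the rest is a formal Hochschild--Serre argument.
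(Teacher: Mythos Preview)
Your proposal is correct and follows essentially the same route as the paper: reduce via the Hochschild--Serre spectral sequence for $1\to K\to \pi_1(X_g)\to \Gamma_g\to 1$ (with $K$ finite) to rational perfectness of $\Gamma_g$, and then invoke the fact that finite-dimensional representations of a higher-rank arithmetic group are almost algebraic (the paper cites this via Serre's survey, you phrase it as Margulis superrigidity) together with $\HH^1$-vanishing for algebraic representations. The only cosmetic difference is that the paper uses the $\QQ$-rank hypothesis (which equals $g$) rather than real rank, and packages the arithmetic input as its Theorem~\ref{thm:arithmetic groups}.
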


\begin{proof}
By Proposition \ref{prop:mcg}, there is a short exact sequence of groups,
$$1\rightarrow K_g \rightarrow \pi_1(X_g)\rightarrow \Gamma_g \rightarrow 1,$$
where the kernel $K_g$ is finite, whence rationally perfect. The group $\Gamma_g$ is an arithmetic subgroup of the algebraic group $\Sp_g$ or $O_{g,g}$, depending on whether $d$ is odd or even. In either case, the algebraic group is almost simple and its $\QQ$-rank is $g$. Hence, it follows from Theorem \ref{thm:arithmetic groups} that $\Gamma_g$ is rationally perfect. An application of the Hochschild-Serre spectral sequence then shows that $\pi_1(X_g)$ is rationally perfect.
\end{proof}

We note the following consequence for future reference.
\begin{proposition} \label{prop:uceq}
There is a $\pi_1(X_g)$-equivariant isomorphism
$$\HH_*(\cX_g;\QQ) \cong \HH_*^{CE}(\gl_g),$$
compatible with the stabilization maps.
\end{proposition}

\begin{proof}
Combine Proposition \ref{prop:equivariant}, Proposition \ref{prop:rp} and Proposition \ref{prop:perfect}. Compatibility with the stabilization maps follows from Proposition \ref{prop:stabilization} and naturality of the Quillen spectral sequence.
\end{proof}

\begin{proposition} \label{prop:identify}
For $d\geq 2$, $g\geq 2$, and all $p,q$, there is an isomorphism
$$\HH_p(\pi_1(X_g);\HH_q(\cX_g;\QQ)) \cong \HH_p(\Gamma_g;\HH_q^{CE}(\gl_g)),$$
compatible with the stabilization maps.
\end{proposition}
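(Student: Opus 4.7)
The plan is to carry out two natural reductions. First, I will use formality to identify $\HH_q(\cX_g;\QQ)$ with the Chevalley-Eilenberg homology of $\gl_g$ as a $\pi_1(X_g)$-module. Second, I will reduce from $\pi_1(X_g)$ to $\Gamma_g$ using that the kernel is finite and we work rationally.

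For the first reduction, I apply Proposition \ref{prop:perfect} to the action of $\pi_1(X_g)$ on $\cX_g$ by deck transformations. The four hypotheses are all in hand: $\cX_g$ is simply connected; its rational cohomology is degreewise finite-dimensional by Corollary \ref{thm:finite type}; the group $\pi_1(X_g)$ is rationally perfect by Proposition \ref{prop:rp}; and $\lambda(\cX_g)$ is formal by Theorem \ref{thm:main aut}. Combined with the identification $\pi_*^\QQ(\cX_g) \cong \gl_g$ from the same theorem, the output is an isomorphism of graded $\pi_1(X_g)$-modules
$$\HH_q(\cX_g;\QQ) \cong \HH_q^{CE}(\gl_g).$$

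For the second reduction, observe that Theorem \ref{thm:main aut} describes the $\pi_1(X_g)$-action on $\gl_g$ as the pullback along the surjection $\pi_1(X_g) \twoheadrightarrow \Gamma_g$ of Proposition \ref{prop:mcg} (using Example \ref{ex:hyperbolic} to identify $\Aut(H_g,\mu,q) = \Aut(H_g,\mu,Jq)$). The kernel $K_g$ is finite by Proposition \ref{prop:mcg}, so it acts trivially on the $\QQ$-vector space $\HH_q^{CE}(\gl_g)$, and $\HH_*(K_g;V) = 0$ in positive degrees for any rational $K_g$-module $V$. The Hochschild-Serre spectral sequence for $1 \to K_g \to \pi_1(X_g) \to \Gamma_g \to 1$ with coefficients $\HH_q^{CE}(\gl_g)$ therefore collapses onto its $E_{*,0}^2$ edge, yielding
$$\HH_p(\pi_1(X_g);\HH_q^{CE}(\gl_g)) \cong \HH_p(\Gamma_g;\HH_q^{CE}(\gl_g)).$$
Combining with the first reduction gives the claimed isomorphism.

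Compatibility with stabilization follows by naturality at every step: the Quillen spectral sequence \eqref{eq:quillen ss} is natural in unbased maps of simply connected spaces, so the Step 1 isomorphism is natural for equivariant maps and the induced Lie algebra map is precisely the stabilization $\sigma \colon \gl_g \to \gl_{g+1}$ by Proposition \ref{prop:stabilization}; the Hochschild-Serre spectral sequence is natural for morphisms of group extensions. The main delicate point, and the step I would want to verify most carefully, is the $\pi_1(X_g)$-equivariance of the isomorphism produced by Proposition \ref{prop:perfect}: one needs that the word-length filtration on the Chevalley-Eilenberg complex and the splittings of the short exact sequences guaranteed by rational perfectness can be chosen $\pi_1(X_g)$-equivariantly. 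This is implicit in the treatment of the Quillen spectral sequence as a spectral sequence of $\pi$-modules in Section 2.
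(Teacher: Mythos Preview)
Your proof is correct and follows essentially the same approach as the paper: apply Proposition~\ref{prop:perfect} (using formality, rational perfectness of $\pi_1(X_g)$, and finite type) to identify $\HH_q(\cX_g;\QQ)$ with $\HH_q^{CE}(\gl_g)$ equivariantly, then pass from $\pi_1(X_g)$ to $\Gamma_g$ using finiteness of the kernel, with stabilization compatibility coming from Proposition~\ref{prop:stabilization} and naturality. Your explicit invocation of Hochschild--Serre and of Example~\ref{ex:hyperbolic} to reconcile $\Aut(H_g,\mu,q)$ with $\Aut(H_g,\mu,Jq)$ makes the argument slightly more detailed than the paper's, but the content is the same.
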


\begin{proof}
The previous proposition implies that
$$\HH_p(\pi_1(X_g);\HH_q(\cX_g;\QQ)) \cong \HH_p(\pi_1(X_g);\HH_q^{CE}(\gl_g)).$$
By Proposition \ref{prop:mcg}, the kernel of the homomorphism $\pi_1(X_g)\rightarrow \Gamma_g$ is a finite group that acts trivially on $\gl_g$ and hence on $\HH_q^{CE}(\gl_g)$. Since we work with rational coefficients, this implies that there is an isomorphism
$$\HH_p(\pi_1(X_g);\HH_q^{CE}(\gl_g)) \cong \HH_p(\Gamma_g;\HH_q^{CE}(\gl_g)),$$
as claimed.
\end{proof}

\begin{proposition} \label{prop:e2 stability}
Let $d\geq 2$. The stabilization map
$$\HH_p(\Gamma_g;\HH_q^{CE}(\gl_g)) \rightarrow \HH_p(\Gamma_{g+1},\HH_q^{CE}(\gl_{g+1}))$$
is an isomorphism for $g> 2p + 2q + 4$ and a surjection for $g= 2p +2q +4$.
\end{proposition}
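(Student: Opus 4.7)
The plan is to identify $\HH_q^{CE}(\gl_g)$ with the value at $H_g^\QQ$ of a coefficient system of bounded polynomial degree, and then invoke Charney's theorem to control its group homology directly. In particular, I do not want to rely on the hyperhomology stability of Theorem~\ref{thm:hh stability}, which flows in the wrong direction (from $E^1$-stability to abutment, not the other way around).

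The starting point is Proposition~\ref{prop:polynomial1}: each term $C_q^{CE}(\gl_g)$ of the Chevalley--Eilenberg chain complex is the value at $H_g^\QQ$ of a polynomial functor of degree at most $\lfloor 3q/d \rfloor$, compatibly with the stabilization embedding $H_g^\QQ\hookrightarrow H_{g+1}^\QQ$. The Chevalley--Eilenberg differential is built from the Lie bracket on $\gl_g$, and is equivariant for the action of the algebraic group $G_g$ (either $\Sp_g$ or $O_{g,g}$) of which $\Gamma_g$ is an arithmetic subgroup. Since polynomial rational $G_g$-representations are completely reducible over $\QQ$, the homology $\HH_q^{CE}(\gl_g)$ is a direct summand of the cycles inside $C_q^{CE}(\gl_g)$ and hence is itself a polynomial $G_g$-representation of degree at most $\lfloor 3q/d\rfloor$, with this identification compatible with the stabilization maps.

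Since $d \geq 2$, we have $\lfloor 3q/d \rfloor \leq 2q$. Applying Charney's theorem (Theorem~\ref{thm:charney}) with $\ell = 2q$ and $k = p$ to the coefficient system $\HH_q^{CE}(\gl_g)$ then yields that
$$\sigma_*\colon \HH_p(\Gamma_g;\HH_q^{CE}(\gl_g)) \to \HH_p(\Gamma_{g+1};\HH_q^{CE}(\gl_{g+1}))$$
is an isomorphism for $g > 2p+2q+4$ and a surjection for $g = 2p+2q+4$.

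The main point that requires care is the identification of $V \mapsto \HH_q^{CE}(\gl(V);\QQ)$ as a polynomial functor of the appropriate degree. The subtlety is that the Lie bracket on $\gl(V)$ depends on the inner product via the formulas following Proposition~\ref{prop:bracketing map}, so the CE differential is strictly natural only for morphisms in $S^\epsilon(\QQ)$ and not for arbitrary linear maps. This is, however, precisely the naturality needed to feed into Charney-style stability; passing to the completion of the action to the algebraic group $G_g$ and invoking semisimplicity of polynomial $G_g$-representations (or, equivalently, appealing to the central-coefficient-system generalization of Charney's theorem mentioned in the remark after Theorem~\ref{thm:charney}) absorbs this subtlety and allows the argument above to go through.
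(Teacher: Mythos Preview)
Your approach is different from the paper's and the core idea is sound, but the last paragraph does not actually close the gap you correctly identify. Saying that $\HH_q^{CE}(\gl_g)$ is, for each fixed $g$, a $G_g$-summand of the polynomial representation $C_q^{CE}(\gl_g)$ is not the same as exhibiting the assignment $g\mapsto \HH_q^{CE}(\gl_g)$ as a central coefficient system of degree $\le 2q$ in Charney's sense: you need the bound to be witnessed \emph{functorially} in the hyperbolic module, not just objectwise. The clean fix is to observe that the Chevalley--Eilenberg complex, viewed as a functor on $S^\epsilon(\QQ)$, is a chain complex in the abelian category of coefficient systems, with each term of degree $\le \lfloor 3q/d\rfloor$; one then checks from Charney's definition that degree is preserved under kernels and cokernels, so the homology system inherits the same bound. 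Invoking ``semisimplicity of polynomial $G_g$-representations'' does not by itself supply this, because the isotypic splittings for different $g$ are not a priori compatible with the stabilization maps.

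The paper takes a different route precisely to avoid this verification. It uses that $\Gamma_g$ is rationally perfect to conclude that $C_*^{CE}(\gl_g)$ is split as a complex of $\QQ[\Gamma_g]$-modules (Proposition~\ref{prop:split}), hence chain-homotopy equivalent to $\HH_*^{CE}(\gl_g)$; Lemma~\ref{lemma:split} then shows these equivalences are compatible with stabilization up to chain homotopy. This identifies $\HHH_k(\Gamma_g;C_*^{CE}(\gl_g))$ with $\HHH_k(\Gamma_g;\HH_*^{CE}(\gl_g))=\bigoplus_{p+q=k}\HH_p(\Gamma_g;\HH_q^{CE}(\gl_g))$ compatibly with $\sigma$, and the hyperhomology stability of Theorem~\ref{thm:hh stability} finishes the argument. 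So your objection that Theorem~\ref{thm:hh stability} ``flows in the wrong direction'' misses the point: the splitting lemma is exactly the device that reverses the flow. Your route is arguably more direct once the degree bound on the homology system is properly established, but as written it leans on an assertion you have not proved.
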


\begin{proof}
As noted above, the group $\Gamma_g$ is rationally perfect for $g\geq 2$. The chain complex of $\QQ[\Gamma_g]$-modules $C_*^{CE}(\gl_g)$ is finite dimensional over $\QQ$ in each degree, and is therefore split by Proposition \ref{prop:split}. By Lemma \ref{lemma:split} we get a homotopy commutative diagram of chain complexes of $\QQ[\Gamma_g]$-modules
$$\xymatrix{C_*^{CE}(\gl_g) \ar[r]^-\sigma \ar[d]_-\simeq & C_*^{CE}(\gl_{g+1}) \ar[d]^-\simeq \\ \HH_*^{CE}(\gl_g) \ar[r]^-\sigma & \HH_*^{CE}(\gl_{g+1}) }$$
where the vertical maps are chain homotopy equivalences. This implies that the diagram
$$\xymatrix{\HHH_k(\Gamma_g;C_*^{CE}(\gl_g)) \ar[d]_-\cong \ar[r]^-\sigma & \HHH_k(\Gamma_{g+1};C_*^{CE}(\gl_{g+1})) \ar[d]^-\cong \\ \HHH_k(\Gamma_g;\HH_*^{CE}(\gl_g)) \ar[r]^-\sigma & \HHH_k(\Gamma_{g+1};\HH_*^{CE}(\gl_{g+1}))}$$
is commutative, and that the vertical maps are isomorphisms. By Theorem \ref{thm:hh stability}, the top map is an isomorphism for $g> 2k+4$ and a surjection for $g\geq 2k+4$, so the same is true of the bottom map. For any group $\Gamma$ and any graded $\Gamma$-module $H_*$, regarded as a chain complex with zero differential, there is a decomposition of hyperhomology,
$$\HHH_k(\Gamma;H_*) \cong \bigoplus_{p+q = k} \HH_p(\Gamma;H_q),$$
which is natural in $\Gamma$ and $H_*$. It follows that the constituents of the bottom map,
$$\sigma_{p,q}\colon \HH_p(\Gamma_g;\HH_q^{CE}(\gl_g)) \rightarrow \HH_p(\Gamma_{g+1};\HH_q^{CE}(\gl_{g+1})),$$
are isomorphisms for $g>2p+2q+4$ and surjections for $g=2p+2q+4$.
\end{proof}

This completes the proof of Theorem \ref{thm:baut stability}.

\subsection{Homological stability for block diffeomorphisms} \label{sec:block}
The goal of this section is to prove the following theorem.
\begin{theorem} \label{thm:bdiff stability}
Let $d\geq 3$. The stabilization map
$$\sigma_*\colon \HH_k(B\tDiff_\partial(M_{g,1});\QQ) \rightarrow \HH_k(B\tDiff_\partial(M_{g+1,1});\QQ)$$
is an isomorphism for $g>2k+4$ and a surjection for $g = 2k+4$.
\end{theorem}

The proof of the theorem is based on an analysis of the diagram \eqref{eq:(23)} in \S\ref{sec:fhf} for $M = M_{g,1}$. We will use the following abbreviations.
\begin{align*}
X_g & = B\aut_\partial(M_{g,1}), & X_{g,J} & = B\aut_{\partial,J}(M_{g,1}), & X_{g,\circ} & = B\aut_{\partial,\circ}(M_{g,1}), \\
Y_g & = B\tDiff_\partial(M_{g,1}), & Y_{g,\circ} & = B\tDiff_{\partial,\circ}(M_{g,1}), & F_g   = & \aut_{\partial,\circ}(M_{g,1})/\tDiff_{\partial,\circ}(M_{g,1}).
\end{align*}
For $M=M_{g,1}$, the diagram \eqref{eq:(23)} can then be rewritten as
\begin{equation} \label{eq:J-pullback}
\xymatrix{F_g \ar[d] \ar[r] & {*} \ar[d] \\ Y_{g,\circ} \ar[d] \ar[r] & X_{g,\circ} \ar[d] \ar[r] & {*} \ar[d] \\
Y_g \ar[r] & X_{g,J} \ar[r] & B\pi_1(X_{g,J}),
}
\end{equation}
where each square is a homotopy pullback. We let $\widehat{\Gamma}_g = \pi_1(X_{g,J})$. Recall that, by construction, it is the image of $J\colon \tDiff_\partial(M_{g,1})\to \pi_0\aut_\partial(M_{g,1})$. The fundamental group of $Y_{g,\circ}$ is the kernel of $J$.
We will study the spectral sequences of the homotopy fiber sequences
\begin{equation} \label{eq:fib1}
F_g \to Y_{g,\circ} \to X_{g,\circ},
\end{equation}
\begin{equation} \label{eq:fib2}
Y_{g,\circ} \to Y_g \to B\pi_1(X_{g,J}).
\end{equation}

First, we need a result on the fundamental group of $X_{g,J}$.

\begin{proposition} \label{prop:X_g^J}
The kernel of $\pi_1(X_{g,J}) \to \Gamma_g$ is finite. The group $\pi_1(X_{g,J})$ is rationally perfect for $g\geq 2$.
\end{proposition}

\begin{proof}
From \S\ref{sec:mcg} we have a commutative diagram of groups with exact rows,
$$
\xymatrix{
1 \ar[r] & \widetilde{K}_g \ar[d] \ar[r] & \pi_1 Y_g \ar[d]^-J \ar[r] & \Gamma_g \ar@{=}[d] \ar[r] & 1 \\
1 \ar[r] & K_g \ar[r] & \pi_1 X_g \ar[r] & \Gamma_g \ar[r] & 1,
}
$$
By construction, $\pi_1(X_{g,J}) = \im J$. It follows that there is an exact sequence
$$1\to K_{g,J} \to \pi_1(X_{g,J}) \to \Gamma_g \to 1,$$
where $K_{g,J}$ injects into $K_g$. Since $K_g$ is finite, so is $K_{g,J}$. The rest of the proof is similar to that of Proposition \ref{prop:rp}.
\end{proof}

\begin{proposition} \label{prop:splitting}
The fibration
$$F_g \to Y_{g,\circ} \to X_{g,\circ}$$
is rationally totally nonhomologous to zero, i.e., there is an isomorphism
\begin{equation} \label{eq:tensor}
\HH_*(Y_{g,\circ};\QQ)\cong \HH_*(X_{g,\circ};\QQ) \tensor \HH_*(F_g;\QQ).
\end{equation}
Moreover, the isomorphism may be taken to be $\widehat{\Gamma}_g$-equivariant.
\end{proposition}

\begin{proof}
The fibration at hand is the upper row in \eqref{eq:b} for $M = M_{g,1}$.
The vertical maps in \eqref{eq:b} are rational homology isomorphisms, and the lower fibration is rationally trivial by Corollary \ref{cor:triviality}, since the reduced homology of $M_{g,1}$ is concentrated in a single degree. This proves the first claim.

The group $\widehat{\Gamma}_g = \pi_1(X_{g,J}) = \pi_0\aut_{\partial,J}(M_{g,1})$ acts on the spaces in the fibration sequence \eqref{eq:fib1} in the sense that
there are homomorphisms $\widehat{\Gamma}_g \to \pi_0\aut(Z)$ for $Z\in\{F_g,Y_{g,\circ},X_{g,\circ}\}$ and the maps in \eqref{eq:fib1} preserve this (homotopy) action. The actions can be seen as holonomy actions of the various fibrations in \eqref{eq:J-pullback}, but it is better to go back to the definitions in \S\ref{sec:surgery} of the involved spaces. Elements of $\widehat{\Gamma}_g$ are represented by diffeomorphisms $\varphi\colon M_{g,1}\to M_{g,1}$ with $\partial \varphi = id$. The action on the $\Delta$-monoids $\tDiff_{\partial,\circ}(M_{g,1})_\bullet$ and $\taut_{\partial,\circ}(M_{g,1})_\bullet$ is given by conjugating a $k$-simplex $f\colon \Delta^k\times M_{g,1} \to \Delta^k\times M_{g,1}$ with $\Delta^k\times \varphi$. The induced action on $\aut_{\partial,\circ}(M_{g,1})/\tDiff_{\partial,\circ}(M_{g,1})$ induces an action on the structure $\Delta$-set compatible with the homotopy equivalence
$$F_g = \aut_{\partial,\circ}(M_{g,1})/\tDiff_{\partial,\circ}(M_{g,1}) \xrightarrow{\sim} \big(\Ss_\partial^{G/O}(M_{g,1})_\bullet\big)_{(1)}.$$
An element $[\varphi]\in \widehat{\Gamma}_g$ acts on a $k$-simplex $W\xrightarrow{f} \Delta^k\times M$ of the structure $\Delta$-set by the composition $(\Delta^k \times \varphi) \circ f$. Use of geometric realization and the classifying space construction yields the required $\widehat{\Gamma}_g$-action on the relevant spaces. Since $\widehat{\Gamma}_g$ is rationally perfect, the isomorphism \eqref{eq:tensor} can be taken to be $\widehat{\Gamma}_g$-equivariant. 
\end{proof}

Proposition \ref{prop:uceq} identifies $\HH_*(X_{g,\circ};\QQ) \cong \HH_*^{CE}(\gl_g)$ as $\widehat{\Gamma}_g$-modules.
Next, we describe the rational homology of $F_g$ as a $\widehat{\Gamma}_g$-module. Recall that $H_g^\QQ$ denotes $\HH_d(M_{g,1};\QQ)$.

\begin{proposition} \label{prop:F_g}
There is an isomorphism of $\widehat{\Gamma}_g$-modules
$$\HH_*(F_g;\QQ) \cong \Lambda(\Pi\tensor H_g^\QQ),$$
where
$$\Pi = \QQ \set{\pi_i}{|\pi_i| = 4i-d >0} \quad \big(= (s^{-d}\pi_*(G/O)\tensor \QQ)_{> 0} \big),$$
and $\widehat{\Gamma}_g$ acts on the right-hand side via the standard action of $\Gamma_g$ on $H_g^\QQ$.
\end{proposition}

\begin{proof}
There is a $\widehat{\Gamma}_g$-action on the $\Delta$-set of normal invariants, where a diffeomorphism $\varphi\colon M_{g,1} \to M_{g,1}$ with $\partial \varphi = id$ acts by composing a $k$-simplex of $\mathcal{N}_\partial^{G/O}(M_{g,1})_\bullet$ with
$$
\xymatrix{\zeta^K \ar[r]^-{\varphi_*} \ar[d] & (\Delta^k \times \varphi^{-1})(\zeta^K) \ar[d] \\
\Delta^k \times M_{g,1} \ar[r]^-\varphi & \Delta^k \times M_{g,1},}
$$
and the normal invariant
$$
\eta_\bullet\colon \Ss_\partial^{G/O}(M_{g,1})_\bullet \to \mathcal{N}_\partial^{G/O}(M_{g,1})_\bullet
$$
becomes equivariant.

It follows from Lemma 3.3 of \cite{BM} that the homotopy equivalence
$$\mathcal{N}_\partial^{G/O}(M_{g,1})_\bullet\xrightarrow{\sim} S_\bullet \map_*(M_{g,1}/\partial M_{g,1},G/O)$$
of Theorem \ref{thm:kan-eq} is $\widehat{\Gamma}_g$-equivariant on homotopy groups, where $[\varphi]\in \widehat{\Gamma}_g$ acts on the target via the diffeomorphism $\varphi^{-1}\colon M_{g,1}\to M_{g,1}$.

Since $\partial M_{g,1} \to M_{g,1}$ is a sum of Whitehead products, its suspension is homotopically trivial and
$$\map_*(M_{g,1}/\partial M_{g,1} ,G/O) \simeq \map_*(M_{g,1},G/O) \times \Omega^{2d} G/O,$$
because $G/O\simeq \Omega B (G/O)$. The action on $\Omega^{2d} G/O$ is trivial, so all in all
$$
q^*\circ j_* \circ \eta \colon \Ss_\partial^{G/O}(M_{g,1})_{(1)} \to \map_*(M_{g,1},BO)_{(0)}
$$
is $\widehat{\Gamma}_g$-equivariant on homotopy groups with the action on the target induced by $\varphi^{-1}\colon M_{g,1}\to M_{g,1}$.

We have the $\widehat{\Gamma}_g$-isomorphism
$$\pi_*(\map_*(M_{g,1},BO)_{(0)})\tensor \QQ \cong \HH_d(M_{g,1};\QQ)\tensor \Pi$$
and since $\map_*(M_{g,1},BO)_{(0)}$ is an infinite loop space it has trivial rational $k$-invariants, so
$$\HH_*(\map_*(M_{g,1},BO)_{(0)};\QQ) \cong \Lambda(H_g^\QQ\tensor\Pi)$$
as $\widehat{\Gamma}_g$-modules.

Since $F_g\to\Ss_\partial^{G/O}(M_{g,1})_{(1)}$ is a $\widehat{\Gamma}_g$-equivariant homotopy equivalence, and the composite
$$
F_g \xrightarrow{\sim} \Ss_\partial^{G/O}(M_{g,1})_{(1)} \xrightarrow{q^*\circ j_*\circ \eta}  \map_*(M_{g,1},BO)_{(0)}
$$
is a rational homotopy equivalence by Proposition \ref{prop:fiber proposition}, which is $\widehat{\Gamma}_g$-equivariant on homotopy groups, it follows that $F_g$ also has trivial rational $k$-invariants and that we have isomorphisms of $\widehat{\Gamma}_g$-modules $\pi_*(F_g)\tensor\QQ \cong \Pi \tensor H_g^\QQ$ and $\HH_*(F_g;\QQ) \cong \Lambda (\Pi \tensor H_g^\QQ)$ as claimed.
\end{proof}

The previous proposition may be interpreted as asserting an isomorphism of (abelian) graded Lie algebras,
$\al_g = \pi_{*+1}(F_g) \tensor \QQ \cong s^{-1} \Pi\tensor H_g^\QQ.$
The homology is then given by
$$\HH_*(F_g;\QQ) = \HH_*^{CE}(\al_g).$$
We may write $C_*^{CE}(\al_g)$ as the value at $H_g$ of a Schur functor as follows:
\begin{lemma} \label{lemma:polynomial2}
There is an isomorphism of $\pi_1(X_{g,J})$-modules
$$C_*^{CE}(\al_g) \cong \bigoplus_{k\geq 0} \DD(k)\tensor_{\Sigma_k} H_g^{\tensor k}$$
compatible with the stabilization maps, where $\DD(k)$ is the $\Sigma_k$-module
$$\DD(k) = \Pi^{\tensor k}.$$
In particular, $\DD(k)$ is concentrated in degrees $\geq k$ for every $k$.\hfill $\square$
\end{lemma}

By Proposition \ref{prop:uceq} and Proposition \ref{prop:F_g} we may rewrite the right-hand side of \eqref{eq:tensor} in terms of Chevalley-Eilenberg homology:
$$\HH_*(X_{g,\circ};\QQ)\tensor \HH_*(F_g;\QQ) \cong \HH_*^{CE}(\gl_g)\tensor \HH_*^{CE}(\al_g) \cong \HH_*^{CE}(\gl_g\oplus \al_g).$$
In particular, the action of $\widehat{\Gamma}_g = \pi_1(X_{g,J})$ on $\HH^*(Y_{g,\circ};\QQ)$ factors over $\Gamma_g$, since this is true of the right-hand side. Since the kernel of $\pi_1(X_{g,J}) \to \Gamma_g$ is finite, we may then write the $E_2$-term of the spectral sequence of \eqref{eq:fib2} as follows:
$$E_{p,q}^2 = \HH_p(\pi_1(X_{g,J});\HH_q(Y_{g,\circ};\QQ)) \cong \HH_p(\Gamma_g;\HH_q^{CE}(\gl_g\oplus \al_g)).$$
Hence, the proof of Theorem \ref{thm:bdiff stability} will be complete once we verify the following proposition.

\begin{proposition}
Let $d\geq 3$. The stabilization map
$$\HH_p(\tGamma_g;\HH_q^{CE}(\gl_g\oplus \al_g)) \rightarrow \HH_p(\tGamma_{g+1};\HH_q^{CE}(\gl_{g+1}\oplus \al_{g+1}))$$
is an isomorphism for $g>2p+2q+4$ and a surjection for $g\geq 2p+2q +4$.
\end{proposition}

\begin{proof}
The proof proceeds exactly as the proof of Proposition \ref{prop:e2 stability}, after noting that $C_*^{CE}(\gl_g \oplus\al_g)$ is a split complex of $\QQ[\tGamma_g]$-modules, whose module of $q$-chains is the value of a polynomial functor of degree $2q$ on the standard $\QQ[\tGamma_g]$-module $H_g^\QQ$. More precisely, by combining Lemma \ref{lemma:polynomial functor} and Lemma \ref{lemma:polynomial2} we have that
$$C_*^{CE}(\gl_g \oplus \al_g) = C_*^{CE}(\gl_g) \tensor C_*^{CE}(\al_g) \cong \bigoplus_{k\geq 0} (\CC\tensor \DD)(k) \tensor_{\Sigma_k} H_g^{\tensor k},$$
where
$$(\CC\tensor \DD)(k) = \bigoplus_{i+j=k} \Ind_{\Sigma_i\times \Sigma_j}^{\Sigma_k} \CC(i)\tensor \DD(j).$$
Since $\CC(i)$ is concentrated in degrees $\geq \frac{id}3$ and $\DD(j)$ is concentrated in degrees $\geq j$, it follows that
$(\CC\tensor \DD)(k)$ is concentrated in degrees $\geq k/2$. This implies that the functor $(\CC\tensor \DD)_q(-)$ is polynomial of degree $\leq 2q$ for every $q$.
\end{proof}

\section{Stable cohomology}
The goal of this section is to calculate the stable rational cohomology of the classifying spaces $B\aut_\partial(M_{g,1})$ and $B\tDiff_\partial(M_{g,1})$.
We begin by reviewing the calculation of the stable cohomology of $B\Diff_\partial(M_{g,1})$ in terms of $\kappa$-classes, due to Galatius and Randal-Williams, and Borel's results on the stable cohomology of arithmetic groups.

\subsection{$\kappa$-classes and the stable cohomology of the diffeomorphism group} \label{sec:diff}
Let $M$ be a closed oriented $2d$-dimensional manifold and let $\Diff(M)$ be the group of orientation preserving diffeomorphisms of $M$. The space $B\Diff(M)$ is a classifying space for smooth oriented fiber bundles with fiber diffeomorphic to $M$, or `$M$-bundles' for short. To every characteristic class of oriented vector bundles
$$c\in \HH^k(BSO(2d))$$
there is an associated characteristic class of $M$-bundles
$$\kappa_c\in \HH^{k-2d}(B\Diff(M)),$$
characterized by the following. Given a smooth oriented fiber bundle
$$M\rightarrow E\stackrel{\pi}{\rightarrow} X,$$
the vertical tangent bundle $T_\pi E$ is an oriented $2d$-dimensional vector bundle over $E$, and we may consider its characteristic class $c(T_\pi E)\in \HH^k(E)$. By applying the Gysin homomorphism $\pi_!\colon \HH^k(E) \rightarrow \HH^{k-2d}(X)$, we obtain a class
$$\pi_!(c(T_\pi E))\in \HH^{k-2d}(X).$$
By definition, $\kappa_c(\pi) = \pi_!(c(T_\pi E))$.
Recall that the rational cohomology of $BSO(2d)$ is a polynomial ring
$$\HH^*(BSO(2d);\QQ) = \QQ[p_1,\ldots,p_{d-1},e]$$
in the Pontryagin classes $p_i$ and the Euler class $e$.

For $M = M_g$, the pullback of $\kappa_c$ along the map $B\Diff_\partial(M_{g,1}) \rightarrow B\Diff(M_g)$ gives us a class in $\HH^{k-2d}(B\Diff_\partial(M_{g,1}))$ that we will also denote $\kappa_c$. The stabilization map $B\Diff_\partial(M_{g,1}) \to B\Diff_\partial(M_{g+1,1})$ induces an isomorphism on $\HH_k(-)$ for $g>2k+4$ \cite{GRW2}. The stable cohomology is given by the following.

\begin{theorem}[Madsen-Weiss $2d=2$ \cite{MW}, Galatius-Randal-Williams $2d>4$ \cite{GRW1}] \label{thm:GRW}
For $2d\ne 4$, the stable cohomology of the diffeomorphism group of $M_{g,1}$ is given by
$$\HH^*(B\Diff_\partial(M_{\infty,1});\QQ) \cong \QQ[\kappa_c|c\in B],$$
where $B$ is the set of monomials $c$ in the Pontryagin classes $p_{d-1},p_{d-2},\ldots,p_{\lceil\frac{d+1}{4}\rceil}$ and the Euler class $e$, of total degree $|c|>2d$.
\end{theorem}

\subsection{Borel's calculation of the stable cohomology of arithmetic groups}
The rational cohomology of $BU$ is a polynomial algebra in the Chern classes
$$\HH^*(BU;\QQ) = \QQ[c_1,c_2,\ldots].$$
The Hopf algebra structure is given by $\Delta(c_n) = \sum_{p+q=n} c_p\tensor c_q$. Let $\sigma_1,\ldots, \sigma_n$ denote the elementary symmetric polynomials in the indeterminates $t_1,\ldots,t_n$. Then there is a unique polynomial $p_n$ such that
$$p_n(\sigma_1,\ldots,\sigma_n) = t_1^n + \cdots + t_n^n,$$
Define the `Newton classes' by
$$s_n = p_n(c_1,\ldots,c_n)\in \HH^{2n}(BU;\QQ).$$
These are primitive generators for the rational cohomology of $BU$.

According to Borel \cite{Borel1}, the rational cohomology of the infinite symplectic group $\Sp(\ZZ)$ is the primitively generated Hopf algebra
$$\HH^*(B\Sp(\ZZ);\QQ) \cong \QQ[x_1,x_2,\ldots].$$
The primitive generators $x_i$ are of degree $4i-2$, and may be chosen to be the pullbacks of the odd classes $s_{2i-1}\in \HH^{4i-2}(BU;\QQ)$ along the map
$$B\Sp(\ZZ)\rightarrow B\Sp(\RR) \stackrel{\sim}{\leftarrow} BU.$$

\subsection{Relation between Borel classes and $\kappa$-classes}
There is another way of producing characteristic classes of smooth fiber bundles, following Atiyah \cite[\S4]{Atiyah}. Again, let $M$ be a smooth oriented $2d$-dimensional manifold. Assume that $d$ is odd. Then $\HH^d(M;\RR)$ is of even dimension, say $2g$.

Let $E\stackrel{\pi}{\rightarrow} X$ be an $M$-bundle. There is a real $2g$-dimensional vector bundle $\xi$ over $X$ (the Hodge bundle), with structure group $\Sp_{2g}(\RR)$, whose fiber over $x$ is the cohomology group
$$\xi_x = \HH^d(\pi^{-1}(x);\RR).$$
The structure group can be reduced to the maximal compact subgroup $U(g)\subset \Sp_{2g}(\RR)$, so we obtain a $g$-dimensional complex vector bundle $\eta$ over $X$. We may consider the `Newton classes'
$$s_i(\eta)\in \HH^{2i}(X).$$
The even classes vanish, $s_{2i}(\eta) = 0$. The odd classes agree with the pullbacks of the Borel classes
\begin{equation} \label{eq:Borel classes}
s_{2i-1}(\eta) = \rho^*(x_i)
\end{equation}
along the map $\rho\colon X\rightarrow B\Sp_{2g}(\ZZ)$.

Now, one may ask if there are any relations between the classes $\kappa_c(\pi)$ and $s_i(\eta)$. This problem was addressed and solved by Morita \cite[\S2]{Morita} in the case of surface bundles. A similar treatment is possible in our situation. According to \cite[(4.3)]{Atiyah}, we have the relation
\begin{equation} \label{eq:index}
ch(\eta^* - \eta) = \pi_!(\widetilde{L}(T_\pi E))
\end{equation}
in the cohomology of $X$. In the left hand side, $\eta^*$ denotes the conjugate bundle, the formal difference $\eta^* - \eta$ is taken in $K(X)$, and $ch$ is the Chern character $ch\colon K(X)\rightarrow \HH^*(X;\QQ)$,
$$ch(\eta) = g + \sum_{k\geq 1} \frac{s_k(\eta)}{k!}.$$
Since $s_k(\eta^*) = (-1)^k s_k(\eta)$, we may write the left hand side of \eqref{eq:index} as
$$ch(\eta^* - \eta) = -2\sum_{\text{$k$ odd}} \frac{s_k(\eta)}{k!}.$$
Turning to the right hand side, if $\xi$ is a real vector bundle over $E$ of dimension $2d$, then
$$\widetilde{L}(\xi) = \widetilde{L}(p_1,\ldots,p_d)$$
is the formal power series in the Pontryagin classes of $\xi$ determined by
$$\widetilde{L}(\sigma_1,\ldots,\sigma_d) = f(t_1)\cdots f(t_d),$$
where $\sigma_i$ is the elementary symmetric polynomial in $t_1^2,\ldots,t_d^2$ of degree $i$, and $f(t)$ is the formal power series
\begin{equation*}
f(t) = \frac{t}{\tanh(t/2)} = 2\left(1+ \sum_{k\geq 1} (-1)^{k-1} \frac{B_k}{(2k)!} t^{2k}\right).
\end{equation*}
Here $B_k$ are the Bernoulli numbers. Explicitly, the homogeneous term in $\widetilde{L}(\xi)$ of degree $n$ is given by
$$\widetilde{L}_n = \widetilde{L}_n(p_1,\ldots,p_n) = 2^d \sum_{I\vdash n} \lambda_I s_I(p_1,\ldots,p_n).$$
Here, the sum is over all partitions $I = (i_1,\ldots,i_r)$ of $n$, and $s_I$ denotes the corresponding polynomial in the elementary symmetric polynomials (see e.g.~\cite[p.188]{MS}). The coefficients are $\lambda_I = \lambda_{i_1} \cdots \lambda_{i_r}$ where
$$\lambda_k = (-1)^{k-1} \frac{B_k}{(2k)!}.$$
We are assuming that $d$ is odd, say $d = 2s+1$. By comparing homogeneous terms in \eqref{eq:index} and using \eqref{eq:Borel classes}, we obtain the relation
\begin{equation}
-2\frac{\rho^*(x_i)}{(2i-1)!} = \kappa_{\widetilde{L}_{i+s}} \in \HH^{4i-2}(X;\QQ)
\end{equation}
for every $i$. It is easily seen that the class $\widetilde{L}_{i+s}$ is proportional to the Hirzebruch $L$-class $L_{i+s}$ (cf.~\cite[p.224]{MS}). In fact,  $2^{2i-1}\widetilde{L}_{i+s} = L_{i+s}$.
Passing to the universal bundle, we may conclude that the map 
\begin{equation} \label{eq:map}
\HH^*(B\Sp(\ZZ);\QQ) \rightarrow \HH^*(B\Diff_\partial(M_{\infty,1});\QQ)
\end{equation}
sends the Borel class $x_i$ to $-\frac{(2i-1)!}{2^{2i}} \kappa_{L_{i+s}}$ for every $i\geq 1$.

By Theorem \ref{thm:GRW}, the set $\set{\kappa_c}{c\in B}$ generates the cohomology freely, where $B$ is the set of monomials $c$ in the Pontryagin classes $p_{d-1},p_{d-2},\ldots,p_{\lceil\frac{d+1}{4}\rceil}$ and the Euler class $e$ of total degree $|c|>2d$. In particular, these classes are linearly independent. By \cite{BB}, every monomial $c$ in the Pontryagin classes of degree $4i+4s$ appears with a non-zero coefficient in $L_{i+s}$. It is easily seen that $L_{i+s}$ will contain such monomials $c$ belonging to $B$: for example, if we write $i = qs+r$ where $q,r$ are non-negative integers with $r < s$, then $c= p_s^q p_{s+r}$ belongs to $B$ and appears with a non-zero coefficient in $L_{i+s}$. It follows that $\kappa_{L_{i+s}}$ is non-zero in $\HH^*(B\Diff_\partial(M_{\infty,1});\QQ)$.
Thus, we have proved the following theorem, for $d\geq 3$ odd.
\begin{theorem} \label{thm:diff injective}
Let $d\geq 3$. The map $B\Diff_\partial(M_{\infty,1}) \rightarrow B\Gamma_\infty$ is injective on indecomposables in rational cohomology.
\end{theorem}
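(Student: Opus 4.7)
The plan is to prove injectivity on indecomposables by exhibiting, for each polynomial generator of the Borel ring $H^*(B\Gamma_\infty; \QQ)$, an explicit non-zero representative of its image under the canonical projection onto indecomposables of the Galatius--Randal-Williams presentation $H^*(B\Diff_\partial(M_{\infty,1}); \QQ) = \QQ[\kappa_c \mid c \in B]$. The mechanism in both parities is the same: for a smooth $M_g$-bundle $\pi \colon E \to X$, factor the Hodge bundle on middle cohomology through the classifying space of the maximal compact subgroup of the symmetry group of the intersection form, then apply a family index theorem to express each Borel generator as a linear combination of $\kappa$-classes.

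For $d$ odd this is the argument already outlined in the preceding paragraphs: the Hodge bundle $\xi$ is flat symplectic of rank $2g$, its maximal compact reduction yields a rank-$g$ complex bundle $\eta$, the Borel generator $x_i \in H^{4i-2}(B\Sp(\ZZ); \QQ)$ pulls back to the odd Newton class $s_{2i-1}(\eta)$, and Atiyah's formula $ch(\eta^* - \eta) = \pi_!(\widetilde{L}(T_\pi E))$ expresses $s_{2i-1}(\eta)$, up to a non-zero rational scalar, as the combination $\sum_{I \vdash i+s} \lambda_I\, \kappa_I(\pi)$ derived in the excerpt. For $d$ even the intersection form is symmetric, so $\xi$ has structure group $O_{g,g}(\RR)$, and choosing a fibrewise Riemannian metric produces a splitting $\xi \cong \eta_+ \oplus \eta_-$ corresponding to the maximal compact $O(g) \times O(g)$. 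The Borel generator $x_i \in H^{4i}(B\Gamma_\infty; \QQ)$ pulls back to a specific combination of Pontryagin classes of $\eta_\pm$, and the Atiyah--Singer family index theorem applied to the fibrewise signature operator supplies the identity
$$\mathrm{ph}(\eta_+ - \eta_-) \;=\; \pi_!\big(L(T_\pi E)\big),$$
where $\mathrm{ph}$ denotes the Pontryagin character and $L$ the Hirzebruch $L$-class; extracting the degree-$4i$ component then expresses each $x_i$ as an explicit non-zero linear combination of classes $\kappa_I(\pi)$, exactly parallel to the odd case.

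To convert non-vanishing as a linear combination of $\kappa$-classes into injectivity on indecomposables, I would invoke the Hopf algebra structures on both sides. The $E_{2d}$-action constructed in the proof of Theorem \ref{thm:loop space} makes $H^*(B\Diff_\partial(M_{\infty,1}); \QQ)$ into a graded commutative and cocommutative Hopf algebra of finite type over $\QQ$, so by Milnor--Moore its primitives coincide with its indecomposables, and a direct check shows each polynomial generator $\kappa_c$ is primitive. On the Borel side the generators $x_i$ are standardly primitive under the block-sum coproduct, and the stabilization map is compatible with both Hopf structures. Hence each image $x_i \mapsto \sum_I \lambda_I'\, \kappa_I(\pi)$ is a primitive, and primitives in a polynomial Hopf algebra lie in the linear span of the polynomial generators; combined with the non-vanishing established above, this yields injectivity on indecomposables. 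The main technical obstacle is the verification of the precise family signature identity in the $d$ even case together with the primitivity of the $\kappa$-classes under the Pontryagin product, both of which amount to careful but standard compatibility checks between fibre integration and the boundary-gluing operation defining the $E_{2d}$-structure on $\coprod_g B\Diff_\partial(M_{g,1})$.
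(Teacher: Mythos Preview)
Your approach is essentially identical to the paper's: for $d$ odd you reproduce verbatim the Atiyah index-theoretic argument given in \S6.4, and for $d$ even you supply the family-signature analogue that the paper explicitly defers with the sentence ``A similar argument may be carried out in the case when $d$ is even.'' Your additional Hopf-algebra justification for the final step (primitivity of $\kappa_c$ forcing the image of $x_i$ to lie in the span of generators) is a mild elaboration of what the paper handles in one line by observing that the $\kappa_c$ are the polynomial generators; note, though, that the $E_{2d}$-structure you cite from Theorem~\ref{thm:loop space} is stated there for $B\aut_\partial$, so you should remark that the same gluing construction applies verbatim to $B\Diff_\partial$.
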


The argument in the case when $d$ is even is similar. The symplectic group $\Sp_{2g}(\RR)$ is replaced by the orthogonal group $O_{g,g}(\RR)$, and the complex vector bundle $\eta$ is replaced by two real vector bundles $W^+$ and $W^-$, as in \cite{Atiyah}. According to \cite{Borel1}, the stable cohomology of the arithmetic group $O_{g,g}(\ZZ)$ is a polynomial algebra in generators $x_i$ of degree $4i$, for $i=1,2,\ldots$. The class $x_i$ may be chosen as the pullback of $ph_i(W^+)-ph_i(W^-)$ under the map
$$BO_{g,g}(\ZZ) \rightarrow BO_{g,g}(\RR) \rightarrow BO(g)\times BO(g),$$
where the last map is a homotopy inverse of the map $BO(g)\times BO(g) \rightarrow BO_{g,g}$ coming from $O(g)\times O(g)$ being the maximal compact subgroup of $O_{g,g}$. Then \cite[(4.2)]{Atiyah} expresses the Pontryagin character $ph(W^+-W^-)$ (i.e., the Chern character of the complexification) in terms of $\kappa$-classes. As before, this can be used to show that the classes $x_i$ are mapped to non-zero linear combinations of $\kappa$-classes.

\subsection{The stable cohomology of homotopy automorphisms}
Let
$$X_g = B\aut_\partial(M_{g,1}),$$
and consider the homotopy colimit
$$X_\infty = \hocolim(X_1 \rightarrow X_2 \rightarrow X_3 \rightarrow \cdots),$$
taken over the stabilization maps described in \S\ref{sec:stabilization maps}. By Theorem \ref{thm:baut stability}, the canonical map $X_g\rightarrow X_\infty$ induces an isomorphism in rational cohomology
$$\HH^k(X_\infty;\QQ)\rightarrow \HH^k(X_g;\QQ)$$
for $g>2k+4$. The goal of this section is to prove the following theorem.

\begin{theorem} \label{thm:main}
Let $d\geq 3$. There is an isomorphism of graded rings
\begin{equation} \label{eq:sc}
\HH^*(X_{\infty};\QQ) \cong \HH^*(\Gamma_\infty;\QQ) \tensor \HH_{CE}^*(\gl_\infty)^{\Gamma_\infty}.
\end{equation}
\end{theorem}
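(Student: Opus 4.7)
The strategy is to run the cohomological Serre spectral sequence of the universal-cover fibration $\widetilde{X}_g \to X_g \to B\pi_1(X_g)$, pass to the stable limit $g \to \infty$, and identify both sides using formality (Theorem \ref{thm:main aut}) and Borel vanishing (Theorem \ref{thm:borel vanishing}).

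\textbf{Step 1: Identification of the $E_2$-page.} The Serre spectral sequence
$$E_2^{p,q}(g) = \HH^p(\pi_1(X_g); \HH^q(\widetilde{X}_g;\QQ)) \Rightarrow \HH^{p+q}(X_g;\QQ)$$
is multiplicative and natural with respect to the stabilization maps. Theorem \ref{thm:main aut} provides a $\pi_1(X_g)$-equivariant isomorphism $\pi_*^\QQ(\widetilde{X}_g) \cong \gl_g$ together with formality of $\lambda(\widetilde{X}_g)$. Combined with Proposition \ref{prop:rp} and (the cohomological dual of) Proposition \ref{prop:perfect}, this yields a $\pi_1(X_g)$-equivariant isomorphism of graded algebras $\HH^*(\widetilde{X}_g;\QQ) \cong \HH^*_{CE}(\gl_g)$. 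Because the kernel of $\pi_1(X_g) \to \Gamma_g$ is finite and acts trivially on $\gl_g$ (Proposition \ref{prop:mcg} and Theorem \ref{thm:main aut}(1)), vanishing of finite-group cohomology over $\QQ$ in positive degrees gives $E_2^{p,q}(g) \cong \HH^p(\Gamma_g; \HH^q_{CE}(\gl_g))$.

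\textbf{Step 2: Stabilization and Borel vanishing.} By Theorem \ref{thm:baut stability}, $\HH^*(X_\infty;\QQ) = \lim_g \HH^*(X_g;\QQ)$, and by compatibility we obtain a limit spectral sequence with $E_2^{p,q}(\infty) = \HH^p(\Gamma_\infty; \HH^q_{CE}(\gl_\infty))$. Proposition \ref{prop:polynomial1} exhibits $C^q_{CE}(\gl_g)$ as the value at $H_g^\QQ$ of a polynomial functor of bounded degree, so the same is true of its cohomology $\HH^q_{CE}(\gl_g)$ (the category of polynomial functors of bounded degree is abelian). Borel vanishing (Theorem \ref{thm:borel vanishing}) then upgrades this, stably, to an isomorphism of bigraded algebras
$$E_2^{*,*}(\infty) \cong \HH^*(\Gamma_\infty;\QQ) \tensor \HH^*_{CE}(\gl_\infty)^{\Gamma_\infty}.$$

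\textbf{Step 3: Collapse and extensions.} The classes in $\HH^p(\Gamma_\infty;\QQ)\tensor 1$ are permanent cycles since they are pulled back from the base $B\pi_1(X_\infty)$; by multiplicativity of the spectral sequence, collapse reduces to showing that every class of the form $1\tensor y$, with $y \in \HH^q_{CE}(\gl_\infty)^{\Gamma_\infty}$, is a permanent cycle. I would establish this by constructing a $\Gamma_\infty$-equivariant relative Sullivan (or dg Lie) model of $X_\infty$ over a model of $B\Gamma_\infty$ with fiber part the minimal model of $\widetilde{X}_\infty$: formality of $\lambda(\widetilde{X}_\infty)$ together with the fact that the $\Gamma_\infty$-action preserves the Lie structure on $\gl_\infty$ constrains the relative perturbation of the differential to act trivially on the $\Gamma_\infty$-invariant subcomplex, so that no higher Serre differential can hit an invariant fiber class. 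Collapse then gives an $E_\infty$-page free over the base algebra $\HH^*(\Gamma_\infty;\QQ)$, so the multiplicative extension problem is trivial over $\QQ$ and the claimed ring isomorphism follows.

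\textbf{Main obstacle.} Steps 1 and 2 are essentially formal consequences of the earlier theory. The genuinely delicate point is the collapse argument in Step 3: it demands an equivariant version of formality for the $\Gamma_\infty$-action on $\widetilde{X}_\infty$. An alternative route, possibly cleaner, would be to produce a splitting $B\Gamma_\infty \to X_\infty$ up to rational equivalence, or to exploit an $H$-space/monoidal structure coming from connected sums to rigidify the spectral sequence.
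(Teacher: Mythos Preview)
Your Steps 1 and 2 are essentially what the paper does (compare Proposition \ref{prop:identify} and Theorem \ref{thm:separate}). The real issue is Step 3, and here your primary argument has a genuine gap: formality of $\lambda(\widetilde{X}_\infty)$ together with $\Gamma_\infty$-equivariance of the Lie structure does \emph{not} force the Serre differentials to vanish on invariant fiber classes. Formality is a statement about the fiber in isolation; the Serre differentials encode how the fiber sits over the base, i.e.\ the $k$-invariants of the fibration, and these are not determined by the isomorphism type of $\pi_*^\QQ(\widetilde{X}_\infty)$ as a $\Gamma_\infty$-Lie algebra. Your sentence ``constrains the relative perturbation of the differential to act trivially on the $\Gamma_\infty$-invariant subcomplex'' is exactly the step that would need to be proved, and there is no mechanism in your outline that supplies it.

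The paper establishes collapse by a completely different route, and it is worth noting that your throwaway alternative (``exploit an $H$-space/monoidal structure coming from connected sums'') is precisely the one that works. The paper first passes to a $\QQ$-local plus construction to replace the fibration $\widetilde{X}_g \to X_g \to B\pi_1(X_g)$ by a fibration $T_\infty \to (X_\infty)_\QQ^+ \to B\pi_1(X_\infty)_\QQ^+$ of simply connected spaces, then supplies two external ingredients: (i) $\HH^*(X_\infty;\QQ)$ is a free graded commutative algebra, proved by exhibiting $\coprod_g X_g$ as an $E_{2d}$-space via gluing along little disks and invoking the group completion theorem plus Milnor--Moore; and (ii) the map $X_\infty \to B\Gamma_\infty$ is injective on indecomposables in rational cohomology, proved by factoring through $B\Diff_\partial(M_{\infty,1})$ and using the Atiyah--Morita index-theoretic relation between Borel classes and $\kappa$-classes (Theorem \ref{thm:diff injective}). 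With both base and total space free and the map injective on indecomposables, a short lemma (Lemma \ref{lemma:collapse}) gives the tensor splitting directly. Neither of these two ingredients is visible from the rational homotopy model of the fiber, which is why your formality-based approach stalls.
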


In the above theorem, $\Gamma_\infty = \colim_g \Gamma_g$ and $\gl_\infty = \colim_g \gl_g$. Recall that $\Gamma_g$ denotes the automorphism group of the quadratic module $(H_g,\mu,q)$ and that $\gl_g$ denotes the graded Lie algebra $\Der_{\omega_g}^+\LL(V_g)$, where $V_g = s^{d-1}H_g \tensor \QQ$.

As discussed earlier, Borel's results \cite{Borel1} yield a computation of the left factor $\HH^*(\Gamma_\infty;\QQ)$. In Section \ref{sec:graph}, we will show how to express the right factor $\HH_{CE}^*(\gl_\infty)^{\Gamma_\infty}$ in terms of graph homology.

The proof of Theorem \ref{thm:main} has several ingredients. Homological stability together with Borel's vanishing result (as manifested in Theorem \ref{thm:borel vanishing}) will allow us to conclude that the universal cover spectral sequence for $X_\infty$ satisfies $E_2^{p,q} \cong E_2^{p,0}\tensor E_2^{0,q}$. Then we will prove that the spectral sequence collapses at $E_2$. We do this by showing that the rational cohomology ring of $X_\infty$ is free and that the map $X_\infty \rightarrow B\Gamma_\infty$ is injective on indecomposables in rational cohomology. Recall that $\widetilde{X}_g = X_{g,\circ} = B\aut_{\partial,\circ}(M_{g,1})$.

\begin{theorem} \label{thm:separate}
The natural map
$$\HH^p(\Gamma_g;\QQ)\tensor \HH_{CE}^q(\gl_g)^{\Gamma_g}\rightarrow \HH^p(\pi_1(X_g);\HH^q(X_{g,\circ};\QQ)),$$
is an isomorphism in the stable range $g>2p+2q+4$. 
\end{theorem}

\begin{proof}
As in Proposition \ref{prop:identify}, there is an isomorphism
\begin{equation} \label{eq:pi-gamma}
\HH^p(\pi_1(X_g);\HH^q(X_{g,\circ};\QQ)) \cong \HH^p(\Gamma_g;\HH_{CE}^q(\gl_g)),
\end{equation}
compatible with the stabilization maps. Since $\Gamma_g$ is rationally perfect, the functor $\HH^p(\Gamma_g;-)$ is exact on the category of finite dimensional $\QQ[\Gamma_g]$-modules. In particular, since the Chevalley-Eilenberg complex $C_{CE}^*(\gl_g)$ is finite dimensional in each degree, we may identify the right hand side of \eqref{eq:pi-gamma} with the $q$-th cohomology of the cochain complex $\HH^p(\Gamma_g;C_{CE}^*(\gl_g))$. By Lemma \ref{lemma:polynomial functor} we may identify $C_{CE}^q(\gl_g)$ with the value at $H_g^\QQ$ of a polynomial functor of degree $\leq 2q$. Hence, by Theorem \ref{thm:borel vanishing} the natural map
$$\HH^p(\Gamma_g;\QQ)\tensor C_{CE}^q(\gl_g)^{\Gamma_g}\rightarrow \HH^p(\Gamma_g;C_{CE}^q(\gl_g))$$
is an isomorphism for $g>2p+2q+4$. The claim follows by passing to cohomology in the $q$ direction and using \eqref{eq:pi-gamma}.
\end{proof}

\begin{theorem} \label{thm:loop space}
The cohomology algebra $\HH^*(X_\infty;\QQ)$ is free graded commutative with finitely many generators in each degree.
\end{theorem}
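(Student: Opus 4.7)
The plan is to exhibit $\HH^*(X_\infty;\QQ)$ as the cohomology ring of a path component of a homotopy commutative $H$-space, and then to invoke the Milnor--Moore structure theorem for bicommutative Hopf algebras over $\QQ$.

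First I would endow $M = \coprod_{g\geq 0} X_g$ with the structure of a homotopy commutative topological monoid. Boundary connected sum of oriented $2d$-dimensional manifolds with one boundary component gives a canonical diffeomorphism $M_{g+h,1}\cong M_{g,1}\natural M_{h,1}$, and combining boundary-fixing self-equivalences will define pairings $\mu_{g,h}\colon X_g\times X_h\rightarrow X_{g+h}$. Since $\natural$ is associative and commutative up to diffeomorphism, these pairings should make $M$ into a homotopy associative, homotopy commutative topological monoid, in such a way that the stabilization map \eqref{eq:stabilization map} is homotopic to multiplication by a fixed element of $X_1$, and so that $\pi_0(M) = \ZZ_{\geq 0}$.

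Next, combined with Theorem \ref{thm:baut stability}, the group completion theorem will yield that the canonical map
$$\ZZ\times X_\infty\rightarrow \Omega BM$$
is a rational homology equivalence onto a union of components of $\Omega BM$. Each component of $\Omega BM$ is a loop space, hence an $H$-space, and homotopy commutativity of the multiplication on $M$ transfers to make the induced $H$-space structure on $\Omega BM$ homotopy commutative. Consequently, $\HH^*(X_\infty;\QQ)$ inherits the structure of a graded connected Hopf algebra that is both graded commutative (as the cohomology ring of any space) and graded cocommutative (from the homotopy commutative $H$-space structure). By the Milnor--Moore structure theorem, such a Hopf algebra is, as an algebra, free graded commutative on its indecomposables. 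Finite generation in each degree will then follow from Corollary \ref{thm:finite type} combined with Theorem \ref{thm:baut stability}: for every $k$, the comparison map $\HH^k(X_\infty;\QQ)\rightarrow \HH^k(X_g;\QQ)$ is an isomorphism for $g>2k+4$, and the target is finite dimensional.

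The main technical obstacle will be establishing the monoid structure on $M$ with enough coherence to invoke the group completion theorem. This amounts to verifying that boundary connected sum of highly connected manifolds is commutative and associative up to coherent homotopy, which is plausible for $d\geq 3$ since one has ample room to isotope boundary disks past one another, but must be set up carefully. An alternative, should it prove more convenient, would be to work with a strictification via a semi-simplicial or $E_k$-algebra model of the type employed by Galatius and Randal-Williams in their work on diffeomorphism groups.
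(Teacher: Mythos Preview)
Your strategy is the same as the paper's: endow $\coprod_{g\geq 0} X_g$ with a suitably commutative multiplicative structure, group complete, and then invoke Milnor--Moore for the resulting bicommutative Hopf algebra. The difference is only in how the multiplicative structure is packaged. Where you propose a homotopy commutative topological monoid coming from boundary connected sum, the paper goes directly to the $E_{2d}$-alternative you mention at the end: it observes that gluing $M_{g_1,1},\ldots,M_{g_r,1}$ into the complement of a configuration of $r$ little $2d$-disks gives an action of the little $2d$-disks operad on $\coprod_g X_g$, so by the recognition principle the group completion is a $2d$-fold loop space. This sidesteps the coherence issues you flag, and since a double (hence $2d$-fold) loop space is automatically homotopy commutative, the cocommutativity of the coproduct is immediate.

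Two small points about your version. First, the group completion theorem already gives the homology isomorphism $\ZZ\times X_\infty\to \Omega BM$ without appealing to Theorem~\ref{thm:baut stability}; stability only enters, as you later say, to identify $\HH^k(X_\infty;\QQ)$ with $\HH^k(X_g;\QQ)$ for large $g$. Second, your sentence ``homotopy commutativity of $M$ transfers to make $\Omega BM$ homotopy commutative'' is stronger than what you need or can easily prove from a bare homotopy-commutative monoid. What you actually get (and what suffices) is that the Pontryagin ring $\HH_*(M;\QQ)$ is commutative, hence so is its localization $\HH_*(\Omega BM;\QQ)$ by the group completion theorem, and therefore the dual coproduct on $\HH^*((\Omega BM)_0;\QQ)$ is cocommutative. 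The paper's $E_{2d}$-structure gives genuine homotopy commutativity of the group completion and thus avoids this nuance.
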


\begin{proof}
It follows from the homological stability theorem that the natural map $\HH^k(X_\infty;\QQ) \rightarrow \HH^k(X_g;\QQ)$ is an isomorphism for $g>2k+4$. The latter group is finite dimensional by Theorem \ref{thm:finite type}. This proves the claim about finite type. To show that the cohomology algebra is free, we employ an argument similar to that of \cite{Miller}. Let $\mathcal{D}_{2d}$ denote the little $2d$-disks operad. There are maps
\begin{equation} \label{eq:operad}
\mathcal{D}_{2d}(r)\times X_{g_1} \times \cdots \times X_{g_r}\rightarrow X_{g},\quad g = g_1+\cdots +g_r.
\end{equation}
Indeed, given a configuration of $r$ little $2d$-disks in a fixed $2d$-disk, we may remove their interiors and glue in the manifolds $M_{g_1,1},\ldots,M_{g_r,1}$ in their place. The result is homeomorphic to $M_{g,1}$. Given homotopy automorphisms of $M_{g_i,1}$ that restrict to the identity on the boundary, we can extend them to a homotopy automorphism of $M_{g,1}$ by letting it be the identity outside the interiors of the removed disks. This construction respects compositions, so it passes to classifying spaces, giving \eqref{eq:operad}. The maps \eqref{eq:operad} endow the disjoint union
$$X = \coprod_{g\geq 0} X_g$$
with the structure of an $E_{2d}$-space. By the recognition principle for iterated loop spaces (cf.~\cite{May1,May2}), the space $X$ admits a group completion $G X$ which is a $2d$-fold loop space, and it follows from the group completion theorem that there is a homology isomorphism $X_\infty\rightarrow GX_0$, where $GX_0$ denotes a connected component of $GX$ (cf.~\cite[\S3.2]{Adams}). Since $GX_0$ is a $2d$-fold loop space, its rational cohomology is a graded commutative and cocommutative Hopf algebra, so the same is true of the cohomology of $X_\infty$. By \cite{MM}, this implies that the cohomology algebra is free graded commutative.
\end{proof}

\begin{theorem} \label{thm:injective}
The map $X_\infty \rightarrow B\Gamma_\infty$ induces an injective homomorphism on indecomposables in rational cohomology.
\end{theorem}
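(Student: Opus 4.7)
The plan is to factor the map $j\colon X_\infty \to B\Gamma_\infty$ through the classifying space of the diffeomorphism group and exploit the Galatius--Randal-Williams computation of its stable cohomology. The natural inclusion of monoids $\Diff_\partial(M_{g,1}) \hookrightarrow \aut_\partial(M_{g,1})$ produces a map $\phi_g\colon B\Diff_\partial(M_{g,1}) \to X_g$, compatible with the stabilization maps on both sides, and so in the colimit a map $\phi\colon B\Diff_\partial(M_{\infty,1}) \to X_\infty$. The composite $j \circ \phi$ is precisely the map classifying the action of the diffeomorphism group on $\HH_d(M_{g,1};\ZZ) \cong H_g$, so it fits into a commutative triangle with $j$ and $\phi$.

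The first step is to invoke the Galatius--Randal-Williams theorem, as recalled in Section \ref{sec:diff}, which identifies $\HH^*(B\Diff_\partial(M_{\infty,1});\QQ)$ with an explicit free graded commutative algebra on generalized Miller--Morita--Mumford classes. Inspecting that description shows that the pullbacks $(j\circ \phi)^*(x_i)$ of the Borel classes $x_i\in \HH^*(B\Gamma_\infty;\QQ)$ appear among the polynomial generators (up to normalization they are the $\kappa$-classes associated to the Pontryagin, respectively Euler, classes of the vertical tangent bundle). In particular $(j\circ \phi)^*$ is injective on indecomposables.

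The second step is formal. Since $\phi^*$ is a ring homomorphism, it sends the decomposables of $\HH^*(X_\infty;\QQ)$ into the decomposables of $\HH^*(B\Diff_\partial(M_{\infty,1});\QQ)$. Hence if $\alpha \in \HH^*(B\Gamma_\infty;\QQ)$ is indecomposable but $j^*(\alpha)$ is decomposable in $\HH^*(X_\infty;\QQ)$, then $(j\circ \phi)^*(\alpha) = \phi^*(j^*(\alpha))$ would be decomposable in $\HH^*(B\Diff_\partial(M_{\infty,1});\QQ)$, contradicting the first step. Therefore $j^*$ is injective on indecomposables. The main obstacle in this approach is the identification carried out in the first step: one must verify that the Borel classes are picked out by the Galatius--Randal-Williams basis as honest polynomial generators, rather than being hit only by decomposable combinations of MMM-type classes. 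This amounts to a characteristic class calculation for the flat $H_g$-bundle over $B\Diff_\partial(M_{\infty,1})$ associated with the middle cohomology of the universal manifold bundle, matched against Borel's construction of the $x_i$.
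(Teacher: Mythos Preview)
Your strategy is exactly the paper's: factor through $B\Diff_\partial(M_{\infty,1})$ and show that the composite $(j\circ\phi)^*$ is already injective on indecomposables; your second step is the same formal deduction the paper uses. The issue is the first step, which you correctly flag as the main obstacle but do not actually carry out, and your tentative description of how it would go is not right.

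The pullback of the Borel class $x_i$ is \emph{not} a single $\kappa$-class ``associated to the Pontryagin, respectively Euler, classes'' up to normalization. It is a linear combination of many $\kappa_c$'s, and one must show this combination is nonzero modulo decomposables. The paper does this via Atiyah's family signature formula. Let $\eta$ be the Hodge bundle over $B\Diff_\partial(M_{g,1})$ (fiber $\HH^d(M;\RR)$, structure group reduced from $\Sp_{2g}(\RR)$ to $U(g)$); its odd Newton classes $s_{2i-1}(\eta)$ are precisely the pullbacks of the Borel classes $x_i$ along $j\circ\phi$. Atiyah's identity
\[
ch(\eta^*-\eta)=\pi_!\big(\tilde{L}(T_\pi E)\big)
\]
then yields, in degree $4i-2$,
\[
-\,\tfrac{2}{(2i-1)!}\,s_{2i-1}(\eta)\;=\;2^d\sum_{I\vdash i+s}\lambda_I\,\kappa_I,\qquad \lambda_k=(-1)^{k-1}\tfrac{B_k}{(2k)!},
\]
where $d=2s+1$. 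The Bernoulli coefficients $\lambda_k$ are nonzero, so the right-hand side is a nonzero linear combination of $\kappa$-classes; invoking the Galatius--Randal-Williams polynomial basis then shows $(j\circ\phi)^*(x_i)$ is nonzero in indecomposables. This index-theoretic computation, not a direct inspection of the Galatius--Randal-Williams generators, is what closes the gap you identified.
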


\begin{proof}
The claim follows immediately from Theorem \ref{thm:diff injective}, which states that the composite map
$$B\Diff_\partial(M_{\infty,1}) \rightarrow B\aut_\partial(M_{\infty,1}) \rightarrow B\Gamma_\infty$$
induces an injective map on indecomposables in rational cohomology.
\end{proof}

\begin{proof}[Proof of Theorem \ref{thm:main}]
We have $\HH^1(X_g;\QQ) = \HH^1(B\pi_1(X_g);\QQ) = 0$ for $g\geq 2$, because the group $\pi_1(X_g)$ is rationally perfect (see Proposition \ref{prop:rp}). Let $(X_g)_\QQ^+$ and $B\pi_1(X_g)_\QQ^+$ denote the rational plus constructions (see Appendix C) and let $T_g$ be the homotopy fiber of the map $(X_g)_\QQ^+\rightarrow B\pi_1(X_g)_\QQ^+$. We obtain a map of fibrations,
$$
\xymatrix{
\uc{X}_g \ar[r] \ar[d] & X_g \ar[r] \ar[d] & B\pi_1(X_g) \ar[d] \\
T_g \ar[r] & (X_g)_\QQ^+ \ar[r] & B\pi_1(X_g)_\QQ^+,}
$$
and we may consider the induced map of cohomology spectral sequences $E\rightarrow \overline{E}$ with $\QQ$-coefficients;
$$E_2^{p,q} = \HH^p(B\pi_1(X_g)_\QQ^+;\HH^q(T_g)),\quad \overline{E}_2^{p,q} = \HH^p(B\pi_1(X_g);\HH^q(\uc{X}_g)).$$

By construction, the maps $E_2^{p,0} \rightarrow \overline{E}_2^{p,0}$ and $E_\infty^{p,q} \rightarrow \overline{E}_\infty^{p,q}$ are isomorphisms for all $p,q$. We have that $E_2^{p,q} \cong E_2^{p,0} \tensor E_2^{0,q}$ because the spaces involved are simply connected. In the spectral sequence $\overline{E}$, we have cohomology with twisted coefficients, but it follows from Theorem \ref{thm:separate} that $\overline{E}_2^{p,q} \cong \overline{E}_2^{p,0}\tensor \overline{E}_2^{0,q}$ for all $p,q$ in the stable range. The map $E\rightarrow \overline{E}$ respects these isomorphisms, because they may be realized by taking cup products. Thus, we are in position to apply Zeeman's comparison theorem for spectral sequences; we may conclude that
$$E_2^{0,q}\rightarrow \overline{E}_2^{0,q}$$
is an isomorphism for all $q$ in the stable range. There results an isomorphism of graded algebras
\begin{equation} \label{eq:fiber1}
\HH^*(T_\infty;\QQ) \cong \HH^*(\cX_\infty;\QQ)^{\Gamma_\infty}.
\end{equation}
The stable cohomology of $B\pi_1(X_g)_\QQ^+$ agrees with the stable rational cohomology of the group $\Gamma_g$, because $\pi_1(X_g)$ surjects onto $\Gamma_g$ with finite kernel. Borel's calculation of the stable cohomology of arithmetic groups \cite{Borel1} tells us that the cohomology ring $\HH^*(\Gamma_\infty;\QQ)$ is free graded commutative. This, together with Theorem \ref{thm:loop space} and Theorem \ref{thm:injective}, shows that the hypotheses of Lemma \ref{lemma:collapse} below are fulfilled, which yields an isomorphism of graded algebras
$$\HH^*((X_\infty)_\QQ^+) = \HH^*(B\pi_1(X_\infty)_\QQ^+)\tensor \HH^*(T_\infty;\QQ).$$
The proof is finished by combining this with the isomorphism \eqref{eq:fiber1}.
\end{proof}

For an arbitrary fibration $F\rightarrow E\rightarrow B$, injectivity of the map $\HH^*(B)\rightarrow \HH^*(E)$ is not enough to ensure collapse of the associated spectral sequence (see e.g. the discussion in \cite[p.148--149]{McCleary}). It is for this reason we need to know that the cohomology ring of $X_\infty$ is free. We use the following lemma. The proof is straightforward and left to the reader.

\begin{lemma} \label{lemma:collapse}
Let $F\rightarrow E\rightarrow B$ be a fibration of simply connected spaces of finite $\QQ$-type. If the cohomology rings $\HH^*(E;\QQ)$ and $\HH^*(B;\QQ)$ are free graded commutative and if $\HH^*(B;\QQ)\rightarrow \HH^*(E;\QQ)$ is injective on indecomposables, then there is an isomorphism of graded algebras
$$\HH^*(E;\QQ) \cong \HH^*(F;\QQ) \tensor \HH^*(B;\QQ).$$
\end{lemma}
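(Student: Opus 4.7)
The plan is to combine a multiplicative decomposition of $\HH^*(E;\QQ)$ over $\HH^*(B;\QQ)$ with a collapse of the Eilenberg--Moore spectral sequence. Writing $\HH^*(B;\QQ)=\Lambda V_B$ and $\HH^*(E;\QQ)=\Lambda V_E$ on their spaces of indecomposables, the injectivity-on-indecomposables hypothesis lets me regard $V_B$ as a subspace of $V_E$ via $\pi^*$. Picking a graded complement $V_E=\pi^*(V_B)\oplus V_A$ and running a standard change-of-generators argument in free graded commutative algebras then produces an algebra isomorphism
$$\HH^*(B;\QQ)\tensor \Lambda V_A \;\cong\; \HH^*(E;\QQ),\qquad x\tensor a\mapsto \pi^*(x)\cdot a,$$
in which the map $\HH^*(B;\QQ)\to\HH^*(E;\QQ)$ is precisely $\pi^*$. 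Setting $A:=\Lambda V_A$, this exhibits $\HH^*(E;\QQ)$ as a free module over $\HH^*(B;\QQ)$.

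Next I feed this freeness into the cohomological Eilenberg--Moore spectral sequence for the pullback square defining $F$,
$$E_2^{*,*}=\Tor^{*,*}_{\HH^*(B;\QQ)}\bigl(\HH^*(E;\QQ),\QQ\bigr)\;\Longrightarrow\; \HH^*(F;\QQ),$$
which converges as a multiplicative spectral sequence since $B$ is simply connected and everything has finite $\QQ$-type. Freeness of $\HH^*(E;\QQ)$ over $\HH^*(B;\QQ)$ makes the higher $\Tor$'s vanish and identifies $\Tor^{0}$ with $\HH^*(E;\QQ)\tensor_{\HH^*(B;\QQ)}\QQ\cong A$. The spectral sequence therefore collapses into a single column with no extensions to resolve, giving an algebra isomorphism $\HH^*(F;\QQ)\cong A$. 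Combined with the first step, this yields the asserted isomorphism $\HH^*(E;\QQ)\cong \HH^*(B;\QQ)\tensor\HH^*(F;\QQ)$ of graded algebras.

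The pitfall to avoid is the one the paper flags: one should resist the temptation to deduce collapse of the Serre spectral sequence of $F\to E\to B$ directly from injectivity of $\pi^*$, which is insufficient in general. The role of the freeness assumption on the two cohomology rings is not to force Serre collapse, but rather to upgrade injectivity-on-indecomposables into freeness of $\HH^*(E;\QQ)$ as a module over $\HH^*(B;\QQ)$, which is exactly what trivializes the Eilenberg--Moore spectral sequence. Equivalently, in the language of rational homotopy theory, both $B$ and $E$ are formal (each has free graded commutative rational cohomology and therefore admits the trivial-differential CDGA as a model), so one may model $\pi$ by $(\HH^*(B;\QQ),0)\to(\HH^*(E;\QQ),0)$ at the CDGA level, and the strict pushout of this map along the augmentation already computes $\HH^*(F;\QQ)$, because freeness means it coincides with the derived pushout.
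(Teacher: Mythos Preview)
Your argument is correct. The paper leaves the proof of this lemma to the reader, so there is no approach to compare against; your Eilenberg--Moore route is a clean and standard way to establish the result. The two key steps are both sound: the change-of-generators argument in free graded commutative algebras does upgrade ``injective on indecomposables'' to the stronger statement that $\pi^*$ makes $\HH^*(E;\QQ)$ free as an $\HH^*(B;\QQ)$-module, and freeness then collapses the Eilenberg--Moore spectral sequence into a single column, giving $\HH^*(F;\QQ)\cong A$ multiplicatively with no extension issues.

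One small remark on your final paragraph: the assertion that free graded commutative cohomology forces formality of the \emph{spaces} is correct (choose cocycle representatives for generators to get a quasi-isomorphism $(\Lambda V,0)\to M_X$), but the further step of modeling the \emph{map} $\pi$ by $(\HH^*(B;\QQ),0)\to(\HH^*(E;\QQ),0)$ is a statement about formality of the map, which does not follow automatically from formality of source and target. That said, this paragraph is offered as an alternative gloss rather than as the proof, and in any case the derived-pushout computation you describe is exactly what the Eilenberg--Moore spectral sequence encodes, so it reduces to your main argument.
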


We remark that the isomorphism $\HH^*(E;\QQ) \cong \HH^*(F;\QQ) \tensor \HH^*(B;\QQ)$ of Lemma \ref{lemma:collapse} is not canonical but depends on the choice of splitting of the short exact sequence of indecomposables
$$0\to Q\HH^*(B;\QQ) \to Q\HH^*(E;\QQ) \to Q\HH^*(F;\QQ) \to 0.$$
Similar remarks apply to the isomorphisms in Theorem \ref{thm:main} above and in Theorem \ref{thm:block diff cohomology} below since Lemma \ref{lemma:collapse} is used in the proof of these results.

\subsection{The stable cohomology of the block diffeomorphism group}
Let
$$Y_g =B\tDiff_\partial(M_{g,1}),$$
and consider the homotopy colimit over the stabilization maps
$$Y_\infty = \hocolim( Y_1 \to Y_2 \to \cdots).$$
By Theorem \ref{thm:bdiff stability}, the canonical map $Y_g \to Y_\infty$ induces an isomorphism in rational cohomology $\HH^k(Y_\infty;\QQ) \cong \HH^k(Y_g;\QQ)$ for $g>2k+4$.

\begin{theorem} \label{thm:block diff cohomology}
For $d\geq 3$, there is an isomorphism of graded rings
$$\HH^*(Y_\infty;\QQ) \cong \HH^*(B\Gamma_\infty;\QQ)\tensor \HH_{CE}^*(\gl_\infty\oplus \al_\infty)^{\Gamma_\infty}.$$
The two graded Lie algebras to the right are the colimits as $g\to \infty$ of
$$\gl_g = \Der_\omega^+ \LL(V_g),$$
$$\al_g = s^{-1}\Pi\tensor H_g^\QQ,$$
where $\al_g$ is abelian. They are equipped with the evident action of $\Gamma_g$.
\end{theorem}

\begin{proof}
Recall the notation from \S\ref{sec:block}. As follows from the results in that section, we have a $\pi_1(X_{g,J})$-equivariant isomorphism
\begin{equation*}
\HH^*(Y_{g,\circ};\QQ) \cong \HH^*(X_{g,\circ};\QQ)\tensor \HH^*(F_g;\QQ),
\end{equation*}
and we may rewrite the right-hand side in terms of Chevalley-Eilenberg cohomology:
$$\HH^*(X_{g,\circ};\QQ)\tensor \HH^*(F_g;\QQ) \cong \HH_{CE}^*(\gl_g)\tensor \HH_{CE}^*(\al_g) \cong \HH_{CE}^*(\gl_g\oplus \al_g).$$
Since the kernel of $\pi_1(X_{g,J}) \to \Gamma_g$ is finite, we may then write the $E_2$-term of the spectral sequence of \eqref{eq:fib2} as follows:
$$E_2^{p,q} = \HH^p(\pi_1(X_{g,J});\HH^q(Y_{g,\circ};\QQ)) \cong \HH^p(\Gamma_g;\HH_{CE}^q(\gl_g\oplus \al_g)).$$
In the stable range, $g>2p+2q+4$, we have that
\begin{equation*}
\HH^p(\Gamma_g;\HH_{CE}^q(\gl_g\oplus \al_g)) \cong \HH^p(\Gamma_g) \tensor \HH_{CE}^q(\gl_g\oplus \al_g)^{\Gamma_g}.
\end{equation*}
This follows from the argument that proves Theorem \ref{thm:separate} upon noticing that the Chevalley-Eilenberg cochains $C_{CE}^q(\gl_g\oplus \al_g)$ is a polynomial functor of degree $\geq 2q$. The rest of the argument is virtually identical to the proof of Theorem \ref{thm:main}, using the fibration diagram
$$
\xymatrix{Y_{g,\circ} \ar[r] \ar[d] & Y_g \ar[r] \ar[d] & B\pi_1(X_{g,J}) \ar[d] \\
T_g \ar[r] & (Y_g)_\QQ^+ \ar[r] & B\pi_1(X_{g,J})_\QQ^+.
}
$$
The fact that $\HH^1(Y_g;\QQ) = 0$, which is necessary for the construction of $(Y_g)_\QQ^+$, can be verified by using the spectral sequence of the fibration $F_g \to Y_g \to X_{g,J}$. Indeed, first note that $\HH^1(X_{g,J};\HH^0(F_g;\QQ)) = \HH^1(\pi_1(X_{g,J});\QQ) = 0$ since $\widehat {\Gamma}_g=\pi_1(X_{g,J})$ is rationally perfect (Proposition \ref{prop:X_g^J}). Secondly, Proposition \ref{prop:F_g} implies that $\HH^1(F_g;\QQ) = (H_g^\QQ)^\vee$ for $d\equiv 3$ (mod $4$) and $\HH^1(F_g;\QQ) =0$ for $d\not\equiv 3$ (mod $4$), as $\widehat{\Gamma}_g$-modules, from which it follows that $\HH^0(X_{g,J};\HH^1(F_g;\QQ)) = \HH^1(F_g;\QQ)^{\widehat{\Gamma}_g} = 0$.
\end{proof}

\section{Graph complexes} \label{sec:graph}
In the previous section, we arrived at the following description of the stable cohomology of the classifying spaces $X_g = B\aut_\partial(M_{g,1})$ and $Y_g = B\tDiff_\partial(M_{g,1})$:
\begin{align*}
\HH^*(X_{\infty};\QQ) & \cong \HH^*(\Gamma_\infty;\QQ) \tensor \HH_{CE}^*(\gl_\infty)^{\Gamma_\infty}, \\
\HH^*(Y_\infty;\QQ) & \cong \HH^*(\Gamma_\infty;\QQ) \tensor \HH_{CE}^*(\gl_\infty\oplus \al_\infty)^{\Gamma_\infty},
\end{align*}
cf.~ Theorem \ref{thm:main} and Theorem \ref{thm:block diff cohomology}.
As discussed earlier, the first factor $\HH^*(\Gamma_\infty;\QQ)$ is isomorphic to a polynomial algebra $\QQ[x_1,x_2,\ldots]$ on classes $x_i$ of degree $4i-2$ if $d$ is odd and $4i$ if $d$ is even.

In this section, we will examine the second factors. We will show how to express the invariant Lie algebra cohomology in terms of graph complexes. For the proof, it will be convenient to work dually with homology and coinvariants. Since the Chevalley-Eilenberg complex $C_*^{CE}(\gl_g)$ is a chain complex of finite dimensional algebraic representations, the coinvariants $\HH_*^{CE}(\gl_g)_{\Gamma_g}$ may be computed as the homology of the chain complex $C_*^{CE}(\gl_g)_{\Gamma_g}$. Indeed, as observed e.g.~in the proof of Proposition \ref{prop:e2 stability}, if $g\geq 2$ then $C_*^{CE}(\gl_g)$ is chain homotopy equivalent to $\HH_*^{CE}(\gl_g)$ as a complex of $\QQ[\Gamma_g]$-modules, and any additive functor, such as $(-)_{\Gamma_g}$, preserves chain homotopy equivalences. Similarly, $\HH_*^{CE}(\gl_g \oplus \al_g)_{\Gamma_g}$ may be computed as the homology of the chain complex $C_*^{CE}(\gl_g\oplus \al_g)_{\Gamma_g}$.

Recall the notation
$$\gl_g = \Der_\omega^+\LL(V_g),$$
$$\al_g = s^{-1} \Pi\tensor H_g.$$
Let $\GG$ denote the graph complex associated to the Lie operad, as described in the introduction, and let
$$\GG^d = \Sigma^{d-1}\GG.$$
The following is the main result of the section.
\begin{theorem} \label{thm:main graph}
There are isomorphisms of chain complexes,
$$C_*^{CE}(\gl_\infty)_{\Gamma_\infty} \cong \Lambda \GG^d(0),$$
$$C_*^{CE}(\gl_\infty\oplus \al_\infty)_{\Gamma_\infty} \cong \Lambda \GG^d[\Pi].$$
\end{theorem}

\begin{remark}
For $d$ odd, the first statement is essentially equivalent to a theorem of Kontsevich \cite{Kontsevich1,Kontsevich2}, a proof of which has been detailed in \cite{CV}. The proof offered here is new. It has the advantage that it rediscovers Kontsevich's graph complex, no prior construction of the graph complex is necessary.
For $d$ even, it is not a priori clear --- not to the authors at any rate --- that one would expect the same result; for one thing, the Lie algebra $\gl_g$ is different from Kontsevich's Lie algebra when $d$ is even since, e.g., $[\alpha,\alpha] \ne 0$ for an odd generator $\alpha$ of a free graded Lie algebra. Curiously, this difference is canceled in the course of the proof due to the difference between symplectic invariant theory and orthogonal invariant theory.
\end{remark}

\subsection{$\Sigma$-modules}
Let $\Sigma$ denote the groupoid of finite sets and bijections. A (left) \emph{$\Sigma$}-module in a category $\VV$ is a functor $\CC\colon \Sigma \to \VV$. A right $\Sigma$-module is a functor $\DD\colon \Sigma^{op}\to \VV$. Every right $\Sigma$-module $\DD$ can be converted into a left $\Sigma$-module $\DD^{op}$ by letting $\DD^{op}(S) = \DD(S)$ for a finite set $S$ and
$$\DD^{op}(\sigma) = \DD(\sigma^{-1})\colon \DD(S)\to \DD(T)$$
for a bijection $\sigma\colon S\to T$; we will do this tacitly in what follows.
We write $\CC(n)$ for $\CC(\{1,2,\ldots,n\})$ for $n\geq 1$ and $\CC(0) = \CC(\emptyset)$.

Now assume that the target category $\VV$ is symmetric monoidal and has all colimits. Given an object $V$ in $\VV$ and a finite set $S$, let
$$S\tensor V = \bigoplus_{s\in S} V.$$
We may also form the $S$-indexed tensor product
$$V^{\tensor S} = \bigotimes_{s\in S}V.$$
For $V$ fixed, $-\tensor V$ may be regarded as a left $\Sigma$-module and $V^{\tensor -}$ as a right $\Sigma$-module.

Let $(\Sigma\downarrow \Sigma)$ denote the category whose objects are functions $f\colon S\to T$ between finite sets and whose morphisms are commutative squares
$$
\xymatrix{S\ar[r]_-\cong^-\sigma \ar[d]^-f & S' \ar[d]^-{f'} \\ T \ar[r]_-\cong^-\tau & T',}
$$
where the horizontal maps are bijections. For a fixed finite set $S$, let $(S\downarrow \Sigma)$ denote the subcategory of $(\Sigma\downarrow \Sigma)$ where $\sigma=id_S$. In other words, the objects of $(S\downarrow \Sigma)$ are functions between finite sets $f\colon S\to T$ and the morphisms are commuting triangles
$$\xymatrix{S \ar[rr]^-f \ar[dr]_-{f'} && T \ar[dl]_-\cong^-\tau \\ & T'}$$
where $\tau$ is a bijection.

Every $\Sigma$-module $\CC$ gives rise to a functor $(\Sigma\downarrow \Sigma) \to \VV$, defined on objects by
$$\big(S\xrightarrow{f} T\big) \mapsto \CC(f) := \bigotimes_{t\in T} \CC(f^{-1}(t)).$$

Recall the composition product of $\Sigma$-modules (monoids over which are operads): the composition of two $\Sigma$-modules $\CC$ and $\DD$ is the $\Sigma$-module $\CC\circ \DD$, whose value on a finite set $S$ is given by
$$(\CC \circ \DD)(S) = \colim_{f\colon S\rightarrow T} \CC(T) \tensor \DD(f),$$
where the colimit is over the category $(S\downarrow \Sigma)$.

The levelwise tensor product $\CC\ltensor \DD$ is defined by
$$(\CC \ltensor \DD)(S) = \CC(S)\tensor \DD(S),$$
where $\Sigma$ acts diagonally.

The \emph{Schur functor} associated to a $\Sigma$-module $\CC$ is the functor $\CC[-]\colon \VV \to \VV$ defined by
$$\CC[V] = \colim_{S\in \Sigma} \CC(S) \tensor V^{\tensor S} \cong \bigoplus_{n\geq 0} \CC(n)\tensor_{\Sigma_n} V^{\tensor n}.$$
The main feature of the composition product is the existence of a natural isomorphism
$$\CC\big[\DD[V]\big] \cong (\CC\circ \DD)[V].$$

\subsection{Invariant theory and matchings}

\begin{definition}
A \emph{matching} on a set $S$ is a set $M$ of disjoint $2$-element subsets whose union is all of $S$. Let $\match{S}$ denote the set of all matchings on the set $S$.

If $\sigma\colon S\to T$ is a bijection, then for every matching $M\in\match{S}$ there is an induced matching $\sigma_*(M)\in\match{T}$ given by
$$\sigma_*(M) = \set{\{\sigma(x),\sigma(y)\}}{\{x,y\}\in M}.$$
In this way, $\matchfun$ may be viewed as a covariant functor $\Sigma\to \Set$.
\end{definition}
\begin{remark}
Note that $\match{S} = \emptyset$ if the number of elements $|S|$ of $S$ is odd. If $|S|$ is even, say $|S| = 2k$, then $|\match{S}| = (2k-1)!! = 1\cdot 3\cdot 5\cdot \ldots \cdot (2k-1)$.
\end{remark}

If $X$ is a set we let $\QQ X$ denote the graded vector space with basis $X$ concentrated in degree $0$ and we let $X^\vee$ denote the dual of $\QQ X$. If $V$ is a graded vector space, then we let $X\tensor V$ denote $\QQ X\tensor V$. Let $\sgn_{n}$ denote the sign representation of $\Sigma_n$, i.e., $\sgn_n=\QQ$ with action of $\sigma\in\Sigma_n$ given by multiplication by the sign $\sgn(\sigma)$ of $\sigma$. If $V$ is a graded vector space, $x_1,\ldots,x_n\in V$, $x=x_1\tensor\cdots\tensor x_n\in V^{\tensor n}$, and $\sigma\in \Sigma_n$, then we let $\sgn(\sigma,x)$ denote the sign for which
$$(x_1\tensor \cdots \tensor x_n)\sigma = \sgn(\sigma,x) x_{\sigma_1} \tensor \cdots \tensor x_{\sigma_{n}},$$
with respect to the standard right action of $\Sigma_n$ on the graded vector space $V^{\tensor n}$.

\begin{theorem} \label{thm:invariant theory}
Let $V$ be a graded anti-symmetric inner product space of degree $2d-2$, concentrated in degree $d-1$. Consider the pairing
$$\langle -, - \rangle \colon \match{2k}\tensor V^{\tensor 2k} \to \sgn_{2k}$$
defined by
$$\langle M, x_1\tensor \cdots \tensor x_{2k} \rangle = \sgn(\sigma)\sgn(\sigma,x)\langle x_{\sigma_1},x_{\sigma_2}\rangle \cdots \langle x_{\sigma_{2k-1}},x_{\sigma_{2k}}\rangle,$$
for $M =\{\{\sigma_1,\sigma_2\},\ldots,\{\sigma_{2k-1},\sigma_{2k}\}\}\in \match{2k}$.
This pairing gives rise to morphisms of $\Sigma_{2k}$-modules of degree $-2k(d-1)$,
\begin{align*}
\varphi\colon \match{2k}\tensor \sgn_{2k} \to \Hom_{\SP(V)}(V^{\tensor 2k},\QQ), & \quad \varphi(M)(x) = \langle M,x\rangle, \\
\psi\colon (V^{\tensor 2k})_{\SP(V)} \to \match{2k}\tensor \sgn_{2k}, & \quad \psi([x]) = \sum_{M\in \match{2k}} \langle M,x\rangle M.
\end{align*}
The morphism $\varphi$ is surjective and the morphism $\psi$ is injective. Both $\varphi$ and $\psi$ are isomorphisms if $\dim V\geq 2k$.
\end{theorem}

\begin{proof}
First of all, note that the pairing is well-defined because $(V,\langle-,-\rangle)$ is graded anti-symmetric and of even degree. As the reader may check, the pairing is $\Sigma_n$-equivariant is the sense that
$$\langle \tau_*(M),\tau x \rangle = \sgn(\tau) \langle M, x\rangle,$$
for all $\tau\in \Sigma_{2k}$.

Suppressing the grading, $V$ is in effect a symplectic vector space for $d$ odd, and a symmetric inner product space for $d$ even. The statements about $\varphi$ are essentially a summary of the first and second fundamental theorems for the symplectic and orthogonal groups, see \cite[\S9.5]{Loday}. Note that $\QQ\match{2k}$ is isomorphic to the $\Sigma_{2k}$-representation denoted $A_k$ in \cite{Loday}. The sign representation factor is not present in the fundamental theorem for the orthogonal group (\cite[Theorem 9.5.2]{Loday}), but it reappears due to the fact that, when $d$ is even, elements of $V$ are of odd degree $d-1$, which means that signs appear when tensor factors are permuted.

Turning to $\psi$, note that $(V^{\tensor 2k})_{\SP(V)}$ is dual to $\Hom_{\SP(V)}(V^{\tensor 2k},\QQ)$, up to a degree shift by $2k(2d-2)$. Note also that the $\Sigma_{2k}$-module $\QQ\match{2k}$ is self-dual. Indeed, a $\Sigma_{2k}$-equivariant isomorphism $\theta\colon \QQ\match{2k}^\vee \to \QQ\match{2k}$ is given by
$$\theta(f) = \sum_{M\in \match{2k}} f(M) M.$$
The map $\psi$ is the composite
{\small $$(V^{\tensor 2k})_{\SP(V)} \xrightarrow{\eta} ((V^{\tensor 2k})_{\SP(V)})^\vee)^\vee \xrightarrow{\varphi^\vee} (\match{2k}\tensor\sgn_{2k})^\vee \cong \match{2k}^\vee \tensor \sgn_{2k} \xrightarrow{\theta\tensor 1} \match{2k}\tensor \sgn_{2k},$$}
where $\eta$ is the canonical isomorphism from a finite dimensional graded vector space to its double dual.
\end{proof}

\begin{corollary} \label{cor:elementary}
Suppose that $v=\dim V\geq 2k$. Let $e_1,e_2,\ldots,e_v$ be a basis for $V$, and let 
$$e = e_1\tensor e_1^\# \tensor e_2 \tensor e_2^\# \tensor \ldots \tensor e_k\tensor e_k^\#\in V^{\tensor 2k}.$$
Then the coinvariants $(V^{\tensor 2k})_{\SP(V)}$ has basis
$$[e\sigma^{-1}],\quad \sigma \in C_{2k},$$
where $C_{2k}\subseteq \Sigma_{2k}$ is the set of permutations $\sigma$ such that
$\sigma_1 < \sigma_3 < \cdots < \sigma_{2k-1}$ and $\sigma_{2i-1} <\sigma_{2i}$ for all $i$.
\end{corollary}

\begin{proof}
Let $E$ denote the matching $\{\{1,2\},\{3,4\},\ldots,\{2k-1,2k\}\}\in \match{2k}$. There is a bijection $C_{2k} \to \match{2k}$ given by
$$\sigma \mapsto \sigma_*(E) = \{\{\sigma_1,\sigma_2\},\{\sigma_3,\sigma_4\},\ldots \{\sigma_{2k-1},\sigma_{2k}\}\}.$$
Hence, the $\Sigma_{2k}$-module $\match{2k}\tensor \sgn_{2k}$ has basis $\sigma_*(E)$ for $\sigma\in C_{2k}$.
The isomorphism $\psi\colon (V^{\tensor 2k})_{\SP(V)}\to \match{2k}\tensor \sgn_{2k}$ is such that
$$\psi[e] = E,$$
whence $\psi[e \sigma^{-1}] = \sgn(\sigma) \sigma_*(E)$ because $\psi$ is $\Sigma_{2k}$-equivariant.
It follows that $[e\sigma^{-1}]$, for $\sigma\in C_{2k}$ is a basis for $(V^{\tensor 2k})_{\SP(V)}$, as claimed.
\end{proof}

\begin{remark} \label{remark:elementary}
We note for future reference that, in particular, $(V^{\tensor 2k})_{\SP(V)}$ is spanned by \emph{elementary tensors}, meaning elements of the form $[\big(e_1\tensor e_1^\# \tensor e_2 \tensor e_2^\# \tensor \ldots \tensor e_k\tensor e_k^\#\big)\sigma]$ for some $\sigma\in \Sigma_{2k}$.
\end{remark}

Let $\VV$ denote the category of graded vector spaces over $\QQ$ and let $\SP = \SP_{d-1}^{2d-2}$ denote the category of graded anti-symmetric inner product spaces of degree $2d-2$, concentrated in degree $d-1$. For an $\SP$-module (i.e., a functor) $M\colon \SP \to \VV$, we define the `$\SP$-coinvariants' by
$$M_\SP = \colim_{V\in \SP} M(V).$$
Informally, the coinvariants $M_\SP$ may be though of the value of $M(V)_{\SP(V)}$ as the dimension of $V$ tends to infinity. There is an $\SP$-$\Sigma$-bimodule given by
$$\SP\times \Sigma \to \VV, \quad (V,S) \mapsto V^{\tensor S}.$$
Using $\SP$-coinvariants, Theorem \ref{thm:invariant theory} admits the following elegant formulation.

\begin{corollary} \label{cor:invariant theory}
There is an isomorphism of $\Sigma$-modules
$$\big(V^{\tensor S})_{\SP} \cong s^{|S|(d-1)}\match{S}\tensor \sgn_S.$$
\end{corollary}

\subsection{The graph complex}

\begin{definition}
A \emph{graph} $G = \big(F\xrightarrow{f} V,E\big)$ consists of a set $F$ of \emph{flags} or \emph{half-edges}, a set $V$ of \emph{vertices}, a function $f\colon F\to V$, and a matching $E$ on $F$, elements of which are thought of as the \emph{edges} of the graph.

An \emph{isomorphism of graphs $G\to G'$} is a pair of bijections $\sigma\colon F\to F'$, $\tau\colon V\to V'$, that commute with the structure maps,
$$
\xymatrix{F\ar[r]_-\cong^-\sigma \ar[d]^-f & F' \ar[d]^-{f'} \\ V \ar[r]_-\cong^-\tau & V',}
$$
and preserve edges in the sense that $\sigma_*(E) = E'$. Let $\grph$ denote the groupoid of graphs and their isomorphisms.
\end{definition}
The \emph{valence} of a vertex $v\in V$ is the cardinality of the set $f^{-1}(v)$.
The Euler characteristic of a graph is defined by $\chi(G) = |V|-|E|$.

Every $\Sigma$-module $\CC$ gives rise to a functor $\CC\colon \grph \to \VV$, whose value at a graph $G = (F \xrightarrow{f} V, E)$ is
$$\CC(G) = \CC(f) = \bigotimes_{v\in V} \CC(f^{-1}(v)),$$
cf.~\cite[(2.12)]{GK2}.

\begin{definition}
For a $\Sigma$-module $\CC$, define the space of $\CC$-decorated graphs by
\begin{align*}
\GG^d\CC & = \colim_{G\in \grph} s^{|V|-\chi(G)(2d-2)} \sgn_V\tensor \sgn_F\tensor \CC(G) \\
& \cong \bigoplus_{[G]} s^{|V|-\chi(G)(2d-2)} \big(\sgn_V\tensor \sgn_F\tensor \CC(G)\big)_{\Aut(G)},
\end{align*}
where the colimit is over the groupoid of graphs $\grph$, and the sum is over all isomorphism classes $[G]$ of graphs.
\end{definition}

The reader may compare this with \cite[(2.18)]{GK2}. Thus, $\GG^d \CC$ is spanned by oriented graphs whose vertices are decorated by elements of $\CC$. To specify a $\CC$-decorated graph one needs to supply the data of
\begin{itemize} 
\item a graph $G = \big(F\xrightarrow{f} V,E\big)$,
\item an orientation of the vertices and an orientation of the flags, and
\item for each vertex $v\in V$, an element $\xi_v\in \CC(f^{-1}(v))$.
\end{itemize}
The homological degree of a decorated graph is $|V|-\chi(G)(2d-2)$ (plus the homological degrees of the decorations $\xi_v$ in case $\CC$ is equipped with a grading). For $d=1$, the space $\GG \CC$ is isomorphic to the space of `$\CC$-graphs' \cite{CV}. For $d > 1$, $\GG^d \CC$ is simply a regraded version of $\GG^1 \CC$. If $\CC$ is a cyclic operad, then there is a differential $\partial\colon \GG^d \CC_k \to \GG^d \CC_{k-1}$, defined as a sum over edge contractions, see e.g.~\cite{CV} for a detailed description.

\subsubsection{Colimits over Grothendieck constructions}
Let $I$ be a category and $F\colon I\to \Cat$ be a functor from $I$ to the category of small categories. The \emph{Grothendieck construction} $I\int F$ is the category whose objects are pairs $(i,x)$, where $i$ is an object of $I$ and $x$ is an object of $F(i)$. A morphism $(i,x) \to (j,y)$ in $I\int F$ is a pair $(f,g)$ where $f\colon i\to j$ is a morphism in $I$ and $g\colon F(f)(x) \to y$ is a morphism in $F(j)$. There are evident functors
$$\mbox{$F(i) \xrightarrow{\iota_i} I\int F \xrightarrow{\pi} I.$}$$

Consider a functor $D\colon I\int F \to \VV$ to some category $\VV$ with all colimits. For a fixed $i$, we get a functor $D\iota_i = D(i,-) \colon F(i) \to \VV$ and we may form its colimit
$$\colim_{x\in F(i)} D(i,x).$$
As $i$ varies, these colimits assemble into a functor from $I$ to $\VV$, and we may form its colimit
$$\colim_{i\in I} \colim_{x\in F(i)} D(i,x).$$
On the other hand, we may form the of colimit $D$ over $I\int F$. It is an exercise to check that the results are canonically isomorphic. For reference, we state this as a proposition.

\begin{proposition} \label{prop:grothendick colimit}
For every diagram $D\colon I\int F \to \VV$, indexed by the Grothendieck construction of a functor $F\colon I\to \Cat$, there is a canonical isomorphism
$$\colim_{(i,x)\in I\int F} D(i,x) \cong \colim_{i\in I} \colim_{x\in F(i)} D(i,x).$$
\end{proposition}\hfill $\square$

We will apply this observation twice in the proof of Theorem \ref{thm:graph complex} below.

There is a functor $(-\downarrow \Sigma)\colon \Sigma \to \Cat$ sending a finite set $S$ to the category $(S\downarrow \Sigma)$. The Grothendieck construction $\Sigma \int (-\downarrow \Sigma)$ and the comma category $(\Sigma \downarrow \Sigma)$ are isomorphic as categories over $\Sigma$.

Next, we observe that the groupoid $\grph$ is equal to the Grothendieck construction $(\Sigma\downarrow \Sigma) \int \matchfun$, where $\matchfun\colon (\Sigma\downarrow \Sigma) \to \Cat$ is the functor that sends an object $f\colon S\to T$ to the set $\match{S}$ of matchings on the source, viewed as a category with only identity morphisms.

Recall that $\SP = \SP_{d-1}^{2d-2}$ denotes the category of graded anti-symmetric inner product spaces of degree $2d-2$ concentrated in degree $d-1$.
Taking cue from Proposition \ref{prop:Lie iso}, we associate to every $\Sigma$-module $\CC$ an $\SP$-module $V\mapsto \CC((V))$, where
$$\CC((V)) = s^{2-2d} \bigoplus_{n\geq 0} \CC(n)\tensor_{\Sigma_n} V^{\tensor n}.$$

\begin{theorem} \label{thm:graph complex}
There is a canonical isomorphism
$$\Lambda s \CC((V))_\SP \cong \GG^d \CC.$$
\end{theorem}

\begin{proof}
The functor $V\mapsto \Lambda s V$ may be identified with the Schur functor associated to the $\Sigma$-module $\Lambda s$ with $\Lambda s(T) = s^{|T|}\sgn_T$ for a finite set $T$. In particular,
\begin{align*}
\Lambda s \CC((V)) & \cong (\Lambda s \circ \CC)[V] \\
& = \colim_{S\in \Sigma} \big(\Lambda s \circ \CC\big)(S) \tensor V^{\tensor S} \\
& = \colim_{S\in \Sigma} \big(\colim_{f\colon S\to T} \Lambda s(T) \tensor \CC((f)) \big) \tensor V^{\tensor S} \\
& \cong \colim_{f\colon S\to T\in (\Sigma\downarrow \Sigma)} \Lambda s (T) \tensor \CC((f)) \tensor V^{\tensor S},
\end{align*}
where we have used Proposition \ref{prop:grothendick colimit} in the last step. Since colimits commute with colimits and tensor products we get
\begin{align*}
\Lambda s\CC[V]_{\SP} & \cong \colim_{f\colon S\to T\in (\Sigma\downarrow \Sigma)} \Lambda s(T) \tensor \CC((f)) \tensor \big(V^{\tensor S}\big)_{\SP} \\
& \cong \colim_{f\colon S\to T\in (\Sigma\downarrow \Sigma)} \Lambda s(T) \tensor \CC((f)) \tensor s^{|S|(d-1)} \match{S} \tensor \sgn_S,
\end{align*}
where we use Corollary \ref{cor:invariant theory} in the last step. Viewing the set $\match{S}$ as a category with only identity morphisms, we may rewrite the above expression as
$$\colim_{f\colon S\to T \in (\Sigma \downarrow \Sigma)} \colim_{M\in \match{S}} s^{|S|(d-1)} \Lambda s(T)\tensor \CC((f))\tensor \sgn_S.$$
As noted above, the groupoid $\grph$ is isomorphic to the Grothendieck construction $(\Sigma\downarrow \Sigma) \int \matchfun$. Remembering that $\Lambda s(T) = s^{|T|}\sgn_T$, another application of Proposition \ref{prop:grothendick colimit} (and a change of notation $V=T$, $F=S$) then shows that the above colimit is isomorphic to
$$\colim_{G\in \grph} s^{(3-2d)|V| + |F|(d-1)}\sgn_V \tensor \sgn_F\tensor \CC(G),$$
which is equal to $\GG\CC$ by definition (note that $(3-2d)|V| + |F|(d-1) = |V|-(2d-2)\chi(G)$).
\end{proof}

\begin{remark}
Note how the groupoid of graphs emerges through successive assembly of colimits in the proof of the previous theorem, in effect allowing us to rediscover both the groupoid of graphs and the space of $\CC$-graphs. Similarly, in the next theorem we will rediscover the graph complex differential.
\end{remark}

If $\CC$ is a cyclic operad, then the formula \eqref{eq:lie bracket} endows $\CC((V))$ with the structure of a graded Lie algebra, and we may form the Chevalley-Eilenberg complex $C_*^{CE}\big(\CC((V))\big) = \big( \Lambda s \CC((V)),d_{CE} \big)$.

\begin{theorem} \label{thm:operad}
If $\CC$ is a cyclic operad, then the isomorphism in Theorem \ref{thm:graph complex} commutes with differentials, yielding an isomorphism of chain complexes
$$C_*^{CE}\big(\CC((V))\big)_{\SP} \cong \big(\GG^d \CC,\partial\big).$$
\end{theorem}

\begin{proof}
It follows from Corollary \ref{cor:elementary} that the graded vector space $\Lambda^n s\CC((V))_{\SP(V)}$ is spanned by $\SP(V)$-orbits of elements of the form
\begin{equation} \label{eq:element}
\xi_1 \tensor h_1 \wedge \cdots \wedge \xi_{n}\tensor h_{n},\quad \xi_j\in \CC(i_j),\quad h_j\in V^{\tensor i_j},
\end{equation}
such that $h_1\tensor \cdots \tensor h_{n} \in V^{\tensor 2k}$ is an elementary tensor, i.e., of the form 
$$h_1\tensor \cdots \tensor h_{n} = \big(e_1\tensor e_1^\# \tensor e_2 \tensor e_2^\# \tensor \ldots \tensor e_k\tensor e_k^\#\big)\sigma^{-1},$$
for some $\sigma\in C_{2k}$.

Tracing through the isomorphism $\Lambda s \CC((V))_{\SP(V)} \cong \GG^d \CC$ of Theorem \ref{thm:graph complex}, the class of the element \eqref{eq:element} may be represented by the decorated graph, whose underlying graph $G=\big(F \xrightarrow{f} V,E\big)$ has vertices $V=\{h_1,h_2,\ldots,h_n\}$, flags $F=\{e_1,e_1^\#,\ldots,e_k,e_k^\#\}$, edges $E = \{\{e_1,e_1^\#\},\ldots,\{e_k,e_k^\#\}\}$, and where $f\colon F\to V$ is given by $f(e) = h_j$ if $e$ appears as a tensor factor in $h_j$. The vertex $h_i$ is decorated by the element $\xi_i$, the orientation of the vertices is $h_1\wedge \ldots \wedge h_n$ and the orientation of the flags is determined by $\sigma$.

Towards describing the Chevalley-Eilenberg differential applied to \eqref{eq:element}, recall that the formula \eqref{eq:lie bracket} defines the Lie bracket of two elements $\xi_s\tensor h_s$ and $\xi_t \tensor h_t$ in $\CC((V))$:
$$\big[\xi_s\tensor h_s,\xi_t \tensor h_t\big] = \sum_{i,j} (\xi_s) {}_i\circ_j (\xi_t) \tensor (h_s) {}_i\circ_j (h_t).$$
Since $h_1\tensor \cdots \tensor h_n$ is an elementary tensor, the only possibility for $(h_s) {}_i\circ_j (h_t)$ to be non-zero is if the $i$th tensor factor of $h_s$ is $e_r$ and the $j$th factor of $h_t$ is $e_r^\#$, or vice versa, for some $r$. In other words, $(h_s) {}_i\circ_j (h_t)$ is non-zero only if the vertex $h_s$ is connected to $h_t$ by an edge $\{e_r,e_r^\#\}$ in the graph $G$. Bearing this observation in mind, the expression for the Chevalley-Eilenberg differential,
\begin{align*}
d_{CE}(\xi_1 \tensor h_1 & \wedge \cdots \wedge \xi_{n}\tensor h_{n}) = \\
& \sum_{s<t} \pm \big[\xi_s\tensor h_s,\xi_t \tensor h_t\big] \wedge \xi_1 \tensor h_1 \wedge \cdots \widehat{\xi_s\tensor h_s} \cdots \widehat{\xi_t \tensor h_t} \cdots \wedge \xi_{n}\tensor h_{n},
\end{align*}
may be simplified: Since $h_1\tensor \cdots \tensor h_n$ is an elementary tensor we may instead sum over all edges in $G$. Indeed, for each edge $\{e_r,e_r^\#\}$ there are unique $s,t,i,j$ such that the $i$th factor of $h_{s}$ is $e_r$ and the $j$th factor of $h_{t}$ is $e_r^\#$. So the above expression may be written as
$$\sum_{r=1}^k \pm (\xi_s) {}_i\circ_j (\xi_t) \tensor (h_s) {}_i\circ_j (h_t) \wedge \xi_1 \tensor h_1 \wedge \cdots \widehat{\xi_s\tensor h_s} \cdots \widehat{\xi_t \tensor h_t} \cdots \wedge \xi_{n}\tensor h_{n}.$$
The $r$th summand in the above expression corresponds to the decorated graph obtained by contracting the edge $\{e_r,e_r^\#\}$ in the decorated graph we started with. So we have rediscovered Kontsevich's graph complex differential.
\end{proof}

The proof of Theorem \ref{thm:main graph} consists in two applications of Theorem \ref{thm:operad}.
By Proposition \ref{prop:Lie iso} the graded Lie algebra $\gl_g= \Der_{\omega_g}^+ \LL(V_g)$ is isomorphic to the Lie algebra associated to the cyclic operad $\Lie$. Thus,
$$C_*^{CE}(\gl_\infty)_{\Gamma_\infty} \cong C_*^{CE}(\Lie((V)))_{\SP} \cong \GG^d \Lie.$$
The graph complex in the introduction involves only connected graphs, whereas disconnected graphs are allowed in the definition of $\GG^d \Lie$. By interpreting a disconnected graph as a formal product of its connected components one sees that
$$\GG^d\Lie \cong \Lambda \GG^d(0).$$
Secondly, the graded Lie algebra $\gl_g\oplus \al_g$ is isomorphic to the Lie algebra associated to the cyclic operad $\Lie_\Pi$ with
$$\Lie_\Pi((n)) = \left\{ \begin{array}{ll} \Lie((n)), & n\geq 3, \\ s^{d-2}\Pi, & n = 1. \end{array} \right.$$
The `hairy graph complex' $\Lambda \GG^d[\Pi]$ is easily seen to be isomorphic to $\GG^d\Lie_\Pi$.

\appendix

\tocless\section{Cohomology of arithmetic groups}

The automorphism groups $\Aut(H,\mu,q)$ and $\Aut(H,\mu,Jq)$ associated to a quadratic module $(H,\mu,q)$ are arithmetic. We will summarize the results on the cohomology of arithmetic groups that we need. We refer to Serre's survey article \cite{Serre}, and the references therein, for more details.

\begin{theorem} \label{thm:arithmetic groups}
Let $G$ be an algebraic group defined over $\QQ$, let $\Gamma$ be an arithmetic subgroup of $G_\QQ$, and let $V$ be a finite dimensional $\QQ$-vector space with an action of $\Gamma$. Then
\begin{enumerate}
\item The cohomology groups $\HH^k(\Gamma;V)$ are finite dimensional.
\item If $G$ is simple and of $\QQ$-rank at least $2$, then the first cohomology group vanishes, $\HH^1(\Gamma;V) = 0$.

\end{enumerate}
\end{theorem}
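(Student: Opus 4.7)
The plan is to recover both statements from classical facts about arithmetic groups, which are collected in Serre's survey \cite{Serre}; there is no new content here, and the role of this appendix is essentially to pin down the precise citations used in the main body of the paper.

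For (1), I would first pass to a torsion-free subgroup $\Gamma_0 \subseteq \Gamma$ of finite index, which exists by Minkowski--Selberg. Because $[\Gamma:\Gamma_0]$ is finite, transfer gives a natural isomorphism $\HH^*(\Gamma;V) \cong \HH^*(\Gamma_0;V)^{\Gamma/\Gamma_0}$, so it is enough to handle $\Gamma_0$. The key input is the Borel--Serre bordification $\overline{X}$ of the symmetric space $X = G_\RR/K_\infty$: it is a contractible manifold with corners on which $\Gamma_0$ acts properly and cocompactly, so $\Gamma_0 \backslash \overline{X}$ is a finite aspherical CW-complex modelling $B\Gamma_0$. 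Computing $\HH^*(\Gamma_0;V)$ as the cohomology of this finite complex with the local system associated to $V$ makes finite-dimensionality in each degree immediate.

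For (2), I would decompose $V$ into $\Gamma$-irreducibles using that $\Gamma$ is Zariski-dense in a reductive group (Borel density), so that any finite-dimensional representation is completely reducible. On the trivial summand, $\HH^1(\Gamma;\QQ) = \Hom(\Gamma,\QQ)$, and the $\QQ$-rank $\geq 2$ hypothesis forces $\Gamma^{\mathrm{ab}}$ to be finite: when the $\RR$-rank is $\geq 2$ this is Kazhdan's property (T), and the remaining higher $\QQ$-rank cases are covered by results of Margulis, Raghunathan, and Borel on the congruence subgroup problem and on vanishing of $\HH^1$ with trivial coefficients. On a nontrivial irreducible summand, extend scalars to $\RR$ and apply Matsushima's formula to identify $\HH^1(\Gamma;V\tensor\RR)$ with a relative Lie algebra cohomology group $\HH^1(\mathfrak{g},K_\infty;V\tensor\RR)$; the Borel--Wallach vanishing theorem then yields $0$ in degree $1$ whenever $G$ is simple of higher rank and $V$ is nontrivial.

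The only honest subtlety is in (2): one must check that the hypothesis as stated ($G$ simple, $\QQ$-rank $\geq 2$) is strong enough to simultaneously cover the trivial-coefficient vanishing and the Borel--Wallach-type vanishing on nontrivial irreducibles, since the classical statements in the literature are sometimes phrased in terms of $\RR$-rank or under slightly different reductivity assumptions. This is a bookkeeping exercise with the standard references rather than a genuine obstacle, and the rest of the argument is a direct appeal to the theorems of Borel--Serre, Matsushima, Borel--Wallach, and Kazhdan.
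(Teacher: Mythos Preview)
Your treatment of (1) is essentially the same as the paper's: both pass to a torsion-free finite-index subgroup and use that the Borel--Serre bordification yields a compact $B\Gamma_0$ (equivalently, a finite free resolution of the trivial module), together with a transfer argument.

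For (2) there is a genuine gap. Matsushima's formula and the Borel--Wallach vanishing theorems compute $\HH^1(\Gamma;V)$ in terms of relative Lie algebra cohomology \emph{only when $V$ is the restriction to $\Gamma$ of a rational representation of $G$}; the input $V\otimes\RR$ must be a $(\mathfrak{g},K_\infty)$-module, not merely a $\Gamma$-module. In the statement, $V$ is an arbitrary finite-dimensional $\Gamma$-representation, and nothing you have said lets you pass from $\Gamma$ to $G$. Relatedly, your complete-reducibility step is not correct as stated: Borel density tells you that $\Gamma$ is Zariski-dense in $G$, which implies that \emph{rational $G$-representations} restricted to $\Gamma$ are semisimple, but it says nothing about semisimplicity of an arbitrary $\Gamma$-representation. (This step can be bypassed with a composition series, but the Matsushima/Borel--Wallach input still fails.)

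The missing ingredient is precisely what the paper invokes: by Margulis superrigidity (phrased in \cite[1.3(9)]{Serre} as every finite-dimensional representation of $\Gamma$ being \emph{almost algebraic}), there is a finite-index subgroup $\Gamma'\subseteq\Gamma$ such that $V|_{\Gamma'}$ extends to a rational representation of $G$. Once you have this, your Matsushima/Borel--Wallach argument goes through for $\Gamma'$, and transfer lifts the vanishing back to $\Gamma$. So your outline becomes correct after inserting the superrigidity step; without it, the appeal to relative Lie algebra cohomology is not justified. This is not the ``bookkeeping'' issue you flag in your last paragraph about $\QQ$-rank versus $\RR$-rank hypotheses---it is the substantive step that reduces the problem to one about $G$-representations.
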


\begin{proof}
If $\Gamma$ is torsion-free the first claim follows from the fact that the trivial $\ZZ[\Gamma]$-module $\ZZ$ admits a finite length resolution by finitely generated free $\ZZ[\Gamma]$-modules. For general $\Gamma$, there exists a torsion-free subgroup $\Gamma'\subseteq \Gamma$ of finite index, and the claim follows because $\HH^k(\Gamma;V)$ may be identified with the set of $\Gamma$-invariants in $\HH^k(\Gamma';V)$ by a transfer argument (see e.g., \cite[III.(10.4)]{Brown}).

If $G$ is simple and of $\QQ$-rank at least $2$, every finite dimensional representation $V$ of $\Gamma$ is almost algebraic (see \cite[1.3(9)]{Serre}). This means that there is a finite index subgroup $\Gamma'\subseteq \Gamma$ such that the restriction of $V$ to $\Gamma'$ is the restriction of an algebraic representation of the algebraic group $G$. This implies that the first cohomology group $\HH^1(\Gamma;V)$ vanishes, as in \cite[Corollary 16.4]{BMS}.
\end{proof}

\tocless\section{Some elementary homological algebra} \label{sec:appendixb}
We will consider $\ZZ$-graded chain complexes over an associative ring $R$, e.g., $R = \QQ[\pi]$ for a group $\pi$.

A chain complex $C_*$ is called split if there are maps $s_n\colon C_n\rightarrow C_{n+1}$ such that $dsd = d$. Equivalently, there is a chain homotopy equivalence between $C_*$ and the homology $H_*(C)$, viewed as a chain complex with trivial differential.

\begin{lemma} \label{lemma:split}
If $C_*$ is a split chain complex then there is a chain homotopy equivalence
$$\xymatrix{C_* \ar[r]_-{p_C}^-\simeq & \HH_*(C)}$$
such that $p_C(z) = [z]$ if $z$ is a cycle.

If $f\colon C_*\rightarrow D_*$ is a chain map between split chain complexes (not necessarily compatible with the splittings), then the diagram
$$\xymatrix{C_* \ar[r]^-f \ar[d]_-{p_C} & D_* \ar[d]^-{p_D} \\ \HH_*(C) \ar[r]^-{\HH_*(f)} & \HH_*(D)}$$
commutes up to chain homotopy.
\end{lemma}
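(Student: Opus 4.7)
The strategy is to build both $p_C$ and the naturality chain homotopy directly from the splitting maps $s_C, s_D$, without making any further choices. The key intermediate construction is $e_C := s_C d + d s_C$; since $d s_C d = d$, a direct check gives $d e_C = d = e_C d$, so $e_C$ is a chain map, and the defining formula says that $s_C$ is a chain homotopy from $0$ to $e_C$. Setting $p := 1_C - e_C$ yields a chain map with $d p = p d = 0$, so $p$ takes values in the cycles $Z_*(C)$ and vanishes on boundaries $B_*(C)$. I would then define
\[
p_C : C_* \to H_*(C), \qquad p_C(c) := [p(c)],
\]
regarding $H_*(C)$ as a complex with zero differential. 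This is visibly a chain map, and for a cycle $z$ one has $p(z) = z - d s_C z$, so $p_C(z) = [z]$.

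To promote $p_C$ to a chain homotopy equivalence, I would check that $p$ restricts to an $R$-linear idempotent on $Z_*(C)$ with kernel exactly $B_*(C)$. A short computation using $d s_C d = d$ gives $p^2 = p + d s_C s_C d$ on all of $C$; the correction $d s_C s_C d$ vanishes on cycles, so $p|_{Z_*(C)}$ is idempotent, and the formula $p(z) = z - d s_C z$ makes the kernel computation immediate. The map $w \mapsto [w]$ therefore identifies $\operatorname{im}(p|_{Z_*(C)})$ with $H_*(C)$ as $R$-modules, and inverting this and including into $C_*$ gives an $R$-linear section $i : H_*(C) \to C_*$ satisfying $p_C \circ i = \mathrm{id}$. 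For the other composite, $i \circ p_C = p^2 = p + d s_C s_C d$, so $i p_C - 1_C = -e_C + d s_C s_C d$, which equals $d(-s_C + s_C s_C d) + (-s_C + s_C s_C d) d$; hence $-s_C + s_C s_C d$ is the desired chain homotopy from $1_C$ to $i \circ p_C$.

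For the naturality square, expanding the definitions yields
\[
(p_D \circ f - H_*(f) \circ p_C)(c) = [(f e_C - e_D f)(c)].
\]
The natural candidate for a chain-level homotopy between $f e_C$ and $e_D f$ is $h := f s_C - s_D f : C_* \to D_{*+1}$, and using $f d = d f$ one checks $d h + h d = f e_C - e_D f$. A further computation using $d s_D d = d$ shows that $h(d c)$ is actually a cycle in $D$, so that $[h(dc)]$ makes sense and $(p_D f - H_*(f) p_C)(c) = [h(dc)]$. I would then define
\[
K : C_* \to H_{*+1}(D), \qquad K(c) := p_D(h(c)) = [(1 - e_D) h(c)],
\]
and verify $K \circ d = p_D f - H_*(f) p_C$: since $h(dc)$ is a cycle, one has $e_D h(dc) = d s_D h(dc)$, a boundary, so $K(dc) = [h(dc)]$. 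Because the target has zero differential, this single identity is the complete chain-homotopy relation.

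The only real subtlety is ensuring that every construction remains $R$-linear so that the result holds in the category of chain complexes of $R = \QQ[\pi]$-modules, not merely over $\QQ$; this is automatic since $e_C$, $p$, $i$, $h$ and $K$ are all assembled from $d$, $s_C$, $s_D$, $f$, and the canonical quotient to homology, and the section $i$ is produced canonically from the idempotent $p|_{Z_*(C)}$ rather than by choosing a basis of $H_*(C)$.
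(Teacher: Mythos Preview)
Your proof is correct and essentially the same as the paper's: the paper writes $p(x)=[x-sd(x)]$, $\nabla[z]=z-ds(z)$, and $h=s-s^2d$, which agree with your $p_C$, $i$, and homotopy up to the harmless boundary term $ds_C(x)$ in the representative. The one place where the paper is slicker is the naturality square: rather than building your explicit $K$, it observes that $p_D f\nabla_C = H_*(f)$ holds \emph{on the nose} (since $\nabla_C[z]$ differs from $z$ by a boundary and $p_D$ kills boundaries), whence $H_*(f)\,p_C = p_D f\,\nabla_C p_C \simeq p_D f$ follows immediately from $\nabla_C p_C\simeq 1$.
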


\begin{proof}
Let $s\colon C_*\rightarrow C_{*+1}$ satisfy $dsd = d$. The reader may check that the formulas
$$\xymatrix{*[r]{C_*} \ar@<1ex>[r]^-{p} \ar@(ul,dl)[]_{h} & \HH_*(C) \ar@<1ex>[l]^-{\nabla}}$$
$$p(x) = [x-sd(x)],\quad \nabla[z] = z-ds(z),\quad h = s-s^2d$$
give well-defined maps that satisfy
$$p\nabla = 1,\quad 1-\nabla p = dh+hd.$$
Clearly, $p(z) = [z]$ if $z$ is a cycle.

Next, consider a chain map $f\colon C_*\rightarrow D_*$ between split chain complexes. Since $p_C(z) = [z]$ for cycles $z$, we have that $\nabla_C[z] = z - dh(z)$. Therefore,
\begin{align*}
p_D f \nabla_C [z] & = p_D f(z - dh(z)) \\
& = p_D(f(z)) \\
& = [f(z)],
\end{align*}
showing that $p_D f_* \nabla_C = \HH_*(f)$. Hence,
$$\HH_*(f) p_C = p_Df_*\nabla_C p_C \simeq p_D f.$$
\end{proof}

\begin{lemma} \label{lemma:split chain}
A chain complex $C_*$ is split if and only if the short exact sequences
$$0\rightarrow Z_n \rightarrow C_n\stackrel{d_n}{\rightarrow} B_{n-1} \rightarrow 0$$
$$0\rightarrow B_n\rightarrow Z_n \rightarrow H_n \rightarrow 0$$
are split exact for all $n$. Here, $Z_n = \ker(d_n)$, $B_{n-1} =\im(d_n)$, and $H_n = \HH_n(C_*)$.
\end{lemma}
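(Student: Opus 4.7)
The plan is to prove both implications by relating the chain contraction datum $s$ to splittings of the two short exact sequences and vice versa, with the key observation that the identity $dsd=d$ is equivalent to saying $s$ provides a partial section of $d$ on boundaries.

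For the forward direction, suppose $s_n\colon C_n \to C_{n+1}$ satisfies $dsd=d$. I would first show that the restriction $s|_{B_{n-1}}\colon B_{n-1}\to C_n$ is a section of $d_n\colon C_n\to B_{n-1}$: indeed, if $b = dc \in B_{n-1}$, then $d(s(b)) = dsd(c) = dc = b$. This splits the first sequence. For the second sequence, I would consider the composite $ds\colon C_n\to C_n$ restricted to $Z_n$; it lands in $B_n$, and for $z = db \in B_n$ one has $ds(z) = dsd(b) = db = z$, so $ds|_{Z_n}\colon Z_n\to B_n$ is a retraction of the inclusion $B_n\hookrightarrow Z_n$.

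For the reverse direction, suppose both sequences split. Choose section $\sigma_n\colon B_{n-1}\to C_n$ of $d_n$, and use the splittings to obtain a direct sum decomposition $C_n \cong B_n \oplus H_n' \oplus B_{n-1}'$, where $H_n'$ is a chosen complement of $B_n$ in $Z_n$ and $B_{n-1}'$ is the image of $\sigma_n$. Let $p_n\colon C_n\to B_n$ denote the projection onto the first factor coming from this decomposition. Then I would define $s_n\colon C_n\to C_{n+1}$ by $s_n = \sigma_{n+1}\circ p_n$, so that $s_n$ vanishes on $H_n'\oplus B_{n-1}'$ and coincides with $\sigma_{n+1}$ on $B_n$. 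To verify $dsd=d$, note that for any $c\in C_{n+1}$, the element $d_{n+1}(c)$ lies in $B_n$, so $s_n(d_{n+1}c) = \sigma_{n+1}(d_{n+1}c)$, and applying $d_{n+1}$ gives back $d_{n+1}(c)$ since $\sigma_{n+1}$ is a section.

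I do not expect any real obstacle beyond keeping track of indices and making sure the decompositions induced by the two splittings fit together compatibly; the content is entirely formal once the right section/retraction is identified from the datum $dsd=d$. The only subtle point is the reverse direction, where one must observe that the two splittings together furnish enough structure to produce a global decomposition of $C_n$ that makes the definition of $s_n$ unambiguous.
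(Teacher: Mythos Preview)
Your argument is correct in both directions. The paper states this lemma without proof (it is presented as an elementary fact in the appendix), so there is nothing to compare against; your write-up supplies exactly the standard verification one would expect, and the only point worth emphasizing is the one you already flag: in the reverse direction over a general ring $R$ one genuinely needs \emph{both} splittings, since extending the partial section $\sigma_{n+1}\colon B_n\to C_{n+1}$ to an $R$-linear map on all of $C_n$ requires $B_n$ to be a direct summand of $C_n$, which is precisely what the two splittings together provide.
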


\begin{definition} \label{def:rationally perfect appendix}
We will say that a group $\pi$ is \emph{rationally perfect} if $\HH^1(\pi;V) = 0$ for all finite dimensional vector spaces $V$ over $\QQ$ with an action of $\pi$.
\end{definition}

\begin{lemma}
A group $\pi$ is rationally perfect if and only if $\Ext_{\QQ[\pi]}^1(W,V) = 0$ for all finite dimensional vector spaces $V$ and $W$ over $\QQ$ with an action of $\pi$.
\end{lemma}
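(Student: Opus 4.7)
The plan is to reduce everything to the standard interpretation of $\Ext$ over a group algebra as group cohomology with suitable coefficients, namely the identity
\[
\Ext_{\QQ[\pi]}^n(W,V) \cong \HH^n(\pi;\Hom_\QQ(W,V)),
\]
where $\Hom_\QQ(W,V)$ carries the conjugation action $(g\cdot f)(w) = gf(g^{-1}w)$. This is valid because $\QQ$ is a field: if $F_\bullet \to \QQ$ is a free resolution of the trivial module, then $F_\bullet \tensor_\QQ W$ with the diagonal $\pi$-action is a free resolution of $W$ (each $\QQ[\pi]\tensor_\QQ W$ with diagonal action being $\QQ[\pi]$-free via $g\tensor w \mapsto g\tensor g^{-1}w$), and the tensor-Hom adjunction gives $\Hom_{\QQ[\pi]}(F_\bullet \tensor_\QQ W,V) \cong \Hom_{\QQ[\pi]}(F_\bullet, \Hom_\QQ(W,V))$, whose cohomology is $\HH^*(\pi;\Hom_\QQ(W,V))$.

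Once this identity is in place, both directions follow at once. For $(\Leftarrow)$, take $W = \QQ$ with trivial $\pi$-action; then $\Hom_\QQ(\QQ,V) = V$ and the identity specialises to $\HH^1(\pi;V) \cong \Ext_{\QQ[\pi]}^1(\QQ,V) = 0$. For $(\Rightarrow)$, given finite dimensional $\pi$-representations $V$ and $W$, the space $\Hom_\QQ(W,V)$ is again finite dimensional over $\QQ$ with a $\pi$-action, so rational perfectness gives $\Ext_{\QQ[\pi]}^1(W,V) \cong \HH^1(\pi;\Hom_\QQ(W,V)) = 0$.

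The only non-trivial step is the $\Ext$--cohomology identity above, which is a routine piece of homological algebra but is the key technical input. Everything else is a direct specialisation or application of the hypothesis.
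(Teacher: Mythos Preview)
Your proof is correct and follows exactly the paper's approach: the paper's entire proof is the single sentence ``Use the relation $\Ext_{\QQ[\pi]}^1(W,V) \cong \HH^1(\pi;\Hom_{\QQ}(W,V))$,'' and you have simply supplied the standard justification for this identity and spelled out the two immediate specialisations.
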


\begin{proof}
Use the relation $\Ext_{\QQ[\pi]}^1(W,V) \cong \HH^1(\pi;\Hom_{\QQ}(W,V))$.
\end{proof}

\begin{proposition} \label{prop:split}
Let $\pi$ be a rationally perfect group. If $C_*$ is a chain complex of $\QQ[\pi]$-module such that $C_n$ is finite dimensional over $\QQ$ for every $n$, then $C_*$ is split.
\end{proposition}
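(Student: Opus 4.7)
The plan is to reduce the statement to Lemma \ref{lemma:split chain} and then deduce the splitting of the relevant short exact sequences from the $\Ext^1$-vanishing characterization of rationally perfect groups (the lemma immediately preceding this proposition).

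First I would note that, as $\QQ$-vector spaces, the cycles $Z_n = \ker(d_n)$, the boundaries $B_n = \im(d_{n+1})$, and the homology $H_n = Z_n/B_n$ are all subquotients of $C_n$. Since $C_n$ is finite dimensional over $\QQ$ by hypothesis, so are $Z_n$, $B_{n-1}$, and $H_n$, and each carries a natural $\QQ[\pi]$-module structure (the subspaces $Z_n$ and $B_n$ are $\pi$-invariant because $d$ is $\pi$-equivariant).

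By Lemma \ref{lemma:split chain}, splitness of $C_*$ is equivalent to splitness of the two families of short exact sequences of $\QQ[\pi]$-modules
\begin{equation*}
0 \rightarrow Z_n \rightarrow C_n \stackrel{d_n}{\rightarrow} B_{n-1} \rightarrow 0, \qquad 0 \rightarrow B_n \rightarrow Z_n \rightarrow H_n \rightarrow 0.
\end{equation*}
The isomorphism classes of these extensions are classified by $\Ext^1_{\QQ[\pi]}(B_{n-1}, Z_n)$ and $\Ext^1_{\QQ[\pi]}(H_n, B_n)$, respectively. Since all the modules involved are finite dimensional over $\QQ$, and since $\pi$ is rationally perfect, the lemma preceding this proposition yields $\Ext^1_{\QQ[\pi]}(W,V) = 0$ for any pair of finite dimensional $\QQ[\pi]$-modules, so both Ext groups vanish. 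Hence both sequences split, and $C_*$ is split as required.

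There is no real obstacle here: the content is entirely packaged in the preceding lemmas, and the only thing to check is the finite-dimensionality of the subquotients, which is immediate from the hypothesis that each $C_n$ is finite dimensional over $\QQ$.
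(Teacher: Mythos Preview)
Your proof is correct and follows exactly the same route as the paper: reduce to Lemma~\ref{lemma:split chain}, observe that $Z_n$, $B_n$, $H_n$ are finite dimensional over $\QQ$, and invoke the $\Ext^1$-vanishing characterization of rationally perfect groups to split both short exact sequences. There is nothing to add.
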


\begin{proof}
If $C_n$ is finite dimensional over $\QQ$ for all $n$, then so are $Z_n$, $B_n$ and $H_n$. Since $\pi$ is rationally perfect, the Ext-groups $\Ext_{\QQ[\pi]}^1(H_n,B_n)$ and $\Ext_{\QQ[\pi]}^1(B_{n-1},Z_n)$ vanish for all $n$, which forces $C_*$ to split by Lemma \ref{lemma:split chain}.
\end{proof}

\tocless\section{A $\QQ$-local plus construction} \label{sec:plus}
Let $X$ be a connected space of finite $\QQ$-type such that $\HH^1(X;\QQ) = 0$. Then $X$ admits a minimal Sullivan model $\mathscr{M}_X$ of finite type with generators in degree $2$ and above, see e.g., \cite[Proposition 12.2]{FHT-RHT}. The spatial realization $|\mathscr{M}_X|$ is then a simply connected $\QQ$-local space of finite $\QQ$-type. Moreover, the canonical map $X\rightarrow | \mathscr{M}_X|$ is a rational cohomology isomorphism. One may view $|\mathscr{M}_X|$ as a $\QQ$-local version of the plus construction, and we will denote it by $X_\QQ^+$. In fact, if the fundamental group of $X$ is perfect, i.e., $\HH_1(X;\ZZ) = 0$, then $X_\QQ^+$ is a $\QQ$-localization of the ordinary plus construction. Note however that the rational plus construction has a wider range of applicability as it does not require the fundamental group to be perfect.

\tocless\section{Proof of Theorem \ref{thm:S-tilde}} \label{sec:S-tilde}
For a vector bundle $\xi$ over a finite CW-complex $X$ with a closed subspace $C\subseteq X$, we defined in \S\ref{sec:partial linearization} the $\Delta$-monoid $\widetilde{S}_\bullet \aut_C(\xi)$ and its stable version $\widetilde{S}_\bullet \aut_C(\xi^S)$. There is an obvious forgetful map
$$
\widetilde{\pi} \colon \widetilde{S}_\bullet \aut_C(\xi^S) \to S_\bullet \aut_C(X).
$$

\begin{lemma} \label{lemma:kan condition}
The map
$$
\widetilde{\pi} \colon \widetilde{S}_\bullet \aut_{C,\circ}(\xi^S) \to S_\bullet \aut_{C,\circ}(X)
$$
satisfies the Kan condition.
\end{lemma}

\begin{proof}
Given a diagram
$$
\xymatrix{\Lambda^k \ar[d] \ar[r]^-{F} & \aut_{C,\circ}(\RR^{k-1}\times \xi) \ar[d] \\
\Delta^k \ar[r]^-f & \aut_{C,\circ}(X)}
$$
that displays $(f,F)$ as a horn in $\widetilde{S}_{k-1}\aut_{C,\circ}(\xi)$, we seek to extend it to an element $(f,\widetilde{F})$ of $\widetilde{S}_k\aut_{C,\circ}(\xi)$ provided $\dim \xi - \dim X - 1 > k$. Since we stabilize, we may assume $\dim \xi - \dim X \gg k$. We treat the case of the $k$-th horn, i.e., $\Lambda^k$ is the union of the faces $d_i\Delta^k = d^i(\Delta^{k-1})$ for $i=0,1,\ldots,k-1$. The cases of the other horns are similar.
Let $F_i$ be the restriction of $F$ to $d_i\Delta^k$ so that
$$
d_\nu F_\mu = d_{\mu-1} F_\nu,\quad 0\leq \nu < \mu \leq k-1.
$$
In \S\ref{sec:partial linearization} we introduced the isomorphism $\phi_\mu \colon \RR\times \RR^{k-1} \to \RR^k$ and we let 
$$
(\phi_\mu)_\# \colon \aut_{C,\circ}(\RR^{k-1} \times \xi) \to \aut_{C,\circ}(\RR^k\times \xi)
$$
be the map
$$
(\phi_\mu)_\#(F) = \phi_\mu(id_\RR \times F) \phi_\mu^{-1}.
$$
Then $F_\mu' = (\phi_\mu)_\#(F_\mu)$ is a map from $d_\mu \Delta^k$ into $\aut_{C,\circ}(\RR^k\times \xi)$ and these morphisms fit together to define a diagram
$$
\xymatrix{\Lambda^k \ar[d] \ar[r]^-{F'} & \aut_{C,\circ}(\RR^k\times \xi) \ar[d] \\
\Delta^k \ar[r]^-f & \aut_{C,\circ}(X).}
$$
Indeed, $d_\nu\Delta^k \cap d_\mu \Delta^k = d_\nu d_\mu \Delta^k = d_{\mu-1} d_\nu \Delta^k$ when $\nu < \mu$, and since
$$
(\phi_\nu)_\# (\phi_\mu)_\# = (\phi_{\mu-1})_\# (\phi_\nu)_\# \colon \aut_{C,\circ}(\RR^{k-2} \times \xi) \to \aut_{C,\circ}(\RR^k \times \xi)
$$
it follows that $F_\nu'$ and $F_\mu'$ agree on $d_\nu\Delta^k \cap d_\mu \Delta^k$.

As in the proof of Lemma \ref{lemma:serre fibration} we may apply \cite[\S11.3]{Steenrod} to extend $(f,F')$ to $(f,F'')$ with
$$
\xymatrix{\Delta^k\times \RR^k \times \xi \ar[d] \ar[r]^-{F''} & \RR^k\times \xi \ar[d] \\ \Delta^k \times X \ar[r]^-f & X.}
$$
Off hand there is no reason to expect that $F''$ maps the $k$-th face of $\Delta^k$ into $(\phi_k)_\#\aut_{C,\circ}(\RR^{k-1}\times\xi)$, which is required for $(f,F'')$ to define a $k$-simplex of $\widetilde{S}_\bullet\aut_{C,\circ}(\xi)$. But we can adjust $F''$, using that 
$$(\phi_k)_\# \colon \aut_{C,\circ}(\RR^{k-1}\times \xi) \to \aut_{C,\circ}(\RR^k\times \xi)$$
is highly connected when $\dim \xi - \dim X \gg k$, cf.~Proposition \ref{prop:connectivity}. The boundary $(k-2)$-sphere $\partial(d_k\Delta^k)$ is contained in $\partial(\Lambda^k)$ and the diagram
$$
\xymatrix{d_k \Delta^k \ar@{-->}[dr] \ar[r]^-{F''} & \aut_{C,\circ}(\RR^k\times \xi) \\ \partial(d_k\Delta^{k-1}) \ar[u] \ar[r]^-{F'} & \aut_{C,\circ}(\RR^{k-1}\times \xi) \ar[u]_-{(\phi_k)_\#}}
$$
represents an element of $\pi_{k-1}((\phi_k)_\#)$, which with our assumption on $\dim \xi$ is the zero group. Hence, $F''$ deforms to a map from $d_k\Delta^k$ into $\aut_{C,\circ}(\RR^{k-1}\times \xi)$ and using a collar to absorb the deformation, we have obtained the required $k$-simplex of $\widetilde{S}_\bullet \aut_{C,\circ}(\xi)$.

We remember that $\widetilde{S}_\bullet \aut_{C,\circ}(\xi^S)$ is the colimit of $\widetilde{S}_\bullet \aut_{C,\circ}(\xi^s)$, $\xi^s = \xi\times \RR^s$, as $s\to\infty$. The condition $\dim \xi^s - \dim X > k$ is satisfied for large $s$ because $X$ was assumed to be a finite $CW$-complex.
\end{proof}

The map that sends $(f,\widehat{f})\in \aut_{C,\circ}(\xi)$ into $(f,id\times \widehat{f})\in \aut_{C,\circ}(\RR^k\times \xi)$ induces a map from $S_k\aut_{C,\circ}(\xi)$ to $\widetilde{S}_k\aut_{C,\circ}(\xi)$ and gives rise to the $\Delta$-map
$$
\alpha_\bullet \colon S_\bullet \aut_{C,\circ}(\xi^S) \to \widetilde{S}_\bullet\aut_{C,\circ}(\xi^S).
$$
Both $\Delta$-sets are Kan $\Delta$-sets; for the target this is a consequence of Lemma \ref{lemma:kan condition}. Hence, the homotopy groups of their geometric realizations may be calculated from the combinatorial homotopy groups.

\begin{lemma}
$\pi_k(\alpha_\bullet)$ is an isomorphism.
\end{lemma}

\begin{proof}
Assuming $\dim \xi - \dim X \gg k$,
$$\pi_k(\widetilde{S}_\bullet \aut_{C,\circ}(\xi)) = Z_k\big(\widetilde{S}_\bullet \aut_{C,\circ}(\xi)\big) / \sim$$
where
$$
Z_k\big(\widetilde{S}_\bullet \aut_{C,\circ}(\xi)\big) = \set{F\in \widetilde{S}_k \aut_{C,\circ}(\xi)}{d_\mu F =id, \,\, \mu = 0,\ldots,k},
$$
and where $F_1 \sim F_2$ if there exists a $G\in \widetilde{S}_{k+1} \aut_{C,\circ}(\xi)$ with $d_k G = F_1$, $d_{k+1} G = F_2$ and $d_\mu G = 0$ for $\mu <k$. If
$$
F\colon \Delta^k \to \aut_{C,\circ}(\RR^k \times \xi)
$$
is an element of $Z_k\big(\widetilde{S}_\bullet \aut_{C,\circ}(\xi)\big)$, then
$$
F\circ d^\mu = \phi_\mu \circ (id \times d_\mu F) \circ \phi_\mu^{-1},\quad \mu =0,\ldots,k,
$$
and since $d_\mu F=id$, we may view $F$ as a map
$$
F\colon (S^k,*) \to \aut_{C,\circ}(\RR^k\times \xi).
$$
We claim that two such maps represent the same element of $\pi_k \widetilde{S}_\bullet \aut_{C,\circ}(\xi)$ if and only if they are homotopic as maps into $\aut_{C,\circ}(\RR^{k+1}\times \xi)$. To wit, write $\Delta^{k+1}$ as the join of the $(k-1)$-simplex $\langle v_0,\ldots,v_{k-1}\rangle = d_k d_{k+1} \Delta^{k+1}$ and the $1$-simplex $\langle v_k,v_{k+1}\rangle$, where $\{v_0,\ldots,v_{k+1}\}$ is the set of vertices of $\Delta^{k+1}$. The family of simplices
$$
\langle v_0,\ldots,v_{k-1} \rangle * \set{tv_k + (1-t)v_{k+1}}{0\leq t\leq 1}
$$
turns the above
$$
G\colon \Delta^{k+1} \to \aut_{C,\circ}(\RR^{k+1}\times\xi)
$$
into a map
$$
G\colon (S^k,*)\times I \to \aut_{C,\circ}(\RR^{k+1}\times \xi)
$$
related to $F_1$ and $F_2$ via the diagram
$$
\xymatrix{(S^k,*)\times \{1\} \ar[d]  \ar[r]^-{F_2} & \aut_{C,\circ}(\RR^k\times \xi) \ar[d]^-{(\phi_{k+1})_\#} \\
(S^k,*)\times I \ar[r]^-G & \aut_{C,\circ}(\RR^{k+1}\times \xi) \\
(S^k,*)\times \{0\} \ar[u] \ar[r]^-{F_1} & \aut_{C,\circ}(\RR^k\times \xi). \ar[u]_-{(\phi_k)_\#}}
$$
Since we are assuming that $\dim \xi - \dim X \gg k$,
$$
\aut_{C,\circ}(\xi) \to \aut_{C,\circ}(\RR^{k+1}\times \xi)
$$
is a $(k+1)$-equivalence. Consequently, $F_1$, $F_2$ and $G$ can be viewed as maps into $\aut_{C,\circ}(\xi)$ with $G$ a homotopy between $F_1$ and $F_2$. This completes the proof.
\end{proof}

\begin{corollary}
The geometric realization of $\widetilde{S}_\bullet\aut_{C,\circ}(\xi^S)$ is homotopy equivalent to the topological monoid $\aut_{C,\circ}(\xi^S)$. \hfill $\square$
\end{corollary}

\begin{remark}
It was pointed out in \cite[Appendix 1, \S3]{BLR} that the $\Delta$-group $\tDiff_\partial(M)_\bullet$ is in fact a simplicial group. In our formulation, the degeneracy operator
$$
s_\lambda \colon \tDiff_\partial(M)_k \to \tDiff_\partial(M)_{k+1}
$$
maps a $k$-simplex $(\varphi,\psi)$ into $(s_\lambda(\varphi),s_\lambda(\psi))$ where $s_\lambda(\psi)(x,y) = \psi(s^\lambda x,y)$ and where $s_\lambda(\varphi)(x,y)\in\Delta^{k+1}$ has components
\begin{align*}
s_\lambda(\varphi)_i(x,y) & = \varphi_i(s^\lambda(x),y),\quad 1\leq i\leq \lambda,\\
s_\lambda(\varphi)_{\lambda+1}(x,y) & = \sigma_\lambda(x)\varphi_\lambda(s^\lambda(x),y) + (1 - \sigma_\lambda(x))\varphi_{\lambda+1}(s^\lambda x,y), \\
s_\lambda(\varphi)_i(x,y) & = \varphi_{i-1}(s^\lambda(x)), \quad \lambda +1 < i \leq k+1.
\end{align*}
Here $(x,y)\in \Delta^{k+1}\times M$, $s^\lambda(x_1,\ldots,x_{k+1}) = (x_1,\ldots,\widehat{x}_{\lambda+1},\ldots,x_{k+1})$ and
$$
\sigma_\lambda(x_1,\ldots,x_{k+1}) = \frac{x_{\lambda+2} - x_{\lambda +1}}{x_{\lambda +2} - x_\lambda},
$$
with the conventions $\varphi_0 = 0$, $\varphi_{k+1} = 1$, $x_0 = 0$ and $x_{k+2} = 1$. It is the collar conditions that make the above formulas well-defined.
Indeed, the collar conditions for $\varphi$, listed in preparation to Lemma \ref{lemma:jacobian}, make the denominator of $\sigma_\lambda(x)$ cancel out:
\begin{align*}
(s_0\varphi)_1(x,y) & = x_1,& \mbox{if $x_2\sim 0$},\\
(s_\lambda\varphi)_{\lambda+1}(x,y) & = \varphi_\lambda(s^\lambda x,y) + x_{\lambda+1} - \frac{1}{2} (x_\lambda +x_{\lambda +2}), & \mbox{if $x_\lambda \sim x_{\lambda +2}$},\\
(s_k\varphi)_{k+1}(x,y) & = x_{k+1},& \mbox{if $x_k \sim 1$}.
\end{align*}
Differentiating the above expression for $s_\lambda(\varphi)$, it follows that $\aut_{\partial,\circ}(\tau_M)$ and its stabilization $\aut_{\partial,\circ}(\tau_M^S)$ admit degeneracy operators, so are simplicial monoids. In fact, if we use a collared version of $\aut_{C,\circ}(\xi)$, then it also becomes a simplicial monoid.
\end{remark}


\begin{thebibliography}{99}

\bibitem{Adams}
J.F. Adams,
Infinite loop spaces.
Annals of Mathematics Studies, 90. Princeton University Press, Princeton, N.J.; University of Tokyo Press, Tokyo, 1978.

\bibitem{Atiyah}
M.F. Atiyah,
{\it The signature of fibre-bundles.}
1969 Global Analysis (Papers in Honor of K. Kodaira) pp. 73--84, Univ. Tokyo Press.

\bibitem{Bak}
A. Bak,
K-theory of forms.
Annals of Mathematics Studies, 98. Princeton University Press, 1981.

\bibitem{BMS}
H. Bass, J. Milnor, J.-P. Serre,
{\it Solution of the congruence subgroup problem for $SL_n (n\geq 3)$ and $Sp_{2n} (n\geq 2)$}.
Inst. Hautes \'Etudes Sci. Publ. Math. No. 33 (1967) 59--137.

\bibitem{Baues-Lemaire}
H.J. Baues, J.-M. Lemaire,
{\it Minimal models in homotopy theory}.
Math. Ann. {\bf 225} (1977), no. 3, 219--242. 

\bibitem{Baues}
H.J. Baues,
{\it On the group of homotopy equivalences of a manifold}.
Trans. Amer. Math. Soc. {\bf 348} (1996), no. 12, 4737--4773.

\bibitem{BL}
D. W. Barnes, L.A. Lambe,
{\it A fixed point approach to homological perturbation theory}.
Proc. Amer. Math. Soc. {\bf 112} (1991), no. 3, 881--892.

\bibitem{Berglund2}
A. Berglund,
{\it Rational models for automorphisms of fiber bundles}.
arXiv:1703.03747 [math.AT]

\bibitem{B}
A. Berglund,
{\it Rational homotopy theory of mapping spaces via Lie theory for L-infinity algebras}.
Homology Homotopy Appl. {\bf 17} (2015), no. 2,  343--369.

\bibitem{Berglund}
A. Berglund,
{\it Koszul spaces}.
Trans. Amer. Math. Soc. {\bf 366} (2014), no. 9, 4551--4569. 

\bibitem{BB}
A. Berglund, J. Bergstr\"om,
Hirzebruch L-polynomials and multiple zeta values.
Math. Ann. {\bf 372} (2018), no. 1-2, 125--137.

\bibitem{BM}
A. Berglund, I. Madsen,
{\it Homological stability of diffeomorphism groups}.
Pure Appl. Math. Q. {\bf 9} (2013), no. 1, 1--48.

\bibitem{BSaleh}
A. Berglund, B. Saleh,
{\it A dg Lie model for relative homotopy automorphisms}.
arXiv:1905.01236 [math.AT]


\bibitem{BC}
M. Blomgren, W. Chacholski,
{\it On the classification of fibrations}.
Trans. Amer. Math. Soc. {\bf 367} (2015), no. 1, 519--557. 

\bibitem{Borel0}
A. Borel,
{\it Density and maximality of arithmetic subgroups}.
J. Reine Angew. Math. {\bf 224} 1966, 78--89. 

\bibitem{Borel1}
A. Borel,
{\it Stable real cohomology of arithmetic groups}.
Ann. Sci. \'Ecole Norm. Sup. (4) {\bf 7} (1974), 235--272 (1975).

\bibitem{Borel2}
A. Borel,
{\it Stable real cohomology of arithmetic groups. II}.
Manifolds and Lie groups (Notre Dame, Ind., 1980), pp. 21--55,
Progr. Math., 14, Birkh\"auser, Boston, Mass., 1981.

\bibitem{BS}
A. Borel, J.-P. Serre,
{\it Corners and arithmetic groups}.
Comment. Math. Helv. {\bf 48} (1973), 436--491. 

\bibitem{Bog}
R. B\o{}gvad,
{\it Some elementary results on the cohomology of graded Lie algebras}.
Algebraic homotopy and local algebra (Luminy, 1982), 156--166, Ast\'erisque, {\bf 113-114}, Soc. Math. France, Paris, 1984.

\bibitem{Brown}
K.S. Brown,
Cohomology of groups.
Graduate Texts in Mathematics, 87.
Springer-Verlag, New York-Berlin, 1982.


\bibitem{BLR}
D. Burghelea, R. Lashof, M. Rothenberg,
{\it Groups of automorphisms of manifolds}.
Lecture Notes in Mathematics, Vol. 473. Springer-Verlag, Berlin-New York, 1975.

\bibitem{Charney}
R. Charney,
{\it A generalization of a theorem of Vogtmann}.
J. Pure Appl. Algebra {\bf 44} (1987), no. 1-3, 107--125.

\bibitem{CHKV}
J. Conant, A. Hatcher, M. Kassabov, K. Vogtmann,
{\it Assembling homology classes in automorphism groups of free groups}.
Comment. Math. Helv. {\bf 91} (2016), no. 4, 751--806.

\bibitem{CV}
J. Conant, K. Vogtmann,
{\it On a theorem of Kontsevich.}
Algebr. Geom. Topol. {\bf 3} (2003), 1167--1224.

\bibitem{CKV}
J. Conant, M. Kassabov, K. Vogtmann,
{\it Hairy graphs and the unstable homology of $\Mod(g,s)$, $\Out(F_n)$ and $\Aut(F_n)$}.
J. Topol. {\bf 6} (2013), no. 1, 119--153. 


\bibitem{CuV}
M. Culler, K. Vogtmann,
{\it Moduli of graphs and automorphisms of free groups}.
Invent. Math. {\bf 84} (1986), no. 1, 91--119.

\bibitem{Dold}
A. Dold,
Lectures on algebraic topology.
Die Grundlehren der mathematischen Wissenschaften, Band 200. Springer-Verlag, New York-Berlin, 1972.

\bibitem{ERW}
J. Ebert, O. Randal-Williams,
{\it Generalized Miller-Morita-Mumford classes for block bundles and topological bundles}.
Algebr. Geom. Topol. 14 (2014), no.2, 1181--1204.

\bibitem{EM}
S. Eilenberg, S. Mac Lane,
{\it On the groups $H(\Pi,n)$. II. Methods of computation}.
Ann. of Math. (2) {\bf 60}, (1954). 49--139.

\bibitem{FM}
B. Farb, D. Margalit,
A primer on mapping class groups.
Princeton Mathematical Series, 49. Princeton University Press, Princeton, NJ, 2012.

\bibitem{FH}
F.T. Farrell, W.C. Hsiang,
{\it On the rational homotopy groups of the diffeomorphism groups of discs, spheres and aspherical manifolds}.
Algebraic and geometric topology (Proc. Sympos. Pure Math., Stanford Univ., Stanford, Calif., 1976), Part 1, pp. 325--337,
Proc. Sympos. Pure Math., XXXII, Amer. Math. Soc., Providence, R.I., 1978.

\bibitem{FHT-RHT}
Y. F\'elix, S. Halperin, J-C. Thomas,
Rational Homotopy Theory.
Graduate texts in Mathematics 205, Springer, 2001.

\bibitem{FQ}
M. Freedman, F. Quinn,
{\it A quick proof of the 4-dimensional stable surgery theorem}.
Comment. Math. Helv. {\bf 55} (1980), no. 4, 668--671. 

\bibitem{GMTW}
S. Galatius, U. Tillmann, I. Madsen, M. Weiss,
{\it The homotopy type of the cobordism category}.
Acta Math. {\bf 202} (2009), no. 2, 195--239.

\bibitem{GRW1}
S. Galatius, O. Randal-Williams,
{\it Stable moduli spaces of high dimensional manifolds}.
Acta Math. {\bf 212} (2014), no. 2, 257--377.

\bibitem{GRW2}
S. Galatius, O. Randal-Williams,
{\it Homological stability for moduli spaces of high dimensional manifolds. I.}
J. Amer. Math. Soc. {\bf 31} (2018), no.~1, 215--264.

\bibitem{GRW3}
S. Galatius, O. Randal-Williams,
{\it Homological stability for moduli spaces of high dimensional manifolds. II}.
Ann. of Math. (2) {\it 186} (2017), no. 1, 127--204.

\bibitem{GK1}
E. Getzler, M. M. Kapranov,
{\it Cyclic operads and cyclic homology}.
Geometry, topology, \& physics, 167--201,
Conf. Proc. Lecture Notes Geom. Topology, IV, Int. Press, Cambridge, MA, 1995. 

\bibitem{GK2}
E. Getzler, M. M. Kapranov,
{\it Modular operads}.
Compositio Math. {\bf 110} (1998), no. 1, 65--126.

\bibitem{GiKa}
V. Ginzburg, M. Kapranov,
{\it Koszul duality for operads}.
Duke Math. J. {\bf 76} (1994), no. 1, 203--272. 

\bibitem{GJ}
P.G. Goerss, J.F. Jardine,
Simplicial homotopy theory.
Progress in Mathematics, 174. Birkh\"auser Verlag, 1999.

\bibitem{Gugenheim}
V.K.A.M. Gugenheim,
{\it On the chain complex of a fibration}.
Illinois J. Math. {\bf 16} (1972) 398--414.

\bibitem{Guldberg}
C. Guldberg,
{\it On homotopy automorphisms of Koszul spaces}.
PhD thesis 2015, University of Copenhagen.

\bibitem{Haefliger}
A. Haefliger,
{\it Differentiable imbeddings}.
Bull. Amer. Math. Soc., {\bf 67} (1961), 109--112.

\bibitem{Harer}
J.L. Harer,
{\it Stability of the homology of the mapping class groups of orientable surfaces.}
Ann. of Math. (2) {\bf 121} (1985), no. 2, 215--249.

\bibitem{HL}
R. Hepworth, A. Lahtinen,
{\it On string topology of classifying spaces}.
Adv. Math. {\bf 281} (2015), 394--507.

\bibitem{HMR}
P. Hilton, G. Mislin, J. Roitberg,
{\it Localization of nilpotent groups and spaces}.
North-Holland Mathematics Studies, No. 15. Notas de Matem\'atica, No. 55. North-Holland Publishing Co., Amsterdam-Oxford; American Elsevier Publishing Co., Inc., New York, 1975.

\bibitem{Igusa}
K. Igusa,
{\it The stability theorem for smooth pseudoisotopies}.
K-Theory {\bf 2} (1988), no. 1--2, vi+355 pp. 

\bibitem{KM}
M.A. Kervaire, J.W. Milnor,
{\it Groups of homotopy spheres. I}.
Ann. of Math. (2) {\bf 77} (1963) 504--537. 

\bibitem{Kontsevich1}
M. Kontsevich,
{\it Feynman diagrams and low-dimensional topology.}
First European Congress of Mathematics, Vol. II (Paris, 1992), 97--121,
Progr. Math., 120, Birkh\"auser, Basel, 1994.

\bibitem{Kontsevich2}
M. Kontsevich,
{\it Formal (non)commutative symplectic geometry.}
The Gelfand Mathematical Seminars, 1990--1992, 173--187, Birkh\"auser Boston, Boston, MA, 1993.

\bibitem{Kreck}
M. Kreck,
{\it Isotopy classes of diffeomorphisms of $(k-1)$-connected almost-parallelizable $2k$-manifolds}.
Algebraic topology, Aarhus 1978 (Proc. Sympos., Univ. Aarhus, Aarhus, 1978), pp. 643--663,
Lecture Notes in Math., {\bf 763}, Springer, Berlin, 1979. 

\bibitem{Lazarev}
A. Lazarev,
{\it Models for classifying spaces and derived deformation theory}.
Proc. Lond. Math. Soc. (3) {\bf 109} (2014), no. 1, 40--64. 

\bibitem{Loday}
J-L. Loday,
Cyclic homology.
Grundlehren der Mathematischen Wissenschaften, 301. Springer-Verlag, Berlin, 1998.

\bibitem{LV}
J-L. Loday, B. Vallette,
Algebraic operads.
Grundlehren der Mathematischen Wissenschaften, 346. Springer, Heidelberg, 2012.

\bibitem{LS}
G. Lupton, S.B. Smith,
{\it Rationalized evaluation subgroups of a map. II. Quillen models and adjoint maps}.
J. Pure Appl. Algebra {\bf 209} (2007), no. 1, 173--188.

\bibitem{MW}
I. Madsen, M. Weiss,
{\it The stable moduli space of Riemann surfaces: Mumford's conjecture}.
Ann. of Math. (2) {\bf 165} (2007), no. 3, 843--941.

\bibitem{Margulis}
G.A. Margulis,
Discrete subgroups of semisimple Lie groups.
Ergebnisse der Mathematik und ihrer Grenzgebiete (3), 17. Springer-Verlag, Berlin, 1991. 

\bibitem{Markl}
M. Markl,
{\it Operads and PROPs}.
Handbook of algebra. Vol. 5, 87--140,
Elsevier/North-Holland, Amsterdam, 2008. 

\bibitem{May1}
J.P. May,
{\it The geometry of iterated loop spaces}.
Lectures Notes in Mathematics, Vol. 271. Springer-Verlag, Berlin-New York, 1972.

\bibitem{May2}
J.P. May,
{\it $E_\infty$ spaces, group completions, and permutative categories}.
New developments in topology (Proc. Sympos. Algebraic Topology, Oxford, 1972), pp. 61--93. London Math. Soc. Lecture Note Ser., No. 11, Cambridge Univ. Press, London, 1974.

\bibitem{May}
J.P. May,
{\it Classifying spaces and fibrations}.
Mem. Amer. Math. Soc. 1 (1975), 1, no. 155.

\bibitem{MP}
J.P. May, K. Ponto,
More concise algebraic topology.
Localization, completion, and model categories.
Chicago Lectures in Mathematics. University of Chicago Press, Chicago, IL, 2012.

\bibitem{McCleary}
J. McCleary,
A user's guide to spectral sequences. Second edition.
Cambridge Studies in Advanced Mathematics, 58. Cambridge University Press, Cambridge, 2001.

\bibitem{Miller}
E.Y. Miller,
{\it The homology of the mapping class group}.
J. Differential Geom. {\bf 24} (1986), no. 1, 1--14.

\bibitem{MM}
J.W. Milnor, J.C. Moore,
{\it On the structure of Hopf algebras}.
Ann. of Math. (2) {\bf 81} (1965), 211--264.

\bibitem{MS}
J.W. Milnor, J.D. Stasheff,
Characteristic classes.
Annals of Mathematics Studies, No. 76. Princeton University Press, 1974.

\bibitem{Morita}
S. Morita,
{\it Characteristic classes of surface bundles}.
Invent. Math. {\bf 90} (1987), no. 3, 551--577. 

\bibitem{MSS}
S. Morita, T. Sakasai, M. Suzuki,
{\it Computations in formal symplectic geometry and characteristic classes of moduli spaces}.
Quantum Topol. {\bf 6} (2015), no. 1, 139--182.

\bibitem{Nicas}
A.J. Nicas,
{\it Induction theorems for groups of homotopy manifold structures}.
Mem. Amer. Math. Soc. 39 (1982), no. 267.

\bibitem{Quillen}
D. Quillen,
{\it Rational homotopy theory}.
Ann. of Math. (2) {\bf 90} (1969) 205--295.

\bibitem{Quinn}
F. Quinn,
A geometric formulation of surgery.
PhD thesis, Princeton University (1969).

\bibitem{Reutenauer}
C. Reutenauer,
Free Lie algebras.
London Mathematical Society Monographs. New Series, 7. Oxford University Press, 1993.

\bibitem{Rourke}
C.P. Rourke,
{\it The Hauptvermutung according to Sullivan}.
IAS mimeographed notes, 1967--1968.

\bibitem{RS1}
C.P. Rourke, B.J. Sanderson,
{\it $\Delta$-sets. I. Homotopy theory.}
Quart. J. Math. Oxford Ser. (2) {\bf 22} (1971), 321--338.

\bibitem{RS2}
C.P. Rourke, B.J. Sanderson,
{\it $\Delta$-sets. II. Block bundles and block fibrations.}
Quart. J. Math. Oxford Ser. (2) {\bf 22} (1971), 465--485.

\bibitem{Serre}
J.-P. Serre,
{\it Arithmetic groups}.
Homological group theory (Proc. Sympos., Durham, 1977), pp. 105--136, London Math. Soc. Lecture Note Ser., 36, Cambridge Univ. Press, Cambridge-New York, 1979.

\bibitem{SS}
M. Schlessinger, J. Stasheff,
{\it Deformation theory and rational homotopy type}.
arXiv:1211.1647 [math.QA]

\bibitem{Stasheff}
J. Stasheff,
{\it A classification theorem for fibre spaces}.
Topology {\bf 2} (1963) 239--246.

\bibitem{Stasheff2}
J. Stasheff,
{\it Rational Poincar\'e duality spaces}.
Illinois J. Math. {\bf 27} (1983), no. 1, 104--109.

\bibitem{Steenrod}
N. Steenrod,
The Topology of Fibre Bundles.
Princeton Mathematical Series, vol. 14. Princeton University Press, 1951.

\bibitem{Sullivan}
D. Sullivan,
{\it Infinitesimal computations in topology}.
Inst. Hautes \'Etudes Sci. Publ. Math. No. 47 (1977), 269--331.

\bibitem{Tanre}
D. Tanr\'e,
{\it Homotopie rationnelle: mod\`eles de Chen, Quillen, Sullivan}.
Lecture Notes in Mathematics, {\bf 1025}. Springer-Verlag, Berlin, 1983.

\bibitem{Wall0}
C.T.C. Wall,
{\it Classification of $(n-1)$-connected $2n$-manifolds}.
Ann. of Math. (2) {\bf 75} (1962) 163–-189. 

\bibitem{Wall1}
C.T.C. Wall,
{\it Classification problems in differential topology. I. Classification of handlebodies}.
Topology {\bf 2} (1963), 253--261.

\bibitem{Wall2}
C.T.C. Wall,
{\it Classification problems in differential topology. II. Diffeomorphisms of handle bodies}.
Topology {\bf 2} (1963), 263--272.

\bibitem{Wall3}
C.T.C. Wall,
{\it Classification problems in differential topology. III. Applications to special cases}.
Topology {\bf 3} (1965), 291--304.

\bibitem{Wall4}
C.T.C. Wall,
{\it Poincar\'e complexes. I}.
Ann. of Math. (2) {\bf 86} (1967) 213--245.

\bibitem{W}
C.T.C. Wall,
Surgery on compact manifolds.
London Mathematical Society Monographs, No. 1. Academic Press, 1970.

\bibitem{WW}
B. Williams, M. Weiss,
{\it Automorphisms of Manifolds and Algebraic K-theory. I.}
K-theory 1 (1988), no.6, 575--626.

\bibitem{Whitehead}
G.W. Whitehead,
Elements of homotopy theory.
Graduate Texts in Mathematics, 61. Springer-Verlag, New York-Berlin, 1978.

\bibitem{Whitehouse}
S. Whitehouse,
{\it The integral tree representation of the symmetric group.}
J. Algebraic Combin. {\bf 13} (2001), no. 3, 317--326. 

\bibitem{Wilkerson}
C.W. Wilkerson,
{\it Applications of minimal simplicial groups.}
Topology {\bf 15} (1976), no. 2, 111--130. 

\end{thebibliography}
\end{document}